\theoremstyle{plain}
\newtheorem{theorem}{Theorem}
\newtheorem{proposition}[subsection]{Proposition}
\newtheorem{lemma}[subsection]{Lemma}
\newtheorem{corollary}[subsection]{Corollary}
\newtheorem{convention}[subsection]{Convention}
\newtheorem{notation}[subsection]{Notation}
\theoremstyle{definition}
\newtheorem{definition}[subsection]{Definition}
\newtheorem{example}[subsection]{Example}
\newtheorem{remark}[subsection]{Remark}
\newtheorem{nothing*}[subsection]{}
\newcommand{\rien}[1]{}
\newcommand{\diver}{ \operatorname{{\rm div}}}
\newcommand{\Lie}{ \operatorname{{\rm Lie}}}
\newcommand{\VFH}{ \operatorname{{\rm VF}_{hol}}}
\newcommand{\LieH}{ \operatorname{{\rm Lie}_{hol}}}
\newcommand{\VFA}{ \operatorname{{\rm VF}_{alg}}}
\newcommand{\LieA}{ \operatorname{{\rm Lie}_{alg}}}
\newcommand{\VFHO}{ \operatorname{{\rm VF}_{hol}^\omega}}
\newcommand{\LieHO}{ \operatorname{{\rm Lie}_{hol}^\omega}}
\newcommand{\VFAO}{ \operatorname{{\rm VF}_{alg}^\omega}}
\newcommand{ \LieAO}{ \operatorname{{\rm Lie}_{alg}^\omega}}
\newcommand{\IVFHO}{ \operatorname{{\rm IVF}_{hol}^\omega}}
\newcommand{\Aut}{ \operatorname{{\rm Aut}}}
\newcommand{\Hol}{ \operatorname{{\rm Hol}}}
\newcommand{\C}{\ensuremath{\mathbb{C}}}
\newcommand{\R}{\ensuremath{\mathbb{R}}}
\newcommand{\Q}{\ensuremath{\mathbb{Q}}}
\newcommand{\Z}{\ensuremath{\mathbb{Z}}}
\newcommand{\N}{\ensuremath{\mathbb{N}}}
\newcommand{\B}{\ensuremath{\mathbb{B}}}
\newcommand{\proj}{\ensuremath{\mathbb{P}}}
\newcommand{\bX}{{\bar X}}
\newcommand{\hX}{{\hat X}}
\newcommand{\bY}{{\bar Y}}
\newcommand{\tF}{{\tilde F}}
\newcommand{\tX}{{\tilde X}}
\newcommand{\cB}{{\ensuremath{\mathcal{B}}}}
\newcommand{\cL}{{\ensuremath{\mathcal{L}}}}
\newcommand{\cF}{{\ensuremath{\mathcal{F}}}}
\newcommand{\cG}{{\ensuremath{\mathcal{G}}}}
\newcommand{\cO}{{\ensuremath{\mathcal{O}}}}
\newcommand{\cI}{{\ensuremath{\mathcal{I}}}}
\newcommand{\cM}{{\ensuremath{\mathcal{M}}}}
\newcommand{\Ker}{{\rm Ker} \,}
\newcommand{\Eeul}{\EuScript{E}}
\newcommand{\Oeul}{\EuScript{O}}
\def\ddd{\buildrel {\rm d }\over\longrightarrow}
\renewcommand{\epsilon}{\varepsilon}
\renewcommand{\phi}{\varphi}
\renewcommand{\emptyset}{\varnothing}
\begin{document}
\title[On the present state of the  \textsc{Anders\' en-Lempert} theory ]{On the present state of the  \textsc{Anders\' en-Lempert} theory}

\author{Shulim Kaliman}
\address{Department of Mathematics\\
University of Miami\\
Coral Gables, FL 33124, USA}
\email{kaliman@math.miami.edu}
\author{Frank Kutzschebauch}
\address{Department of Mathematics\\
University of Bern\\
Bern, Switzerland, }
\email{frank.kutzschebauch@math.unibe.ch}

\maketitle

{\em \hspace{2.5in} Dedicated to Professor Peter Russell}

{\em \hspace{2.5in} on the occasion of his seventieth birthday\\[2ex]}

\begin{abstract} In this survey of the
 \textsc{Anders\' en-Lempert} theory we present the state of the art in
 the study of the density property (which means that the Lie algebra generated by completely integrable
 holomorphic vector fields on a given Stein manifold is dense in the space of all holomorphic vector fields). There
 are also two new results in the  paper one of which is
 the theorem stating that the product of Stein manifolds with the volume density
 property possesses such a property as well. The second one is a meaningful
 example of an algebraic surface without the algebraic density property.
 The proof of the last fact requires   \textsc{Brunella}'s technique.

\end{abstract}

{\renewcommand{\thefootnote}{}
\footnotetext{
2000 \textit{Mathematics Subject Classification.} Primary: 32M05,14R20.
Secondary: 14R10, 32M25.}}

\vfuzz=2pt

\vfuzz=2pt

\section{Introduction}

In this paper we discuss recent developments in  the  \textsc{Anders\' en-Lempert}
theory.  
This theory describes complex manifolds such that among other things  the local phase flows on their holomorphically convex compact
subsets can be approximated by global holomorphic
automorphisms which leads to construction of  holomorphic
automorphisms with prescribed local properties. Needless to say that
this implies remarkable consequences for such manifolds some of which are described below
in Section 2. The original work
of   \textsc{Anders\' en} and  \textsc{Lempert}  (\cite{A}, \cite{AL}) established that complex Euclidean spaces of dimension at least 2 belong to
this class. Their results were extended by  \textsc{Forstneri\v c} and  \textsc{Rosay}  \cite{FR}, \cite{Ros99} who discovered new approximation theorems for such spaces.
Perhaps, they understood already that a complex
manifold has such approximations if it possesses the following density property introduced later by \textsc{Varolin}.

\begin{definition}\label{1.10}
A complex manifold $X$ has the density property if in the
compact-open topology the Lie algebra $\LieH (X)$ generated by
completely integrable holomorphic vector fields on $X$ is dense in
the Lie algebra $\VFH (X)$ of all holomorphic vector fields on
$X$. An affine algebraic manifold $X$ has the algebraic density
property if the Lie algebra $\LieA (X)$ generated by completely
integrable algebraic vector fields on it coincides with  the Lie
algebra $\VFA (X)$ of all algebraic vector fields on it.
\end{definition}

Usually the  easiest way to establish the density property for an affine algebraic manifold $X$ is to prove the algebraic density property for it
since some convenient tools from affine algebraic geometry are available in this case\footnote{
Furthermore, the authors do not know examples of such an $X$ with the density property but without the algebraic density property.}.
In the sequence of papers   (\cite{V1}, \cite{TV1}, \cite{TV2},  \cite{KK1},  \cite{KK2}, \cite{DoDvK})
the algebraic density property was established for a wide variety
of affine algebraic manifolds, including all homogeneous spaces that coincide with quotients of linear algebraic groups
with respect to their reductive subgroups
provided that the connected components of these quotients are different from $\C$ or complex tori. The last two papers were based on a very effective
criterion of the algebraic density property introduced by the authors which will be presented in Section 3.

\textsc{Anders\'en, Lempert, Forstneri\v c, Rosay}, \textsc{Toth, Varolin}, and the authors
 considered also another property which has similar consequences for automorphisms preserving a
volume form.
\begin{definition}\label{1.20}
Let a complex manifold  $X$ be
equipped with a holomorphic volume form $\omega$
(i.e. $\omega$ is nowhere vanishing section of the canonical
bundle). We say that $X$ has the volume density property with
respect to $\omega$ if in the compact-open topology the Lie
algebra $\LieHO$ generated by completly integrable holomorphic
vector fields $\nu$ such that $\nu (\omega)=0$, is dense in the
Lie algebra $\VFHO (X)$ of all holomorphic vector fields that
annihilate $\omega$ (note that condition $\nu (\omega)=0$ is
equivalent to the fact that $\nu$ is of $\omega$-divergence zero).
If $X$ is affine algebraic we say that $X$ has the algebraic
volume density property with respect to an algebraic volume form $\omega$ if the Lie
algebra $\LieAO$ generated by completely integrable algebraic vector
fields $\nu$ such that $\nu (\omega)=0$, coincides with the Lie
algebra $\VFAO (X)$ of all algebraic vector fields that annihilate
$\omega$.

\end{definition}

It is much more difficult to establish  the algebraic volume density property than
the algebraic density property since the criterion mentioned before is not applicable
in the volume case. However \textsc{Anders\'en} \cite{A} established the algebraic volume
density property for Euclidean spaces even before the algebraic density property. Some extra
manifolds with the algebraic volume density were found by \textsc{Varolin} and eventually the
authors \cite{KK2} proved it for all  linear algebraic groups (with respect to the left or right invariant volume forms).
Furthermore, they established some features that are straightforward for the algebraic density property
and not at all clear in the volume-preserving case. For instance, they
prove that the algebraic volume density property for an affine algebraic manifold $X$
implies the volume density property for such an $X$ and that the product of two manifolds with algebraic volume
density property has again the algebraic volume density property. Some facts on this subject are contained
in Section 4 together with the following new result.

\begin{theorem}\label{1.30} Let $X$ and $Y$ be Stein manifolds equipped with (holomorphic) volume
forms $\omega_X$ and $\omega_Y$ respectively. Suppose that $X$ and $Y$ have the volume density
property with respect to these forms. Then so does $X \times Y$ with respect to the form $\omega_X \times \omega_Y$.

\end{theorem}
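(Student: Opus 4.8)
The plan is to follow the scheme used in \cite{KK2} for the algebraic volume density property of a product, carrying it out in the Stein category by replacing the polynomial arguments with Cartan's theorems $A$, $B$ and Oka--Weil approximation. Write $Z=X\times Y$ and $\omega=\omega_X\times\omega_Y$. Every holomorphic field $\Theta$ on $Z$ splits as $\Theta=\Theta_X+\Theta_Y$, with $\Theta_X$ a section of $\pi_X^*TX$ and $\Theta_Y$ of $\pi_Y^*TY$, and a local computation gives $\operatorname{div}_\omega\Theta=\operatorname{div}_{\omega_X}\Theta_X+\operatorname{div}_{\omega_Y}\Theta_Y$, where $\operatorname{div}_{\omega_X}\Theta_X$ denotes the function on $Z$ that on each slice $X\times\{y\}$ equals the $\omega_X$-divergence of $\Theta_X$ restricted there. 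Thus $\Theta\in\VFHO(Z)$ precisely when $\operatorname{div}_{\omega_X}\Theta_X=-\operatorname{div}_{\omega_Y}\Theta_Y=:\rho$; and, $\rho(\cdot,y)$ being always a divergence on $X$, it lies in the closed subspace $E_X:=\operatorname{div}_{\omega_X}(\VFH(X))\subseteq\cO(X)$ for every $y$, and symmetrically $\rho(x,\cdot)\in E_Y$ for every $x$.

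First I would produce members of $\LieHO(Z)$. If $\nu$ is a completely integrable $\omega_X$-divergence-free field on $X$ and $f\in\cO(Y)$, then the $X$-directed lift $f\,\tilde\nu$ is still completely integrable (its flow runs the flow of $\nu$ at the $y$-dependent speed $f$) and annihilates $\omega$, so $f\,\tilde\nu\in\LieHO(Z)$; likewise $g\,\tilde\mu\in\LieHO(Z)$ for $\mu$ completely integrable $\omega_Y$-divergence-free and $g\in\cO(X)$. Since any $X$-directed lift annihilates functions from $\cO(Y)$, one has $[f\,\tilde\nu_1,\tilde\nu_2]=f\,\widetilde{[\nu_1,\nu_2]}$ for $f\in\cO(Y)$ and fields $\nu_1,\nu_2$ on $X$, so an induction on bracket length shows $f\,\tilde\nu\in\LieHO(Z)$ for \emph{every} $\nu\in\LieHO(X)$. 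Using the volume density property of $X$ one concludes that $\overline{\LieHO(Z)}$ contains every $X$-directed field with $\operatorname{div}_{\omega_X}=0$, and symmetrically every $Y$-directed field with $\operatorname{div}_{\omega_Y}=0$ (for $\dim X\ge 2$ these are the global sections of the coherent sheaf $\Ker(\operatorname{div}_{\omega_X})$, and the density is a consequence of Cartan's theorems; a factor of dimension one is handled directly).

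The remaining, ``coupled'' fields are reached through brackets mixing the two factors. For $\varphi\in\cO(X)$, $g\in\cO(Y)$ and completely integrable divergence-free $\nu$ on $X$, $\mu$ on $Y$, both $\varphi(x)\,\tilde\mu$ and $g(y)\,\tilde\nu$ lie in $\LieHO(Z)$, and
\[
[\varphi(x)\,\tilde\mu,\ g(y)\,\tilde\nu]=\varphi(x)(\mu g)(y)\,\tilde\nu-g(y)(\nu\varphi)(x)\,\tilde\mu\ \in\ \LieHO(Z),
\]
a field whose $X$-part has $\operatorname{div}_{\omega_X}$ equal to $(\nu\varphi)(x)(\mu g)(y)$ and whose $Y$-part has $\operatorname{div}_{\omega_Y}$ equal to $-(\nu\varphi)(x)(\mu g)(y)$. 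Here the volume density property is used in an essential way: combined with Cartan's theorem $A$ it implies that the $\cO(X)$-module generated by $\VFHO(X)$ is dense in $\VFH(X)$, and hence that the linear span $E_X^{0}$ of the functions $\nu(\varphi)$ (with $\nu$ completely integrable $\omega_X$-divergence-free and $\varphi\in\cO(X)$) is dense in $E_X$; similarly $E_Y^{0}$ is dense in $E_Y$. Finite $\C$-linear combinations of the brackets above then realize, as $\operatorname{div}_{\omega_X}$ of the $X$-part of a suitable $\Xi\in\LieHO(Z)$, every function in $E_X^{0}\otimes E_Y^{0}$, hence, after a limit, every function in $\overline{E_X\otimes E_Y}$.

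It remains to assemble things. Given $\Theta\in\VFHO(Z)$ with $\rho=\operatorname{div}_{\omega_X}\Theta_X$, nuclearity of the relevant spaces of holomorphic functions turns the pointwise conditions $\rho(\cdot,y)\in E_X$ and $\rho(x,\cdot)\in E_Y$ into $\rho\in\overline{E_X\otimes E_Y}$; choose finite sums $\rho_n\in E_X^{0}\otimes E_Y^{0}$ with $\rho_n\to\rho$ and $\Xi_n\in\LieHO(Z)$ with $\operatorname{div}_{\omega_X}\Xi_{n,X}=\rho_n$. Then $\Theta-\Xi_n\in\VFHO(Z)$ and $\operatorname{div}_{\omega_X}(\Theta-\Xi_n)_X=\rho-\rho_n\to0$, so by the open mapping theorem there are $T_n^X$ with $\operatorname{div}_{\omega_X}T_n^X=\rho-\rho_n$ and $T_n^X\to0$ locally uniformly; putting $S_n^X:=(\Theta-\Xi_n)_X-T_n^X$ we get $\operatorname{div}_{\omega_X}S_n^X=0$, and analogously $(\Theta-\Xi_n)_Y=S_n^Y+T_n^Y$ with $\operatorname{div}_{\omega_Y}S_n^Y=0$, $T_n^Y\to0$. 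By the second step $\Xi_n+S_n^X+S_n^Y\in\overline{\LieHO(Z)}$, while $\Theta-(\Xi_n+S_n^X+S_n^Y)=T_n^X+T_n^Y\to0$, whence $\Theta\in\overline{\LieHO(Z)}$. The hard part is the third paragraph --- showing that $\overline{\LieHO(Z)}$ realizes \emph{every} admissible divergence imbalance $\rho$; this is exactly where the full volume density property of $X$ and of $Y$, rather than merely the existence of some completely integrable fields, is needed, and where the Stein hypothesis enters through Cartan's theorems and the resulting approximation and open mapping arguments. The bracket identities and the final assembly are then formal.
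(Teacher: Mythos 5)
Your proposal follows essentially the same route as the paper's proof (Theorem \ref{4.40} and the lemmas preceding it): the horizontal/vertical splitting of a divergence-free field, the mixed brackets $[\varphi(x)\,\tilde\mu,\ g(y)\,\tilde\nu]$ with $\varphi\in\Ker\tilde\mu$, $g\in\Ker\tilde\nu$ to realize the divergence imbalance, the nuclearity/Grothendieck--Bungart machinery to convert the slicewise conditions $\rho(\cdot,y)\in F_X$, $\rho(x,\cdot)\in F_Y$ into $\rho\in F_X\hat\otimes F_Y$, and the fiberwise use of the volume density property of each factor at the end. Your open-mapping-theorem correction of the error $\rho-\rho_n$ replaces the paper's exact cancellation (Proposition \ref{4.140}, where the full imbalance is realized as a convergent sum of brackets), but this is a cosmetic difference, and your implicit appeals to closedness of $E_X$ and to surjectivity of $T\mapsto\diver_{\omega_X}T$ onto $E_X\hat\otimes\Hol(Y)$ are exactly the paper's Lemmas \ref{4.100} and \ref{4.120}.

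One justification does not work as written: the parenthetical claim that the $X$-directed fields with fiberwise $\diver_{\omega_X}=0$ are the global sections of a \emph{coherent sheaf} $\Ker(\diver_{\omega_X})$. The divergence is not $\Oeul$-linear (indeed $\diver_{\omega}(fT)=f\diver_{\omega}(T)+T(f)$), so its kernel is not an $\Oeul$-module and Cartan's theorems do not apply to it. The statement you need is nevertheless true and is proved in the paper by the same tool you invoke in your final paragraph: such a field is a weakly holomorphic function on $Y$ with values in the closed subspace $\VFHO(X)=\overline{\LieHO(X)}$, hence by Bungart's theorem it lies in $\Hol(Y)\hat\otimes\overline{\LieHO(X)}$, which is the closed span of the elementary tensors $f\,\tilde\nu$ that your bracket induction has already placed in $\LieHO(X\times Y)$. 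With that repair the argument is complete.
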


Unlike in the algebraic case the proof of this theorem required some nontrivial facts from functional analysis.

In Sections 5-8 we consider a new meaningful example of an affine surface without the algebraic density property.
The algebraic density property for an affine  algebraic algebraic manifold implies that this manifold has an $m$-transitive group of {\em holomorphic}
automorphisms for any natural $m$ (i.e. every $m$-tuple of distinct points in the manifold can
be transformed into any other such tuple by an automorphism). Thus it makes sense to consider such objects only.
The question about examples  of $m$-transitive affine algebraic manifolds (for any $m$) without the algebraic density
property were posed  by   \textsc{P. Russell} and  \textsc{D. Akhiezer} to the first author who was not aware at the time
about the third paper of \textsc{Anders\'en} \cite{A00}   showing that tori belong to this class. However his proof is
based heavily on a Borel theorem from the Nevanlinna theory which is quite specific for tori. The question whether
there are examples besides tori and the line was still open.

We shall show that the surface $S$ given by $x+y+xyz=1$ in $\C^3$ does not have the algebraic density property. The proof will be based
on remarkable  \textsc{Brunella}'s technique  \cite{Bru98}, \cite{Bru04}, \cite{Brun}, that allowed him to classify completely integrable
algebraic vector fields on the plane (which is based in turn on the seminal preprint of  \textsc{McQuillan}
\cite{McQ} and earlier work of  \textsc{Suzuki} \cite{Su77a}, \cite{Su77b}).
In fact, his technique works for other rational affine surfaces (including the tori $(\C^*)^2$) and, therefore, is more productive than
the \textsc{Anders\'en}'s approach  in the two dimensional case.
We present crucial ingredients of his method in Section 5-7 and apply them to $S$ in Section 8.

{\em Acknowledgements.} It is a pleasure to express our gratitude to  \textsc{Nahum Zobin}
for explaining us important facts from functional analysis.

\section{Applications of the \textsc{Anders\' en-Lempert} theory}

Constructions of holomorphic automorphisms of Stein manifolds with prescribed behavior on compact subsets
are based on the next central theorem of the \textsc{Anders\'en-Lempert} theory which was proven in
the papers \cite{A} and  \cite{AL}  of  \textsc{Anders\'en}   and \textsc{Lempert}  for Euclidean spaces.
We give a stronger version of this theorem which   is  due to \textsc{Forstneri\v c} and \textsc{Rosay} \cite{FR}.
They  considered
it also in the case Euclidean spaces only; however it was essentially their contribution that made the
original result an important tool as it is now.

\begin{theorem} \label{AL-Theorem} Let $X$ be a Stein manifold with the density (resp.  volume density) property
and let  $\Omega$ be an open subset of $X$. Suppose that  $ \Phi : [0,1] \times \Omega \to X$ is a $C^1$-smooth
map  such that

{\rm (1)} $\Phi_t : \Omega \to X$ is holomorphic and injective  (and resp. volume preserving) for every  $ t\in [0,1]$,

{\rm (2)} $\Phi_0 : \Omega \to X$ is the natural embedding of $\Omega$ into $X$, and

{\rm (3)} $\Phi_t  (\Omega) $ is  a Runge
subset\footnote{Recall that an open subset $U$ of $X$ is Runge if any holomorphic function on $U$ can
be approximated by global holomorphic functions on $X$ in the compact-open topology. Actually, by Cartan's Theorem A this definition
implies more: for any coherent sheaf on $X$ its section over $U$ can be approximated by global sections.}  of $X$ for every $t\in [0,1]$.

Then for each $\epsilon >0 $ and every compact subset $K \subset \Omega$ there is a continuous family,
$\alpha: [0, 1] \to \Aut_{hol} (X)$  of  holomorphic (and resp. volume preserving) automorphisms of $X$
such that  $$\alpha_0 = \Phi_0 \, \, \, {\rm and} \, \, \,\vert \alpha_t - \Phi_t \vert_K <\epsilon $$ 
for every $t \in [0,1]$.
\end{theorem}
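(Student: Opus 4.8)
The plan is to replace the isotopy $(\Phi_t)$ by the time-dependent holomorphic vector field it generates, to globalize and then discretize that field using the Runge hypothesis together with the (volume) density property, and finally to integrate the resulting global fields into an honest path of automorphisms of $X$; the only real difficulty will be controlling the error accumulated along a long composition. So first I would introduce the infinitesimal generator: for $(t,x)\in[0,1]\times\Omega$ put $v_t(\Phi_t(x)):=\tfrac{\partial}{\partial t}\Phi_t(x)$. Since $\Phi$ is $C^1$ and each $\Phi_t\colon\Omega\to X$ is holomorphic and injective, this is well defined and yields, for every $t$, a holomorphic vector field $v_t$ on the open Runge set $U_t:=\Phi_t(\Omega)\subset X$, continuous in $t$; in the volume-preserving case $v_t$ has $\omega$-divergence zero because $\Phi_t$ preserves $\omega$. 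Tautologically $\Phi_t$ is the (non-autonomous) flow of $v_t$ starting from $\Phi_0=\mathrm{id}_\Omega$, the inclusion $\Omega\hookrightarrow X$.

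Next I would globalize and discretize. Fix $\epsilon>0$ and a compact $K\subset\Omega$, and pick a slightly larger compact $L$ with $K\subset\mathrm{int}\,L$ and $L\subset\Omega$. Since $U_t$ is Runge in the Stein manifold $X$, Cartan's Theorem A lets me approximate $v_t$, uniformly on $\Phi_t(L)$, by a \emph{global} holomorphic vector field on $X$ (in the volume case, first write $v_t$ as the contraction of $\omega$ with a holomorphic $(n-1)$-form and approximate that form instead, so as to stay in the divergence-zero class). The density property (resp. volume density property) then approximates this global field, again uniformly on $\Phi_t(L)$, by a field in $\LieH(X)$ (resp. $\LieHO(X)$), i.e. a finite expression built from completely integrable holomorphic (divergence-zero) fields using linear combinations and Lie brackets. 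By continuity in $t$ and compactness of $[0,1]$, I can arrange a single partition $0=t_0<\dots<t_N=1$ so that on each $[t_j,t_{j+1}]$ one fixed such field $w_j$ works.

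Then I would pass from $\LieH(X)$ to a path of automorphisms. For $w\in\LieH(X)$ and small $s$, the time-$s$ flow of $w$ is approximable, uniformly on compacts, by finite compositions of flows of completely integrable fields: the Lie--Trotter formula takes care of sums, the commutator formula (a limit of $n$-fold compositions with square-root time steps) takes care of Lie brackets, these two are combined recursively, and such compositions are genuine holomorphic (volume-preserving) automorphisms of $X$, depending continuously on $s$ and equal to $\mathrm{id}_X$ at $s=0$. Applying this on $[t_j,t_{j+1}]$ to $w_j$ produces a continuous path $t\mapsto\beta^{j}_t\in\Aut_{hol}(X)$ with $\beta^{j}_{t_j}=\mathrm{id}_X$ and $\beta^{j}_t$ approximating $\Phi_t\circ\Phi_{t_j}^{-1}$ on $\Phi_{t_j}(L)$ as closely as I wish. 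Splicing, I set $\alpha_t:=\beta^{j}_t\circ\beta^{j-1}_{t_{j}}\circ\dots\circ\beta^{0}_{t_{1}}$ for $t\in[t_j,t_{j+1}]$: this is a continuous path in $\Aut_{hol}(X)$ with $\alpha_0=\mathrm{id}_X$, and formally $\alpha_1\approx\Phi_{t_N}\Phi_{t_{N-1}}^{-1}\cdots\Phi_{t_1}\Phi_{t_0}^{-1}=\Phi_1$ on $K$.

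The hard part will be the error bookkeeping, since composing $N$ approximate maps can amplify errors. I would control this by a discrete Gronwall-type argument: refine the partition so that, by uniform continuity of $(t,x)\mapsto\Phi_t(x)$ and of $t\mapsto\Phi_t^{-1}$ on compacts, each $\Phi_{t_{j+1}}\circ\Phi_{t_j}^{-1}$ is uniformly $C^1$-close to the identity on a fixed neighbourhood of $\Phi_{t_j}(L)$ (hence injective there, with Runge image); then each $\beta^{j}_t$ has derivative uniformly close to the identity, so by the Cauchy estimates a $\delta$-error of the inner map on $L$ yields only an $O(\delta)$-error of the composed map on $K$. Propagating this through the $N$ steps, and choosing the partition fine enough and the approximations in the previous two paragraphs sharp enough, gives $|\alpha_t-\Phi_t|_K<\epsilon$ for all $t$. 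Keeping the intermediate images inside the sets where the earlier maps behave well, and tracking constants so the error stays additive rather than multiplicative, is precisely the technical heart of the Forstneri\v c--Rosay refinement.
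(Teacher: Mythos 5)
The paper does not actually prove this theorem: it is quoted as the central result of \textsc{Anders\'en--Lempert--Forstneri\v c--Rosay} with references to \cite{A}, \cite{AL}, \cite{FR}, so there is no in-paper argument to compare against. Your outline is the standard (and correct) strategy for the density-property case: pass to the time-dependent generator $v_t$ on the Runge set $\Phi_t(\Omega)$, globalize by the coherent-sheaf Runge property, approximate in $\LieH(X)$ by the density property, convert sums and brackets of completely integrable fields into compositions of their flows via Euler/Lie--Trotter and commutator product formulas, and control the accumulated error over a fine partition by Cauchy estimates on nested compacts. Modulo the (admittedly heavy) bookkeeping you defer to the last paragraph, this is the Forstneri\v c--Rosay proof.

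There is, however, a genuine gap in your treatment of the volume-preserving case. You propose to ``write $v_t$ as the contraction of $\omega$ with a holomorphic $(n-1)$-form and approximate that form instead, so as to stay in the divergence-zero class.'' The form $\iota_{v_t}\omega$ is a \emph{closed} $(n-1)$-form on $\Phi_t(\Omega)$, and a global field obtained from an approximating global $(n-1)$-form is divergence-free only if that global form is again closed; Runge approximation of sections of $\Omega^{n-1}$ does not preserve closedness. The correct device is to approximate a primitive: write $\iota_{v_t}\omega = {\rm d}\beta$ for a holomorphic $(n-2)$-form $\beta$ and approximate $\beta$ by a global $(n-2)$-form $\beta'$, so that ${\rm d}\beta'$ is automatically closed. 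This requires $\iota_{v_t}\omega$ to be exact on (a neighborhood of the relevant compact in) $\Phi_t(\Omega)$ --- a topological condition, automatic when $H^{n-1}(\Phi_t(\Omega),\C)=0$ but not in general, and vacuous for $n=1$ where no $(n-2)$-forms exist. This is exactly the delicate point the paper isolates in Remark \ref{appli.15} and in the Claim inside Lemma \ref{4.50} (where one shrinks to a sublevel set $U$ with $H^{n-1}(U,\C)=0$ before invoking Runge). As stated, your step would need either this exactness hypothesis added or an argument that the cohomology class of $\iota_{v_t}\omega$ is realized by a global closed form; without one of these the volume-preserving half of the proof does not close.
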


Furthermore, approximations on Stein manifolds with the density property can be chosen with some specific features
as in the next result \cite{V2}.

\begin{proposition}\label{appli.10} Let $X$ be a Stein manifold of dimension $n\geq 2$ with the density
(resp. volume density) property, $K$ be a compact
in $X$, and $x, y \in X$ be two points outside the convex hull of $K$. Suppose that $x_1, \ldots , x_m \in K$.

Then there exists a (resp. volume-preserving) holomorphic automorphism $\Psi$ of $K$ such that $\Psi (x_i) =x_i$ for every $i=1, \ldots , m$,
$\Psi |_K : K \to X$ is as close to the natural embedding as we wish, and $\Psi (y) =x$.

\end{proposition}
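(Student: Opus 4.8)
The plan is to obtain $\Psi$ from the Anders\'en--Lempert theorem (Theorem~\ref{AL-Theorem}), in the standard refinement that in addition prescribes the map \emph{exactly} at finitely many points, after a purely geometric reduction to the construction of a suitable isotopy. First I would replace $K$ by its $\cO(X)$-convex hull $\widehat K$ in $X$, so that $x_1,\dots,x_m\in\widehat K$ while $x,y\notin\widehat K$, and $\widehat K$ has a neighbourhood basis of Stein Runge open subsets of $X$. It then suffices to produce a Runge open neighbourhood $\Omega$ of $\widehat K\cup\{x,y\}$ together with a $C^1$ family $\Phi_t\in\Aut_{hol}(\Omega)$ (resp.\ of volume-preserving automorphisms of $\Omega$), $t\in[0,1]$, with $\Phi_0=\mathrm{id}$, $\Phi_t(x_i)=x_i$ for all $i$ and $t$, and $\Phi_1(y)=x$: applying the interpolated form of Theorem~\ref{AL-Theorem} with the finite set $\{x_1,\dots,x_m,y\}\subset\Omega$, a compact $L$ with $\widehat K\subset L\subset\Omega$, and a small $\epsilon>0$, one then gets a (resp.\ volume-preserving) holomorphic automorphism $\Psi=\alpha_1$ of $X$ with $\Psi(x_i)=\Phi_1(x_i)=x_i$, $\Psi(y)=\Phi_1(y)=x$ and $\vert\Psi-\mathrm{id}\vert_{\widehat K}<\epsilon$, whence $\Psi|_K$ is as close to the natural embedding as we wish.

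To build $\Phi_t$: since $\dim X=n\ge 2$ there is a real-analytic embedded arc $\gamma\colon[0,1]\to X\setminus\widehat K$ with $\gamma(0)=y$ and $\gamma(1)=x$. A real-analytic tubular neighbourhood of $\gamma$ identifies an open set $T\subset X$ containing $\{x,y\}$ with $N\times B\subset\C\times\C^{n-1}$, where $N$ is a thin (simply connected, hence disc-like) neighbourhood of $[0,1]$ in $\C$, $B$ a small ball, and $\gamma(s)\leftrightarrow(\zeta(s),0)$; after shrinking, $T$ is disjoint from a Stein Runge neighbourhood $\Omega_0$ of $\widehat K$, and I set $\Omega:=\Omega_0\sqcup T$. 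Choosing a path $f_t$, $t\in[0,1]$, in $\Aut(N)$ with $f_0=\mathrm{id}$ and $f_1(\zeta(0))=\zeta(1)$ (possible since the automorphism group of a disc is path-connected and transitive), I define $\Phi_t$ to be the identity on $\Omega_0$ and $(\zeta,w)\mapsto(f_t(\zeta),w)$ on $T$. Then $\Phi_t\in\Aut_{hol}(\Omega)$, $\Phi_0=\mathrm{id}$, $\Phi_t(x_i)=x_i$, $\Phi_1(y)=x$, and $\Phi_t(\Omega)=\Omega$, which can be chosen Runge in $X$ precisely because $\widehat K$ is $\cO(X)$-convex and $\gamma$ is an embedded arc in its complement. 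In the volume-preserving case one first straightens $\omega$ on the (simply connected) tube to $d\zeta\wedge dw_1\wedge\dots\wedge dw_{n-1}$ by a biholomorphism, and then replaces the shear above by $(\zeta,w_1,\dots,w_{n-1})\mapsto(f_t(\zeta),\,w_1/f_t'(\zeta),\,w_2,\dots,w_{n-1})$, whose Jacobian determinant is identically $1$.

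The main obstacle is the analytic input on which all of this rests: Theorem~\ref{AL-Theorem} as stated only \emph{approximates} $\Phi_1$ on a compact subset of $\Omega$, whereas here the values $\Psi(x_i)=x_i$ and $\Psi(y)=x$ must hold exactly. Upgrading approximation to exact interpolation at finitely many points is the refinement of the Anders\'en--Lempert theorem due to Forstneri\v c--Rosay over $\C^n$ and to Varolin in general: one runs a rapidly convergent iteration in which the residual displacements of the marked points are driven to zero by correcting successively with small holomorphic vector fields --- a holomorphic vector field with prescribed $1$-jets at the finite set and arbitrarily small sup-norm on a prescribed compact exists by Cartan's theorems together with the implicit function theorem --- while the $C^0$-distance to the identity on $K$, which only has to stay below $\epsilon$, is kept summably small. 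Verifying that the Runge condition persists along the iteration and that the limit is a genuine automorphism of $X$ is the delicate point; I would organise it exactly as in the classical argument over $\C^n$. Everything else is the soft geometric reduction above.
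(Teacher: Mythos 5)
Your geometric reduction is essentially the same as the one the paper takes over from \textsc{Varolin} \cite{V2} and Remark \ref{appli.15}: choose an arc $\gamma$ from $y$ to $x$ missing the $\cO(X)$-hull $K_0=\widehat K$ of $K$, and realize the motion of $y$ along $\gamma$ by a model isotopy that is the identity near $K_0$, then invoke Anders\'en--Lempert approximation; your packaging of the iteration into a ``jet-interpolation version of Theorem \ref{AL-Theorem}'' is just a black-boxed form of the same correction scheme. For the density property case this is acceptable modulo two points you assert without proof: that $\Omega_0\sqcup T$ and its images $\Phi_t(\Omega)$ are Runge (this rests on the $\cO(X)$-convexity of $\widehat K\cup\gamma$ for an embedded real-analytic arc in the complement of an $\cO(X)$-convex compact --- standard, but it is the hypothesis of Theorem \ref{AL-Theorem} and must be cited or proved), and the interpolation refinement itself, for which \textsc{Forstneri\v c--Rosay}/\textsc{Varolin} is a legitimate reference.

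The genuine gap is in the volume-preserving case, and it sits exactly where the paper does its actual work. Your correction step claims that ``a holomorphic vector field with prescribed $1$-jets at the finite set and arbitrarily small sup-norm on a prescribed compact exists by Cartan's theorems together with the implicit function theorem.'' Cartan's theorems produce holomorphic fields; they do not produce fields of $\omega$-divergence zero, and without that the correcting flows destroy volume preservation. The same difficulty already infects the approximation step: to apply the volume version of Theorem \ref{AL-Theorem} to your disconnected $\Omega=\Omega_0\sqcup T$ one must approximate a divergence-zero field that vanishes on $\Omega_0$ and is supported on the tube by \emph{global} divergence-zero fields, and the naive Runge argument does not preserve $\diver_\omega=0$. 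The paper's Remark \ref{appli.15} resolves precisely this: pass to the dual closed $(n-1)$-form $\alpha=\iota_\nu\omega$, observe that $\alpha$ is exact on $K_1=K_0\cup U$ because it vanishes on $K_0$ and $H^{n-1}(U,\C)=0$, write $\alpha={\rm d}\beta$, Runge-approximate the $(n-2)$-form $\beta$ by a global one, and take ${\rm d}$ of the result (the same device, applied to $(n-2)$-forms with prescribed $1$-jets, yields the divergence-zero correction fields at the marked points). This is where the hypothesis $n\geq 2$ genuinely enters; in your write-up $n\geq2$ is used only to find the arc and to build the shear $(\zeta,w)\mapsto(f_t(\zeta),w_1/f_t'(\zeta),w_2,\dots)$, which is not the binding constraint (and that shear, incidentally, need not map the tube back into the coordinate chart without further shrinking in the $w_1$-direction). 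To repair the proposal you should replace the appeal to Cartan's theorems by the $(n-1)$-form duality argument of Remark \ref{appli.15} and the Claim in Lemma \ref{4.50}.
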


\begin{remark}\label{appli.15}  This Proposition \ref{appli.10} was proven in \cite{V2} for the density property only
but a slightly modified argument works  in the volume-preserving case. More precisely, let $\gamma$ be a piece-wise analytic path
between $y$ and $x$ that does not meet the holomorphic hull $K_0$ of $K$,  $K_1$ be the union
of $K_0$ and a small ball $U$ around $y$, and $\gamma_1= U \cap \gamma$.  The main step in construction of $\Psi$, where
the adjustment is needed, is a global
approximation on $X$ of a holomorphic vector field $\nu$ on $K_1$ which is identically zero on $K_0$ and which
is tangent to $\gamma_1$. In the volume-preserving case
not only $\nu$ but also its global approximation must be of divergence zero which prevents the direct use
of the Runge property as in \cite{V2}. However, it turns out that the existence of such an approximation is
equivalent  to approximation of some closed holomorphic $(n-1)$-form $\alpha$ on $K_1$ that is identically zero on $K_0$
by a global closed holomorphic $(n-1)$-form on $X$ (see the proof of Claim in Lemma \ref{4.50} for more accurate details).
Note that $\alpha$ is exact (indeed it is zero on $K_0$ and $H^{n-1} (U,\C )=0$ which enables us to apply de Rham's theorem),
i.e.  $\alpha = {\rm d} \beta$ where $\beta$ is a holomorphic $(n-2)$-form on $K_1$.
Since $K_1$ is still holomorphically convex, the Runge property implies that one can approximate $\beta$ by a global holomorphic $(n-2)$-form $\beta'$
which yields
an approximation  of $ \alpha$ by the global closed $(n-1)$-form ${\rm d} \beta'$.
We want to emphasize that this reasoning uses the assumption that  $n\geq 2$ which is essential for the volume-preserving case.
For $n=1$, consider, for example, $X=\C^*$ which has the
volume density property with respect to the volume form ${\rm d} z/z$ where $z$ is the coordinate on $\C^*$. However it does not
satisfy this analogue of Proposition  \ref{appli.10}. By the same reason it has no properties (B)-(E) described below.
\end{remark}

Treating $X$ as a Stein $n$-dimensional manifold in the rest of this section, let
us describe some general properties of Stein manifolds with density (resp. volume density) property which
follow directly from the density property and from subtle applications of Theorem \ref{AL-Theorem} and Proposition \ref{appli.10}. \\

{\bf (A)} If $X$ has the  density (resp. volume density) property, then there are finitely
many completely integrable holomorphic vector fields (resp. of divergence zero)
that span the tangent space at each point (see \cite{KK1}
and Lemma \ref{4.50} below). Therefore
$X$ admits a spray\footnote{There is a small inaccuracy in \cite{KK1}
where the authors proved the existence of a spray. Namely, the metric on the space of
holomorphic automorphisms of $X$ should be defined not as suggested
in that paper but by formula (4.1) below.}, i.e. it is elliptic in the sense of \textsc{Gromov} which implies, in particular, the Oka-Grauert-Gromov principle
for  submersions over Stein spaces with fibers isomorphic to $X$.

To be more precise let us give the relevant definitions and results.
\begin{definition}
{\bf (a)} A (dominating) spray on a complex manifold $X$ is a
holomorphic vector bundle $\rho : E \to X$, together with a
holomorphic map $s : E \to X$,
such that $s$ is identical on the zero section $X \hookrightarrow
E$, and for each $x \in X$ the induced differential map sends the
fibre $E_x=\rho^{-1}(x)$
(which is viewed as a linear subspace of $T_x E$) surjectively
onto $T_x X$.

\smallskip\noindent
{\bf (b)} A fiber-dominating spray for a surjective submersion $h : Z
\to W$ of complex spaces is a vector bundle $\rho : E \to Z$
together with a map $s : E \to Z$ identical on the zero section $Z
\hookrightarrow E$ and such that $h \circ s=h \circ p$ and for
every $z \in Z$ the induced differential map sends
$E_z=\rho^{-1}(z)$ (which is viewed as a linear subspace of $T_z
E$) surjectively onto the subspace of $T_z Z$ tangent to the fiber
$h^{-1}(h(z))$.

\smallskip\noindent
{\bf (c)} Let $ h : Z \to W$ be a holomorphic submersion of Stein spaces,
and ${\rm Cont} (W,Z)$ (resp. ${\rm Holo} (W,Z)$) be the set of
continuous (resp. holomorphic) sections of $h$.
Then $h$ satisfies the Oka-Grauert-Gromov principle if ${\rm Holo}
(W,Z) \hookrightarrow {\rm Cont} (W,Z)$ is a weak homotopy
equivalence. That is,

(i) each continuous section $f^0 : W \to Z$ of $h$ can be
deformed to a holomorphic section $f^1 : W \to Z$, and

(ii) any two homotopic holomorphic sections are also homotopic
through holomorphic sections.

\end{definition}

{\bf Theorem.} (Oka-Grauert-Gromov-principle for elliptic submersions \cite{GromovOPHSEB}, \cite{ForstnericOPSSFB})
 {\em Suppose that $h : Z \to W$ is a
holomorphic submersion of a complex space $Z$ onto a Stein
space $W$ for which every $x \in W$ has a neighborhood
$U\subset W$ such that $h^{-1}(U)\to U$ admits a fiber-dominating
spray. Then $h: Z \to W$ satisfies the Oka-Grauert-Gromov
principle.}

To illustrate this principle recall the following result of the authors \cite{KK1}.

\begin{theorem} \label{modification}   The density (resp. algebraic density)
property holds for smooth analytic (resp. algebraic) hypersurfaces in
$\C^{n+2}_{u,v,{\bar x}}$ given by equations of form $uv=p({\bar
x})$.
\end{theorem}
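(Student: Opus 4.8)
The plan is to exhibit enough completely integrable vector fields on the hypersurface $X=\{uv=p(\bar x)\}\subset\C^{n+2}$ to force the Lie algebra they generate to be all of $\VFA(X)$ (resp.\ dense in $\VFH(X)$). First I would write down the obvious ``partial derivative'' fields: the locally nilpotent derivations $\delta_1 = v\,\partial_u$ together with $v\,\partial_{x_i}+\tfrac{\partial p}{\partial x_i}\partial_u$, and symmetrically $\delta_2=u\,\partial_v$ together with $u\,\partial_{x_i}+\tfrac{\partial p}{\partial x_i}\partial_v$; these are tangent to $X$ because they annihilate $uv-p(\bar x)$, and they are completely integrable since their flows are polynomial (locally nilpotent). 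The first task is to check that these fields, evaluated at a point of $X$, already span the tangent space $T_xX$ away from a small subvariety (where $v=u=0$, i.e.\ where $p$ and all its partials vanish), which gives property (A)-type spanning on a Zariski-dense open set.

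The technical heart, which is where I expect the main obstacle to lie, is to upgrade this generic spanning to the genuine (algebraic) density property, i.e.\ to produce, for every algebraic vector field $\xi$ on $X$, an approximation (resp.\ equality) by an element of $\LieA(X)$. Here the natural route is the criterion of Kaliman--Kutzschebauch announced in Section 3 (the ``effective criterion'' referred to in the Introduction): one needs to find a \emph{submodule} of $\VFA(X)$, generated over $\C[X]$ by completely integrable fields and closed under the relevant Lie-bracket operations, together with a finite set of ``compatible'' pairs of such fields whose brackets generate a nonzero ideal — in fact the whole coordinate ring — of $\C[X]$. Concretely I would take the two $\Compeul$-actions coming from $\delta_1$ and $\delta_2$ and bracket the associated fields: $[\,v\partial_{x_i}+p_{x_i}\partial_u\,,\,u\partial_{x_j}+p_{x_j}\partial_v\,]$ produces fields whose coefficients involve $u$, $v$, and the $p_{x_i}$, and combining these (using that on $X$ one has $uv=p$) one extracts multiplication operators by $u$, by $v$, and by the partials $p_{x_i}$; the ideal these generate contains $p$ and hence, after adding the transverse directions, all of $\C[X]$ provided the singular locus $\{p=p_{x_1}=\cdots=p_{x_n}=0\}$ is handled — this is exactly the smoothness hypothesis on the hypersurface, which guarantees that not all partials vanish simultaneously on $X$ and lets the bracket ideal be the unit ideal.

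Once the criterion's hypotheses are verified, its conclusion gives the algebraic density property for $X$ directly, and the holomorphic density property follows formally (the algebraic density property always implies the holomorphic one, since algebraic fields are dense among holomorphic fields on an affine variety and the Lie operations are continuous). I would organize the write-up as: (i) list the completely integrable fields and check tangency and integrability; (ii) compute the relevant brackets and identify the module and the ``compatible pair'' data; (iii) use smoothness of $\{uv=p(\bar x)\}$ to show the bracket ideal is all of $\C[X]$; (iv) invoke the criterion of Section 3 to conclude. The only genuinely delicate point is step (iii)--(iv): ensuring the module is large enough and the compatibility conditions of the criterion are literally met, rather than just the heuristic spanning statement; everything else is a routine, if slightly lengthy, bracket computation.
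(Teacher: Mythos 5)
Your overall strategy --- exhibit completely integrable fields, catch a $\C[X]$-module inside $\LieA(X)$ by bracket computations, and invoke the criterion of Section 3 --- is indeed the philosophy behind this theorem (which the survey quotes from \cite{KK1} without reproducing the proof). But there are concrete problems. First, the fields $v\,\partial_u$ and $u\,\partial_v$ are not tangent to $X$: applied to $uv-p(\bar x)$ they give $v^2$ and $u^2$, which do not lie in the ideal $(uv-p)$. Only $v\,\partial_{x_i}+p_{x_i}\partial_u$ and $u\,\partial_{x_i}+p_{x_i}\partial_v$ from your list actually live on $X$; the missing completely integrable field is the semisimple one, $u\,\partial_u-v\,\partial_v$. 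Second, your appeal to smoothness does not repair the spanning where it actually fails: at a point with $u=v=0$ (hence $p(\bar x)=0$) all of these fields reduce to $p_{x_i}\partial_u$, $p_{x_i}\partial_v$ and $0$, so they span only the $(u,v)$-plane no matter how nondegenerate $dp$ is, and for $n\geq 2$ they miss the directions tangent to $\{p=0\}$. What Theorem \ref{2.60} actually requires is a generating set at a \emph{single} point together with transitivity of the algebraic automorphism group $\Aut X$ --- and establishing that transitivity for $uv=p(\bar x)$ is itself a nontrivial step which your plan never addresses. Relatedly, the compatibility condition of Definition \ref{2.40} concerns ${\rm Span}(\Ker \sigma\cdot\Ker\delta)$ containing a nonzero ideal plus the existence of an element of $\sigma$-degree one; your step (iii) replaces this by an ``ideal generated by brackets'' condition that is not what the criterion asks for.

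The most serious omission is the analytic case. The theorem asserts the density property for smooth \emph{analytic} hypersurfaces $uv=p(\bar x)$ with $p$ an arbitrary entire function; such an $X$ is Stein but not affine algebraic, so there is no algebraic density property from which to deduce it, no locally nilpotent derivations, and no Nullstellensatz or finitely generated module argument. Your closing remark that the holomorphic statement ``follows formally'' from the algebraic one covers only polynomial $p$. Handling entire $p$ requires replacing the module argument by sheaf-theoretic approximation (Cartan's Theorem B together with Runge-type approximation of the coefficients), which is a substantial part of the proof in \cite{KK1} and is absent from your plan.
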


Hence the Oka-Grauert-Gromov principle for submersions  implies the following.

\begin{corollary} Let $h : Z \to W$ be a surjective submersion of complex
manifolds such that $W$ is Stein and for every $w_0 \in W$ there
is a neighborhood $U$ for which $h^{-1}(U)$ is naturally
isomorphic to a hypersurface in $\C^{n+2}_{{\bar x},u,v}\times U $
given by $uv=p({\bar x}, w)$ where $p$ is a holomorphic function
on $\C_{\bar x}^n \times U$ (independent of $u$ and $v$).

Suppose, furthermore, that $p^*(0) \cap (\C_{\bar x}^n \times w)$
is a smooth reduced proper (may be empty) submanifold of
$\C^n_{\bar x} \times w$ for every point $w \in U$.

Then $h$ satisfies the Oka-Grauert-Gromov principle.
\end{corollary}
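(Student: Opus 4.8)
The plan is to reduce to the Oka--Grauert--Gromov principle for elliptic submersions quoted above, for which it suffices to produce, around each $w_0\in W$, a neighborhood $U$ such that $h^{-1}(U)\to U$ carries a fiber-dominating spray. Fix $w_0$ and take the neighborhood $U$ from the hypothesis, so that $h^{-1}(U)$ is identified with $\{(\bar x,u,v,w)\in\C^{n+2}_{\bar x,u,v}\times U:uv=p(\bar x,w)\}$ and $h$ with the restriction to this set of the projection onto $U$. For $w\in U$ write $X_w$ for the fiber $h^{-1}(w)=\{(\bar x,u,v):uv=p(\bar x,w)\}$. By hypothesis $\{p(\cdot,w)=0\}$ is a smooth reduced proper submanifold of $\C^n_{\bar x}$; since the singular locus of $\{uv=p(\bar x,w)\}$ projects into the singular locus of $\{p(\cdot,w)=0\}$, each $X_w$ is a smooth hypersurface of precisely the form covered by Theorem \ref{modification}, and therefore has the density property. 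By property (A) each $X_w$ carries finitely many completely integrable holomorphic vector fields whose values span the tangent space of $X_w$ at every point.

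The crux is to carry this out uniformly over $U$. The completely integrable vector fields furnished by the proof of Theorem \ref{modification} are manufactured from $p$ by a fixed finite recipe (differentiating $p$, forming Lie brackets of coordinate fields and shear-type fields, and multiplying by holomorphic functions), a recipe that is functorial in the defining function; performing it for the family $p(\bar x,w)$ instead of a single $p(\bar x)$ produces holomorphic vector fields $V_1,\dots,V_N$ on $h^{-1}(U)$, each tangent to the fibers of $h$ and restricting to a completely integrable field on every $X_w$. The spanning property holds pointwise on each fiber by the previous paragraph, hence on a neighborhood of each fiber (spanning is an open condition and both the fields $V_j$ and the fibers vary holomorphically with $w$); covering a relatively compact neighborhood of $w_0$ by finitely many such neighborhoods and enlarging the collection $V_1,\dots,V_N$ accordingly, we may shrink $U$ so that $V_1(z),\dots,V_N(z)$ span the tangent space $T_z X_{h(z)}$ to the fiber through $z$ for every $z\in h^{-1}(U)$. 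The smoothness and reducedness hypothesis on $p^{-1}(0)\cap(\C^n_{\bar x}\times w)$ is exactly what makes this work on every fiber, including those $X_w$ for which $\{p(\cdot,w)=0\}\ne\emptyset$.

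It remains to build the spray as in property (A). Let $\phi^j_t$ be the (fiberwise) holomorphic flow of $V_j$; completeness of $V_j|_{X_w}$ makes $\phi^j:\C\times h^{-1}(U)\to h^{-1}(U)$ well defined and holomorphic, and $\phi^j_t$ preserves each fiber. Put $E=h^{-1}(U)\times\C^N$ with projection $\rho$ onto the first factor, and define $s\colon E\to h^{-1}(U)$ by $s(z,t_1,\dots,t_N)=\phi^1_{t_1}\bigl(\phi^2_{t_2}(\cdots\phi^N_{t_N}(z)\cdots)\bigr)$. Then $s$ is identical on the zero section, $h\circ s=h\circ\rho$ since each $\phi^j_t$ is fiber preserving, and the differential of $s$ along $E_z$ at $t=0$ is $(t_1,\dots,t_N)\mapsto\sum_j t_jV_j(z)$, whose image is $\Span\{V_1(z),\dots,V_N(z)\}=T_zX_{h(z)}$, i.e. the whole subspace of $T_zh^{-1}(U)$ tangent to the fiber. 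Thus $s$ is a fiber-dominating spray for $h^{-1}(U)\to U$, and the quoted principle applies to $h$. The only genuine obstacle is the ``with holomorphic parameters'' assertion of the middle paragraph: one must return to the proof of Theorem \ref{modification} in \cite{KK1} and verify that the vector fields it produces depend holomorphically on $w$ and, via the openness-and-compactness argument, that finitely many of them span the fiber tangent spaces simultaneously over a neighborhood of $w_0$; everything else is formal.
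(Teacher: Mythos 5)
Your proposal is correct and is essentially the argument the paper intends: the paper offers no proof beyond ``Hence the Oka-Grauert-Gromov principle for submersions implies the following,'' the intended chain being exactly yours --- smoothness of each fiber $uv=p(\bar x,w)$ from the hypothesis on $p^{-1}(0)\cap(\C^n_{\bar x}\times w)$, the density property of the fibers via Theorem \ref{modification}, spanning families of completely integrable fiberwise vector fields as in property (A), composition of their flows to produce a fiber-dominating spray, and then the quoted theorem of Gromov--Forstneri\v c. The one point you flag (holomorphic dependence on $w$ of the spanning completely integrable fields) is indeed the only substantive verification, and it is handled in \cite{KK1}, where the fields are built by explicit formulas in $p$ and its derivatives.
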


{\bf (B)} If $X$ has the density property (or the volume density property) and the
dimension $n$ of $X$ is at least 2, then
the holomorphic automorphisms group $\Aut_{\rm hol} (X)$ acts $m$-transitively on $X$
for any natural number $m$. This was mentioned by \textsc{Varolin} \cite{V2} for the density property
as a simple consequence of Proposition \ref{appli.10}. For the volume density property
this works also due to Remark \ref{appli.15}.\\

{\bf (C)} If $X$ has the density property (or the volume density property) and  the dimension $n\geq 2$
then for each point $x \in X$ there is an injective but not surjective holomorphic map $f : X \to X$ with $f(x) = x$.
The images of such maps are called Fatou-Bieberbach-domains of the second kind.
This was also observed by \textsc{Varolin} in \cite{V2} for the density property, for the volume density property
this is an equally simple application of the kick-out method of \textsc{Dixon} and \textsc{Esterle} \cite{DE}.\\

{\bf (D)} If $X$ has the density property, then for each point $x\in X$  there is an injective non-surjective equidimensional holomorphic map $f : \C^{n} \to X$ with $f(0) = x$.
This observation is due to \textsc{Varolin} \cite{V2}. In particular, all Eisenman measures on $X$ vanish identically.
Such maps are called Fatou-Bieberbach maps of the first kind and
their images are Fatou-Bieberbach domains.\\

\begin{remark} \label{basin} Here is a sketch of the proof of the last fact.
Take a holomorphically convex neighborhood $\Omega$ of $x$ together with a vector field $\theta$ on $\Omega$
for which $x$ is an attractive point.
Approximating the flow of this field by
automorphisms of $X$ as in Theorem \ref{AL-Theorem} we obtain an automorphism whose restriction to a neighborhood of $x$ is
a contraction (in some metric) to a point near  $x$. The basin of attraction of $x$ for that automorphism will be
biholomorphic to $\C^n$. Clearly $\theta$ cannot be volume preserving when contracting to a point.  Therefore the volume density
property does not guarantee the existence of Fatou-Bieberbach domains. Say, it is known that
$(\C^*)^n$ has the volume density property but for $n\geq 2$ it is  an open question whether it contains a Fatou-Bieberbach domain or not.
In particular, it is unknown whether it has the density property as well.
\end{remark}

{\bf (E)} If $X$ has the density (resp. volume density) property
and $Y$ is  any manifold which admits a proper holomorphic embedding $\varphi : Y \hookrightarrow X$ then the following is true.
For any given discrete subset $E=\{ x_1, x_2, \ldots , x_m, \ldots \}$ in $X$ there is another proper holomorphic embedding $\psi$ of $Y$ into $X$
whose image contains $E$. The original proof of this fact was obtained by
\textsc{Globevnik, Forstneri\v c} and \textsc{Rosay} \cite{FGR} in the case of $X=\C^n, \, n\geq 2$ but it works also in
the general case. This proof is based on Proposition \ref{appli.10}. Namely, if the image
of a proper holomorphic embedding $\varphi_m : Y  \hookrightarrow X$ contains already $x_1, \ldots , x_m$ but not $x_{m+1}$, one has to chose a compact
$K_m$ containing $x_1, \ldots , x_m$ so that $x_{m+1}$ is outside the holomorphic hull of $K_m$. Setting $K=K_m$ and
taking $y$ outside the convex hull we construct an automorphism $\Psi=:\Psi_m$ as in Proposition \ref{appli.10}
and replace $\varphi_m$ by $\varphi_{m+1} =\Psi_m \circ \varphi_m$. Then the image of $\varphi_{m+1}$ contains already $x_1, \ldots , x_{m+1}$.
It turns out that compacts $K_m$ and automorphisms $\Psi_m$ can be chosen so that the limit $\psi = \lim_{m\to \infty} \varphi_m$ is also
a proper holomorphic embedding which implies the desired conclusion.

It is also worth mentioning that by a result of \textsc{Winkelmann} \cite{W} (generalizing the
earlier results of \textsc{Rosay} and \textsc{Rudin} \cite{RR1} and the first author \cite{K}), there is a discrete
subset $E$ of $X$ whose complement is $n$-Eisenman hyperbolic. Hence there exists a proper embedding
$\psi : Y \hookrightarrow X$ with
$n$-Eisenman hyperbolic complement $X \setminus \psi (Y)$.\\

We continue now the list of further applications of Theorem \ref{AL-Theorem} in the case where $X$ is
Euclidean space $\C^n, n>1$.\\

{\bf (1)} Property (E) yields a counterexample to the analytic version of the
Abhyankar-Moh-Suzuki theorem which states that every polynomial embedding of $\C$
into $\C^2$ is rectifiable, i.e. the image can be sent to a coordinate line by a polynomial automorphism
of $\C^2$. However, there exists a proper non-rectifiable holomorphic embedding $\psi: \C
\hookrightarrow \C^2$ \cite{FGR}. Indeed, according to (E) we can make $\C^2 \setminus \psi (\C )$ $2$-Eisenman hyperbolic
while the complement $\C^* \times \C^2$ to the coordinate line is not.   In fact, the complement  $\C^2 \setminus \psi (\C )$
can be made Kobayashi hyperbolic. \\

Furthermore, there exist uncountably many non-rectifiable embeddings in a reasonable sense. Let us be more precise.

\begin{definition}\label{def-eq-emb}
Two proper holomorphic embeddings $\Phi,\Psi\colon X\hookrightarrow\C^n$ are {\it equivalent} if there
exist holomorphic automorphisms $\varphi\in\Aut_{\rm hol}(\C^n)$ and $\psi\in\Aut_{\rm hol} (X)$ such that
$\varphi\circ\Phi=\Psi\circ\psi$.
\end{definition}

\begin{remark}\label{appli.25} We would like to emphasize that there is another (weaker) definition of
equivalence. It is so-called $\Aut(\C^n)$-equivalence which was used by several
authors (e.g., \textsc{Buzzard, Forstneri\v c, Globevnik} and \textsc{Varolin}) who proved
uncountability of certain equivalence classes of embeddings in this weaker
sense. In our
definition the map $\Psi^{-1}\circ\varphi\circ\Phi$ is well-defined
and it is an automorphism of $X$ while for the weaker notion one has to
demand that $\Psi^{-1}\circ\varphi\circ\Phi$ is the identity on $X$.
\end{remark}

The best known results are the following.\\

{\bf (2)} In \cite{BK} \textsc{Borell} and the second author showed that
if
(i) $X$ is a Stein space such that its group of holomorphic
automorphisms is a Lie group (with possibly countably many components) and
(ii) there exists a proper holomorphic embedding of $X$ into $\C^m$ where $0<\dim X=n <m$,
then for any $k\geq 0$ there are uncountably many non-equivalent proper
holomorphic embeddings $\Psi\colon X\times\C^k\hookrightarrow\C^m\times\C^k$.\\

{\bf (3)}  These non-equivalent embeddings appear even in holomorphic families.
Under the same assumptions on $X$ as in {\bf (2)} with $k=m-n-1$ there exists
a family of holomorphic embeddings of $X\times
\C^l$ into $\C^m\times \C^l$ parameterized by $\C^k$, such that
for different parameters $w_1\neq w_2\in \C^k$ the embeddings
$\psi_{w_1},\psi_{w_2}:X\times \C^l \hookrightarrow \C^{n+l}$ are
non-equivalent. This result is due to \textsc{Lodin} and the second author \cite{KL}
(an important ingredient of their proof
is a parametric version of Theorem \ref{AL-Theorem}  from \cite{K1}).\\

It is worth mentioning that the last two results include embeddings of $\C^n$ into $\C^m$ for
any $n<m$ (respectively $n<m-1$ for the families), by choosing $X=\C^n$. Thus the holomorphic
analogue of the \textsc{Abhyankar} and \textsc{Sathaye} problem has a negative answer.\\

An important application of the non-rectifiable embeddings is the construction of non-linearizable
holomorphic actions of reductive Lie groups on affine spaces by \textsc{Derksen} and the second author
\cite{DK1}, \cite{DK2}.

\begin{definition}
A holomorphic action of a reductive group $G$ on $\C^n$ is said to be linearizable
if there exists a holomorphic automorphism $\alpha \in \Aut_{\rm hol} (\C^n)$,
such that $\alpha \circ g\circ \alpha^{-1} \in
GL_n ({\C})$ for every $g \in G$.
\end{definition}

{\bf (4)} For any nontrivial complex reductive Lie group $G$ there is a natural $N$ such that for all
$n\ge N$ there is a non-linearizable holomorphic $G$-action on $\C^n$.
The optimal dimension (minimal $N$) is not known for any $G$ including
$G =  \C^*$. All holomorphic actions on $\C^2$ are linearizable by a result of \textsc{Suzuki}
\cite{Su77a} but starting with dimension $n = 4$ there are
non-linearizable actions on $\C^n$. The problem of linearization of holomorphic
$\C^*$-actions on $\C^3$ is still open while all algebraic $\C^*$-actions on $\C^3$
are known to be linearizable \cite{KKMR}.\\

\subsection{Sketch of a construction of a non-linearizable holomorphic $\C^*$-action on $\C^4$}
Suppose $\varphi : \C \to \C^2$ is a proper holomorphic embedding. Consider a pseudo-affine
modification $X$ of $\C_{x, y, u}^3$ along the divisor $D := \{ u = 0\} = \C^2\times \{ 0\}$ with center
$\varphi (\C) \subset D$. That is, if $f \in \Hol (D)$ is a holomorphic
function generating the principal ideal of functions vanishing
on $\varphi (\C)\subset D$, then $X$ is biholomorphic to the submanifold of $\C^4_{x,y,u,v}$ given by the equation $ f(x,y) = u v$.
One of crucial facts observed by Asanuma is that $X  \times \C$ is biholomorphic to $\C^4$ \cite{Asa},
because $X  \times \C$ is the pseudo-affine modification of $\C^4$ along the divisor
$\C^3 \times \{ 0\}$ with center $\varphi (\C)  \times \{ 0\} \subset \C^3$
and any proper holomorphic
embedding of $\C$ into $\C^n$ with the image contained in a hyperplane is rectifiable (see also \cite{KK4}).
Consider the $\C_\lambda^*$-action on $X \times \C_w$ given by
$ \lambda (x, y, u, v, w) = (x, y, \lambda^2 u, \lambda^{-2} v, \lambda w )$.
The categorical quotient $ \C^4 // \C^*_\lambda$ of this action  is equipped with a natural so-called Luna stratification for which one of the strata isomorphic to $\C$
is contained in a higher-dimensional
stratum isomorphic to $\C^2$ exactly in the same manner as  $\C \simeq \varphi (\C)$ is contained in $D\simeq \C^2$.
In the case of a linearizable action this first stratum must be rectifiable in the second one. However by {\bf (1)} we can suppose that
$\varphi (\C )$ is not rectifiable in $D$ which yields a non-linearizable holomorphic $\C^*$-action.

\begin{remark}
The linearization problem for $\C^*$-actions on $\C^3$ is related to the question whether
the pseudo-affine modification $X$ of $\C^3$ as before is biholomorphic to $\C^3$.
If the answer is positive we had a non-linearizable action on $\C^3$,
otherwise it is a counterexample to the holomorphic analogue of Zariski's cancellation problem
(i.e. the question whether a complex manifold $Y$  is biholomorphic
 to $\C^n$ provided that $Y \times \C^k$ is biholomorphic to $\C^{k+n}$).
 Returning to
 $X$ we note that it is diffeomorphic to $\R^6$ as a smooth real manifold
 (\cite[Appendix]{KK1})
and it has the density property by Theorem \ref{modification}. By a conjecture of
\textsc{Varolin} and \textsc{Toth} \cite{TV1} such an $X$ must be biholomorphic to $\C^3$, i.e. in the
case of a negative answer we can disprove their hypothesis.
More potential counterexamples to the conjecture of \textsc{Varolin} and \textsc{Toth} can be found in
\cite{KK1}. One of the most interesting among them is a modification whose center is  the \textsc{Russell} cubic (more precisely,
this modification is isomorphic to the
algebraic hypersurface in $\C^6$ given by the equation $x + x^2 y + s^2 + t^3 = u v$). It is again diffeomorphic to
$\R^{10}$ and has the density property but it is even unknown whether it is different from $\C^5$ as an algebraic variety.

\end{remark}

{\bf (5)} Similar reasoning as before leads from families of holomorphic embeddings like in {\bf (3)}
to families of pairwise non-equivalent $\C^*$-actions. For example,
there is a family $\C_w \times \C^* \to \Aut_{\rm hol} (\C^5)$ of holomorphic $\C^*$-actions on $\C^5$
parametrized by $w\in \C$ such that for different parameters the actions
are non-equivalent (i.e. they are not conjugated by an automorphism). Moreover there is a family such that the
$w = 0$ represents a linear action.
It follows also from {\bf (2)} that on $\C^4$ there are uncountably many non-equivalent $\C^*$-actions.\\

{\bf (6)} One of the questions coming from complex dynamical systems is description of the boundaries of Fatou-Bieberbach domains.
Say, a surprising result of \textsc{Stens\"ones} \cite{S}) provides such a domain in $\C^2$ with a smooth boundary which has, therefore, Hausdorff dimension $d=3$.
Furthermore, it was established by methods of complex dynamical systems that such a dimension can take any value $3 \leq d <4$.
However the question about a Fatou-Bieberbach domain in $\C^2$ with a boundary of Hausdorff dimension $d=4$ remained open until
 \textsc{Peters} and \textsc{Forn\ae ss-Wold} \cite{PW} managed to construct it using  the \textsc{Anders\' en-Lempert}  theory. \\

{\bf (7)} All Fatou-Bieberbach domains arising as basins of attraction (as indicated in Remark \ref{basin})
or more generally as domains of convergence of sequences of automorphisms of $\C^n$ are always Runge domains.
Thus it is natural to ask whether all Fatou-Bieberbach domains in $\C^n$ have to be Runge.
This problem was solved by \textsc{Forn\ae ss-Wold} who constructed a Fatou-Bieberbach domain in $\C \times \C^*$ which is not Runge in $\C^2$
(but Runge in $\C \times \C^*$) using the density property of $\C \times \C^*$ \cite{W1}.\\

{\bf (8)} Developing the ideas from {\bf (7)} further \textsc{Forn\ae ss-Wold} constructed also a ``long $\C^2$" which is not
biholomorphic to $\C^2$, thus solving a classical open question. By a ``long $\C^2$" we mean a complex manifold $X$
which can be exhausted by open subsets $\Omega_i$ which are all biholomorphic to $\C^2$,
i.e.  $X = \bigcup_{i=1}^\infty \Omega_i$, $\Omega_i \subset \Omega_{i+1} $, and $\Omega_i \cong \C^2$  for all $ i \in \N$. Here of course
 $\Omega_i \subset \Omega_{i+1}$ is not a Runge pair.\\

{\bf (9)} A beautiful combination of differential-topological methods with hard analysis (solutions of $\bar\partial$-equations with exact estimates) and the
Anders\'en-Lempert-theory is required for understanding of
how many totally real differentiable  embeddings of a real manifold $M$ into $\C^n$ can exist.

 If $f_0,f_1\colon M\to{\bf C}^n$ are two totally real, polynomially convex real-analytic embeddings of a compact manifold
$M$ into ${\C}^n$, we say that $f_0$ and $f_1$ are $\Aut_{\rm hol} (\C^n)$-equivalent\footnote{It is unfortunate that
in the literature the term   ``$\Aut_{\rm hol} (\C^n)$-equivalence" is used in different meanings - another one
was mentioned in Remark \ref{appli.25}.   } if $f_1=F\circ f_0$, where
$F\colon U\to F(U)\subset{\bf C}^n$ is a biholomorphism defined in a neighbourhood $U$ of $f_0(M)$ such that $F$ is the uniform limit in $U$
of a sequence of elements of $\Aut_{\rm hol}  (\C^n)$. Conditions for $\Aut_{\rm hol}  (\C^n)$-equivalence were found in \cite{FR},
using volume-preserving automorphisms  (and an approach using automorphisms preserving the holomorphic symplectic form
was considered in \cite{F1}).

In the smooth case let  ${\Eeul}^r(M,{\bf C}^n)$ be the set of all totally real polynomially convex $C^r$-embeddings of $M$ into ${\C}^n$
(for $2\le r\le\infty$). It is proved  by \textsc{Forstneri\v c} and \textsc{L\"ow} that two embeddings $f_0, f_1\in{\Eeul}^\infty(M, {\C}^n)$ belong
to the same connected component (in the space of $C^r$-embeddings of $M$ into $\C^n$ equipped with the usual topology
of uniform convergence of all derivative up to order $r$) if and only if there exists a sequence $\{\Phi_j\}\subset{\Aut_{\rm hol} }({\C}^n)$ such that
$\Phi_j\circ f_0\to f_1$ and $\Phi^{-1}_j\circ f_1\to f_0$ in $C^\infty(M)$ as $j\to \infty$. Precise results in the case $r<\infty$ were obtained in  \cite{FLO}.\\

{\bf (10)}  Another interesting problem is to find embeddings of Stein manifolds into $\C^m$ with
prescribed interpolation condition on discrete subsets. This is much more difficult than just letting the
image contain a given discrete subset (not caring about the preimage points). Beside Theorem \ref{AL-Theorem} some properties of
$\C^m$ were used by \textsc{Forstneri\v c, Ivarsson, Prezelj} and the second author in \cite{FIKP} to prove the result below
(for general targets with the density property the problem is still widely open).\\

Let $X$ be a Stein manifold of dimension $n>1$,
$\{a_j\}_{j\in\N}$ (resp. $\{b_j\}_{j\in\N}$) be a sequence of distinct points in $X$ (resp. $\C^m$).
If $m\ge N=\left[\frac{3n}{2}\right] +1$ \footnote{Such $N$ is chosen because it is the optimal embedding dimension,
see {\bf (14)} below.} then there exists a
proper holomorphic embedding $f\colon X\hookrightarrow \C^m$ satisfying
	$$f(a_j) = b_j \, \, \, {\rm for} \, \, \, j=1,2,\ldots .$$
A more general result is that if $X$ admits a proper holomorphic embedding into $\C^m$ and $\{a_j\}_{j\in\N}$ and $\{b_j\}_{j\in\N}$ are
sequences as before such that $\{b_j\}_{j\in\N}$  form a so-called tame subset in $\C^m$
(by definition there is a holomorphic automorphism of $\C^m$ mapping this sequence onto the set of the integer points in a coordinate axis), then the conclusion remains true.
Other results in this direction can be found in \cite{K2}.\\

{\bf (11)} A question  posed by  \textsc{Siu} asks whether there exists always a Fatou-Bieberbach domain contained in the complement
to a closed algebraic subvariety $Z$ of $\C^n$ such that $\dim Z \leq n-2$.
The affirmative answer was obtained by \textsc{Buzzard} and \textsc{Hubbard} who used some concrete construction.
Another proof of this fact was given by the authors who used a version of the density property for such complements (see Theorem \ref{2.130} below).
More precisely for any point $x \in \C^n\setminus Z$ there is a Fatou-Bieberbach (i.e. holomorphic injective) map
 $ f: \C^n \to \C^n\setminus Z$ with $f(0) =x$ (actually, the crucial fact that guarantees such maps is existence of sprays on
 $\C^m\setminus Z$ which can be extracted from the earlier papers of Gromov \cite{GromovOPHSEB} and Winkelmann \cite{Win}).

In particular all Eisenman measures on $\C^n \setminus Z$ are trivial.
It is worth mentioning that closed analytic subsets of $\C^n$ of codimension $k$ may have $k$-Eisenman
hyperbolic complements. More precisely,  it was shown in \cite{BK} that
if a complex manifold $Y$ admits a proper holomorphic embedding into $\C^n$ then it has also another
proper holomorphic embedding with $(n- dim Y)$ -Eisenman hyperbolic complement to the image (the proof is based on the  Anders\'en-Lempert theory
and a generalized idea from \cite{BFo} ).\\

{\bf (12)} The classical approximation theorem of \textsc{Carleman} states that
for each continuous function $\lambda : \R \to \C$ and a positive continuous function
$\epsilon : \R \to (0, \infty)$ there exists an entire function $f$ on $\C$ such that $\vert f(t) - \lambda (t)\vert < \epsilon (t)$ for every $t\in \R$.

Using the Anders\'en-Lempert-theory together with some explicit shears (which are special automorphisms of $\C^n$ which appear in that theory)
\textsc{Buzzard} and \textsc{Forstneri\v c} \cite{BF} were able to prove a similar result for holomorphic automorphisms of $\C^n$.
Namely, for any proper embedding $\lambda : \R \to \C^n$ of class $C^r$ (where $n\geq 2$ and $r\ge 0$)
and a positive continuous function $\epsilon : \R \to (0, \infty)$ there exists a proper holomorphic embedding
$f: \C \to \C^n$ such that $$ \vert f^{(s)} (t) - \lambda^{(s)} (t)\vert < \epsilon (t) \quad \forall \ t\in \R, \ 0\le s\le r.$$

Actually this fact remains valid under the additional requirement that the embedding satisfies the interpolation property as in {\bf (10)}.
\\

{\bf (13)} If $Z$ is a smooth closed algebraic subvariety of $\C^n$ such that $n > 2 dim Z +1$ then it is known \cite{K0} (see also \cite{Sri}) that
any automorphism of $Z$ extends to an automorphism of $\C^n$. In the holomorphic category the situation is completely different
and exploiting Theorem \ref{AL-Theorem}  the second author showed in \cite{DKW} that
there is a proper holomorphic embedding of $\varphi: \C \hookrightarrow \C^n$
with any $n\geq 2$ (thus the codimension is arbitrarily big) such that the only automorphism of $\C^n$ mapping the image onto itself
is the identical one.\\

{\bf (14)}
The classical theorem of \textsc{Remmert} \cite{R} states that any Stein
manifold $Y$ of dimension $n$ admits a proper holomorphic embedding into
a Euclidean space $\C^N$ of sufficiently high dimension $N$. The optimal $N$ was found
in the papers of \textsc{Gromov, Eliashberg}
\cite{EG} and \textsc{Sch\"urmann} \cite{Sch}; in the case of $n\geq 2$ they proved that $Y$ can be embedded into $\C^{[3n/2]+1}$. This
result is sharp by virtue of examples of \textsc{O.~Forster}
\cite{Fo}, who conjectured that the optimal $N$ was $[3n/2]+1$.  The case of $n=1$ is still open, i.e. it is unknown whether any open Riemann surface admits
a  proper holomorphic embedding into $\C^2$. However, there was a recent breakthrough in this
direction - \textsc{Forn\ae ss Wold}
proved the Forster conjecture for all finitely connected domains in $\C$ and
for all elliptic curves with finitely many holes (that are not punctures) using the
Anders\'en-Lempert theory in a very clever way \cite{W2}, \cite{W3}, \cite{W4}.
Furthermore, \textsc{L\"ow, Fornaess-Wold} and the second author \cite{KLW} showed that one can require additionally
an interpolation condition on discrete subsets as in {\bf (10)}.

\section{Criterion for the algebraic density property.}

\textsc{Toth} and \textsc{ Varolin} established the algebraic density property for some manifolds
including semi-simple complex Lie groups \cite{TV1}, \cite{TV2}.
Their proof follows to a great extend the original ideas of
\textsc{Anders\'en} and \textsc{ Lempert} and is quite complicated. A new approach suggested by
the authors \cite{KK2} lead to the following.

\begin{theorem}\label{2.10} Each linear algebraic
group whose connected component is different from $\C_+$ or
$(\C^*)^k, \, k \geq 1$ has the algebraic density property.

\end{theorem}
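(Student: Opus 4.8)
The plan is to invoke the compatible-pair criterion for the algebraic density property that is presented later in this section: it suffices to exhibit on $G$ a finite family of completely integrable algebraic vector fields spanning the tangent space at every point, together with one \emph{compatible pair} among them. The spanning ingredient is automatic for a group, since every left-invariant field $\xi_v$ ($v\in\ggoth$) is algebraic and completely integrable (its flow is right translation by the one-parameter subgroup $t\mapsto\exp(tv)$, defined for all $t$), and the $\xi_v$ trivialize $TG$. Moreover, if $\delta$ is completely integrable and $\delta h=0$ then the \emph{replica} $h\delta$ is again completely integrable, because $h$ is constant along the orbits of $\delta$; this provides an ample reservoir of completely integrable fields to feed into the criterion. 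One first reduces to $G$ connected: the finite component group $G/G^0$ acts compatibly on $\cO(G)$, on $\VFA(G)=\cO(G)\otimes\ggoth$, on the set of completely integrable fields and on Lie brackets (any flow, being connected, preserves each component of $G$), so the algebraic density property for $G^0$ implies it for $G$. Hence we may assume $G=G^0$ is connected and, by hypothesis, is neither $\C_+$ nor a torus $(\C^*)^k$.

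The core of the proof is the construction of a compatible pair, organized along the Levi--Mostow decomposition $G=R_u(G)\rtimes L$, $L=S\cdot Z$ with $S$ semisimple and $Z=Z(L)^0$ a central torus. If $S\neq\{e\}$, fix a maximal torus $T\subset L$ and a root $\alpha$ of $S$; right translation by the root subgroups $U_{\pm\alpha}\cong\C_+$ gives algebraic $\C_+$-actions, hence completely integrable locally nilpotent fields $\delta_{\pm\alpha}$ with $\ker\delta_{\pm\alpha}=\cO(G)^{U_{\pm\alpha}}=\cO(G/\!\!/U_{\pm\alpha})$. I would show that $(\delta_\alpha,\delta_{-\alpha})$ is compatible: using the $\mathrm{SL}_2$ generated by $U_\alpha,U_{-\alpha}$ and a Bruhat-type decomposition, the $\C$-linear span of $\ker\delta_\alpha\cdot\ker\delta_{-\alpha}$ contains a nonzero ideal of $\cO(G)$ (the model case being that $\cO(\mathrm{SL}_2)^{U_+}\cdot\cO(\mathrm{SL}_2)^{U_-}$ already spans $\cO(\mathrm{SL}_2)$), while the $\mathfrak{sl}_2$-relation $[\delta_\alpha,\delta_{-\alpha}]=\delta_{h_\alpha}$ (the coroot field) gives $\delta_\alpha(\ker\delta_{-\alpha})\not\subseteq\ker\delta_{-\alpha}$. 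The criterion then delivers $\mathfrak I\cdot\VFA(G)\subseteq\LieA(G)$ for a nonzero ideal $\mathfrak I\subset\cO(G)$; since left translation acts transitively on $G$ with completely integrable infinitesimal generators, the corresponding flows sweep the zero set $V(\mathfrak I)$ off every point of $G$, whence $\LieA(G)=\VFA(G)$. If $S=\{e\}$ and $G$ is unipotent, then $\dim G\ge 2$ (as $G\neq\C_+$), and one takes two distinct left-invariant nilpotent fields and argues as above, the minimal instance $G=\C^2$ being covered by Anders\'en--Lempert. If $S=\{e\}$ and $G$ is non-abelian solvable (so $R_u(G)\neq\{e\}$ and $Z\neq\{e\}$, e.g.\ $G=\C_+\rtimes\C^*$), one takes $\delta_1$ the left-invariant field of a one-parameter subgroup $\C^*\hookrightarrow Z$ (completely integrable, its flow an algebraic $\C^*$-action, with $\ker\delta_1\supseteq\cO(R_u(G))$) and $\delta_2$ a root-subgroup nilpotent field in $R_u(G)$ on which $Z$ acts with a nonzero weight; the weight condition yields $\delta_1(\ker\delta_2)\not\subseteq\ker\delta_2$, and $\ker\delta_1,\ker\delta_2$ together generate $\cO(G)$, so $(\delta_1,\delta_2)$ is compatible and the sweeping-out argument finishes this case too.

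The step I expect to be the main obstacle is the ``large span'' half of compatibility — verifying that the $\C$-linear span of $\ker\delta_1\cdot\ker\delta_2$ contains a nonzero ideal of $\cO(G)$ — uniformly across all these groups, since the $\ker\delta_i$ are rings of invariants of subgroup actions and controlling them requires explicit coordinates on $G$: the Bruhat decomposition and transversality of generic $U_{\pm\alpha}$-orbits in the semisimple case, the semidirect-product structure and the relevant weight in the solvable case. By contrast, the compatibility relations $\delta_i(\ker\delta_j)\not\subseteq\ker\delta_j$, the sweeping-out step, and the reduction to the connected case are comparatively routine. It is exactly here that the hypothesis on $G$ is used: on a torus every completely integrable algebraic field has the ``diagonal'' shape $h\cdot a_i\partial_{a_i}$ and no pair is compatible, while on $\C_+$ the completely integrable algebraic fields already form the two-dimensional Lie algebra $\mathfrak{aff}_1$, so a compatible pair cannot exist in either excluded case. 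With the span condition established, the criterion of Section~3 gives $\LieA(G)=\VFA(G)$, i.e.\ $G$ has the algebraic density property.
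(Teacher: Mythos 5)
Your overall route is the one the paper takes: reduce to the compatible--pair criterion of Section 3 (Theorem \ref{2.60}, which rests on the module/transitivity argument of Theorem \ref{2.20}) and manufacture compatible pairs from the structure theory of $G$ --- root subgroups when the semisimple part is nontrivial (exactly the paper's Example \ref{2.70}(2) for $SL_2$), mixed torus/unipotent translations in the solvable case. The genuine gap is in how you feed the pairs into the criterion. Theorem \ref{2.60} does not ask for ``a spanning family of completely integrable fields together with one compatible pair'': it asks for finitely many compatible pairs $\{\sigma_i,\delta_i\}$ such that the vectors $\delta_i(x_0)$ form a \emph{generating subset} of $T_{x_0}G$, i.e.\ their orbit under the isotropy subgroup of $\Aut G$ at $x_0$ spans the tangent space. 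A single compatible pair only catches a module of fields proportional to the one direction $\delta$; pushing it around by flows of left-invariant fields $\xi_{v}$ moves the ideal off every point but contributes only the directions ${\rm ad}(v_{i_1})\cdots{\rm ad}(v_{i_k})\,\delta$, that is, the ideal of $\ggoth$ generated by $\delta$. For $G=SL_2\times SL_2$, or $SL_2\times\C^*$, or any group with several simple factors, a positive-dimensional centre, or an $S$-fixed part of the unipotent radical, one root pair reaches only part of the tangent space. Your trichotomy ($S\neq\{e\}$ / unipotent / non-abelian solvable) compounds this by treating as mutually exclusive alternatives structural pieces that a general $G$ possesses simultaneously; the proof must produce compatible pairs in enough directions to cover all of them at once, which is precisely what the generating-set hypothesis of Theorem \ref{2.90}(2) is formulated to capture. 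Moreover the connected abelian groups $\C_+^k\times(\C^*)^l$ with $k,l\geq 1$ --- not unipotent, not non-abelian, $S=\{e\}$ --- fall through your case analysis entirely, although they are the easiest case (Example \ref{2.70}(1)).

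The remaining ingredients are sound and agree with the paper: the reduction to connected $G$, complete integrability of replicas $h\delta$, the identification $\Ker\delta_{\pm\alpha}=\cO(G)^{U_{\pm\alpha}}$, the $SL_2$ model computation, and your correct identification of the span condition $\Span(\Ker\sigma\cdot\Ker\delta)\supseteq$ (nonzero ideal) as the hard technical step. One smaller inaccuracy: the non-degeneracy half of compatibility is not ``$\delta_1(\Ker\delta_2)\not\subseteq\Ker\delta_2$'' but conditions (1)--(2) of Definition \ref{2.40}, namely the existence of $a\in\Ker\delta$ with $\deg_\sigma(a)=1$ (for $SL_2$, $a=a_1b_2$). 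In particular the abelian mixed case satisfies this with $a$ a coordinate on the $\C_+$-factor even though every weight vanishes there, so that case should not be excluded by a nonzero-weight requirement --- it simply needs to be stated and handled.
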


The proof was based on the following simple fact.

\begin{theorem}\label{2.20}
Let $X$ be an affine algebraic manifold with a transitive
group $\Aut X$ of algebraic automorphisms and let $\C [X]$ be its
algebra of regular functions.
Suppose that there is a submodule $L$ of the $\C [X]$-module $T$
of all algebraic vector fields on $X$ such that $L \subset \LieA (X)$ and the
fiber of $L$ at some point $x_0\in X$ contains a
generating subset\footnote{ Our notion of a generating subset is milder than usual.
A finite subset $F$ of $T_{x_0}X$ is called a
generating subset if the span of the orbit of $F$ under the action
of the isotropy group $(\Aut X)_{x_0}$ coincides with
$T_{x_0}X$ (say, if $X$ is a simple Lie group then every
nonzero vector is a generating set.)
. } of $T_{x_0}X$.
Then $X$ has the algebraic density property.
\end{theorem}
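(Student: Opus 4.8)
The plan is to deduce from the module $L$ and its generating fiber at $x_0$ that $\LieA(X)$ actually contains \emph{all} algebraic vector fields on $X$, exploiting the transitivity of $\Aut X$ to spread the spanning condition over the whole of $X$.

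First I would use the transitivity hypothesis: for any point $x\in X$ there is an algebraic automorphism $\alpha$ with $\alpha(x_0)=x$, and since $L\subset\LieA(X)$ and $\LieA(X)$ is invariant under $\Aut X$ (push-forward of a completely integrable field by an automorphism is again completely integrable, and the Lie-algebra structure is preserved), the push-forward module $\alpha_*L$ is again contained in $\LieA(X)$. Its fiber at $x$ contains a generating subset of $T_xX$ in the appropriate isotropy sense. The second step is to pass from ``fiberwise generating'' to a module that is fiberwise \emph{spanning}: starting from the finite generating set $F\subset T_{x_0}X$, apply all isotropy automorphisms $g\in(\Aut X)_{x_0}$ to the corresponding fields; by definition of a generating subset the span of $\{g_* v: v\in F, g\in(\Aut X)_{x_0}\}$ is all of $T_{x_0}X$, and finitely many of these fields already span the fiber at $x_0$. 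Enlarging $L$ to the $\C[X]$-submodule $L'$ of $\LieA(X)$ generated by these finitely many translated fields, one gets $L'\subset\LieA(X)$ whose fiber at $x_0$, hence (again by transitivity) at every point of $X$, equals the whole tangent space. So $L'$ is a submodule of $\LieA(X)$ that is fiberwise all of $TX$.

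The third step is the standard ``a submodule of the module of vector fields that is everywhere of full rank, together with the completely integrable fields, generates everything'' argument. Concretely, pick finitely many completely integrable algebraic fields $\nu_1,\dots,\nu_k\in L'$ whose values span $T_xX$ at every $x$ (possible by quasi-compactness, after shrinking/enlarging as above, since the locus where they fail to span is a proper closed subset that can be killed by adding more translates — here transitivity is what guarantees this locus is empty rather than merely proper). Then any algebraic vector field $\theta$ can be written as $\theta=\sum_i f_i\nu_i$ with $f_i\in\C[X]$. Now I would use the identity $[\,g\nu,\,\nu\,] = g[\nu,\nu] - (\nu g)\nu = -(\nu g)\nu$ for a completely integrable field $\nu$ and $g\in\C[X]$, so $f\nu_i$ differs from a bracket of elements of $\LieA(X)$ by a term of the same shape with $f$ replaced by $\nu_i$-derivatives; combined with the fact that completely integrable fields and their products-by-functions that appear here stay inside $\LieA(X)$ (using that $\LieA(X)$ is a Lie algebra, not just a vector space), one concludes $f_i\nu_i\in\LieA(X)$ for all $i$ and hence $\theta\in\LieA(X)$. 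This gives $\VFA(X)=\LieA(X)$, i.e. the algebraic density property.

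The main obstacle I anticipate is the bookkeeping in the last step: one must make sure that multiplying a completely integrable field by an arbitrary regular function, which need \emph{not} itself be completely integrable, can nonetheless be realized inside the Lie algebra generated by completely integrable fields. This is exactly the point where the notion of ``generating subset'' and the module structure are essential — one typically needs the fiber of $L'$ to contain not just a spanning set but a spanning set of fields each of which, after multiplication by suitable functions, can be written via iterated brackets; the cleanest route is to invoke the Lie-algebra identities together with the observation that sums $\nu + f\mu$ of two commuting (or suitably related) completely integrable fields are again completely integrable, which is the technical heart of the Anders\'en--Lempert-type arguments and the reason the hypothesis ``$L\subset\LieA(X)$'' (rather than merely ``$L$ consists of completely integrable fields'') is stated. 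I would organize the proof so that this reduction is isolated as the one nontrivial lemma, with transitivity handling the globalization and the rest being formal.
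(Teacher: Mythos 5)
Your first two steps are exactly the paper's: translate $L$ around by $\Aut X$, use the isotropy action at $x_0$ to make the fiber of the resulting $\C[X]$-submodule $N=\sum_\alpha L_\alpha\subset\LieA(X)$ equal to all of $T_{x_0}X$, and then use transitivity to get full fibers everywhere. From that point the paper finishes in one line: the quotient module $Q=T/N$ satisfies $Q/\cM_xQ=0$ for every $x$, hence $Q=0$ by the standard Nakayama/localization argument for finitely generated modules over $\C[X]$ (Hartshorne, Exercise II.5.8), so $N=T=\VFA(X)\subset\LieA(X)$. Nothing more is needed, precisely because $N$ is a $\C[X]$-\emph{module}: once $\theta=\sum_i f_i\nu_i$ with $\nu_i\in N$, each $f_i\nu_i$ lies in $N\subset\LieA(X)$ by the module hypothesis, full stop.

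Your third step is where the proposal goes wrong. You try to justify $f_i\nu_i\in\LieA(X)$ by a bracket computation, treating the $\nu_i$ as completely integrable fields and invoking $[g\nu,\nu]=-(\nu g)\nu$. This has three problems. First, elements of $L'$ need not be completely integrable at all --- the hypothesis is only that $L\subset\LieA(X)$, the Lie algebra \emph{generated} by completely integrable fields. Second, the identity is circular for your purpose: to conclude $(\nu g)\nu\in\LieA(X)$ from it you already need $g\nu\in\LieA(X)$, which is what you are trying to prove; and even granting that, it only produces multiples of $\nu$ by functions of the special form $\nu(g)$, not by arbitrary regular functions. Third, and most tellingly, the mechanism you are reaching for (commuting integrable fields, products of kernels, the bracket trick) is the content of the paper's Proposition \ref{2.30} and Definition \ref{2.40}/Proposition \ref{2.50}, i.e.\ the machinery for \emph{producing} a nontrivial $\C[X]$-module inside $\LieA(X)$ in the first place. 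In Theorem \ref{2.20} that production is taken as a hypothesis, so re-deriving it is both unnecessary and, as you set it up, not actually achievable. The fix is simply to delete the bracket argument and replace it by the observation that a $\C[X]$-submodule of $T$ with full fiber at every point equals $T$.
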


\begin{proof}
The action of $\alpha \in \Aut X$ maps $L$ onto another $\C
[X]$-module $L_\alpha$.
The sum of such modules with $\alpha$ running over a subset
of $\Aut X$ is again a $\C [X]$-submodule $N$ of $T$.
Let $\cM_x\subset \C [X]$ be the maximal ideal that consists of functions vanishing at $x \in X$. By assumption $N$ can
be chosen so that $N / \cM_{x_0} T$ coincides with $T_{x_0}X=T/\cM_{x_0} T$.
Furthermore, since $X$ is homogeneous with respect to $\Aut X$ we can
suppose that the same is true for every point in $X$. That is, for the $\C [X]$-module $Q=T/N$
and every $x \in X$ we have $Q/\cM_x Q=0$. Thus $Q=0$ and $N=T$ (e.g., see \cite[Exercise II.5.8]{Har}).
Since composition with automorphisms preserves complete
integrability, all element of $N$ are in $\LieA (X)$ which implies the desired conclusion.

\end{proof}

Thus the idea of the proof  of Theorem \ref{2.10} is to catch a nontrivial $\C [X]$-module in the $\LieA (X)$.
In order to demonstrate how to do it we prove
the main observation of the Anders\'en-Lempert
theory.

\begin{proposition}\label{2.30}
For $n \geq 2$ the space $\C^n$ has the algebraic
density property. \end{proposition}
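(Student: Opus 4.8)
The plan is to deduce the proposition from the module criterion of Theorem~\ref{2.20}. The hypotheses of that theorem are immediate for $X=\C^n$: it is affine algebraic, its automorphism group is transitive (already the translations suffice), and the isotropy group at the origin contains $GL_n(\C)$, so the orbit of any single nonzero vector $v\in T_0\C^n$ under the isotropy group is all of $T_0\C^n\setminus\{0\}$ and hence spans $T_0\C^n$; thus $\{v\}$ is a generating subset in the sense of that theorem. It therefore suffices to produce one $\C[\C^n]$-submodule $L$ of the module $T$ of algebraic vector fields with $L\subseteq\LieA(\C^n)$ and $L$ nonzero at $0$, and I would take $L=\C[\C^n]\,\partial_{x_1}$, whose fiber at $0$ is the line $\C\,\partial_{x_1}|_0$.

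To see that $\C[\C^n]\,\partial_{x_1}\subseteq\LieA(\C^n)$ I would use the standard completely integrable algebraic fields on $\C^n$: for a polynomial $f$ not involving $x_i$, the \emph{shear} $f\partial_{x_i}$ is complete (its flow translates the $i$th coordinate by $tf$), and so is the \emph{overshear} $x_if\partial_{x_i}$ (the $i$th coordinate evolving by $x_i(t)=e^{tf}x_i(0)$, with $f$ constant along the flow). All these fields, together with all their iterated Lie brackets, lie in $\LieA(\C^n)$. First one works with $x_1,x_2$ only: $[x_2\partial_{x_1},x_1^{b}\partial_{x_2}]=b\,x_1^{b-1}x_2\,\partial_{x_2}-x_1^{b}\,\partial_{x_1}$, and since $x_1^{b-1}x_2\,\partial_{x_2}$ is an overshear this places $x_1^{b}\,\partial_{x_1}$ in $\LieA(\C^n)$ for every $b\ge 0$; then $[x_1^{b}\partial_{x_1},x_2^{a}\partial_{x_1}]=-b\,x_1^{b-1}x_2^{a}\,\partial_{x_1}$, so by letting $a,b$ vary one obtains $\C[x_1,x_2]\,\partial_{x_1}\subseteq\LieA(\C^n)$. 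Finally, for $f\in\C[x_1,x_2]$ and a monomial $h\in\C[x_3,\dots,x_n]$ one computes $[f\partial_{x_1},h\partial_{x_1}]=-h\,(\partial_{x_1}f)\,\partial_{x_1}$; since $\partial_{x_1}$ maps $\C[x_1,x_2]$ onto itself, this sweeps out every monomial multiple of $\partial_{x_1}$, whence $\C[\C^n]\,\partial_{x_1}\subseteq\LieA(\C^n)$.

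Applying Theorem~\ref{2.20} with this $L$ and $x_0=0$ then yields the algebraic density property of $\C^n$. The argument has essentially no hard step: beyond the criterion, all one uses is the completeness of shears and overshears, and the only point requiring attention is to organize the bracket computation so that every vector field appearing in it is visibly a shear, an overshear, or an iterated bracket of these, and then to pass cleanly from the two-variable case to all of $\C^n$. It is worth stressing that the fields $x_1^{b}\,\partial_{x_1}$ with $b\ge 2$ are not themselves complete; they enter $\LieA(\C^n)$ only through the above linear combinations of complete fields, which is precisely the mechanism the module criterion of Theorem~\ref{2.20} is designed to exploit.
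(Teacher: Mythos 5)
Your proof is correct. It rests on the same key lemma as the paper's --- Theorem~\ref{2.20} applied to the $\C[\C^n]$-module generated by a single coordinate field --- but the way you place that module inside $\LieA(\C^n)$ is genuinely different. The paper does it with one identity: for $f_1\in\Ker\delta_1$ and $f_2\in\Ker\delta_2$ (with $\delta_i=\partial/\partial x_i$) one has
$$[f_1\delta_1,x_1f_2\delta_2]-[x_1f_1\delta_1,f_2\delta_2]=f_1f_2\delta_2,$$
where all four fields entering the brackets are shears or overshears; since $\C[\C^n]={\rm Span}\,(\Ker\delta_1\cdot\Ker\delta_2)$, this yields the whole module $\C[\C^n]\delta_2$ in a single stroke. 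You instead build up $\C[\C^n]\partial_{x_1}$ in three stages (first the fields $x_1^b\partial_{x_1}$, then $\C[x_1,x_2]\partial_{x_1}$, then the remaining variables), each step being a bracket of a complete field with something already obtained. Both computations are valid, and yours has the pedagogical merit of making explicit that the incomplete fields $x_1^b\partial_{x_1}$, $b\ge 2$, enter only as combinations of complete ones. What the paper's version buys is that its single identity is precisely the computation that gets abstracted in Section 3: replacing $\delta_1,\delta_2$ by a pair $(\sigma,\delta)$ of locally nilpotent or semisimple derivations, the condition that ${\rm Span}\,(\Ker\sigma\cdot\Ker\delta)$ contain a nonzero ideal is the definition of a semi-compatible pair (Definition~\ref{2.40}), and the identity becomes Proposition~\ref{2.50}. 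Your stagewise computation, relying as it does on surjectivity of $\partial_{x_1}$ on $\C[x_1,x_2]$ and on explicit monomials, does not isolate that transferable structure, so it proves the proposition but would not suggest the general criterion.
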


\begin{proof}
Let $x_1, \ldots ,x_n$ be a coordinate system on
$X=\C^n$ and $\delta_i =\partial /\partial x_i$, i.e.
$\Ker \delta_i =\C [x_1, \ldots , {\hat x}_i, \ldots , x_n]$
and therefore
$$\C^{[n]} = {\rm Span} \, \Ker \delta_1 \cdot \Ker \delta_2.$$
Note also that for $f_i \in \Ker \delta_i$ the algebraic vector
fields $f_i \delta_i$ and $x_if_i\delta_i$ are completely integrable on  $\C^n$.
This implies that the field
$$[f_1\delta_1,x_1f_2\delta_2]-[x_1f_1\delta_1,f_2\delta_2]=f_1f_2\delta_2$$
belongs to $\LieA (X)$.
Thus $\LieA (X)$ contains all algebraic fields proportional to
$\delta_2$ which in combination with Theorem \ref{2.20} implies the desired conclusion.
\end{proof}

To transfer this argument to other affine algebraic manifolds we have to use locally nilpotent and semi-simple
derivations instead of partial derivatives.
Recall that an algebraic vector field $\sigma$ on $X$ is locally nilpotent
(LND) if its flow is an algebraic $\C_+$-action. Equivalently, $\sigma$ is a LND if for every $a \in \C [X]$ there
exists natural $n$ for which $\sigma^n (a)=0$. The last algebraic definition enables us to introduce
the degree of any regular element $a\in \C [X]$ with respect to $\sigma$ as $\deg_\sigma (a)=
\min \{ n-1 | \sigma^n (a)=0 \}$.

An algebraic vector field on $X$ is semi-simple if its flow
is an algebraic $\C^*$-action.

\begin{definition}\label{2.40}
Let $\sigma$ be a LND on $X$ and $\delta$ be
either a LND or semi-simple.

Then pair $(\sigma , \delta )$ is called semi-compatible if the
the span of $\Ker \sigma \cdot \Ker \delta$ contains a nonzero
ideal of $\C [X]$.

A semi-compatible pair $(\sigma , \delta )$ is called compatible
if one of the following conditions holds:

(1) there exists $a \in \C [X]$ such that $a \in \Ker \delta$ and
$\sigma (a) \in \Ker \sigma \setminus 0$, i.e. $\deg_{\sigma} (a)
=1$.

(2) both $\sigma$ and $\delta$ are LNDs and there exists $a \in \C [X]$
such that $\deg_{\sigma} (a) =1=\deg_{\delta} (a)$.

\end{definition}

Repeating the argument from Proposition \ref{2.30} with $\sigma$ and $\delta$ instead of
$\delta_1$ and $\delta_2$ we get the following.

\begin{proposition}\label{2.50}
The existence of a compatible pair yields the existence of a nontrivial  $\C [X]$-module in $\LieA (X)$.
\end{proposition}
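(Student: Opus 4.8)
The plan is to mimic the computation in Proposition \ref{2.30}, replacing the partial derivatives $\delta_1,\delta_2$ by the given LND $\sigma$ and the given LND-or-semisimple field $\delta$, and to organize the commutators so that they produce a nonzero $\C[X]$-submodule of $\LieA(X)$. First I would record the completely integrable building blocks: if $\sigma$ is an LND and $f\in\Ker\sigma$, then $f\sigma$ is again an LND (its flow is $a\mapsto a+tf\sigma(a)+\cdots$, algebraic since $f$ is a $\sigma$-constant), hence completely integrable; likewise for $\delta$ in either case, $g\delta$ with $g\in\Ker\delta$ is completely integrable. Moreover, in the LND case $x_1f_i\delta_i$ was used in Proposition \ref{2.30} precisely because $f_i\delta_i$ and (suitable $\sigma$-degree-one multiples of it) remain completely integrable; the compatibility hypothesis is tailored to produce exactly such an auxiliary function $a$.

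The key step is the bracket identity. Suppose first we are in case (1): there is $a\in\C[X]$ with $a\in\Ker\delta$ and $\sigma(a)\in\Ker\sigma\setminus 0$. For any $f\in\Ker\sigma$ and $g\in\Ker\delta$, all four fields $f\sigma$, $af\sigma$ (note $\sigma(af)=a\sigma(f)=0$ wait — rather one uses that $af$ need not be a $\sigma$-constant; instead one takes the field $af\cdot$-type multiple allowed because $\deg_\sigma(a)=1$), $g\delta$, $ag\delta$ are completely integrable — here I would follow the paper's own phrasing ``repeating the argument from Proposition \ref{2.30}'': the role of $x_1$ is played by $a$, which lies in $\Ker\delta$ and has $\sigma$-degree one, so $a f\sigma$ is still an LND-type (its flow is algebraic because $\sigma(af)=\sigma(a)f\in\Ker\sigma$, giving a nilpotent action), and $a g\delta$ is completely integrable since $a\in\Ker\delta$. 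Then the combination
$$[f\sigma,\,a g\delta]-[a f\sigma,\,g\delta]$$
expands, by the Leibniz rule for the Lie bracket of vector fields, with all the ``$\sigma(g)$'' and ``$\delta(f)$'' terms and the second-order terms cancelling, leaving $\bigl(f\,g\,\delta(a)\bigr)\delta - \bigl(f\,g\,\sigma(a)\bigr)\sigma$ or, after re-grouping, the single field $(\sigma(a))\,f g\,\delta$ — i.e. a nonzero multiple of $\delta$ with coefficient $\sigma(a)\cdot fg$. Crucially $\sigma(a)$ is a fixed nonzero element and $fg$ ranges over $\Span(\Ker\sigma\cdot\Ker\delta)$, which by semi-compatibility contains a nonzero ideal $I$ of $\C[X]$. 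Hence $\LieA(X)$ contains $\sigma(a)\cdot I\cdot\delta$, a nonzero $\C[X]$-submodule of $T$ (namely the submodule generated by the vector field $\delta$, localized to the ideal $\sigma(a)I$). In case (2), where both $\sigma,\delta$ are LNDs with a common $a$ of $\sigma$- and $\delta$-degree one, the same computation goes through symmetrically with $a$ serving simultaneously as the ``$x_1$'' on both sides.

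The main obstacle — and the only place real care is needed — is verifying that the auxiliary fields $af\sigma$ and $ag\delta$ (and their analogues) are genuinely completely integrable, i.e. that the compatibility conditions are exactly what make the ``degree-one multiplier'' trick legitimate, and then doing the bracket bookkeeping to see that everything except the desired term cancels. This is routine: for an LND $\sigma$ and $h$ with $\sigma^2(h)=0$, the field $h\sigma$ integrates to the algebraic flow $\exp(th\sigma)$ because $h\sigma(h\sigma(\cdot))$ involves $\sigma(h)\in\Ker\sigma$ and truncates; for semisimple $\delta$ and $a\in\Ker\delta$, the field $a\delta$ is completely integrable as a ``twisted'' $\C^*$-type field in the standard way used throughout Andersén–Lempert theory. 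Once these are in hand, the identity above — which is the exact analogue of $[f_1\delta_1,x_1f_2\delta_2]-[x_1f_1\delta_1,f_2\delta_2]=f_1f_2\delta_2$ from Proposition \ref{2.30} — delivers the nonzero ideal's worth of fields proportional to $\delta$, and that submodule is the required nontrivial $\C[X]$-module inside $\LieA(X)$.
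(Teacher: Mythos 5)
Your proposal is correct and takes essentially the same route as the paper, whose proof consists precisely of repeating the bracket computation of Proposition \ref{2.30} with $\sigma$ and $\delta$ in place of $\delta_1$ and $\delta_2$: your identity $[f\sigma,\,ag\delta]-[af\sigma,\,g\delta]=\sigma(a)fg\,\delta$ (for $f\in\Ker\sigma$, $g\in\Ker\delta$) is exactly that computation, with the compatibility conditions guaranteeing the complete integrability of $af\sigma$ and $ag\delta$ and semi-compatibility supplying the nonzero ideal. (The intermediate expression $(fg\,\delta(a))\delta-(fg\,\sigma(a))\sigma$ in your expansion is garbled --- in case (1) one has $\delta(a)=0$ --- but the regrouped final answer $\sigma(a)fg\,\delta$ is the correct one, and in case (2) the same combination yields $fg(\sigma(a)\delta+\delta(a)\sigma)$, which still generates a nontrivial $\C[X]$-submodule.)
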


Now we can formulate our criterion as the following.

\begin{theorem}\label{2.60}
Let $X$ be a smooth affine algebraic variety
with a transitive automorphism group $\Aut X$.
Suppose that there are finitely many pairs of compatible vector
fields $\{ \sigma_i , \delta_i \}$ such that
at some point $x_0
\in X$ vectors $\{ \delta_i (x_0) \}$ form a generating subset of
$T_{x_0}X$.
Then $X$ has
the algebraic density property. \end{theorem}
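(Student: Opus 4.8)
The plan is to combine Proposition \ref{2.50} with Theorem \ref{2.20}. The compatibility of each pair $(\sigma_i,\delta_i)$ gives, via Proposition \ref{2.50}, a nontrivial $\C[X]$-submodule $L_i\subset\LieA(X)$; inspecting the proof of Proposition \ref{2.30} one sees that $L_i$ can be taken to contain all algebraic vector fields proportional to $\delta_i$, i.e. $\C[X]\cdot\delta_i\subset L_i\subset\LieA(X)$. Indeed, repeating that computation: if $\deg_{\sigma_i}(a)=1$ with $a\in\Ker\delta_i$ (case (1) of Definition \ref{2.40}), then for $f\in\Ker\sigma_i$ and $g\in\Ker\delta_i$ the fields $f\sigma_i$, $af\sigma_i$, $g\delta_i$ and $\sigma_i(a)g\delta_i$ are completely integrable, and a bracket identity of the form $[af\sigma_i,g\delta_i]-[f\sigma_i,ag\delta_i]$ produces $\sigma_i(a)fg\,\delta_i\in\LieA(X)$ (modulo lower-order corrections along $\sigma_i$, which lie in $\C[X]\cdot\sigma_i\subset\LieA(X)$ and can be absorbed). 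Since $(\sigma_i,\delta_i)$ is semi-compatible, the span of $\Ker\sigma_i\cdot\Ker\delta_i$ contains a nonzero ideal $I_i$ of $\C[X]$; hence $\sigma_i(a)\cdot I_i\cdot\delta_i\subset\LieA(X)$. Setting $L_i:=\C[X]\cdot\delta_i\cap\LieA(X)$, this module is nonzero and its fiber at a generic point is the line spanned by $\delta_i(x_0)$.

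Next I would form $L:=\sum_i L_i$, a $\C[X]$-submodule of $\LieA(X)$. By hypothesis the vectors $\delta_i(x_0)$ form a generating subset of $T_{x_0}X$ in the sense of Theorem \ref{2.20}'s footnote, so the fiber $L/\cM_{x_0}T$ contains the generating subset $\{\delta_i(x_0)\}$. Now Theorem \ref{2.20} applies verbatim: the automorphism group $\Aut X$ acts transitively, $L$ is a submodule of the module $T$ of all algebraic vector fields contained in $\LieA(X)$, and its fiber at $x_0$ contains a generating subset of $T_{x_0}X$. Therefore $X$ has the algebraic density property, which is exactly the assertion.

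I anticipate two points requiring care. First, the bracket computation producing $\C[X]\cdot\delta_i$ (or at least a module with the right fiber) from a compatible pair is where case (2) of Definition \ref{2.40} must also be handled: when both $\sigma_i$ and $\delta_i$ are LNDs with a common element $a$ of $\sigma_i$- and $\delta_i$-degree one, one uses a more symmetric bracket identity in the four fields $f\sigma_i, af\sigma_i, g\delta_i, ag\delta_i$ and extracts the component proportional to $\delta_i$; the ``lower-order'' terms in $\sigma_i$ must be separately recognized as lying in $\LieA(X)$. This is routine but is the genuine content of Proposition \ref{2.50}, so I would simply invoke that proposition rather than redo it. Second, one must make sure that the submodule one extracts actually sits inside $\LieA(X)$ and not merely inside some completion — but since $\LieA(X)$ is a $\C[X]$-module (it is closed under multiplication by regular functions, as $fa\,\nu$ with $a\in\Ker\nu$ is completely integrable whenever $a\nu$ and $\nu$ are, and Lie brackets of such fields stay algebraic), this is automatic. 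Thus the main obstacle is purely bookkeeping: checking that the modules $L_i$ furnished by Proposition \ref{2.50} have fibers at $x_0$ spanning the prescribed directions $\delta_i(x_0)$, after which Theorem \ref{2.20} closes the argument with no further work.
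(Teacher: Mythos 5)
Your architecture is exactly the one the paper intends: Theorem \ref{2.60} is stated right after Proposition \ref{2.50} and Theorem \ref{2.20} precisely because it is meant to be their formal combination (the survey gives no separate argument; the details are in \cite{KK2}). So extracting from each compatible pair a submodule of the form $(\text{nonzero ideal})\cdot\delta_i$ inside $\LieA(X)$, summing over $i$, and feeding the result to Theorem \ref{2.20} is the right plan and matches the paper. Two steps of your write-up, however, do not hold as stated.

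First, the parenthetical claim that $\LieA(X)$ is a $\C[X]$-module (``closed under multiplication by regular functions'') is false, and if it were true it would trivialize the whole theory: on $X=\C$ the fields $\Ker$-multiplied as you describe generate only $\{(ax+b)\,\partial/\partial x\}$, the field $x^2\partial/\partial x$ is not completely integrable and not in $\LieA(\C)$, and were $\C[x]\cdot\partial/\partial x$ contained in $\LieA(\C)$ then Theorem \ref{2.20} would give $\C$ the algebraic density property, which it does not have. Membership of $\sigma_i(a)fg\,\delta_i$ in $\LieA(X)$ comes only from exhibiting it explicitly as a combination of brackets of completely integrable fields, as in Proposition \ref{2.30}; since you ultimately just invoke Proposition \ref{2.50} for this, the error is not load-bearing, but the justification you offer for it must be discarded. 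Second, and more substantively, you pass from ``the fiber of $L_i$ at a \emph{generic} point is the line through $\delta_i$'' to ``the fiber of $L=\sum_i L_i$ at $x_0$ contains $\{\delta_i(x_0)\}$''. Proposition \ref{2.50} furnishes $J_i\,\delta_i\subset\LieA(X)$ only for some nonzero ideal $J_i$ (containing $\sigma_i(a)\cdot\Span(\Ker\sigma_i\cdot\Ker\delta_i)$-type elements), and the fiber of $J_i\delta_i$ at the particular point $x_0$ is zero whenever $J_i\subset\cM_{x_0}$; nothing in the hypotheses rules this out, so Theorem \ref{2.20} cannot be applied ``verbatim'' at $x_0$. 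One has to either verify the generating hypothesis at a point off the zero loci of the $J_i$ (which is how the criterion is used in all the applications, where the $\delta_i$ span the tangent space on a dense open set), or redo the averaging over $\Aut X$ from the proof of Theorem \ref{2.20} so as to move the supports of the ideals off $x_0$. The survey glosses over this point as well, but a complete proof must address it.
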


\begin{example}\label{2.70}

(1) As an obvious application of this theorem we see that  the manifold $X=\C^k\times (\C^*)^l$ with $k\geq 1$ and $k+l\geq 2$ has algebraic density property.

(2) A more interesting case is when $X=SL_2$ (or $PSL_2$). Denote by
$$A =  \left(\begin{array}{ccc}
a_1 & a_2\\
b_1 & b_2 \end{array}\right) $$
an element of  $X$.
Consider the following LNDs on $\C [X]$:
$$
\delta_1=a_1\frac{\partial}{\partial
b_1}+a_2\frac{\partial}{\partial b_2}
$$
$$
 \delta_2=b_1\frac{\partial}{\partial
 a_1}+b_2\frac{\partial}{\partial a_2} \, .
$$

Then $\Ker \delta_1 =\C [a_1, a_2]$ and $\Ker \delta_2 =\C
[b_1,b_2]$, i.e.  ${\rm Span} \,  \Ker \delta_1 \cdot \Ker
\delta_2= \C[X]$ and the pair $(\delta_1, \delta_2)$ is
semi-compatible.

Furthermore, $\deg_{\delta_1}(a)= \deg_{\delta_2} (a) =1$ for
$a=a_1b_2$ and therefore the pair $(\delta_1, \delta_2)$ is compatible.
Hence $SL_2$ and
$PSL_2$ have the algebraic density property. It can be shown in the same manner
that $SL_n $ and $PSL_n $ have also algebraic density property.
\end{example}

\begin{definition}\label{2.80} Let us choose an identification of elements of $SL_2$ with $(2\times 2)$-matrices with determinant 1.
Suppose that  $H_1\simeq \C_+$ (resp. $H_2\simeq \C_+$) is
the unipotent upper (resp. lower) triangular subgroup of $SL_2$.
Then any $SL_2$-action on $X$ generates $H_i$-action on $X$ and,
therefore, a locally nilpotent vector field $\delta_i$.
The pair $(\delta_1 ,\delta_2)$ will be called an associated pair
of LNDs of the $SL_2$-action.
\end{definition}

\begin{theorem}\label{2.90}
Let $X$ be a smooth
complex affine algebraic variety whose group of algebraic
automorphisms is transitive.

{\rm (1)} Suppose that $X$ is equipped with a non-degenerate\footnote{That is, the dimension of general orbits
is 3.} fixed point
free $SL_2$-action. Then the associated pair  $(\delta_1 ,\delta_2)$ is compatible.

{\rm (2)} Suppose that $X$ is equipped with $N$ non-degenerate fixed point
free $SL_2$-actions. Let $\{ \delta_1^k,\delta^k_2 \}_{k=1}^N$ be
the corresponding pairs of associated locally nilpotent vector
fields.

If $\{\delta^k_2 (x_0) \}_{k=1}^N \subset T_{x_0}X$ is a
generating set at some point $x_0\in X$ then $X$ has the algebraic
density property. \end{theorem}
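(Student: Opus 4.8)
The plan is to reduce both statements to the criterion of Theorem~\ref{2.60}. Part~(2) is then immediate: by part~(1) each associated pair $(\delta_1^k,\delta_2^k)$ is compatible, by hypothesis the vectors $\delta_2^k(x_0)$ form a generating subset of $T_{x_0}X$, and $\Aut X$ is transitive, so Theorem~\ref{2.60} applies with $\sigma_k=\delta_1^k$ and $\delta_k=\delta_2^k$. Hence the whole content is part~(1).

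For part~(1) I would first set up the $\mathfrak{sl}_2$-picture on the coordinate ring. Writing $H_1,H_2$ for the upper and lower unipotent one-parameter subgroups and $T\cong\C^*$ for the diagonal torus of $SL_2$, the action induces complete algebraic vector fields $\delta_1,\delta_2$ (the associated pair) together with a semisimple field $\theta$, satisfying — up to the sign fixed by the action — $[\theta,\delta_1]=2\delta_1$, $[\theta,\delta_2]=-2\delta_2$, $[\delta_1,\delta_2]=\theta$. The $T$-action grades $\C[X]=\bigoplus_{m\in\Z}A_m$ by the $\theta$-eigenvalues, and $\C[X]$ is a locally finite rational $SL_2$-module, hence a direct sum of copies of the irreducibles $V_d=\mathrm{Sym}^d(\C^2)$; in this decomposition $\Ker\delta_1=\C[X]^{H_1}$ is the span of the highest-weight vectors of the summands and $\Ker\delta_2=\C[X]^{H_2}$ the span of the lowest-weight vectors. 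For semi-compatibility one observes that $R:=\Span(\Ker\delta_1\cdot\Ker\delta_2)$ is in fact a subring of $\C[X]$ (a product of highest-weight vectors is again a highest-weight vector, likewise for lowest) containing $\Ker\delta_1$ and $\Ker\delta_2$; since $H_1$ and $H_2$ generate $SL_2$ and $H_1\cap H_2=\{1\}$, an $H_1$-leaf and an $H_2$-leaf on a $3$-dimensional orbit meet in at most one point, so these invariants jointly separate points on the (by non-degeneracy dense) set of $3$-dimensional orbits, and then, by the relevant conductor/finiteness lemma of \cite{KK2}, $R$ contains a nonzero ideal of $\C[X]$.

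It remains to verify the compatibility condition of Definition~\ref{2.40}, and here the weight grading reduces the task to exhibiting a \emph{small} irreducible $SL_2$-subrepresentation of $\C[X]$: a copy of $V_1$ supplies an element satisfying condition~(1), since its lowest-weight vector $a$ has $\delta_2 a=0$, $\delta_1 a\neq 0$ and $\delta_1^2 a=0$; a copy of $V_2$ supplies an element satisfying condition~(2), since its weight-zero vector $a$ has $\deg_{\delta_1}a=\deg_{\delta_2}a=1$. These are exactly the analogues of the computation $\deg_{\delta_1}(a_1 b_2)=\deg_{\delta_2}(a_1b_2)=1$ in Example~\ref{2.70}.

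The main obstacle is therefore to produce a copy of $V_1$ or of $V_2$ in $\C[X]$ — equivalently, to rule out that every isotypic component of $\C[X]$ has highest weight at least $3$ — and it is precisely here that fixed-point-freeness (together with non-degeneracy) must enter, since without it the statement is false, e.g.\ for the coordinate ring of the cone over a rational normal curve, or for $SL_2/\Gamma$ with $\Gamma$ a large binary polyhedral group. I would establish it by localising near a closed orbit $O\subseteq X$, which exists because $SL_2$ is reductive: writing $O\cong SL_2/K$, fixed-point-freeness forces $K\neq SL_2$, and running through the possibilities for an affine homogeneous space $SL_2/K$ one finds that $\C[O]$ — and then, lifting along the surjection $\C[X]\twoheadrightarrow\C[O]$ of semisimple $SL_2$-modules, also $\C[X]$ — contains a copy of $V_1$ or of $V_2$; Luna's slice theorem would be used to control the passage from $O$ to a neighbourhood. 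If one reads ``fixed point free'' in the stronger sense that the $SL_2$-action is free, there is a cleaner route: $H_1\cong\mathbb{G}_a$ then acts freely on the affine variety $X$, every $\mathbb{G}_a$-torsor over an affine base is trivial, so there is $s\in\C[X]$ with $\delta_1 s=1$, whence $s\in\Ker\delta_1^2\setminus\Ker\delta_1$, and correcting $s$ by an element of $\Ker\delta_1$ so as to also annihilate $\delta_2^2 s$ yields condition~(2). Pinning down this step is the technical heart of the argument.
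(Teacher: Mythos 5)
Your handling of part (2) and the reduction to Theorem~\ref{2.60} agree with the paper, and your unwinding of Definition~\ref{2.40} for an associated pair is a sharp observation: since $\delta_1^2$ and $\delta_2^2$ act within each irreducible summand of the locally finite $SL_2$-module $\C[X]$, an element $a$ with $\deg_{\delta_1}a=\deg_{\delta_2}a=1$, or $a\in\Ker\delta_2$ with $\deg_{\delta_1}a=1$, can exist only if the isotypic decomposition of $\C[X]$ contains a copy of $V_1$ or of $V_2$. The gap is that your central claim --- that fixed-point-freeness plus non-degeneracy forces such a copy to exist --- is false, and one of your own examples refutes it. Take $X=SL_2/\Gamma$ with $\Gamma$ the binary icosahedral group. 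This $X$ satisfies every hypothesis of the theorem: it is smooth affine, $\Aut X$ is transitive, and the left $SL_2$-action is non-degenerate and fixed point free (indeed transitive). Yet $\C[X]\cong\bigoplus_d V_d\otimes (V_d^*)^{\Gamma}$ by algebraic Peter--Weyl, and $V_1^{\Gamma}=V_2^{\Gamma}=0$ (the image of $\Gamma$ in $SO_3$ fixes no nonzero vector), so no copy of $V_1$ or $V_2$ occurs. You file this example under ``excluded by fixed-point-freeness,'' but it is not excluded; it sits inside the hypotheses and defeats the strategy exactly at the step you call the technical heart. (It also shows that the purely global criterion you extracted cannot be what is actually verified in \cite{KK2}; the compatibility established there is checked locally, near closed orbits, after an \'etale base change.) The fallback for free actions is also incorrect: a free $\mathbb{G}_a$-action on an affine variety need not be a torsor over an affine base, and there need be no $s$ with $\delta_1 s=1$. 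In fact $1$ is never in the image of $\delta_1$ for an associated LND of an $SL_2$-action, because $\delta_1$ preserves each isotypic component and annihilates the $SL_2$-invariants; concretely, $H_1$ acts freely on $SL_2$ but the quotient is $\C^2\setminus\{0\}$ and the torsor is nontrivial.

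The paper's own argument is local rather than global. One takes a closed $SL_2$-orbit $O$, applies Luna's slice theorem to obtain an $SL_2$-invariant neighborhood $U'$ and a surjective \'etale equivariant morphism $U''\to U'$ with $U''\cong SL_2\times_{I_x}V$, proves compatibility of the associated derivations on the model $U''$ (the difficult case being $I_x\supseteq\C^*$), descends it to $U'$, and then assembles the local data over all closed orbits into compatibility on $X$ via the Nullstellensatz. Your semi-compatibility sketch (that the span of $\Ker\delta_1\cdot\Ker\delta_2$ is a subring and that these invariants nearly separate points on the three-dimensional orbits) is in the spirit of \cite{KK2} and is acceptable as a sketch, but for the compatibility condition you must follow the local scheme above and consult the precise formulation of the required element $a$ in \cite{KK2}; the global representation-theoretic statement you wrote down is not it.
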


The second statement is, of course, a consequence of (1) and Theorem \ref{2.60}.
The idea of the proof of the first statement can be described as follows. By \textsc{Luna}'s
slice theorem (e.g., see \cite{D}) any closed $SL_2$-orbit $O$ possesses an $SL_2$-invariant neighborhood $U'$
for which there exists a surjective \'etale $SL_2$-equivariant morphism $U'' \to U'$ such that $U''$ is naturally isomorphic
to $SL_2\times_{I_x}V$ \footnote{That is, elements of $U''$ are equivalence classes
in $SL_2 \times V$ given by the relation $(s,v) \sim (sg^{-1}, g\cdot v)$ for $g \in I_x$.}   where $I_x\subset SL_2$ is the isotropy
group of some point $x\in O$ and  $V$ is an $I_x$-invariant subvariety of $X$ (called a slice).
Then the associated locally nilpotent derivations $\delta_1$ and $\delta_2$ generate naturally similar derivations $\delta_1'$ and
$\delta_2'$ ( resp. $\delta_1''$ and $\delta_2''$) on $U'$ (resp. $U''$). The straightforward application of Nullstellensatz shows
that $\delta_1$ and $\delta_2$  are compatible provided that  for every closed orbit $O$ the pair $(\delta_1', \delta_2')$ described  before is compatible.
It was shown in \cite{KK2} that compatibility of $\delta_1'$ and $\delta_2'$ follows from compatibility of $\delta_1''$ and $\delta_2''$.
The proof of compatibility of the associated locally nilpotent derivations for
the natural $SL_2$-action on $SL_2\times_{I_x}V$ (especially in the case when $I_x$ contains $\C^*$) is the most
difficult part which does not work for $SL_2$-actions with fixed points.
Actually, if a non-degenerate $SL_2$-action has a fixed point the pair
 $(\delta_1' ,\delta_2')$  is not compatible.

 As a (non-straightforward) application of Theorem \ref{2.90} we have.

\begin{theorem}\label{2.100}
Let $G$ be a
linear algebraic group and $R$ be its proper reductive subgroup.

Suppose that the connected components of $X=G/R$ are different from
$\C_+$ or $(\C^*)^k$.

Then $X$ has the algebraic density property. \end{theorem}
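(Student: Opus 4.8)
The plan is to derive the statement from the $SL_2$-criterion of Theorem~\ref{2.90}(2). Since $G$ acts transitively on $X=G/R$ by algebraic automorphisms, the group $\Aut X$ is transitive, so it will be enough to produce on $X$ finitely many non-degenerate fixed-point-free $SL_2$-actions whose associated locally nilpotent fields $\delta_2^k$ (Definition~\ref{2.80}) form a generating subset of $T_{x_0}X$ at the base point $x_0=eR$, in the weak sense of the footnote to Theorem~\ref{2.20} (the span of the $(\Aut X)_{x_0}$-orbit of $\{\delta_2^k(x_0)\}$ is all of $T_{x_0}X$). What makes this feasible is that $(\Aut X)_{x_0}$ contains $R$, acting on $T_{x_0}X=\ggoth/\rgoth$ through the adjoint representation; thus a single nilpotent class whose $\mathrm{Ad}(R)$-orbit spans the appropriate $R$-submodule already contributes a large piece of $\ggoth/\rgoth$. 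Where $SL_2$-actions do not reach, one falls back on genuine compatible pairs (Definition~\ref{2.40}) inserted into Theorem~\ref{2.60}.

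First I would make the standard reductions. One may assume $G$ connected (the algebraic density property of $X$ is equivalent to that of each of its components). Using the Levi--Mostow decomposition one conjugates $R$ into a Levi subgroup $L$ of $G=L\ltimes R_u(G)$; since $R\subseteq L$, multiplication gives an isomorphism of affine varieties $G/R\cong R_u(G)\times(L/R)\cong\C^m\times(L/R)$. As $\C^m$ has the algebraic density property for $m\geq 2$ (Proposition~\ref{2.30}), this property is preserved under products, and the hypothesis excluding $\C_+$ and $(\C^*)^k$ rules out the low-dimensional boundary cases (the remaining ones being covered by Example~\ref{2.70}), we are reduced to $G$ reductive, and a further reduction --- again via Example~\ref{2.70} for the central-torus directions --- brings us to $G$ semisimple. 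For semisimple $G$ the space $G/R$ is automatically neither $\C_+$ nor $(\C^*)^k$, since those carry no transitive action of a semisimple group.

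For $G$ semisimple and $R\subsetneq G$ reductive, the $SL_2$-actions are supplied by three-dimensional simple subgroups $S\hookrightarrow G$ acting on $G/R$ by left translation: the orbit of $gR$ is $S/(S\cap gRg^{-1})$, so the action is fixed-point-free precisely when no conjugate of $S$ lies in $R$, and it is then automatically non-degenerate as soon as $\dim G/R\geq 3$ (for dimension reasons the generic intersection $S\cap gRg^{-1}$ is finite). The field $\delta_2^S$ at $x_0$ is the class in $\ggoth/\rgoth$ of a nilpotent generator of the opposite unipotent of $\Lie(S)$. Fixing maximal tori $T_R\subseteq T_G$, the root subgroups $S_\alpha$ with $\alpha$ outside the root system of $R$ yield the classes $\overline{e_{\pm\alpha}}$; a root $S_\alpha$ that is conjugate into $R$ (this happens, e.g., when $R$ is a large Levi subgroup) is replaced by a regular or principal $SL_2$ of a reductive subgroup of $G$ transverse to $R$, whose associated nilpotent is still nonzero modulo $\rgoth$. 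One then has to check that finitely many of the resulting classes, together with the $\mathrm{Ad}(R)$-action, span the $R$-module $\ggoth/\rgoth$ --- in particular the Cartan directions $\Lie(T_G)/\Lie(T_R)$, which are recovered from brackets $[e_{-\beta},e_\beta]$ with $e_{-\beta}\in\rgoth$ --- any residual directions being supplied by a compatible pair built from a root subgroup and a semisimple field. The case $\dim G/R=2$ is exceptional: there no non-degenerate $SL_2$-action can exist, but such an $X$ is then an affine surface of the form $SL_2/T\cong\{uv=x^2-x\}\subset\C^3$, which is covered by Theorem~\ref{modification}.

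I expect the main obstacle to be the simultaneous requirement of fixed-point-freeness and of the spanning property when $R$ is a large reductive subgroup: then many root $SL_2$'s are swallowed by conjugates of $R$ and must be traded for transverse $SL_2$'s extracted from the structure theory of reductive subgroups of semisimple groups, after which one must still verify that their nilpotent classes, propagated by $\mathrm{Ad}(R)$, exhaust every irreducible $R$-summand of $\ggoth/\rgoth$. A convenient way to organize this is an induction on $\dim G-\dim R$: one transverse non-degenerate fixed-point-free $SL_2$-action contributes, via Theorem~\ref{2.90}(1) and Proposition~\ref{2.50}, a nontrivial $\C[X]$-module inside $\LieA(X)$ covering one off-block of $\ggoth/\rgoth$, after which one is reduced to a smaller pair, with the base cases furnished by the explicitly settled examples $SL_n$, $PSL_n$ and $\C^k\times(\C^*)^l$ of Example~\ref{2.70}.
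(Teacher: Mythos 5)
Your top-level strategy (feed $SL_2$-actions into Theorem~\ref{2.90}(2), using the weak generating condition via the isotropy action of $R$ on $\ggoth/\rgoth$) is the same as the paper's, but the proposal does not actually carry out the one step in which the entire difficulty of this theorem is concentrated: producing an $SL_2$-subgroup $\Gamma\subset G$ whose action on $X=G/R$ is simultaneously fixed point free and non-degenerate (Proposition~\ref{2.110}). Fixed-point-freeness is a condition on \emph{all} conjugates of $\Gamma$, so taking root subgroups $S_\alpha$ ``with $\alpha$ outside the root system of $R$'' does not help: for $G=SL_3$ and $R$ the upper-left block $SL_2$, every root $SL_2$ of $G$ is conjugate to $R$ and therefore fixes a point of $G/R$, so your primary supply of subgroups is empty already in this basic example. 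Your fallback (``replaced by a regular or principal $SL_2$ of a reductive subgroup of $G$ transverse to $R$, whose associated nilpotent is still nonzero modulo $\rgoth$'') only ensures that the field is nonzero at the base point, which is neither of the two required conditions; it is a placeholder for the actual content of Proposition~\ref{2.110}, which the paper obtains by passing through the Jacobson--Morozov theorem, the Bala--Carter/Dynkin classification of nilpotent orbits and facts about the Freudenthal square, with $\Gamma$ the principal or sub-regular $SL_2$ of $G$. Likewise ``one then has to check that finitely many of the resulting classes, together with the $\mathrm{Ad}(R)$-action, span the $R$-module $\ggoth/\rgoth$'' restates what must be proved rather than proving it.

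Two of your auxiliary claims are moreover wrong or unjustified. The assertion that a fixed point free action is ``automatically non-degenerate as soon as $\dim G/R\geq 3$'' is false: take $G=SL_2\times SL_2$, $R=T\times\{e\}$ with $T$ a maximal torus of the first factor (so $R$ is reductive and $\dim G/R=5$), and $S=SL_2\times\{e\}$; then $S\cap gRg^{-1}=g_1Tg_1^{-1}\times\{e\}$ is one-dimensional for \emph{every} $g$, so all $S$-orbits are $2$-dimensional, and no dimension count rescues genericity of a finite intersection. And the reduction $G/R\cong R_u(G)\times(L/R)$ is not an isomorphism of varieties you are entitled to: Mostow's theorem only exhibits $G/R$ as the total space of the associated bundle $L\times_R R_u(G)\to L/R$ with fiber $R_u(G)$, which need not be trivial. (The product step itself --- that the algebraic density property passes to products --- is fine, since algebraic vector fields on a product split into horizontal and vertical parts with coefficients in $\C[X]\otimes\C[Y]$, but it cannot be invoked before that triviality is established.)
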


The difficulty in the proof of the last result lies in checking the assumption of Theorem \ref{2.90}. That is,
one needs to find an $SL_2$-subgroup $\Gamma$ of $G$ such that its natural action on $X$ is fixed point
free and non-degenerate.
This is equivalent to the next fact.

\begin{proposition}\label{2.110}
Let  $G$  and $R$ be as before. Then there exists
an $SL_2$-subgroup $\Gamma$ of $G$ such that

{\rm (1)}  $g\Gamma g^{-1}$ is not contained in $R$ for any $g \in G$ and

{\rm (2)} $g_0\Gamma g_0^{-1}$ meets $R$ at a finite set  for some $g_0 \in G$.

Furthermore $\Gamma$ can be chosen as a principal or sub-regular $SL_2$-subgroup of $G$. \end{proposition}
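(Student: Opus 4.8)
The plan is to produce $\Gamma$ from a Jacobson--Morozov $\mathfrak{sl}_2$-triple, using the principal $SL_2$-subgroup when it works and the sub-regular one as a fallback, verifying (1) via the classification of reductive subgroups of simple groups that contain a regular (resp.\ sub-regular) unipotent element and (2) via a transversality count. First I would reduce to $G$ connected and semisimple. Passing to $G^{0}$ replaces $X$ by one of its connected components, still neither $\C_+$ nor $(\C^{*})^{k}$, and since the $\Gamma$ produced will be connected this affects neither sub-conjugacy nor the intersection in (2); so we may take $G$ connected. Writing $G=L\ltimes R_{u}(G)$ with $L$ a Levi subgroup, Mostow's theorem conjugates $R$ into $L$, and since $SL_2$ is perfect every $SL_2$-subgroup of $G$ is conjugate to one contained in $[L,L]$. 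The solvable radical of $G$ then contributes to $X$ only affine-space and torus directions — visible through the fibration $G/R\to L/R$ over a quotient of $L$ together with the almost-direct decomposition $L\sim Z(L)^{\circ}\cdot[L,L]$ — and when these directions are nontrivial a component of $X$ has the shape $\C^{j}\times(\C^{*})^{l}$, in which case Theorem \ref{2.100} is already known by Example \ref{2.70}. In the remaining cases we may take $G=[L,L]$ semisimple with $R\subset G$, it being legitimate there to replace $G$-conjugacy and the intersection of (2) by their counterparts inside this $G$; we also set aside $\dim X\le 2$, where no $SL_2$-action on $X$ can have three-dimensional orbits and $X$ is a homogeneous affine curve or surface treated directly.

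So let $G$ be semisimple and let $\Gamma_{\mathrm{prin}}$ be a principal $SL_2$-subgroup with triple $(e,h,f)$, $e$ a regular nilpotent. For (2) I argue by transversality: the $G$-orbit of $\rgoth=\Lie(R)$ inside the Grassmannian of $(\dim R)$-planes of $\ggoth$ has dimension $\dim G-\dim N_{G}(R)$, the set of planes $W$ with $W\cap\mathfrak{sl}_{2}=0$ is open and dense there, and stratifying $\mathbb{P}(\mathfrak{sl}_{2})=\mathbb{P}^{2}$ into its regular-semisimple and nilpotent loci yields $\dim\{g:\mathrm{Ad}(g)\rgoth\cap\mathfrak{sl}_{2}\ne 0\}\le 2+\mathrm{rk}\,G+\dim R$; when $\dim X>\mathrm{rk}\,G+2$ this is $<\dim G$, so the orbit of $\rgoth$ is not swallowed by the non-transverse locus and some $g_{0}$ achieves $\mathfrak{sl}_{2}\cap\mathrm{Ad}(g_{0})\rgoth=0$, i.e.\ $\Gamma_{\mathrm{prin}}\cap g_{0}Rg_{0}^{-1}$ finite; the borderline pairs not covered by this crude bound satisfy a dimension inequality forcing $R$ to be large, so they form a manageable list checked by hand. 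For (1): if no $G$-conjugate of $\Gamma_{\mathrm{prin}}$ lies in $R$ we are done. Otherwise $R$ contains a regular unipotent element of $G$; here I invoke the classification of reductive subgroups of simple algebraic groups containing such an element — a short explicit list (the irreducible $\mathrm{SO}_{n}$ and $\mathrm{Sp}_{n}$ in $\mathrm{SL}_{n}$, $G_{2}$ in $\mathrm{SO}_{7}$, and a few more) — and take instead a sub-regular $SL_2$-subgroup $\Gamma_{\mathrm{sub}}$, attached to the sub-regular nilpotent orbit (Jordan type $[n-1,1]$ for $\mathfrak{sl}_{n}$, the corresponding Bala--Carter type elsewhere). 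Running through the list shows $\Gamma_{\mathrm{sub}}$ is never $G$-conjugate into such an $R$ — for instance when $R=\mathrm{SO}_{n}\subset\mathrm{SL}_{n}$ with $n$ odd, the only case there where a fallback is needed, the type $[n-1,1]$ does not occur in $\mathfrak{so}_{n}$ — giving (1), and the same transversality argument with the sub-regular triple gives (2).

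The hard part is precisely this combined case analysis. For each simple $G$ and each proper reductive $R\subset G$ one must decide whether $R$ contains a regular unipotent and, if so, verify both that the sub-regular $SL_2$ escapes $R$ and that it is still in general enough position for (2); and for semisimple but non-simple $G$ one must track the projections of $R$ to the simple factors, the awkward situation being that $R$ surjects onto every factor (as for twisted diagonal subgroups), in which case a principal or sub-regular $SL_2$ inside a single factor already works, with small factors such as $\mathrm{SL}_{2}$ needing individual inspection. Shorn of bookkeeping, the proof comes down to the non-formal statement that the sub-regular $SL_2$ is always ``generic enough'' to avoid whichever proper reductive subgroup the principal $SL_2$ fails to avoid — a fact resting on the structure theory of nilpotent orbits and on the subgroup classification for classical and exceptional simple groups — and it is exactly the freedom to take $\Gamma$ principal \emph{or} sub-regular that makes these checks feasible.
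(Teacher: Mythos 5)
Your skeleton coincides with the one the paper sketches: pass from $SL_2$-subgroups to nilpotent orbits via Jacobson--Morozov, try the principal $SL_2$ first and fall back on the sub-regular one, and finish by a classification-driven case analysis. Within that skeleton your substitutions are sensible --- using the classification of reductive subgroups containing a regular unipotent element for condition (1) is a genuinely different (and cleaner) input than the Freudenthal-square dimension facts of \cite{LMW} that the paper invokes, and for the principal case your transversality bound $\dim\{g: {\rm Ad}(g)\rgoth\cap\mathfrak{sl}_2\neq 0\}\le 2+{\rm rk}\,G+\dim R$ is correct, because by Kostant's theorem every nonzero element of the principal $\mathfrak{sl}_2$ is regular in $\ggoth$, so all the relevant centralizers have dimension ${\rm rk}\,G$.

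There are, however, three concrete gaps. First, the reduction to semisimple $G$ is wrong as written: after conjugating $R$ into a Levi subgroup $L$ one gets $X\cong R_u(G)\times (L/R)$, which is of the form $\C^j\times(\C^*)^l$ only when $L/R$ is a torus, so the discarded cases are not covered by Example \ref{2.70}; and appealing to Theorem \ref{2.100} there proves the wrong statement anyway, since Proposition \ref{2.110} is the \emph{input} to Theorem \ref{2.100} via Theorem \ref{2.90} --- indeed for $G=R\ltimes\C^N$ every $SL_2$-subgroup is conjugate into $R$ and the proposition itself fails, so the reduction must be carried out at the level of conditions (1) and (2) (e.g.\ by checking that $u\Gamma'u^{-1}\subset L$ forces $u\Gamma'u^{-1}=\Gamma'$ for $u\in R_u(G)$, so that $L$-conjugacy statements transfer to $G$), not by citing the downstream theorem. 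Second, ``the same transversality argument with the sub-regular triple'' is not automatic: Kostant's regularity is special to the principal $\mathfrak{sl}_2$, the centralizers of elements of a sub-regular $\mathfrak{sl}_2$ can exceed ${\rm rk}\,G$ (already for the semisimple elements of the $[n-1,1]$ triple in $\mathfrak{sl}_n$ with $n$ even), and the bound degrades exactly in the fallback cases where you need it. Third, the two finite verifications you defer --- that the borderline pairs violating the dimension inequality can be ``checked by hand,'' and that the sub-regular $SL_2$ escapes every reductive $R$ containing a regular unipotent --- are precisely what the paper identifies as the surprisingly non-trivial content, handled there through the Bala--Carter classification \cite{BaCa1}, \cite{BaCa2} and the dimension formulas of \cite{LMW}; asserting that the list is manageable is not the same as running through it, and until that is done the proof is a plausible program rather than a proof.
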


The proof of Proposition \ref{2.110} is surprisingly non-trivial. First using the Jackobson-Morozov theorem
(e.g., see \cite{Bo}) we can switch from studying the conjugate classes of $SL_2$-subgroups in $G$ to the study
of orbits of nilpotent elements in the Lie algebra of $G$ under the adjoint action.  The orbit
of the largest dimension is called principal and the second largest dimension sub-regular (they are both unique).
To show that an $SL_2$-subgroup corresponding to one of these orbits satisfy conditions (1) and (2) one
needs
to use the Dynkin classification of nilpotent orbits (e.g. see \cite{BaCa1} and \cite{BaCa2})
in combination with some difficult facts about
the Freudental square \cite{LMW}.

Besides Theorem \ref{2.90} (1) we have another useful criterion for compatibility \cite[Proposition 3.9]{KK2}.

\begin{proposition}\label{2.112} Suppose that either $\Gamma = SL_2$ and $H_1$, $H_2$ are from
Definition \ref{2.80} or $\Gamma =H_1 \times H_2$ where $H_1 \simeq \C_+$ and $H_2$ is one of the groups
$\C_+$ or $\C^*$. Let $X$ be a smooth affine algebraic $\Gamma$-variety and $Y$ be a normal affine algebraic
variety equipped with a trivial $\Gamma$-action. Let $r : X \to Y$ be a surjective $\Gamma$-equivariant morphism
and $\delta_1, \delta_2$ be the algebraic fields on $X$ generated by the action of $H_1$ and $H_2$.
Suppose that for every point $y \in Y$ there exists an \'etale neighborhood $W \to Y$ such that
the pair of vector fields induced by $\delta_1$ and $\delta_2$ on the fibered product $X\times_Y W$ is compatible.
Then the pair $(\delta_1, \delta_2)$ is compatible.

\end{proposition}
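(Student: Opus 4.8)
The plan is to reduce the global compatibility of $(\delta_1,\delta_2)$ to the compatibility statement on the étale covers $X\times_Y W$, exploiting the fact that compatibility is built from two kinds of data: a semi-compatibility statement (the span of $\Ker\delta_1\cdot\Ker\delta_2$ contains a nonzero ideal), and the degree condition (existence of $a$ with $\deg_{\delta_1}(a)=1$ and either $a\in\Ker\delta_2$ or $\deg_{\delta_2}(a)=1$). Both are essentially local on $Y$ in a sense that behaves well under étale base change, because $r$ is $\Gamma$-equivariant with trivial $\Gamma$-action downstairs, so the fields $\delta_1,\delta_2$ are ``vertical'' for $r$ and their kernels contain $r^*\C[Y]$. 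First I would set up the descent machinery: for an étale morphism $W\to Y$, the induced map $X\times_Y W\to X$ is étale, and the fields $\delta_1,\delta_2$ pull back to the fields induced by the $H_1,H_2$-actions on $X\times_Y W$ (the $\Gamma$-action lifts since $W\to Y$ is $\Gamma$-equivariant for the trivial action). Étale morphisms preserve kernels of these derivations after the natural base-change identification, and a nonzero ideal pulls back to a nonzero ideal.

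The key step is a quasi-compactness/gluing argument on $Y$. Since $Y$ is a normal affine variety it is Noetherian, so the étale cover $\{W_y\to Y\}_{y\in Y}$ (with $X\times_Y W_y\to W_y$ having a compatible pair of induced fields) admits a \emph{finite} subcover $\{W_j\to Y\}_{j=1}^m$ whose images cover $Y$; by further refinement I may assume each $W_j\to Y$ is a \emph{finite} étale cover of a principal open subset of $Y$, or work with a single $W=\coprod W_j\to Y$ which is étale and surjective. On $X\times_Y W$ the induced pair is compatible, so in particular semi-compatible: the span of $\Ker\delta_1^W\cdot\Ker\delta_2^W$ contains a nonzero ideal of $\C[X\times_Y W]$. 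I would then push this down: using faithfully flat descent along the faithfully flat (even étale, finite after refinement) morphism $X\times_Y W\to X$, an ideal upstairs that is visibly defined by pullback data descends, and one extracts a nonzero ideal of $\C[X]$ contained in the span of $\Ker\delta_1\cdot\Ker\delta_2$. Concretely: a regular function $h$ on $X\times_YW$ that lies in the relevant span and is nonzero has a nonzero ``norm'' or trace down to $X$ (for the finite étale part) which, since the fields are vertical and the kernels contain the pulled-back functions on the base, again lies in $\Span\,\Ker\delta_1\cdot\Ker\delta_2$. This gives semi-compatibility of $(\delta_1,\delta_2)$ on $X$.

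For the degree condition I would argue more directly. Fix one $W\to Y$ and an element $\tilde a\in\C[X\times_Y W]$ witnessing compatibility there. The point is that $\deg_{\delta_i}$ is insensitive to étale base change: since $X\times_YW\to X$ is étale, for any $f\in\C[X]$ one has $\deg_{\delta_i}(f)=\deg_{\delta_i^W}(f)$ computed on the cover. So it suffices to produce $a\in\C[X]$ whose pullback has the right degrees; equivalently, one wants the witness $\tilde a$ to be (a component of) a pullback. If $\tilde a$ is not already pulled back from $X$, I would use that the $\Gamma$-action on the cover is still fiberwise, and average/project $\tilde a$ over the Galois group of $W\to Y$ (in the finite étale case) or take a suitable trace; the degree-one condition with respect to $\delta_1$ (a LND) is preserved by the trace because the trace is $\C[Y]$-linear and $\C[Y]\subset\Ker\delta_1\cap\Ker\delta_2$, so $\delta_i$ commutes with the trace operation. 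Hence one of the two alternatives in Definition~\ref{2.40}(1)--(2) descends, and combined with the descended semi-compatibility this yields compatibility of $(\delta_1,\delta_2)$ on $X$.

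The main obstacle I expect is the descent of the semi-compatible \emph{ideal}: an étale cover need not be finite, so a naive ``norm/trace'' argument requires first reducing to a finite étale situation over principal opens of $Y$, and then patching the resulting ideals over a finite affine cover of $Y$ into a single nonzero ideal of $\C[X]$ (using that $Y$ is affine and normal, so that the pieces can be cleared of denominators and added up, their images not all vanishing on any common component). Making this patching clean — especially checking that the glued ideal genuinely lands inside $\Span\,\Ker\delta_1\cdot\Ker\delta_2$ rather than merely inside its radical — is the delicate point; everything else is a formal consequence of the verticality of $\delta_1,\delta_2$ over $Y$ and the étale-invariance of $\Ker$ and $\deg_{\delta_i}$.
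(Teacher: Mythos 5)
First, a caveat: the paper itself gives no proof of Proposition \ref{2.112} --- it is quoted from \cite[Proposition 3.9]{KK2} --- so your attempt can only be measured against what that argument has to accomplish. Your setup is sound and goes in the right direction: since the $\Gamma$-action on $Y$ is trivial and $r$ is equivariant, $r^*\C[Y]\subset\Ker\delta_1\cap\Ker\delta_2$, kernels of the $\delta_i$ commute with flat (in particular \'etale) base change, the images of the $W_y$ are open so finitely many suffice, and each \'etale map can be refined to a finite \'etale cover of a principal open $V_j=Y\setminus\{g_j=0\}$. Over a single such $V_j$ your trace argument does work: writing $M=\Span(\Ker\delta_1\cdot\Ker\delta_2)$, one has $\Ker\delta_i^W=\Ker\delta_i\otimes_{\C[Y]}\C[W]$ by flatness, hence the span upstairs is $M\otimes_{\C[Y]}\C[W]$, the trace is $\C[X]$-linear and maps this into $M$, and nondegeneracy of the trace form of a finite \'etale extension produces a nonzero principal ideal of $\C[r^{-1}(V_j)]$ inside $M[1/g_j]$. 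The degree condition also descends as you say, after multiplying the witness $\tilde a$ by a suitable element of $\C[W]\subset\Ker\delta_1^W\cap\Ker\delta_2^W$ so that the trace of $\delta_1^W(\tilde a)$ does not vanish, and then clearing the denominator $g_j$ (which lies in both kernels, so does not change $\deg_{\delta_i}$).

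The genuine gap is exactly the patching step you flag but do not fill, and it is not a routine verification. You obtain, for each $j$, a nonzero $h_j\in\C[X]$ with $h_j\cdot\C[X][1/g_j]\subseteq M[1/g_j]$; but $M$ is only a $\C[Y]$-submodule of $\C[X]$, not a $\C[X]$-submodule and not finitely generated over $\C[X]$, so the relation $fh_j\in M[1/g_j]$ only gives $g_j^{n}fh_j\in M$ with an exponent $n$ depending on $f$. Without a uniform bound you cannot clear denominators to land a single nonzero ideal of $\C[X]$ inside $M$ (as opposed to inside $\bigcup_n M:g_j^n$), and summing the $h_j$ does not help. The way to close this is to replace the ``contains a nonzero ideal'' condition by an equivalent geometric one that \emph{does} localize: $M$ is the image of $\C[X_1]\otimes\C[X_2]\to\C[X]$ with $X_i=\Spec(\Ker\delta_i)$, i.e.\ a finitely generated subalgebra $A\subseteq B=\C[X]$, and $A$ contains a nonzero ideal of $B$ if and only if the morphism $X\to X_1\times X_2$ is finite and birational onto its image (the nonzero ideal being the conductor). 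Finiteness of $B$ as an $A$-module and birationality can be checked after inverting the $g_j$, which generate the unit ideal; the local data you produced gives exactly this, and the conductor of $A$ in $B$ is then a genuine nonzero ideal of $\C[X]$ contained in $M$. This reformulation (which is the route taken in \cite{KK2}) is the missing ingredient; without it the final sentence of your second paragraph does not go through.
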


\subsection{ Non-Stein Case: Complement to a Subvariety of  Codimension at least 2.}

Let $Y$ be an algebraic subvariety of an affine
algebraic manifold $X$ such that ${\rm codim}_X Y \geq 2$.
Then according to \textsc{Forstneri\v c}  $X\setminus Y$ has the algebraic density property if the Lie
algebra $\LieA (X,Y)$ generated by completely integrable algebraic
vector fields on $X$ that vanishes on $Y$ coincides with the Lie
algebra $\VFA (X,Y)$ of all algebraic vector fields on $X$ that
vanishes on $Y$.

\begin{remark}\label{2.120} By Hartogs' theorem every algebraic vector field on $X\setminus Y$
can be extended to $X$. In the case of completely integrable fields this extension must
be tangent to $Y$. In particular, $\LieA (X, Y)$ cannot contain algebraic fields with extension
non-tangent to $Y$. Furthermore, in the case of $Y$ without the algebraic density property
(say, when $Y$ is singular) $\LieA (X, Y)$ cannot contain all vector fields tangent to $Y$.
Thus the \textsc{Forstneri\v c} definition is the only reasonable.

\end{remark}

\begin{theorem}\label{2.130}  {\rm (\cite{KK2})}
Let $Y$ be a
subvariety of $\C^n$ of codimension $\geq 2$. Then

{\rm (1)} $\C^n \setminus Y$ has the algebraic density property if
$T_y Y \leq n-1$ for every $y \in Y$;

{\rm (2)} in any case there exists natural $k$ such that $ \LieA
(\C^n,Y)$ contains all
algebraic vector fields that vanish on $Y$ with multiplicity at least $k$.
\end{theorem}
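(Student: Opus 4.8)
The plan is to reduce the claim to exhibiting a large enough $\C[\C^n]$-submodule of $\VFA(\C^n,Y)$ inside $\LieA(\C^n,Y)$, and then to finish with the same module manipulation as in the proof of Theorem~\ref{2.20}, except that homogeneity is replaced by the Nullstellensatz. First, by Remark~\ref{2.120} and Hartogs' theorem (using ${\rm codim}_{\C^n}Y\ge 2$), the space $\VFA(\C^n\setminus Y)$ coincides with $\VFA(\C^n,Y)$, and the latter is exactly $I(Y)\delta_1\oplus\cdots\oplus I(Y)\delta_n$ for the coordinate fields $\delta_i=\partial/\partial x_i$. Thus (1) asks for $\LieA(\C^n,Y)=I(Y)\cdot\VFA(\C^n)$, while (2) asks for $\LieA(\C^n,Y)\supseteq I(Y)^k\cdot\VFA(\C^n)$ for some $k$.

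Next I would run the Anders\'en--Lempert computation of Proposition~\ref{2.30} with every field forced to vanish on $Y$. Since ${\rm codim}_{\C^n}Y\ge 2$ we have $\dim Y\le n-2$, so for each $i$ the Zariski closure of the projection of $Y$ to $\{x_i=0\}\cong\C^{n-1}$ is a proper subvariety; hence $\mathfrak a_i:=I(Y)\cap\Ker\delta_i$ is a nonzero ideal of $\Ker\delta_i=\C[x_1,\dots,\widehat{x_i},\dots,x_n]$. For $i\ne j$ and $f_i\in\mathfrak a_i$, $f_j\in\mathfrak a_j$ the completely integrable fields $f_i\delta_i$, $f_j\delta_j$, $x_i f_j\delta_j$, $x_i f_i\delta_i$ (shears and overshears as in Proposition~\ref{2.30}) all vanish on $Y$, hence lie in $\LieA(\C^n,Y)$, and the identity
$$[f_i\delta_i,x_i f_j\delta_j]-[x_i f_i\delta_i,f_j\delta_j]=f_i f_j\delta_j$$
puts $f_i f_j\delta_j$ into $\LieA(\C^n,Y)$. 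Taking $\C$-linear spans and using $\C[\C^n]=\Span(\Ker\delta_i\cdot\Ker\delta_j)$, one checks that $\Span(\mathfrak a_i\mathfrak a_j)$ is an ideal of $\C[\C^n]$, so that $L_0:=\sum_j\big(\sum_{i\ne j}\Span(\mathfrak a_i\mathfrak a_j)\big)\delta_j$ is a $\C[\C^n]$-submodule of $\LieA(\C^n,Y)$. The same computation applied to the conjugates $\varphi_*\delta_i$, $\varphi_*\delta_j$ for $\varphi\in\Aut(\C^n)$, with functions drawn from $I(Y)\cap\Ker(\varphi_*\delta_i)$ (nonzero by the same dimension count), produces further $\C[\C^n]$-submodules; let $L\subseteq\LieA(\C^n,Y)$ be the submodule they generate.

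The heart of the matter is to show that $L$ is large. One has $L\subseteq I(Y)\cdot\VFA(\C^n)$, and $L$ is a coherent submodule of the free module $\VFA(\C^n)$; hence by the Nullstellensatz it suffices to locate the support of the quotient $\big(I(Y)\cdot\VFA(\C^n)\big)/L$. For $p\notin Y$ one chooses $\varphi$ so that the orbit directions of $\varphi_*\delta_1,\dots,\varphi_*\delta_n$ span $T_p\C^n$ while $p$ avoids the closure of the projection of $Y$ along each of them — possible because $\dim Y\le n-2$ and $p\notin Y$, so even a generic linear choice works — which forces $L_p=\VFA(\C^n)_p$; collecting enough automorphisms makes this hold at every $p\notin Y$, so the quotient is supported on $Y$ and $I(Y)^k\cdot\VFA(\C^n)\subseteq L$ for suitable $k$, proving (2). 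For (1) one must in addition kill the quotient along $Y$ itself; here the hypothesis $\dim_\C T_pY\le n-1$ is used: it guarantees that near each $p\in Y$ the variety $Y$ lies in a smooth hypersurface, so after a suitable automorphism one may take one of the $\varphi_*\delta_i$ transverse to that hypersurface and $I(Y)$ locally generated by elements of the relevant kernels, which upgrades the vanishing to first order and yields $L_p=\big(I(Y)\cdot\VFA(\C^n)\big)_p$, i.e. $L=I(Y)\cdot\VFA(\C^n)$.

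I expect the main obstacle to be precisely this last analysis: arranging finitely many automorphisms so that the ideals $I(Y)\cap\Ker(\varphi_*\delta_i)$ and their pairwise products cut $Y$ out set-theoretically (for (2)) and, where the tangent-space bound holds, with multiplicity exactly one along the possibly singular locus of $Y$ (for (1)). A secondary but necessary point is checking throughout that the objects produced really are $\C[\C^n]$-submodules, which rests on the identities $\C[\C^n]=\Span(\Ker\sigma_1\cdot\Ker\sigma_2)$ for the pairs of fields used. The rest — complete integrability of the shears and overshears, Hartogs extension, and coherence — is routine within the Anders\'en--Lempert framework.
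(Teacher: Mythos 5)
This survey does not actually contain a proof of Theorem~\ref{2.130}; it is quoted from \cite{KK2}, so your attempt has to be measured against what a complete proof requires. For part (2) your plan is sound and is exactly in the spirit of the machinery of Section 3 (Theorem \ref{2.20}, Propositions \ref{2.30} and \ref{2.50}): the ideals $\mathfrak a_i=I(Y)\cap\Ker\delta_i$ are nonzero because $\dim Y\le n-2$; the shear/overshear identity places $f_if_j\delta_j$ in $\LieA(\C^n,Y)$; ${\rm Span}(\mathfrak a_i\mathfrak a_j)$ is indeed an ideal of $\C[\C^n]$ because $\C[\C^n]={\rm Span}(\Ker\delta_i\cdot\Ker\delta_j)$; and summing the conjugated submodules over generic linear automorphisms pushes the support of the quotient into $Y$, whence $I(Y)^k\cdot\VFA(\C^n)\subseteq\LieA(\C^n,Y)$ by the Nullstellensatz. (Two small corrections: $\VFA(\C^n\setminus Y)$ is all of $\VFA(\C^n)$ by Hartogs, not $\VFA(\C^n,Y)$ --- immaterial here since the density property for the complement is defined via $\VFA(\C^n,Y)$; and ``vanishing with multiplicity at least $k$'' refers to the symbolic power $I(Y)^{(k)}$ rather than $I(Y)^k$, so a containment of the form $I(Y)^{(m)}\subseteq I(Y)^k$ still has to be invoked.)

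The genuine gap is in part (1). Every generator of your module $L$ lies in some ${\rm Span}(\mathfrak a_i^\varphi\mathfrak a_j^\varphi)\cdot\varphi_*\delta_j$, and each such ideal is contained in $I(Y)^2$; multiplying by $\C[\C^n]$ and summing cannot leave $I(Y)^2$, so $L\subseteq I(Y)^2\cdot\VFA(\C^n)$ identically. Consequently $L$ can never equal $I(Y)\cdot\VFA(\C^n)$ at any point of $Y$, no matter how the automorphisms are chosen, and the phrase ``which upgrades the vanishing to first order'' does not correspond to any construction you have actually performed. What is missing is a proof that $\LieA(\C^n,Y)$ surjects onto $I(Y)\cdot\VFA(\C^n)/I(Y)^2\cdot\VFA(\C^n)$ (and onto the intermediate graded pieces up to your $k$). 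The raw material exists --- the shears $f\delta_i$ with $f\in\mathfrak a_i$ vanish only to first order, and brackets such as $[f_i\delta_i,f_j\delta_j]=f_i\delta_i(f_j)\delta_j-f_j\delta_j(f_i)\delta_i$ have coefficients in $I(Y)\setminus I(Y)^2$ since $\delta_i(f_j)\notin I(Y)$ in general --- but one must show that the first-order jets so produced generate the conormal data of $Y$ in every direction $\delta_j$, and this is precisely where the hypothesis $\dim T_yY\le n-1$ must enter concretely (it provides a class in $I(Y)/I(Y)^2$ with nonvanishing differential at $y$, which then has to be realized inside the kernels of suitably rotated coordinate fields). Until that step is supplied, part (1) is not proved.
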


It can be shown that even under condition (2) the approximation theorems we had in the affine
case are valid for $\C^n \setminus Y$.

\section{ Volume density property}

As we mentioned before, the algebraic volume density property (see Definition \ref{1.20})
is much more delicate thing to prove than the algebraic density property. It is hopeless to
look for a $\C [X]$-module inside the Lie algebra $\LieAO (X)$ generated by completely
integrable algebraic vector fields of divergence zero with respect to an algebraic
volume form $\omega$ on $X$. Indeed, let
$\nu$ be a vector field on $X$ of $\omega$-divergence zero and
$f \in \C [X]$. Then $\diver_{\omega} (f \nu ) = \nu (f)$ is nonzero for general $f$. However,
some important facts remain valid in this volume-preserving case \cite{KK3}.

\begin{theorem}\label{4.10}
Let $X$ be a smooth
hypersurface in $\C_{u,v, \bar x}^{n+2}$ given by an equation
$uv=p({\bar x})$. Suppose that $Z \subset \C_{\bar x}^n$ is given
by $p(\bar x)=0$ and $H^{n-2}(Z,\C )=0$. Then $X$ has the algebraic
volume density property.
\end{theorem}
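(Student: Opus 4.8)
The plan is to imitate the strategy that worked for the algebraic density property of the hypersurfaces $uv=p(\bar x)$ (Theorem \ref{modification}), but since the criterion of Section 3 is unavailable in the volume-preserving case, one must argue directly with divergence-zero vector fields and with the de Rham/Cartan machinery hidden behind Remark \ref{appli.15}. Fix the volume form $\omega$ on $X=\{uv=p(\bar x)\}$; the natural candidate is the Leray-type form obtained from $\mathrm{d}u\wedge\mathrm{d}v\wedge\mathrm{d}\bar x$ on $\C^{n+2}$ by contraction with the differential of $uv-p(\bar x)$, i.e. $\omega=\frac{\mathrm{d}u}{u}\wedge\mathrm{d}\bar x$ (away from $u=0$, with the symmetric expression near $v=0$). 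A divergence-zero field annihilates $\omega$, and for such fields the right bookkeeping device is the closed holomorphic $(n)$-form $\iota_\nu\omega$: the map $\nu\mapsto\iota_\nu\omega$ identifies divergence-zero fields with closed $n$-forms, and a Lie bracket of two such fields corresponds, under this identification, to (plus or minus) the form $\mathrm{d}\,\iota_{\nu_1}\iota_{\nu_2}\omega$. So the whole game is to produce, inside the span of brackets of completely integrable divergence-zero fields, enough closed $n$-forms to recover all of $\Omega^n_{\mathrm{closed}}(X)$, equivalently all divergence-zero vector fields.

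First I would exhibit the basic completely integrable divergence-zero fields on $X$. On $\C^{n}_{\bar x}$ with its standard volume $\mathrm{d}\bar x$ the fields $f_i\,\partial/\partial x_i$ with $f_i\in\Ker(\partial/\partial x_i)$ are complete and divergence-free, and likewise the "shear-type" fields $\partial/\partial x_i$ and the locally nilpotent fields in the $(u,v)$ variables, such as $u\,\partial/\partial v$ (moving along $v$, fixing $u$ and $\bar x$) and $v\,\partial/\partial u$, which are complete $\C_+$-actions; one checks each annihilates $\omega$. More generally, for $a(\bar x,v)\in\Ker(u\partial_v+\cdots)$ appropriately chosen one gets a large supply of complete divergence-zero fields pulled back from the "slices" $u=\mathrm{const}$. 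Then, exactly as in Proposition \ref{2.30}, I would compute brackets of the shape $[\,f\,\partial_{x_i},\; x_i g\,\partial_{x_j}\,]-[\,x_i f\,\partial_{x_i},\; g\,\partial_{x_j}\,]=fg\,\partial_{x_j}$ to show that $\LieAO(X)$ already contains a $\C[X]$-submodule's worth of $n$-forms; the hypothesis $H^{n-2}(Z,\C)=0$ enters precisely here, to guarantee (via de Rham on the fibers of the map $X\to\C_{\bar x}^n$ and a Leray/Künneth argument) that the closed $n$-forms produced in this way, together with those coming from the $u$-$v$ directions, span all of $H^\bullet$-classes and hence—modulo exact forms, which are automatically in the module generated—all closed $n$-forms on $X$. (The cohomological input is the volume-case analogue of the condition $H^{n-1}(U,\C)=0$ used in Remark \ref{appli.15} to pass from a form to its primitive.)

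The transitivity of $\Aut X$ on $X$—which holds for these hypersurfaces—lets me run the module-sweeping argument of Theorem \ref{2.20} in the present context: translating the submodule of closed $n$-forms already obtained by volume-preserving automorphisms and summing, I get a submodule $N\subset\Omega^n_{\mathrm{closed}}(X)$ whose fiber at each point is everything, so $N$ is everything, and unwinding the correspondence $\nu\leftrightarrow\iota_\nu\omega$ gives $\LieAO(X)=\VFAO(X)$. The step I expect to be the genuine obstacle is the cohomological one: showing that the closed $n$-forms manufactured from the explicit complete fields, after the bracket manipulations, exhaust the de Rham $n$-classes of $X$, and that the leftover exact part is captured. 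This requires computing $H^\bullet(X,\C)$—$X$ fibers over $\C^n_{\bar x}$ with generic fiber $\C^*$ and special fiber (over $Z$) a nodal $\{uv=0\}$, so a Leray spectral sequence or a Mayer–Vietoris splitting $X=\{u\ne0\}\cup\{v\ne0\}$ with $\{u\ne0\}\cong\C^*\times\C^n$ gives $H^{n}(X)$ in terms of $H^{n-2}(Z)$—and it is exactly the vanishing $H^{n-2}(Z,\C)=0$ that kills the obstruction class and makes every relevant closed $n$-form exact on the two Runge pieces, hence reachable. The remaining bookkeeping—verifying divergence-zero for each explicit field, checking that the brackets land where claimed, and that "module generated by brackets" absorbs $\mathrm{d}(\text{anything })$—is routine once the cohomological vanishing is in hand.
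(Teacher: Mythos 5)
The statement you are addressing is only quoted in this survey---its proof lives in the reference \cite{KK3}---so there is no in-paper argument to match against; but your plan, as written, runs into precisely the obstruction that the paper flags in the opening lines of Section 4. Your strategy is ultimately to produce ``a $\C[X]$-submodule's worth'' of closed $n$-forms inside $\LieAO(X)$ and then run the module-sweeping argument of Theorem \ref{2.20}. This cannot work in the volume-preserving setting: since $\diver_{\omega}(f\nu)=\nu(f)$ is nonzero for general $f$, the Lie algebra $\LieAO(X)$ contains no nontrivial $\C[X]$-module of vector fields, and correspondingly the space of \emph{closed} $n$-forms is not a $\C[X]$-module (multiplying a closed form by a function destroys closedness), so the fiberwise-generation-plus-Nakayama step of Theorem \ref{2.20} has nothing to act on. The same problem already invalidates your bracket computation: in $[f\partial_{x_i},\,x_ig\partial_{x_j}]-[x_if\partial_{x_i},\,g\partial_{x_j}]=fg\partial_{x_j}$ the field $x_if\partial_{x_i}$ has divergence $f\neq 0$ and hence does not belong to $\IVFAO(X)$, and the output $fg\partial_{x_j}$ is likewise not divergence-free in general, so these brackets do not land in $\LieAO(X)$ at all. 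A smaller but real further issue is that the fields $\partial/\partial x_i$ do not live on the hypersurface $uv=p(\bar x)$; the completely integrable fields actually available are the lifts such as $u\partial_{x_i}+(\partial p/\partial x_i)\partial_v$ and $v\partial_{x_i}+(\partial p/\partial x_i)\partial_u$, and every computation must be carried out with these.

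What a correct argument must do instead---and what \cite{KK3} does---is work at the level of the vector space (not module) of closed forms $\iota_\nu\omega$: one restricts attention to brackets $[f\sigma,g\delta]$ with $f\in\Ker\sigma$ and $g\in\Ker\delta$ for suitable pairs of divergence-free completely integrable fields, since such brackets genuinely remain divergence-free and completely integrable; one then determines exactly which closed forms are reachable in this way, and invokes Grothendieck's algebraic de Rham theorem together with a Mayer--Vietoris computation of the relevant cohomology of $X$ in terms of $H^{n-2}(Z,\C)$ to conclude that the hypothesis $H^{n-2}(Z,\C)=0$ closes the gap between the reachable forms and all closed forms. Your final paragraph correctly locates where the cohomological hypothesis must enter, and your observation that $\iota_{[\nu_1,\nu_2]}\omega=\mathrm{d}\,\iota_{\nu_1}\iota_{\nu_2}\omega$ for divergence-free fields is the right starting point; but without replacing the module-sweeping step by a genuine vector-space argument of this kind, the plan does not close.
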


Note that unlike in Theorem \ref{modification} we have an extra assumption on the zero locus $Z$;
namely $H^{n-2} (Z, \C )=0$. It is unknown whether this assumption is essential.
However this is enough to show that $SL_2$ (given by equation $uv=xy+1$ in $\C^4$) and
$PSL_2$ have the algebraic volume density property with respect to the invariant volume form
which is important in the proof of the next result \cite{KK3}.

\begin{theorem}\label{4.20}
Every linear algebraic group has the algebraic volume density property with respect
to the left-invariant (or right-invariant) volume form.
\end{theorem}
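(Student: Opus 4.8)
The plan is to reduce an arbitrary linear algebraic group, via the structure theory, to the building blocks whose algebraic volume density property is already available — the affine spaces $\C^N$ (Andersén), the tori $(\C^*)^k$, and the groups $SL_2$ and $PSL_2$, for which Theorem \ref{4.10} applies as explained above — and then to glue the pieces together with the algebraic product theorem of \cite{KK2}. First one passes to a connected group: $G$ is a finite disjoint union of translates of its identity component $G^0$, the left-invariant form restricts on each translate to the Haar form of $G^0$, and for a disjoint union the spaces $\VFAO$ and $\LieAO$ split as direct products over the connected components (vector fields, brackets and $\omega$-divergence all decompose componentwise), so it suffices to treat connected $G$. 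For connected $G$ take the Levi decomposition $G=G_u\rtimes L$, with $G_u$ the unipotent radical and $L$ a connected reductive Levi subgroup. The inclusion $L\hookrightarrow G$ splits the projection $G\to L$, so $(n,l)\mapsto nl$ is an isomorphism of varieties $G_u\times L\to G$; the exponential map identifies $G_u$, with its Haar form, with $\C^N$ carrying the standard volume form, which has the algebraic volume density property. The delicate point here is that under $G_u\times L\cong G$ the left-invariant form of $G$ agrees with the product of the standard form on $G_u$ and the Haar form on $L$ only up to the character $l\mapsto\det\mathrm{Ad}(l)|_{\mathrm{Lie}(G_u)}$ of $L$, pulled back to $G$; one must check that this twist is harmless — absorbing it into the $L$-factor and re-running the product argument, or invoking directly the fibered mechanism underlying Theorem \ref{4.10}. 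Granting this, the product theorem reduces the statement to connected reductive groups.

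For a connected reductive group $L$ there is a central isogeny $\pi\colon T\times\prod_i G_i\to L$, with $T$ a torus and each $G_i$ simple and simply connected, and the Haar form pulls back to the Haar form. Every $\C^*=\{uv=1\}$ trivially has the algebraic volume density property with respect to $dz/z$ (a divergence-zero algebraic field is a constant multiple of the complete field $z\,\partial_z$), hence so does each torus $(\C^*)^k$ by the product theorem, and $SL_2$, $PSL_2$ have it by Theorem \ref{4.10}. So, modulo descent of the property along the finite central — hence étale and free — quotient $\pi$ (carried out by lifting completely integrable divergence-zero fields to $\pi$-invariant ones and an averaging argument), and modulo the product theorem, what remains is a simple simply connected group $G$ that is not of type $A_1$.

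This last case is the core of the proof and, I expect, the main obstacle. I would induct on $\dim G$ through a maximal parabolic $P\subset G$ (or, for the classical series, through an orbit map such as $SL_n\to\C^n\setminus\{0\}$): the fiber is assembled from the unipotent radical of $P$ — an affine space with the standard form — and a reductive group of smaller semisimple rank, to which the induction hypothesis, the previous step, and the product theorem apply (the volume twist now being trivial, the relevant Levi being semisimple). The difficulty is that such a fibration is a \emph{nontrivial} bundle, so the product theorem cannot be applied to the total space directly; what is needed is a \emph{relative}, fibered version of the algebraic volume density property — the analogue for divergence-zero fields of the fibered compatibility criterion of Proposition \ref{2.112} — guaranteeing that the property descends from the fibers (over a base one controls) to the total space while respecting the divergence. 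Establishing this relative statement, equivalently feeding $G$ or a suitable $G$-variety into the machinery that proves Theorem \ref{4.10}, is the hard part. Once all the simple factors are in place, the isogeny descent, the product theorem and the Levi reduction recover the general linear algebraic group, and the right-invariant case follows by applying the inversion automorphism (or repeating the proof verbatim).
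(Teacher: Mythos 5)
Your peripheral reductions are sound and match the inputs the survey itself points to: splitting off connected components, $G_u\cong\C^N$ via the exponential map, $\C^*$ and hence all tori by the product theorem (Theorem \ref{4.30}(2)), $SL_2$ and $PSL_2$ via Theorem \ref{4.10}, and inversion to pass between the left- and right-invariant forms. But the proposal does not prove the theorem: the case you yourself isolate as the core --- a simple, simply connected group of rank at least two --- is left resting on a hypothetical ``relative, fibered algebraic volume density property'' for parabolic fibrations that you neither formulate nor establish, and which cannot be a routine analogue of Proposition \ref{2.112}: that criterion works by catching a $\C[X]$-module inside $\LieA(X)$, and the paper warns at the opening of Section 4 that exactly this mechanism is unavailable in the volume setting, since $\diver_\omega(f\nu)=\nu(f)$ is nonzero for general $f$ even when $\diver_\omega\nu=0$. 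In \cite{KK3} the semisimple case is not treated by induction through parabolics; one works directly on $G$ with the invariant form, exploiting that every algebraic one-parameter subgroup gives a completely integrable left-invariant field of zero divergence with respect to the right-invariant form, and verifying a criterion that requires finitely many semi-compatible pairs $(\nu,\mu)$ of such fields whose wedge products $\nu\wedge\mu$ generate $\Lambda^2T_xG$ at every point --- the point being that, under the duality $\nu\mapsto\iota_\nu\omega$, the exact $(n-1)$-forms one must produce come from $(n-2)$-forms of the shape $fg\,\iota_\nu\iota_\mu\omega$ with $f\in\Ker\nu$, $g\in\Ker\mu$, so one has to generate bivectors rather than tangent vectors. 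That substitute for the module trick is the actual content of the theorem and is absent from your plan.

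Two of your intermediate steps also need more than a flag. The twist by $\det\mathrm{Ad}(l)|_{\mathrm{Lie}(G_u)}$ in the Levi decomposition is a genuine obstruction, not a formality: already for $G=\C\rtimes\C^*$ the invariant form is not the product of the factors' invariant forms, and the algebraic volume density property is not known to be stable under rescaling $\omega$ by a character (the space $\VFAO(X)$ itself changes), so ``absorbing the twist and re-running the product argument'' is not available as stated. Likewise, descent along a finite central isogeny is not obtained by averaging: an average of completely integrable fields need not be completely integrable, and it is not clear that the pushed-down fields generate all of $\LieAO$ downstairs. In \cite{KK3} both difficulties are bypassed precisely by not decomposing $G$ at these stages and instead checking the wedge criterion on $G$ itself with its honest invariant form.
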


The proof uses also the second statement of the following theorem which is not at all
obvious in the volume-preserving case.

\begin{theorem}\label{4.30}
{\rm (1)} For an affine algebraic manifold $X$ equipped with an algebraic
volume form $\omega$ the algebraic volume density property implies
the volume density property (in the holomorphic sense).

{\rm (2)} If affine algebraic manifolds
$X$ and $Y$ have the algebraic volume density property with respect to volume forms
$\omega_X$ and $\omega_Y$, then so
does $X \times Y$ with respect to the volume form $\omega_X \times  \omega_Y$.
\end{theorem}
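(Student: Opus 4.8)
The plan is to treat the two parts separately; in both, the organizing device is the contraction $\nu\mapsto\iota_\nu\omega$, which carries a field into a form of top degree minus one. Since $\omega$ is a top-degree form we have $d\omega=0$, so by Cartan's formula $\diver_\omega\nu=0$ is equivalent to $d(\iota_\nu\omega)=0$; and as $\omega$ is nowhere vanishing, $\nu\mapsto\iota_\nu\omega$ is a topological isomorphism between fields and forms, matching divergence-free fields with \emph{closed} forms. I will use this dictionary on both the algebraic and the holomorphic side.

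For part (1) the goal is to show that $\VFAO(X)$ is dense in $\VFHO(X)$ in the compact-open topology. This suffices: every completely integrable algebraic divergence-free field is also such a holomorphic field, so $\VFAO(X)=\LieAO(X)\subseteq\LieHO(X)$, and density of $\VFAO(X)$ then forces the closure of $\LieHO(X)$ to contain all of $\VFHO(X)$. Via the dictionary, density reduces to approximating an arbitrary closed holomorphic $(n-1)$-form $\alpha$ (with $n=\dim X$) by closed \emph{algebraic} $(n-1)$-forms. Here I would invoke Grothendieck's comparison theorem on the smooth affine variety $X$: algebraic and holomorphic de Rham cohomology coincide, so the class of $\alpha$ admits a closed algebraic representative $\alpha_0$, whence $\alpha-\alpha_0=d\beta$ for a holomorphic $(n-2)$-form $\beta$. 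Because $X$ is affine, regular forms are dense in holomorphic forms in the compact-open topology (Oka--Weil approximation together with Cartan extension from the closed embedding $X\subseteq\C^N$), so I may choose an algebraic $(n-2)$-form $\beta'$ approximating $\beta$; by the Cauchy estimates $d\beta'$ approximates $d\beta$, and $\alpha_0+d\beta'$ is a closed algebraic $(n-1)$-form approximating $\alpha$. This is exactly the volume-preserving primitive trick of Remark \ref{appli.15}, and it is where $n\geq 2$ (so that $(n-2)$-forms exist) enters.

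For part (2), set $n=\dim X$, $m=\dim Y$, $\omega=\omega_X\times\omega_Y$, and split each field on $X\times Y$ as $W=W_X+W_Y$ into its $X$-tangent and $Y$-tangent parts. The first step is to show that every $X$-tangent divergence-free field lies in $\LieAO(X\times Y)$ (and symmetrically for $Y$). For completely integrable $\nu\in\VFAO(X)$ and any $a\in\C[Y]$, the field $a(y)\,\nu$ is again completely integrable (its flow is the $\nu$-flow run for the fixed time $a(y)$) and divergence free, since $\diver_\omega(a\nu)=a\,\diver_{\omega_X}\nu+\nu(a)=0$; hence $a\nu\in\LieAO(X\times Y)$. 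As $\nu_2(c)=0$ for $c\in\C[Y]$, one gets $[c\nu_1,\nu_2]=c[\nu_1,\nu_2]$, so iterating the bracket over the completely integrable generators yields $c\,\mu\in\LieAO(X\times Y)$ for all $c\in\C[Y]$ and all $\mu\in\VFAO(X)$ (invoking the algebraic volume density property of $X$ to know such brackets span $\VFAO(X)$). Writing a general $X$-tangent divergence-free field as $\sum_j c_j(y)\mu_j(x)$ with the $c_j$ linearly independent over $\C$ and comparing $x$-divergences shows each $\mu_j\in\VFAO(X)$, so all $X$-tangent divergence-free fields lie in $\LieAO(X\times Y)$.

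It remains to decouple the two divergences, and this is the main obstacle. For divergence-free $W=W_X+W_Y$ put $d:=\diver_\omega W_X=-\diver_\omega W_Y$; the tangent parts need not individually be divergence free. My plan is to construct a correction $V\in\LieAO(X\times Y)$ realizing the same mismatch $d$, so that $W-V$ has divergence-free tangent parts and lies in $\LieAO(X\times Y)$ by the previous step. The correction is built from cross-brackets: for $\mu\in\VFAO(X)$, $\lambda\in\VFAO(Y)$, $c\in\C[Y]$, $e\in\C[X]$ one has $[\mu,\lambda]=0$, hence
$$[c\,\mu,\,e\,\lambda]=c(y)\,(\mu e)(x)\,\lambda-e(x)\,(\lambda c)(y)\,\mu,$$
a divergence-free field in $\LieAO(X\times Y)$ whose $X$-tangent part has $x$-divergence $-(\mu e)(x)(\lambda c)(y)$. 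Thus I can match any $d$ lying in $\image(\diver_{\omega_X})\otimes\image(\diver_{\omega_Y})$, once I know $\image(\diver_{\omega_X})=\Span\{\mu(e):\mu\in\VFAO(X),\,e\in\C[X]\}$; this holds because divergence-free fields span the tangent space at every point (property (A)), so $\C[X]\cdot\VFAO(X)=\VFA(X)$ by the fibrewise-surjectivity argument of Theorem \ref{2.20}, and $\diver_\omega(e\nu)=\nu(e)$ for divergence-free $\nu$. The delicate point, which I would verify most carefully, is that $d$ genuinely lies in $\image(\diver_{\omega_X})\otimes\image(\diver_{\omega_Y})$: being an $x$-divergence places $d$ in $\image(\diver_{\omega_X})\otimes\C[Y]$, being a $y$-divergence places it in $\C[X]\otimes\image(\diver_{\omega_Y})$, and since each image has finite codimension (the cokernels being the top de Rham groups $H^n_{\mathrm{dR}}(X)$ and $H^m_{\mathrm{dR}}(Y)$), an elementary tensor-product identity for subspaces gives that the intersection of these two subspaces is exactly $\image(\diver_{\omega_X})\otimes\image(\diver_{\omega_Y})$. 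Subtracting the resulting $V$ then shows $W\in\LieAO(X\times Y)$, completing the argument.
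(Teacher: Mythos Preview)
Your proof is correct and follows essentially the same route as the paper. For part (1) your argument is identical to the paper's sketch: pass to closed $(n-1)$-forms via $\nu\mapsto\iota_\nu\omega$, use Grothendieck's comparison theorem to get an algebraic representative of the cohomology class, then approximate the holomorphic primitive of the difference by an algebraic one. For part (2) the paper defers the actual proof to \cite{KK3} and instead proves the harder holomorphic analogue (Theorem \ref{4.40}); your argument is precisely the algebraic version of that proof---the same decoupling via cross-brackets and the same tensor-intersection identity $(F_X\otimes\C[Y])\cap(\C[X]\otimes F_Y)=F_X\otimes F_Y$---with the simplification that $\C[X\times Y]=\C[X]\otimes_\C\C[Y]$ is an honest algebraic tensor product, so the nuclear-space machinery (Lemma \ref{4.108}, Bungart's Proposition \ref{4.110}) collapses to elementary linear algebra.
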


{\em Sketch of the Proof of (1).}
Let $\mu$ be a holomorphic vector field such that $\mu (\omega
)=0$. Our aim is to find an algebraic vector field $\nu$ with $\nu (\omega )=0$
that approximates $\mu$.

In the first step one can show that
there is a natural duality between the holomorphic (resp. algebraic) vector fields of
$\omega$-divergence zero and the closed
$(n-1)$-forms on $X$ given by $\mu \to \iota_\mu (\omega)$ where $\iota_\mu$ is
the inner product (see \cite[Lemmas 3.5]{KK3}).

Next one needs to use the remarkable
\textsc{Grothendieck}'s theorem \cite{Gro} that states that the computation of de Rham cohomology
on a smooth affine algebraic variety can be achieved via the complex
of algebraic forms. Thus there exists an algebraic $(n-1)$-form $\tau_{n-1}$
such that $\iota_\mu (\omega) -\tau$ is exact and equal to ${\rm
d} \tau_{n-2}$ where $\tau_{n-2}$ is a holomorphic $(n-2)$-form.
Approximate  $\tau_{n-2}$ by an algebraic form  $\tau_{n-2}'$ and
set $\nu$ equal to the dual of the form  $ \tau_{n-1}+{\rm d} \tau_{n-2}'$. \hspace{4in} $\square$

Instead of proving (2) we present a more complicated proof of its analogue in the holomorphic
case mentioned in the introduction.

\begin{theorem}\label{4.40}
If Stein manifolds
$X$ and $Y$ have the volume density property with respect to forms
$\omega_X$ and $\omega_Y$, so
does $X \times Y$ with respect to the form $\omega_X \times  \omega_Y$.

\end{theorem}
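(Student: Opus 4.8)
The plan is to imitate the proof of the algebraic analogue, Theorem~\ref{4.30}(2), replacing the module‑theoretic bookkeeping used there by facts about completed tensor products of Fréchet spaces. Put $p=\dim X$, $q=\dim Y$, $\omega=\omega_X\times\omega_Y$, and let $\LieHO$, $\VFHO$ without decoration refer to $X\times Y$. First I would collect explicit members of $\LieHO(X\times Y)$. If $\zeta\in\LieHO(X)$ and $h\in\cO(Y)$, then $h\zeta$ (pulled back from $X$, scaled by a function of the $Y$‑variable) lies in $\LieHO(X\times Y)$: when $\zeta$ is completely integrable of $\omega_X$‑divergence zero, $h\zeta$ is again completely integrable (its flow is that of $\zeta$ reparametrised by the leafwise‑constant factor $h$) and of $\omega$‑divergence zero, while for brackets $h\,[\zeta_1,\zeta_2]=[h\zeta_1,\zeta_2]$ since the vertical field $\zeta_2$ annihilates $h$, so induction on bracket length applies. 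Symmetrically $g\mu\in\LieHO(X\times Y)$ for $\mu\in\LieHO(Y)$, $g\in\cO(X)$; and, since $[\zeta,\mu]=0$ for vertical $\zeta$ and horizontal $\mu$, all ``mixed brackets''
\[
[\,b\zeta,\,a\mu\,]=\zeta(a)\,b\mu-\mu(b)\,a\zeta\qquad(\zeta\in\LieHO(X),\ \mu\in\LieHO(Y),\ a\in\cO(X),\ b\in\cO(Y))
\]
lie in $\LieHO(X\times Y)$.

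Next comes the bookkeeping. Since $X,Y$ are Stein, $T(X\times Y)=\pi_X^{*}TX\oplus\pi_Y^{*}TY$, every holomorphic vector bundle on a Stein manifold is a direct summand of a trivial one, and $\cO(X\times Y)=\cO(X)\,\widehat{\otimes}\,\cO(Y)$; hence
\[
\VFH(X\times Y)=\bigl(\VFH(X)\,\widehat{\otimes}\,\cO(Y)\bigr)\oplus\bigl(\cO(X)\,\widehat{\otimes}\,\VFH(Y)\bigr),
\]
and, $\omega=\omega_X\times\omega_Y$ being a product form, the $\omega$‑divergence of $V=V^{v}+V^{h}$ equals $D_{X}V^{v}+D_{Y}V^{h}$ with $D_{X}:=\diver_{\omega_X}\widehat{\otimes}\,\mathrm{id}$ and $D_{Y}:=\mathrm{id}\,\widehat{\otimes}\,\diver_{\omega_Y}$; thus $\VFHO(X\times Y)=\{V^{v}+V^{h}:D_{X}V^{v}=-D_{Y}V^{h}\}$. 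Set $I_X:=\diver_{\omega_X}(\VFH(X))\subseteq\cO(X)$ and likewise $I_Y$. Two remarks will be used: (a) for $\zeta\in\LieHO(X)$ one has $\zeta(a)=\diver_{\omega_X}(a\zeta)\in I_X$, and by property~(A) (see Lemma~\ref{4.50}) there are completely integrable divergence‑zero fields $\nu_1,\dots,\nu_N$ generating $TX$, whence every $\diver_{\omega_X}(\xi)=\sum_i\nu_i(g_i)$, so $\{\zeta(a):\zeta\in\LieHO(X),\ a\in\cO(X)\}$ spans $I_X$ (similarly for $Y$); (b) as $\LieHO(X)$ is dense in $\VFHO(X)$, the closed linear span of $\{h\zeta\}$ is $\VFHO(X)\,\widehat{\otimes}\,\cO(Y)$ and that of $\{g\mu\}$ is $\cO(X)\,\widehat{\otimes}\,\VFHO(Y)$.

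The crucial — and, I expect, hardest — ingredient is functional‑analytic: the image of $\diver_{\omega_X}\colon\VFH(X)\to\cO(X)$ is \emph{closed}. Under the duality $\nu\mapsto\iota_\nu\omega_X$ of \cite{KK3} this says that $d(\Omega^{p-1}(X))$ is closed in $\Omega^{p}(X)$, i.e. that the top‑degree holomorphic de Rham cohomology of the Stein manifold $X$ is Hausdorff; the same is needed for $Y$. Granting this, and using that $\cO(X)$, $\Omega^{\bullet}(X)$, $\VFH(X)$ are nuclear Fréchet spaces, so that $(-)\,\widehat{\otimes}\,\cO(Y)$ carries topologically exact sequences to topologically exact sequences and closed subspaces to closed subspaces, one obtains: $\ker D_{X}=\VFHO(X)\,\widehat{\otimes}\,\cO(Y)$ and its horizontal analogue; $\operatorname{im}D_{X}=\overline{I_X}\,\widehat{\otimes}\,\cO(Y)$ and $\operatorname{im}D_{Y}=\cO(X)\,\widehat{\otimes}\,\overline{I_Y}$ are closed; and, from $0\to\overline{I_X}\to\cO(X)\to\cO(X)/\overline{I_X}\to0$ and its $Y$‑counterpart, $\operatorname{im}D_{X}\cap\operatorname{im}D_{Y}=\overline{I_X}\,\widehat{\otimes}\,\overline{I_Y}$. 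In particular, for every $V=V^{v}+V^{h}\in\VFHO(X\times Y)$ the common value $f:=D_{X}V^{v}=-D_{Y}V^{h}$ lies in $\overline{I_X}\,\widehat{\otimes}\,\overline{I_Y}$.

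Finally I would conclude by Hahn--Banach: let $\ell$ be a continuous linear functional on $\VFHO(X\times Y)$ vanishing on $\LieHO(X\times Y)$ and show $\ell=0$. Then $\ell$ vanishes on $\ker D_{X}=\VFHO(X)\,\widehat{\otimes}\,\cO(Y)$ and on $\ker D_{Y}=\cO(X)\,\widehat{\otimes}\,\VFHO(Y)$; since $D_{X}$, $D_{Y}$ have closed range this forces $\ell(V^{v}+V^{h})=\lambda_{1}(D_{X}V^{v})+\lambda_{2}(D_{Y}V^{h})$ for some continuous functionals $\lambda_{1},\lambda_{2}$ on $\cO(X\times Y)$, hence $\ell(V)=\lambda(f)$ on $\VFHO(X\times Y)$, where $\lambda:=\lambda_{1}-\lambda_{2}$ is a continuous functional on $\overline{I_X}\,\widehat{\otimes}\,\overline{I_Y}$. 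But $\ell$ also kills each mixed bracket $[\,b\zeta,\,a\mu\,]$, whose vertical part $-\mu(b)\,a\zeta$ has $D_{X}$‑image $-\mu(b)\,\zeta(a)$; so $\lambda(\zeta(a)\mu(b))=0$ for all $\zeta,\mu,a,b$. As $\{\zeta(a)\}$ spans $I_X$ and $\{\mu(b)\}$ spans $I_Y$, $\lambda$ vanishes on the dense subspace $\operatorname{span}(I_X\!\cdot\!I_Y)$ of $\overline{I_X}\,\widehat{\otimes}\,\overline{I_Y}$, so $\lambda=0$ and $\ell=0$. Hence $\LieHO(X\times Y)$ is dense in $\VFHO(X\times Y)$, which is the assertion. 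The principal obstacle, as flagged, is the closed‑range (Hausdorffness) statement for $\diver_{\omega_X}$ together with the permanence of these properties under $(-)\,\widehat{\otimes}\,\cO(Y)$ for the relevant nuclear Fréchet spaces — precisely the point where nontrivial functional analysis is needed; one should also verify the decomposition of $\VFH(X\times Y)$ and the factorisation of the divergence, both of which rest on $X,Y$ being Stein.
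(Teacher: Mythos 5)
Your proposal is correct in substance and, despite its dual formulation, rests on the same pillars as the paper's proof: the horizontal/vertical splitting of $\VFH (X\times Y)$, the identification of the image of the divergence with the space $F_X$ of Notation \ref{4.70} (your $I_X$) together with its closedness, the exactness of Grothendieck's completed tensor product for nuclear Fr\'echet spaces with the intersection identity $(F_X \hat \otimes \Hol (Y))\cap (\Hol (X) \hat \otimes F_Y)=F_X\hat \otimes F_Y$ (Lemma \ref{4.108}), and the mixed brackets $[b\zeta , a\mu ]=\zeta (a) b\mu - \mu (b) a \zeta$, which are exactly the brackets $[y^k\delta_i , b_{i,k}\mu ]$ of Proposition \ref{4.140}; the generation statement of Lemma \ref{4.50} enters in both arguments to see that $\{\zeta (a)\}$ spans all of $F_X$. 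Where you genuinely diverge is the endgame. The paper constructs the approximating fields explicitly: it solves $f=\sum \delta_i (b_i)$ via the surjectivity of $\Theta \hat \otimes {\rm id}$ (Lemma \ref{4.120}), expands the $b_i$ in monomials and passes to a limit of finite sums of mixed brackets (Proposition \ref{4.140}), and uses Bungart's theorem (Proposition \ref{4.110}) to absorb the fiberwise divergence-free pieces into $\Hol (X) \hat \otimes \Lie_{\rm hol}^{\omega_Y}(Y)$. You instead dualize via Hahn--Banach (implicitly extending $\ell$ from the closed subspace $\VFHO (X\times Y)$ to all of $\VFH (X\times Y)$ before splitting it), which lets you dispense with Lemma \ref{4.120}, the Weierstrass limit, and Bungart's theorem, at the price of having to justify the decomposition $\VFH (X\times Y)=(\VFH (X)\hat \otimes \cO (Y))\oplus (\cO (X)\hat \otimes \VFH (Y))$ and the factorization $\diver_\omega = D_X\oplus D_Y$; your justification (a tangent bundle on a Stein manifold is a direct summand of a trivial one, plus $\Hol (X\times Y)=\Hol (X)\hat \otimes \Hol (Y)$) is sound, and it effectively replaces the paper's appeal to Cartan's Theorem B and weak holomorphy.

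The one ingredient you grant rather than prove is the closedness of $I_X={\rm im}(\diver_{\omega_X})$ in $\Hol (X)$, without which the open mapping theorem, the factorization $\ell = \lambda_1\circ D_X + \lambda_2 \circ D_Y$, and the exactness arguments for $\hat \otimes$ all collapse. You correctly reduce it to Hausdorffness of the top-degree holomorphic de Rham cohomology of a Stein manifold; the paper supplies this in Lemma \ref{4.100}: since the holomorphic de Rham complex computes $H^*(X,\C )$, the exact holomorphic $n$-forms are exactly those with vanishing periods over all real $n$-cycles, a condition that is closed in the compact-open topology. With that (short) argument added, and Lemma \ref{4.50} cited for the finite generating set, your proof is complete.
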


In preparation for the proof of Theorem \ref{4.40} we must establish a number of facts. Recall that for a complex
manifold $X$ and a holomorphic volume form $\omega$ on it the space of holomorphic vectors fields of
$\omega$-divergence zero is denoted by $\VFHO (X)$ and the Lie algebra generated by the set  $\IVFHO (X)$ of completely integrable holomorphic
vector fields from $ \VFHO (X)$ is denoted by $\LieHO (X)$.  First we shall prove the holomorphic analogue of Lemma 4.2 from \cite{KK3}.

\begin{lemma} \label{4.50}
Let $\omega$ be a holomorphic form on a Stein manifold $X$ such that $X$ has the volume density property with respect to $\omega$. Then there exist finitely many
vector fields $\delta_1, \delta_2, \ldots , \delta_N \in \IVFHO (X)$ such that
${\rm Span}\{ \delta_i  (x) | \, i=1, \ldots, N \} = T_x X$ for every point $ x\in X$.

\end{lemma}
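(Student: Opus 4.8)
The plan is to mimic the proof of Lemma 4.2 in \cite{KK1} (the non-volume version, cited as part of property (A)), adapting it to the divergence-zero setting. First I would fix a point $p\in X$ and produce, using the volume density property, finitely many fields in $\IVFHO(X)$ that span $T_pX$. This is elementary: by definition $\LieHO(X)$ is dense in $\VFHO(X)$, and $\VFHO(X)$ spans $T_pX$ (indeed the divergence-zero condition $\nu(\omega)=0$ is only one scalar equation, so the evaluation map $\VFHO(X)\to T_pX$ is still surjective — e.g.\ in local coordinates one can write down divergence-zero fields pointing in every direction); hence $\LieHO(X)$ spans $T_pX$. But an element of $\LieHO(X)$ is a finite iterated bracket of fields in $\IVFHO(X)$, and a bracket need not itself be completely integrable. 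So the real content is to pass from "brackets of integrable divergence-zero fields span $T_pX$" to "integrable divergence-zero fields span $T_pX$". The standard trick here is the identity that for complete fields $\theta_1,\theta_2$ and small time the field $(\phi^{\theta_1}_t)_*\theta_2$ is again complete and divergence-zero (since pushforward by a volume-preserving flow preserves divergence zero), and that $\tfrac{d}{dt}\big|_{t=0}(\phi^{\theta_1}_t)_*\theta_2 = -[\theta_1,\theta_2]$. Thus $[\theta_1,\theta_2]$ is a limit of differences of fields in $\IVFHO(X)$, i.e.\ lies in the closure of the linear span of $\IVFHO(X)$; iterating, the whole of $\LieHO(X)$ lies in that closure, hence that closed linear span is all of $\VFHO(X)$, and in particular its fibre at $p$ is $T_pX$.

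Next I would make this quantitative and uniform. Having the closed linear span of $\IVFHO(X)$ equal to $\VFHO(X)$, and $\VFHO(X)$ spanning $T_pX$, I can find finitely many $\delta_1,\dots,\delta_N\in\IVFHO(X)$ whose values at $p$ span $T_pX$. The set $U$ of points where $\delta_1(x),\dots,\delta_N(x)$ span $T_xX$ is Zariski-open-like: it is the set where an explicit collection of $(\dim X)\times(\dim X)$ minors (in any local frame) does not all vanish, hence open in the usual topology and, being the non-vanishing locus of holomorphic sections of a line bundle twisted appropriately, its complement is a proper analytic subset of $X$. So it remains to enlarge the finite family so as to kill the bad set $A:=X\setminus U$. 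For each point $q\in A$ I can, by the argument of the previous paragraph applied at $q$, adjoin finitely many more fields from $\IVFHO(X)$ so that after enlargement the new bad set is a proper analytic subset of $A$, hence of strictly smaller dimension; by Noetherian induction on $\dim A$ this terminates after finitely many enlargements, leaving us with a finite family $\delta_1,\dots,\delta_N\in\IVFHO(X)$ spanning $T_xX$ at every $x\in X$. (Alternatively one invokes Cartan's Theorem A: the coherent sheaf generated by all of $\VFHO(X)$ is the full tangent sheaf, it is globally generated, and on a Stein manifold finitely many global sections suffice to generate it fibrewise; one then approximates those finitely many global divergence-zero fields by elements of $\LieHO(X)$ and, via the pushforward trick, by finite sums of elements of $\IVFHO(X)$, and the generating property is open so it persists.)

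The main obstacle is precisely the gap between $\LieHO(X)$ and the linear span of $\IVFHO(X)$: brackets of complete volume-preserving fields are generally neither complete nor expressible as a single complete field, so one cannot simply read off the conclusion from the definition of the volume density property. Making the pushforward/limit argument rigorous in the \emph{holomorphic} category — ensuring the flows $\phi^{\theta_i}_t$ are defined on a fixed holomorphically convex exhausting family long enough, that $(\phi^{\theta_i}_t)_*\theta_j$ converges in the compact-open topology to $\theta_j - t[\theta_i,\theta_j]+o(t)$ uniformly on compacts, and that the limit field is genuinely holomorphic on all of $X$ and of $\omega$-divergence zero — is the delicate point, and is exactly where the holomorphic proof is more involved than the algebraic one and where the "nontrivial facts from functional analysis" alluded to in the introduction enter. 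Once that is in hand, the globalization over $X$ via the dimension induction on the bad locus is routine.
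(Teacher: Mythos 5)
Your middle step (the bracket-approximation trick) is exactly the paper's: it uses $[\nu,\mu]=\lim_{t\to 0}\frac{\phi_t^*(\nu)-\nu}{t}$ to show every element of $\LieHO (X)$ lies in the closed linear span of $\IVFHO (X)$. But the two outer steps of your plan have real problems. First, the surjectivity of the evaluation map $\VFHO (X)\to T_pX$ is not established by writing down \emph{local} divergence-zero fields; one needs \emph{global} ones, and this is a genuine piece of content. The paper proves it as a separate Claim: it identifies divergence-zero fields with closed $(n-1)$-forms via $\nu\mapsto\iota_\nu(\omega)$, picks a Runge neighbourhood $U$ of $p$ with $H^{n-1}(U,\C)=0$ so that the local closed form is exact, $={\rm d}\beta$, approximates the $(n-2)$-form $\beta$ by a global holomorphic $(n-2)$-form, and takes ${\rm d}$ of the result; the case $n=1$ requires a separate argument (there $X$ must be $\C$ or $\C^*$). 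Your sketch elides all of this.

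Second, and more seriously, your globalization step fails as written. The bad set $A$ where $\delta_1,\dots,\delta_N$ do not span is a closed analytic subset of the noncompact Stein manifold $X$ and may have \emph{countably many} irreducible components $A_1,A_2,\dots$ (the paper notes this explicitly). Adjoining finitely many fields that span at a single point $q\in A$ only cuts down the components passing through $q$; infinitely many others can survive with the same dimension, so there is no ``Noetherian induction on $\dim A$'' --- analytic subsets of a Stein manifold are not Noetherian in the required sense. Your fallback (``the generating property is open so it persists'') also fails globally, since compact-open approximation controls the fields only on compacta and spanning can be lost near infinity. The missing idea is the paper's Baire-category argument: equip $\Aut_{\rm hol}^\omega (X)$ with the complete metric (4.1); show that for each component $A_i$ the set $Z_i$ of volume-preserving automorphisms $\Psi$ with $\Psi(A_i)\cap(X\setminus A)\neq\emptyset$ is open and dense (density comes from composing with flows of fields in $\IVFHO (X)$ not tangent to $A_i$); pick $\Phi\in\bigcap_i Z_i$. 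Then $\theta_1,\dots,\theta_n,\Phi_*\theta_1,\dots,\Phi_*\theta_n$ span the tangent space outside an analytic set of strictly smaller dimension, simultaneously for all the $A_i$, and induction on dimension finishes. Without a device of this kind the finite family you want is not obtained.
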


\begin{proof}
We start with the following.

{\em Claim.} The set $\VFHO (X)$ generate $T_xX$ for every $x \in X$.

Lets us assume first that the dimension of $X$ is bigger than one.
Let $x \in X$ and $U$ be a Runge neighborhood of $x$ such that $H^{n-1} (U, \C) = 0$ where $n = \dim X$ (take,
for instance, a small sublevel set $U$ of a strictly plurisubharmonic  exhaustion
function on $X$ with minimum at $x$).
Shrinking $U$ we can assume that in some holomorphic coordinate system $z_1, \ldots ,z_n$
on $U$
the form $\omega \vert_U$ is the standard volume  ${\rm d} z_1\wedge  \ldots \wedge {\rm d} z_n$.
Thus the holomorphic vector fields $\partial /\partial z_i$ on $U$ are of divergence zero
and they span the tangent space at $x$. We need to approximate them by global holomorphic
fields of divergence zero on $X$ which would yield our claim.  As in Theorem \ref{4.30}
consider the inner product $\iota_\nu (\omega ) =: \alpha$ for $\nu \in \VFHO (U)$.  By \cite[Lemma 3.5 (1)]{KK3} $\alpha$ is a closed
$(n-1)$-form  on $U$ and since $H^{n-1} (U, \C) = 0$ we  can
find an $(n-2)$-form $\beta$ on $U$ with ${\rm d} \beta = \alpha$.
Since $U$ is Runge in $X$ we can also
approximate $\beta$ by a global holomorphic $(n-2)$-form $\tilde \beta$ (uniformly on compacts in $U$). Then the closed holomorphic $(n-1)$-form ${\rm d} \tilde\beta$
approximates $\alpha$ and the unique holomorphic vector field $\theta$ defined by
$\iota_\theta (\omega ) = {\rm d} \tilde \beta$ approximates $\nu$. Since
${\rm d} \tilde \beta$ is closed, the field $\theta$ is of divergence zero which
concludes the proof of the Claim in the case when the dimenson of $X$ is bigger than one.

Now assume the dimension of $X$ is equal to one, i.e., $X$ is 
a smooth Stein curve with the volume density property.
In particular, it admits a non-trivial completely integrable holomorphic vector field and, therefore, a non-constant
holomorphic map $\C \to X$.  This implies that $X$ is isomorphic to either $\C_+$ or $\C^*$. In each of these cases the volume
form $\omega$ for which $X$ has the volume density property is the unique (up to a constant factor) invariant volume form on the corresponding group
and there exists
a nowhere vanishing completely integrable vector field of $\omega$-divergence zero which concludes the proof of the Claim.

It follows from the Claim and  the  volume density property
that  vector fields from $ \LieHO (X)$ span the tangent space $T_xX$  at any given
point $x \in X$. Observe that every Lie bracket $[\nu , \mu ]$ of completely integrable holomorphic  vector fields of divergence zero
can be approximated by a linear combination of such fields which follows immediately from
the equality $[ \nu , \mu ] = \lim_{t \to 0}  {\frac{\phi_t^* (\nu ) - \nu}{t}}$ where $\phi_t$ is
the flow generated by $\mu$. Thus the set $\IVFHO (X)$ generates  $T_x X$ at any $x \in X$.

To prove that there are finitely many fields from $\IVFHO (X)$ that span each tangent space
let us start with $n$ fields $\theta_1, \ldots, \theta_n$ which
span the tangent space at some point $x_0$ and thus outside a proper analytic subset $A$. The set $A$
may have countably many irreducible components $A_1, A_2, A_3, \ldots$.

It suffices now
to find a volume preserving holomorphic automorphism $\Phi \in \Aut_{\rm hol}^\omega (X)$ such that
$\Phi (X \setminus  A) \cap A_i \ne \emptyset$ for every  $ i = 1, 2, 3, \ldots$. Indeed,
for such an automorphism $\Phi$ the completely integrable holomorphic vector  fields $\Phi_*(\theta_1), \ldots,
\Phi_* (\theta_n)$ have divergence zero and span the tangent space at a general point in each $A_i$, i.e. together with the fields
$\theta_1, \ldots, \theta_n$ they span the tangent space at each point outside an analytic subset $B$ of a
smaller dimension than $A$. Then the induction by dimension implies the desired conclusion.

In order to construct $\Phi$ consider a monotonically increasing sequence of compacts $K_1 \subset K_2 \subset \ldots $ in $X$ such
that $\bigcup_i K_i =X$ and a closed imbedding $\iota : X \hookrightarrow \C^m$. For every continuous map $\varphi : X \to \C^m$ denote
by $||\varphi ||_i$ the standard norm of the restriction of $\varphi$ to $K_i$.
Let $d$ be the metric on the space $ \Aut_{\rm hol}  (X)$ of holomorphic automorphisms  of $X$
given by the formula $$d(\Phi , \Psi ) =\sum_{i=1}^\infty 2^{-i}( \min (||\Phi -\Psi ||_{i},1)+ \min (||\Phi^{-1} -\Psi^{-1} ||_{i},1) \eqno{(4.1)}$$
where automorphisms $\Phi^{\pm 1}, \Psi^{\pm 1} \in \Aut_{\rm hol} (X)$  are viewed as continuous  maps from $X$ to $\C^m$.
This metric makes $ \Aut_{\rm hol}  (X)$ a complete metric space. Its subset $  \Aut_{\rm hol}^\omega (X) $ of volume-preserving automorphisms
is closed and, therefore, it is a complete metric space as well.

Set $Z_i = \{ \Psi \in  \Aut_{\rm hol}^\omega (X) : \Psi (A_i) \cap (X\setminus A) \ne \emptyset \} $. Note that $Z_i$ is open
in $ \Aut_{\rm hol}^\omega (X)$ and let us show that it  is also everywhere dense. Indeed,
the flow of any   $\theta \in \IVFHO (X)$  preserves $\omega$ because
for any vector fields $\nu$ its divergence with respect to $\omega$ is defined by the formula $\diver_{\omega} (\nu ) \omega =L_{\nu} (\omega )$ where
$L_\nu$ is the Lie derivative.
Since $\IVFHO (X)$ generates the tangent space
at each point of $X$ we can choose $\theta$ non-tangent to $A_i$. Then for every $\Psi \in  \Aut_{\rm hol}^\omega (X)$  its composition with general
elements of the flow induced by $\theta$ is  in $Z_i$.
That is,  a perturbation of $\Psi$ belongs to $Z_i$ which proves that $Z_i$ is everywhere dense in $ \Aut_{\rm hol}^\omega (X)$.
By the Baire category theorem the set $\bigcap_{i=1}^\infty Z_i$ is not empty which yields
the existence of the desired automorphism.

\end{proof}

\begin{lemma} \label{4.60} For a holomorphic field $\nu$ on $X$
denote by $ \omega_{\nu} $ the inner product $\iota_\nu (\omega )$ of $\nu$ with the form $\omega$.  If $\nu$ is of $\omega$-divergence zero
then for every holomorphic function $f$ on $X$ we have
${\rm d} ( \omega_{f\nu} ) = \nu (f) \omega$.
\end{lemma}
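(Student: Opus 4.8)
The plan is to reduce the identity to Cartan's magic formula together with the single structural fact that $\omega$ has top degree $n=\dim X$. First I would use that $\iota_{(\cdot)}(\omega)$ is linear over holomorphic functions in the vector-field argument: $\omega_{f\nu}=\iota_{f\nu}(\omega)=f\,\iota_\nu(\omega)=f\,\omega_\nu$. Hence, by the Leibniz rule for ${\rm d}$, we get ${\rm d}(\omega_{f\nu})={\rm d}f\wedge\omega_\nu+f\,{\rm d}\omega_\nu$, and the task splits into (i) showing the second term vanishes and (ii) identifying ${\rm d}f\wedge\omega_\nu$ with $\nu(f)\,\omega$.

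For (i): since $\omega$ is an $n$-form on the $n$-dimensional manifold $X$, we have ${\rm d}\omega=0$, so Cartan's formula $L_\nu=\iota_\nu\circ{\rm d}+{\rm d}\circ\iota_\nu$ gives $L_\nu(\omega)={\rm d}\,\iota_\nu(\omega)={\rm d}\omega_\nu$. The hypothesis that $\nu$ has $\omega$-divergence zero means precisely $L_\nu(\omega)=\diver_\omega(\nu)\,\omega=0$; therefore ${\rm d}\omega_\nu=0$ and ${\rm d}(\omega_{f\nu})={\rm d}f\wedge\omega_\nu$.

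For (ii): I would apply $\iota_\nu$ to the $(n+1)$-form ${\rm d}f\wedge\omega$, which is identically zero because $\omega$ is already of maximal degree. The derivation property of the contraction operator then yields $0=\iota_\nu({\rm d}f\wedge\omega)=(\iota_\nu{\rm d}f)\,\omega-{\rm d}f\wedge\iota_\nu(\omega)=\nu(f)\,\omega-{\rm d}f\wedge\omega_\nu$, i.e. ${\rm d}f\wedge\omega_\nu=\nu(f)\,\omega$. Combining this with the formula from (i) gives ${\rm d}(\omega_{f\nu})=\nu(f)\,\omega$, as claimed.

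I do not expect any genuine obstacle: the statement is a formal identity in the Cartan calculus of differential forms, valid verbatim in the holomorphic category. The only points that deserve attention are that all three vanishings used (${\rm d}\omega=0$, ${\rm d}f\wedge\omega=0$, and the absence of an $\iota_\nu{\rm d}\omega$ term) rely on $\omega$ being of top degree, and that ``divergence zero'' must be read in its coordinate-free guise $L_\nu\omega=0$ rather than through any local expression; with those understood, the three steps above are completely forced.
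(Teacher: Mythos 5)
Your proof is correct and is essentially the paper's argument: both rest on Cartan's formula $L_{(\cdot)}={\rm d}\circ\iota_{(\cdot)}+\iota_{(\cdot)}\circ{\rm d}$ together with the fact that $\omega$ has top degree, so that ${\rm d}\omega=0$. The only cosmetic difference is that the paper applies Cartan's formula directly to the field $f\nu$ and then invokes the product rule $\diver_{\omega}(f\nu)=f\diver_{\omega}(\nu)+\nu(f)$ in one line, whereas you first split $\omega_{f\nu}=f\,\omega_{\nu}$, apply Cartan's formula to $\nu$ alone, and recover the needed identity ${\rm d}f\wedge\omega_{\nu}=\nu(f)\,\omega$ by contracting the vanishing $(n+1)$-form ${\rm d}f\wedge\omega$ --- a line-for-line equivalent computation.
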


\begin{proof}
Using the well-known formula $ L_{\nu}= {\rm
d}\circ \iota_{\nu} + \iota_{\nu} \circ {\rm  d} $ (e.g., see \cite[Proposition 3.10]{KN1})
that relates the outer differentiation ${\rm d}$ with the Lie derivative $L_\nu$ and inner
product $\iota_\nu$ we get
$${\rm d} ( \omega_{f\nu} ) = L_{f\nu} (\omega )- \iota_{f \nu} {\rm d} (\omega)=
L_{f\nu} (\omega )=\diver_{\omega} (f\nu )\omega = (f\diver_{\omega} (\nu )+\nu (f) ) \omega =\nu (f) \omega .$$
\end{proof}

\begin{notation}\label{4.70} For a complex manifold $X$ with a volume form $\omega$ we define the following linear subspace $F_X$ in the space
of holomorphic functions  ${\rm Hol} (X) = H^0(X,\Oeul (X))$ on $X$:

$$F_X = {\rm Span} \{ \theta ({\rm Hol} (X)) \ : \ \theta \in \VFHO
 (X)\} .$$

\end{notation}

\begin{lemma} \label{4.80}
Let $\Omega^i$ be the sheaf of holomorphic $i$-forms on a complex manifold $X$ and let $\cB^i \subset \Omega^i$
be the subsheaf of exact $i$-forms.
For $n=\dim X$ consider the isomorphism
$\epsilon : {\rm Hol} (X) \to H^0(X, \Omega^n )$ into the space $H^0(X, \Omega^n)$
of holomorphic $n$-forms  given by $\epsilon (f) = f \omega$ and suppose that

{\rm (i)} the ${\rm Hol} (X)$-module $\VFH (X)$ of holomorphic vector fields on $X$ is generated  by $\VFHO (X)$.

Then
$F_X =  \epsilon^{-1} (H^0(X, \cB^n ))$ where $H^0(X, \cB^n)$ is the space of exact $n$-forms.

Moreover,  suppose that  $X$ is Stein and instead of condition (i) we have

{\rm (ii)}  finitely many vector fields $\delta_1, \delta_2, \ldots , \delta_N \in \VFHO (X)$  generating $T_x X$ at each $x\in X$.

Then
$F_X = {\rm Span} \{ \delta_i  ({\rm Hol} (X)) |  \, i=1, \ldots, N \}. $
\end{lemma}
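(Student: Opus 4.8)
The plan is to reduce both assertions to Lemma \ref{4.60} together with the elementary observation that, since $\omega$ is nowhere vanishing, contraction with $\omega$ is an $\cO_X$-linear isomorphism of the sheaf $\Theta_X$ of holomorphic vector fields onto the sheaf $\Omega^{n-1}$; for the ``moreover'' part I would add Cartan's Theorems A and B.

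First, assuming (i), I would establish $F_X\subseteq\epsilon^{-1}(H^0(X,\cB^n))$ as follows: for $\theta\in\VFHO(X)$ and $f\in\Hol(X)$, Lemma \ref{4.60} gives $\epsilon(\theta(f))=\theta(f)\,\omega={\rm d}(\omega_{f\theta})$, where $\omega_{f\theta}$ is a \emph{global} holomorphic $(n-1)$-form; since $\epsilon$ is linear and a sum of exact forms is exact, this yields $\epsilon(F_X)\subseteq H^0(X,\cB^n)$. For the reverse inclusion, I would take $f\in\Hol(X)$ with $f\omega={\rm d}\gamma$, $\gamma\in H^0(X,\Omega^{n-1})$; by the isomorphism above (which in a local frame with $\omega={\rm d}z_1\wedge\cdots\wedge{\rm d}z_n$ sends $\partial/\partial z_i$ to $\pm\,{\rm d}z_1\wedge\cdots\widehat{{\rm d}z_i}\cdots\wedge{\rm d}z_n$) there is a unique $\mu\in\VFH(X)$ with $\omega_\mu=\gamma$, and Cartan's formula $L_\mu={\rm d}\iota_\mu+\iota_\mu{\rm d}$ applied to $\omega$, together with ${\rm d}\omega=0$, gives ${\rm d}\omega_\mu=L_\mu(\omega)=\diver_\omega(\mu)\,\omega$, hence $\diver_\omega(\mu)=f$. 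Writing $\mu=\sum_j h_j\nu_j$ with $h_j\in\Hol(X)$ and $\nu_j\in\VFHO(X)$ by (i), and using the identity $\diver_\omega(h_j\nu_j)=h_j\diver_\omega(\nu_j)+\nu_j(h_j)=\nu_j(h_j)$ from the proof of Lemma \ref{4.60}, I get $f=\sum_j\nu_j(h_j)\in F_X$, so $F_X=\epsilon^{-1}(H^0(X,\cB^n))$.

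For the ``moreover'' part I would first show that, when $X$ is Stein, (ii) implies (i). The $\cO_X$-linear morphism $\cO_X^{\oplus N}\to\Theta_X$ sending $(h_1,\dots,h_N)$ to $\sum_i h_i\delta_i$ is surjective on every fibre because $\{\delta_i(x)\}$ spans $T_xX$; by Nakayama it is an epimorphism of coherent sheaves with coherent kernel, so Cartan's Theorem B makes it surjective on global sections. Hence every $\mu\in\VFH(X)$ has the form $\sum_i h_i\delta_i$ with $h_i\in\Hol(X)$ and $\delta_i\in\VFHO(X)$, which is (i), and in particular $F_X=\epsilon^{-1}(H^0(X,\cB^n))$. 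To obtain the sharper description I would then take $\nu\in\VFHO(X)$, $f\in\Hol(X)$, write $f\nu\in\VFH(X)$ as $\sum_i h_i\delta_i$, and compute $\nu(f)=\diver_\omega(f\nu)=\sum_i\bigl(h_i\diver_\omega(\delta_i)+\delta_i(h_i)\bigr)=\sum_i\delta_i(h_i)$; since such elements $\nu(f)$ span $F_X$, this gives $F_X\subseteq\Span\{\delta_i(\Hol(X)):i=1,\dots,N\}$, and the opposite inclusion is immediate since each $\delta_i\in\VFHO(X)$.

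The manipulations with $\iota$, $L$ and $\diver_\omega$ are routine once Lemma \ref{4.60} is available. The one place where something beyond formal computation enters is the implication (ii)$\Rightarrow$(i) — the step from ``$\delta_1,\dots,\delta_N$ span every tangent space'' to ``$\delta_1,\dots,\delta_N$ generate the $\Hol(X)$-module $\VFH(X)$'' — which is exactly where the Stein hypothesis is used, through Cartan's Theorems A and B; I do not expect any further difficulty, except for keeping in mind that ``exact $n$-form'' here must be interpreted as an element of ${\rm d}\,H^0(X,\Omega^{n-1})$.
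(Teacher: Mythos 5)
Your proof is correct and follows essentially the same route as the paper: both arguments rest on the isomorphism $\nu\mapsto\iota_\nu(\omega)$ between vector fields and $(n-1)$-forms, Lemma \ref{4.60}, and (for the second assertion) Cartan's Theorem B applied to the surjection $\Oeul(X)^N\to\Omega^{n-1}$. Your remark that ``exact'' must mean globally exact, i.e.\ lying in ${\rm d}\,H^0(X,\Omega^{n-1})$, is the right reading of the statement.
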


\begin{proof}
Since $\omega$ is nowhere
vanishing we have an  isomorphism between the spaces of holomorphic vector fields and holomorphic $(n-1)$-forms  given by  $\nu \to \iota_\nu (\omega )$,
i.e. any $(n-1)$-form can be presented as $\omega_\nu$ for some $\nu \in \VFH (X)$. Furthermore, condition (i) implies that
the vector space $H^0 (X, \Omega^{n-1})$ is generated by elements of type $\omega_{f\delta}$ where $\delta \in \VFHO (X)$ and $f \in {\rm Hol} (X)$.
By Lemma \ref{4.60} we have
$$F_X = {\rm Span} \{ \epsilon^{-1}  ({\rm d} (\omega_{f \delta})) \ : \ f \in {\rm Hol} (X), \delta \in \VFHO (X) \}$$ which in combination with the above description
of generators of  $H^{n-1} (X, \Omega^{n-1})$ yields the first statement.

To prove the second assertion note that the  $\Oeul (X)$-module homomorphism of sheafs
$$\Oeul (X)^N \to TX , \quad (f_1, f_2, \ldots , f_N) \mapsto  \sum_{i=1}^N f_i \delta_i$$ and, therefore (because of the isomorphism  $\nu \to \iota_\nu (\omega )$)
the homomorphism $$\Oeul (X)^N \to \Omega^{n-1}, \quad (f_1, f_2, \ldots , f_N) \mapsto \omega_{\sum_{i=1}^N f_i \delta_i}$$ are surjective.
Since $X$ is Stein, the Cartan Theorem B implies the surjectivity on the level of global sections.
Together with Lemma $\ref{4.60} $ this implies the second assertion.
\end{proof}

In combination with Lemma \ref{4.50} this implies the following.

\begin{corollary}\label{4.90} For a Stein manifold $X$ with the volume density property we have
finitely many vector fields $\delta_1, \delta_2, \ldots , \delta_N \in \IVFHO (X)$ for which
$$F_X = {\rm Span} \{ \delta_i  ({\rm Hol} (X)) |  \, i=1, \ldots, N \}. $$
\end{corollary}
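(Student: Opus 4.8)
The plan is to obtain Corollary \ref{4.90} by simply feeding the output of Lemma \ref{4.50} into the second assertion of Lemma \ref{4.80}, so the proof will be very short and the only thing to verify is that the hypotheses match. First I would invoke Lemma \ref{4.50}: since $X$ is Stein and has the volume density property with respect to $\omega$, it produces finitely many completely integrable holomorphic vector fields $\delta_1, \delta_2, \ldots, \delta_N \in \IVFHO (X)$ whose values $\delta_1(x), \ldots, \delta_N(x)$ span $T_x X$ at every point $x \in X$.

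Next I would observe the trivial inclusion $\IVFHO (X) \subset \VFHO (X)$: a completely integrable holomorphic vector field of $\omega$-divergence zero is, in particular, a holomorphic vector field of $\omega$-divergence zero. Hence the fields $\delta_1, \ldots, \delta_N$ just obtained satisfy hypothesis (ii) of Lemma \ref{4.80} — they lie in $\VFHO (X)$ and generate $T_x X$ at each $x \in X$ — and $X$ is Stein. Applying the second assertion of Lemma \ref{4.80} then yields $F_X = {\rm Span}\{ \delta_i({\rm Hol}(X)) : i = 1, \ldots, N\}$ for exactly these $\delta_i$, which is the assertion of the corollary.

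All the real work sits in the two inputs rather than in the corollary itself. Lemma \ref{4.50} is where the volume density property is genuinely used, via the Baire category argument over the complete metric space $\Aut_{\rm hol}^\omega (X)$ to upgrade a pointwise spanning family to a single finite globally spanning one; Lemma \ref{4.80}(ii) is where Cartan's Theorem B is used, to pass from surjectivity of the $\Oeul (X)$-module map $\Oeul (X)^N \to TX$ to surjectivity on global sections, together with the identity of Lemma \ref{4.60}. Consequently I do not expect any additional obstacle in proving the corollary: the one point to check is the compatibility of the two lemmas' hypotheses, namely the inclusion $\IVFHO (X) \subset \VFHO (X)$ noted above, after which the conclusion is immediate.
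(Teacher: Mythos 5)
Your proposal is correct and is exactly the paper's argument: the corollary is stated there as an immediate combination of Lemma \ref{4.50} (producing finitely many spanning fields in $\IVFHO(X)$) with the second assertion of Lemma \ref{4.80}, using the obvious inclusion $\IVFHO(X)\subset\VFHO(X)$ to match hypothesis (ii). Nothing further is needed.
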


\begin{lemma} \label{4.100}
If $X$ is Stein, then $F_X$ is a closed linear subspace of $\Hol (X)$ and thus a Frechet space. Moreover, $\Hol (X)  / F_X$
is isomorphic to $H^n (X, \C)$.
\end{lemma}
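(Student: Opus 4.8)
The plan is to make $F_X$ completely explicit, then to reduce both assertions to the analytic de Rham theorem on the Stein manifold $X$, in such a way that the description of the quotient also forces $F_X$ to be closed. By Lemma~\ref{4.50} there are finitely many fields $\delta_1,\dots,\delta_N\in\IVFHO(X)$ whose values span $T_xX$ at every $x\in X$. Hypothesis (ii) of Lemma~\ref{4.80} then holds directly, and hypothesis (i) follows as well: the $\cO$-linear epimorphism of sheaves $(f_i)\mapsto\sum_i f_i\delta_i$ from $\cO_X^{\,N}$ onto the tangent sheaf has coherent kernel, so by Cartan's Theorem~B it stays surjective on global sections, i.e. $\VFH(X)$ is generated over $\Hol(X)$ by the $\delta_i\in\VFHO(X)$. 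Hence Lemma~\ref{4.80}, combined with Lemma~\ref{4.60} and the isomorphism $\epsilon(f)=f\omega$, yields (with $n=\dim X$)
$$F_X=\epsilon^{-1}\bigl({\rm d}\,H^0(X,\Omega^{n-1})\bigr)=\bigl\{\,f\in\Hol(X)\ :\ f\omega={\rm d}\beta\ \text{for some}\ \beta\in H^0(X,\Omega^{n-1})\,\bigr\}.$$
Since $\omega$ is nowhere vanishing, $\epsilon$ is an isomorphism of Fréchet spaces from $\Hol(X)$ onto $H^0(X,\Omega^n)$ taking $F_X$ onto the subspace of exact holomorphic $n$-forms, so it suffices to study that subspace inside $H^0(X,\Omega^n)$.

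For the quotient I would use that $X$ is Stein: every $\Omega^p_X$ is coherent, hence acyclic by Theorem~B, so the holomorphic de Rham complex is a resolution of $\C_X$ by acyclic sheaves and the analytic de Rham theorem computes $H^k(X,\C)$ as the $k$-th cohomology of $\bigl(H^0(X,\Omega^\bullet_X),{\rm d}\bigr)$. In top degree, where every form is closed, this reads $H^0(X,\Omega^n)/{\rm d}\,H^0(X,\Omega^{n-1})\cong H^n(X,\C)$, which via $\epsilon$ is exactly the desired $\Hol(X)/F_X\cong H^n(X,\C)$. Comparing $\Omega^\bullet_X$ with the soft resolution of $\C_X$ by smooth forms shows that the natural map from holomorphic to smooth de Rham cohomology is an isomorphism; therefore a closed holomorphic $n$-form is \emph{holomorphically} exact if and only if it is smoothly exact, and, by the classical de Rham theorem together with the universal coefficient theorem (note ${\rm Ext}_\Z(-,\C)=0$ as $\C$ is divisible), this is the case if and only if all of its periods over the smooth singular $n$-cycles of $X$ vanish.

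Closedness is then immediate. For a fixed smooth singular $n$-cycle $\gamma$ in $X$ the functional $f\mapsto\int_\gamma f\omega$ is linear and continuous on $\Hol(X)$, because $\gamma$ has compact image, $\omega$ is a fixed continuous form, and the integral is bounded by a constant times the supremum of $|f|$ on that image, i.e. it is continuous for the compact–open topology. By the previous paragraph $F_X=\bigcap_\gamma\Ker\bigl(f\mapsto\int_\gamma f\omega\bigr)$, an intersection of closed hyperplanes; so $F_X$ is a closed linear subspace of the Fréchet space $\Hol(X)$ and hence itself Fréchet. Together with the isomorphism above this proves the lemma.

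The main obstacle is the comparison step in the second paragraph: the fact that holomorphic exactness of a holomorphic top form on $X$ is detected by its topological periods. This relies on the analytic de Rham theorem on Stein manifolds — the holomorphic counterpart of the \textsc{Grothendieck} theorem quoted after Theorem~\ref{4.30} — and on its compatibility with the ordinary de Rham theorem; everything else is formal. One could instead try to prove closedness directly via the statement that on a Stein manifold the differentials of the holomorphic de Rham complex have closed range, but that closed-range assertion is itself the genuinely nontrivial analytic point, so passing through periods seems the more economical route.
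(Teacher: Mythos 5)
Your proof is correct and follows essentially the same route as the paper's: both identify $F_X$ via Lemma \ref{4.80} with the preimage under $\epsilon$ of the exact holomorphic $n$-forms, use the quasi-isomorphism of the holomorphic de Rham complex into the smooth de Rham complex on the Stein manifold $X$ to characterize holomorphic exactness by vanishing of periods (whence closedness, since the period functionals $f\mapsto\int_\gamma f\omega$ are continuous in the compact-open topology) and to compute the quotient as $H^n(X,\C)$. The only difference is presentational: you spell out the verification of the hypotheses of Lemma \ref{4.80} via Lemma \ref{4.50} and Cartan's Theorem B, which the paper leaves implicit.
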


\begin{proof}

Since $X$ is Stein the inclusion of the holomorphic de Rham complex into the complex-valued de Rham  $C^\infty$-complex
$$\begin{matrix}\ldots & \ddd  & H^0(X, \Eeul^{n-1}) &\ddd & H^0 (X, \Eeul^n ) &\ddd& 0 \\
                           &           &      \cup             &         &   \cup            &       &     \\
 \ldots & \ddd &  H^0(X, \Omega^{n-1} ) &  \ddd  & H^0 (X, \Omega^n) & \ddd & 0 \\ \end{matrix} $$
is a quasi-isomorphism, i.e. the complex cohomology of $X$ can be computed via the lower complex.
By de Rham's theorem the boundaries ${\rm d} (H^0(X, \Eeul^{n-1} ))$ are exactly the
$n$-forms whose integral over any real $n$-dimensional cycle is zero (e.g., see \cite{GrHa})
and, therefore, ${\rm d } (H^0(X, \Omega^{n-1}))$ consists exactly of
the holomorphic $n$-forms which give zero when integrated over any real $n$-dimensional cycle. This condition is closed in the compact-open topology,
which proves the first assertion. The second one follows from $$H^n (X, \C) =H^0(X, \Eeul^n) / {\rm d} (H^0(X, \Eeul^{n-1})) \cong
H^0(X,\Omega^n) / {\rm d} (H^0(X, \Omega^{n-1})) \cong \Hol (X) / F_X,$$ where the last isomorphism is induced by $\epsilon$ from
lemma \ref{4.80}.
\end{proof}

\subsection{Grothendieck's tensor products}\label{4.104}
For convenience of readers we remind some facts from \textsc{Grothendieck}'s theory
of nuclear vector spaces which can be found in \cite{Sch54} or \cite{Tr}.
Denote by $E, E_i,F, F_i$, and $H$ locally convex
Hausdorff topological vector spaces.  \textsc{Grothendieck} introduced two different completions of the algebraic
tensor product $E_1 \otimes E_2$,
denoted by $E_1 \hat \otimes_\pi E_2$ and  $E_1 \hat \otimes_\epsilon E_2$. The crucial properties
of these completions we are going to exploit are the following. For linear continuous isomorphisms into (i.e embeddings)
$E_1 \to F_1$ and $E_2\to E_2$ the induced linear map $E_1\hat \otimes_\epsilon E_2 \to F_1\hat \otimes_\epsilon F_2$
is also an isomorphism into \cite[Proposition 43.7]{Tr}. In the case of metrizable $E_1$ and $E_2$
and linear continuous surjective maps $ E_1 \to F_1$ and $E_2\to E_2$ the induced
linear map $ E_1\hat \otimes_\pi E_2 \to F_1\hat \otimes_\pi F_2$
is also a surjection \cite[Proposition 43.9]{Tr}.

A space $E$ is called nuclear if for any other locally convex topological vector space
$F$ the completions $E \hat \otimes_\epsilon F$ and $E \hat \otimes_\pi F$ coincide
which allows us to omit indices   $\pi$ and $\epsilon$ in tensor products when appropriate.
Nuclear spaces possess nice properties: their subspaces and quotients with respect to closed subspaces
are again nuclear \cite[Proposition 50.1]{Tr}. Furthermore, the above claims about surjections and embeddings
in combination with \cite[Exercise 43.2]{Tr}\footnote{Using this exercise one can show
that for surjective maps $\varphi_1: E_1 \to F_1$ and $\varphi_2 : E_2\to E_2$ the induced
linear map $\varphi_1 \hat \otimes \varphi_2 : E_1\hat \otimes E_2 \to F_1\hat \otimes F_2$ has
$\Ker \varphi_1 \hat \otimes \varphi_2  = \Ker \varphi_1 \hat \otimes E_2  + E_1 \hat \otimes \Ker \varphi_2$.}
imply that the \textsc{Grothendieck}'s tensor product preserves
(short) exact sequences of metrizable nuclear spaces. That is,
$(E/H) \hat \otimes F = (E\hat \otimes F) /(H \hat \otimes F) $
where $H$ is a closed subspace of the nuclear space $E$.

\begin{example}\label{4.106} Let $X$ (resp. $Y$) be a closed Stein submanifold of $\C^n$ (resp. $\C^m$),
i.e we have the natural surjections $\varphi_X : \Hol (\C^n ) \to \Hol (X)$ and
$\varphi_Y : \Hol (\C^m ) \to \Hol (Y)$. The spaces of
holomorphic functions on Euclidean spaces are known to be nuclear \cite[Corollary, p. 530]{Tr} and, therefore, their
quotients $\Hol (X)$ and $\Hol (Y)$ are also nuclear. Furthermore $\Hol (\C^n) \hat \otimes \Hol (\C^m)$ is naturally
isomorphic to $\Hol (\C^{n+m})$  \cite[Theorem 51.6]{Tr} and the induced surjective linear map
$\varphi_X \hat \otimes \varphi_Y : \Hol (\C^{n+m}) \to \Hol (X) \hat \otimes \Hol (Y )$ has
the kernel generated by functions that vanish either on $X \times \C^m$ or on $\C^n\times Y$.
Thus this kernel coincides with the defining ideal of $X\times Y$ in $\C^{n+m}$ and
$\Hol (X) \hat \otimes \Hol (Y )$ is naturally isomorphic to $\Hol (X \times Y)$.
\end{example}

We present the proof of the next simple fact because of the lack of references.

\begin{lemma}\label{4.108} Let $F_i$ be a closed subspace of a complete metrizable nuclear space $E_i$ for $i=1,2$.
Then $I:= (F_1 \hat \otimes E_2) \cap (E_1 \hat \otimes F_2)$ coincides with $F_1 \hat \otimes F_2$.

\end{lemma}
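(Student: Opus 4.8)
The plan is to realize all four completed tensor products as closed subspaces of $E_1 \hat \otimes E_2$ and then play the two quotient maps induced by $E_1 \to E_1/F_1$ against one another. First I would record that $F_1$, $F_2$, $E_1/F_1$, $E_2/F_2$ are again complete metrizable nuclear spaces: metrizability and closedness pass to subspaces, a closed subspace of a complete space is complete, a quotient of a Fr\'echet space by a closed subspace is again Fr\'echet, and nuclearity is preserved under subspaces and Hausdorff quotients. Hence every space that will occur belongs to the class for which the facts quoted above apply. In particular, since $F_i \hookrightarrow E_i$ is an embedding, the functor $\hat \otimes$ turns it into embeddings $F_1 \hat \otimes E_2 \hookrightarrow E_1 \hat \otimes E_2$, $E_1 \hat \otimes F_2 \hookrightarrow E_1 \hat \otimes E_2$ and $F_1 \hat \otimes F_2 \hookrightarrow E_1 \hat \otimes E_2$; this is what makes the intersection $I$ meaningful, and it gives the trivial inclusion $F_1 \hat \otimes F_2 \subseteq I$ at once.

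For the reverse inclusion I would use the quotient map $q : E_1 \to E_1/F_1$. Applying $\hat \otimes E_2$ and $\hat \otimes F_2$ to the short exact sequence $0 \to F_1 \to E_1 \to E_1/F_1 \to 0$ (legitimate because $\hat \otimes$ preserves short exact sequences of metrizable nuclear spaces) produces two surjections: $q \hat \otimes {\rm id}_{E_2} : E_1 \hat \otimes E_2 \to (E_1/F_1) \hat \otimes E_2$ with kernel $F_1 \hat \otimes E_2$, and $q \hat \otimes {\rm id}_{F_2} : E_1 \hat \otimes F_2 \to (E_1/F_1) \hat \otimes F_2$ with kernel $F_1 \hat \otimes F_2$. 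By functoriality of $\hat \otimes$ with respect to the embedding $F_2 \hookrightarrow E_2$, the square whose horizontal arrows are $q \hat \otimes {\rm id}_{F_2}$ and $q \hat \otimes {\rm id}_{E_2}$ and whose vertical arrows are the induced embeddings $E_1 \hat \otimes F_2 \hookrightarrow E_1 \hat \otimes E_2$ and $(E_1/F_1) \hat \otimes F_2 \hookrightarrow (E_1/F_1) \hat \otimes E_2$ commutes.

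Now take $x \in I$. From $x \in E_1 \hat \otimes F_2$ we obtain $q \hat \otimes {\rm id}_{F_2}(x) \in (E_1/F_1) \hat \otimes F_2$, and by commutativity of the square its image under the embedding into $(E_1/F_1) \hat \otimes E_2$ equals the image of $x$ under $q \hat \otimes {\rm id}_{E_2}$, viewing $x$ inside $E_1 \hat \otimes E_2$. But $x$ also lies in $F_1 \hat \otimes E_2 = \Ker (q \hat \otimes {\rm id}_{E_2})$, so that image is $0$; since $(E_1/F_1) \hat \otimes F_2 \hookrightarrow (E_1/F_1) \hat \otimes E_2$ is injective, $q \hat \otimes {\rm id}_{F_2}(x) = 0$, i.e. $x \in \Ker (q \hat \otimes {\rm id}_{F_2}) = F_1 \hat \otimes F_2$. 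Hence $I \subseteq F_1 \hat \otimes F_2$, and together with the trivial inclusion this gives $I = F_1 \hat \otimes F_2$. I expect the categorical core to be a one-line diagram chase; the only part demanding care is the bookkeeping --- checking that every space occurring is complete metrizable nuclear so that the quoted exactness and embedding statements genuinely apply, and that each $\hat \otimes$ is unambiguous, which is exactly the nuclearity that lets one drop the $\pi$ and $\epsilon$ subscripts.
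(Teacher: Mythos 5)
Your proof is correct, and it takes a recognizably different route from the paper's. The paper first invokes the two-variable kernel formula $\Ker(\varphi_1\hat\otimes\varphi_2)=\Ker\varphi_1\hat\otimes E_2+E_1\hat\otimes\Ker\varphi_2$ (extracted from Exercise 43.2 in Tr\`eves) for the map $E_1\hat\otimes E_2\to Q_1\hat\otimes Q_2$ with $Q_i=E_i/F_i$, and then identifies $M:=(F_1\hat\otimes E_2)/I$ with $F_1\hat\otimes Q_2\simeq (F_1\hat\otimes E_2)/(F_1\hat\otimes F_2)$ through a chain of quotient isomorphisms, concluding from $I\supset F_1\hat\otimes F_2$ that the two subspaces coincide. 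You instead never leave the one-variable setting: you only need that $\hat\otimes$ applied to $0\to F_1\to E_1\to E_1/F_1\to 0$ in the first slot (with $E_2$, respectively $F_2$, in the second) yields surjections with kernels $F_1\hat\otimes E_2$ and $F_1\hat\otimes F_2$, plus injectivity of $(E_1/F_1)\hat\otimes F_2\hookrightarrow (E_1/F_1)\hat\otimes E_2$, and then a commutative-square chase finishes it. What your version buys is the elimination of the sum-of-kernels identity, which is the least elementary ingredient in the paper's argument, at the price of having to note that the square commutes (immediate on the algebraic tensor product and extended by density and continuity). Your bookkeeping of why every space in sight remains complete, metrizable and nuclear is exactly the care the paper's quoted facts require, so there is no gap.
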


\begin{proof} Set $Q_i = E_i/F_i$. Then the kernel of the natural linear map $E_1 \hat \otimes E_2
\to Q_1 \hat \otimes Q_2$ coincides with $F_1 \hat \otimes E_2 + E_1 \hat \otimes F_2$.
 In particular, $Q_1 \hat \otimes Q_2$ is naturally isomorphic to the
 quotinet of $(E_1 \hat \otimes E_2) /(E_1 \hat \otimes F_2)= E_1 \hat \otimes Q_2$ with respect to
 $M:= (F_1\hat \otimes E_2)/I$ which implies that $M$ coincides with the subspace $F_1 \hat \otimes Q_2$
 of $E_1 \hat \otimes Q_2$ since $(E_1 \hat \otimes Q_2)/(F_1 \hat \otimes Q_2)= Q_1 \hat \otimes Q_2$.
 Note also that $F_1 \hat \otimes Q_2 \simeq (F_1 \hat \otimes E_2)/ (F_1 \hat \otimes F_2)$.
 Since $I \supset F_1 \hat \otimes F_2$ and $M:= (F_1\hat \otimes E_2)/I$ we must have $I=F_1 \hat \otimes F_2$.

\end{proof}

We need an extra product $E\epsilon F$ of locally convex topological vector spaces introduced by  \textsc{Schwartz}  \cite{Sch57}.
In general $E\hat \otimes_\epsilon F$ is a subspace of $E\epsilon F$. However we have equality
 $E\hat \otimes_\epsilon F=E\epsilon F$ provided that both $E$ and $F$ are complete and
 have the approximation property (see, \cite[Corollary 1, p. 47]{Sch57}).
 The latter means, say for $E$,  that the identical
operator on $E$ belongs to the closure of operators $E \to E$ of finite rank in the topology
$\cL_c (E, E)$ of uniform convergence on convex compact  subsets of $E$ (see, \cite[Definition, p. 5]{Sch57}).
For any nuclear space $E$ the identical operator can be always approximated
by operators of finite rank. Thus the result of  \textsc{Bungart}  \cite[Proposition 9.2]{Bu}
about $\epsilon$-product can be reformulated in the
special case of nuclear spaces as the following.

\begin{proposition}\label{4.110} Let $X$ be a Stein manifold and $E$ be a complete nuclear space.
Then $\Hol (X) \hat \otimes_\epsilon E$ coincides with the space of weakly holomorphic $E$-valued
functions on $X$ (i.e. $E$-valued functions $f$ on $X$ such that
if for any continuous linear functional $T$ on $E$ the function $T \circ f$ is holomorphic on $X$).

\end{proposition}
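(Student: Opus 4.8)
The plan is to unwind the definitions and combine three ingredients already recalled above: the nuclearity of $\Hol(X)$, \textsc{Schwartz}'s identification of the completed $\epsilon$-tensor product with the $\epsilon$-product for complete spaces having the approximation property, and \textsc{Bungart}'s Proposition 9.2. None of the steps is analytically deep; the substance of the argument lies purely in collating these functional-analytic facts in the right order.

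First I would record that $\Hol(X)$ is a complete metrizable nuclear space with the approximation property. By \textsc{Remmert}'s embedding theorem \cite{R} one may realize $X$ as a closed submanifold of some $\C^{N}$, so that $\Hol(X)$ is the quotient of the nuclear Fr\'echet space $\Hol(\C^{N})$ (nuclear by \cite[Corollary, p.\ 530]{Tr}) modulo the closed defining ideal of $X$; quotients of nuclear Fr\'echet spaces by closed subspaces are again nuclear Fr\'echet \cite[Proposition 50.1]{Tr}, hence complete, metrizable, and --- like every nuclear space, as recalled just before the statement --- endowed with the approximation property.

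Next I would note that $E$, being complete and nuclear, likewise has the approximation property, so \textsc{Schwartz}'s theorem \cite[Corollary 1, p.\ 47]{Sch57} applies to the pair $(\Hol(X),E)$ and yields $\Hol(X)\hat\otimes_\epsilon E=\Hol(X)\epsilon E$; since $\Hol(X)$ is nuclear we also have $\Hol(X)\hat\otimes_\epsilon E=\Hol(X)\hat\otimes_\pi E$, so all the tensor products in sight coincide. It then remains to identify $\Hol(X)\epsilon E$, and this is precisely the content of \textsc{Bungart}'s Proposition 9.2 in \cite{Bu}: through the canonical bilinear map $(f,v)\mapsto\big(x\mapsto f(x)v\big)$ the space $\Hol(X)\epsilon E$ is carried isomorphically onto the space of holomorphic $E$-valued functions on $X$. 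Finally, because $E$ is complete, an $E$-valued function $f$ on $X$ is holomorphic as soon as it is weakly holomorphic, i.e.\ as soon as $T\circ f$ is holomorphic for every continuous linear functional $T$ on $E$; chaining the three identifications then gives the assertion.

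The one point that will need genuine care --- and the only real obstacle --- is the bookkeeping about which maps implement these isomorphisms. I would have to check that the bilinear map underlying \textsc{Bungart}'s statement is indeed the canonical one $(f,v)\mapsto f(\cdot)v$, so that the $\epsilon$-product topology is transported to the compact-open topology on the space of $E$-valued holomorphic functions; and I would have to invoke the classical (but easily overlooked) fact that weak holomorphy implies holomorphy for complete range spaces. Both are standard, so no essentially new difficulty should arise.
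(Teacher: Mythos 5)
Your argument is correct and follows essentially the same route as the paper, which states Proposition \ref{4.110} without a separate proof precisely as a reformulation of \textsc{Bungart}'s Proposition 9.2 via \textsc{Schwartz}'s identification $\Hol(X)\hat\otimes_\epsilon E=\Hol(X)\epsilon E$ for complete spaces with the approximation property, the latter being automatic for nuclear spaces. Your additional details (nuclearity of $\Hol(X)$ via the \textsc{Remmert} embedding, and weak holomorphy implying holomorphy for complete range spaces) are exactly the bookkeeping the paper leaves implicit.
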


\begin{convention}\label{4.115} {\rm For Stein manifolds $X$ and $Y$ we consider holomorphic vector fields on $X\times Y$
tangent to the fibers of the natural projection $X\times Y \to X$ (resp. $X\times Y \to Y$) and we call them
vertical (resp. horizontal) fields.
Every holomorphic vector field $\delta$ and $X$ (resp. $\mu$ on $Y$)
generates the natural horizontal (resp. vertical) vector field on $X\times Y$ which by abuse of notation will
be denoted by the same symbol.}

\end{convention}

\begin{lemma} \label{4.120} Let Notation \ref{4.70} and Convention \ref{4.115} hold, and let $\delta_1, \delta_2, \ldots , \delta_N \in \IVFHO (X)$ be as in
Corollary \ref{4.90}. Then
for every $f\in F_X \hat\otimes \Hol (Y)$ there exist $\bar b = (b_1, \ldots , b_N) \in \Hol (X \times Y)^N$
such that $f= \delta_1 (b_1) + \delta_2 (b_2) + \ldots +
\delta_N (b_N)$ \footnote{In this formula each $\delta_j$ is already a horizontal field on $X\times Y$.}.
\end{lemma}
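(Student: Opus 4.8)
The plan is to realize the operator $(b_1,\ldots,b_N)\mapsto\sum_i\delta_i(b_i)$ on $\Hol(X\times Y)^{N}$ as the completed tensor product, over $\C$, of a surjection living on the $X$-factor with the identity on $\Hol(Y)$, and then to read off surjectivity from Grothendieck's results on completed tensor products of Frechet nuclear spaces.

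First I would package the data from Corollary \ref{4.90}. Consider the continuous linear map
$$D\colon \Hol(X)^{N}\longrightarrow \Hol(X),\qquad (a_1,\ldots,a_N)\longmapsto\sum_{i=1}^{N}\delta_i(a_i).$$
By the choice of $\delta_1,\ldots,\delta_N$ together with Notation \ref{4.70}, the image of $D$ is exactly $F_X$, and by Lemma \ref{4.100} this image is a closed subspace of the Frechet space $\Hol(X)$. Since $X$ is Stein it embeds as a closed submanifold of some $\C^{n}$, so $\Hol(X)$ is a quotient of the nuclear space $\Hol(\C^{n})$ (as recalled in Example \ref{4.106}) and is therefore itself Frechet and nuclear; consequently $\Hol(X)^{N}$, $F_X$ and $\Hol(Y)$ are all Frechet nuclear spaces, and $D\colon \Hol(X)^{N}\to F_X$ is a continuous linear surjection of such spaces.

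Next I would tensor with $\Hol(Y)$. Because all the spaces involved are nuclear we may drop the subscripts $\pi$ and $\epsilon$ on completed tensor products, and by the surjectivity property of $\hat\otimes_\pi$ for Frechet spaces (\cite[Proposition 43.9]{Tr}) the induced map
$$D\,\hat\otimes\,{\rm id}\colon \Hol(X)^{N}\,\hat\otimes\,\Hol(Y)\longrightarrow F_X\,\hat\otimes\,\Hol(Y)$$
is again surjective; here $F_X\,\hat\otimes\,\Hol(Y)$ embeds as a closed subspace of $\Hol(X)\,\hat\otimes\,\Hol(Y)$ by \cite[Proposition 43.7]{Tr} and the nuclear exact-sequence property. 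Now I would identify the source: the completed tensor product commutes with finite direct sums, and by Example \ref{4.106} one has $\Hol(X)\,\hat\otimes\,\Hol(Y)\cong\Hol(X\times Y)$, whence $\Hol(X)^{N}\,\hat\otimes\,\Hol(Y)\cong\Hol(X\times Y)^{N}$. Under this isomorphism $D\,\hat\otimes\,{\rm id}$ sends an elementary tensor $(a_1,\ldots,a_N)\otimes c$ to the function $(x,y)\mapsto\sum_i(\delta_i a_i)(x)\,c(y)$, which equals $\sum_i\delta_i(a_i\otimes c)$ once each $\delta_i$ is read as the horizontal field on $X\times Y$ of Convention \ref{4.115} (a horizontal field annihilates every function pulled back from $Y$). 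Since such elementary tensors span a dense subspace of $\Hol(X\times Y)^{N}$ and both $D\,\hat\otimes\,{\rm id}$ and $(b_1,\ldots,b_N)\mapsto\sum_i\delta_i(b_i)$ are continuous on it, the two maps coincide.

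With these identifications the lemma follows at once: given $f\in F_X\,\hat\otimes\,\Hol(Y)$, surjectivity of $D\,\hat\otimes\,{\rm id}$ produces $\bar b=(b_1,\ldots,b_N)\in\Hol(X\times Y)^{N}$ with $\sum_{i=1}^{N}\delta_i(b_i)=f$. I expect the only genuinely delicate part to be bookkeeping around the tensor-product machinery: confirming that every space in play is Frechet and nuclear so that \cite[Propositions 43.7 and 43.9]{Tr} apply with $\hat\otimes=\hat\otimes_\pi=\hat\otimes_\epsilon$, and — the step I would be most careful about — verifying that the abstractly defined map $D\,\hat\otimes\,{\rm id}$ really is the concrete horizontal differential operator $(b_i)\mapsto\sum_i\delta_i(b_i)$ on $\Hol(X\times Y)^{N}$, which is handled by the reduction to elementary tensors above.
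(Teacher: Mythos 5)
Your proof is correct and follows essentially the same route as the paper: the paper also packages the $\delta_i$ into a continuous linear surjection $\Theta\colon \Hol(X)^N\to F_X$ of Frechet spaces (surjectivity from Corollary \ref{4.90}, closedness of $F_X$ from Lemma \ref{4.100}) and then applies Grothendieck's surjectivity theorem for $\hat\otimes$ together with the identification $\Hol(X)^N\hat\otimes\Hol(Y)\cong\Hol(X\times Y)^N$. Your extra step verifying on elementary tensors that $\Theta\hat\otimes{\rm id}$ agrees with the concrete horizontal differential operator is a detail the paper leaves implicit, and it is handled correctly.
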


\begin{proof}
By Lemma \ref{4.100} the map $\Theta : \Hol (X)^N \to F_X$ given by $(a_1, a_2, \ldots, a_N) \mapsto \sum_{i=1}^N \delta_i ( a_i) $ is a linear
surjection of Frechet spaces.
By \textsc{ Grothendieck}'s theorem the linear map
$$\Theta \hat \otimes {\rm id}_Y :  (\Hol (X))^N \hat \otimes \Hol (Y) \cong (\Hol (X\times Y))^N \to F_X \hat \otimes \Hol (Y)$$
is surjective which implies the desired conclusion

\end{proof}

\begin{remark}\label{4.130} (1) The proof of existence of $\bar b$ from Lemma \ref{4.120} can be also extracted from
another nontrivial fact -
the theorem of  \textsc{Michael} (e.g., see \cite[p. 183-186]{Hol} and \cite[Corollary 17.67]{AB}) that states that
for every continuous surjective linear map of Frechet spaces there exists a homogeneous continuous section.

(2) The same argument implies that for every $f \in F_X \hat \otimes F_Y$ there is $\bar b
=(b_1, \ldots , b_N) \in (\Hol (X) \hat \otimes F_Y)^N$ for which $f = \delta_1 (b_1) + \ldots + \delta_N (b_N)$.

\end{remark}

\begin{proposition} \label{4.140}

For every vector field $\mu \in  \IVFHO (Y)$ and each $f \in F_X \hat\otimes \Hol (Y)$
there is a vector field $\alpha$ from the closure of $\Lie_{\rm hol}^{\omega_X \times \omega_Y}
(X \times Y)$ (in the compact-open topology) such that the field $f\mu - \alpha$ is a horizontal vector field on $X\times Y$.
\end{proposition}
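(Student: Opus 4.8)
The plan is to reduce the statement, via Lemma \ref{4.120}, to one elementary commutator identity on $X\times Y$, after which no new analysis is needed. First I would invoke Lemma \ref{4.120}: since $f\in F_X\hat\otimes\Hol(Y)$, there are $b_1,\ldots,b_N\in\Hol(X\times Y)$ with $f=\delta_1(b_1)+\cdots+\delta_N(b_N)$, where $\delta_1,\ldots,\delta_N\in\IVFHO(X)$ are the fields of Corollary \ref{4.90}, regarded (as in Convention \ref{4.115}) as horizontal fields on $X\times Y$. I would then record that the following fields, built from the $\delta_i$ and from $\mu$, are completely integrable of $(\omega_X\times\omega_Y)$-divergence zero on $X\times Y$: (i) $\delta_i$ itself, with flow $(x,y)\mapsto(\phi^i_t(x),y)$ for the complete flow $\phi^i_t$ of $\delta_i$ on $X$, which preserves $\omega_X\times\omega_Y$ because $\phi^i_t$ preserves $\omega_X$ and fixes $Y$; (ii) for $a\in\Hol(X)$, the vertical field $a\mu$, with flow $(x,y)\mapsto(x,\psi_{ta(x)}(y))$ (here $\psi_s$ is the flow of $\mu$) and $\diver_{\omega_X\times\omega_Y}(a\mu)=\mu(a)=0$ since $a$ is independent of $y$; (iii) for $c\in\Hol(Y)$, the horizontal field $c\delta_i$, with flow $(x,y)\mapsto(\phi^i_{tc(y)}(x),y)$ and $\diver_{\omega_X\times\omega_Y}(c\delta_i)=\delta_i(c)=0$.

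Next comes the key identity. Because $\delta_i$ comes from $X$ and $\mu$ from $Y$, they commute on $X\times Y$: $[\delta_i,\mu]=0$. Hence, for $b(x,y)=a(x)c(y)$ with $a\in\Hol(X)$ and $c\in\Hol(Y)$, the Leibniz rule for brackets together with $\delta_i(c)=0=\mu(a)$ gives
\[
[\,c\delta_i,\;a\mu\,]\;=\;c\,\delta_i(a)\,\mu\;-\;a\,\mu(c)\,\delta_i\;=\;\delta_i(b)\,\mu\;-\;\mu(b)\,\delta_i ,
\]
and the left-hand side lies in $\Lie_{\rm hol}^{\omega_X \times \omega_Y}(X \times Y)$ by (i)--(iii). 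To pass to arbitrary $b\in\Hol(X\times Y)$ I would use $\Hol(X\times Y)=\Hol(X)\hat\otimes\Hol(Y)$ (Example \ref{4.106}), so that $b=\lim_j b^{(j)}$ in the compact-open topology with $b^{(j)}=\sum_k a^{(j)}_k(x)c^{(j)}_k(y)$ a finite sum of pure tensors. Since differentiation is continuous in the compact-open topology, $\delta_i(b^{(j)})\mu\to\delta_i(b)\mu$ and $\mu(b^{(j)})\delta_i\to\mu(b)\delta_i$ as vector fields; summing the identity over $k$ yields $\delta_i(b^{(j)})\mu-\mu(b^{(j)})\delta_i=\sum_k[\,c^{(j)}_k\delta_i,\,a^{(j)}_k\mu\,]\in\Lie_{\rm hol}^{\omega_X \times \omega_Y}(X \times Y)$, and letting $j\to\infty$ gives
\[
\delta_i(b)\,\mu\;-\;\mu(b)\,\delta_i\;\in\;\overline{\Lie_{\rm hol}^{\omega_X \times \omega_Y}(X \times Y)}\qquad\text{for every }b\in\Hol(X\times Y).
\]

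Finally I would apply this to $b=b_i$ and sum:
\[
f\mu\;=\;\sum_{i=1}^N\delta_i(b_i)\,\mu\;=\;\alpha\;+\;\sum_{i=1}^N\mu(b_i)\,\delta_i ,\qquad \alpha:=\sum_{i=1}^N\bigl(\delta_i(b_i)\,\mu-\mu(b_i)\,\delta_i\bigr).
\]
Here $\alpha\in\overline{\Lie_{\rm hol}^{\omega_X \times \omega_Y}(X \times Y)}$ by the previous display (the closure of a linear subspace is a linear subspace), while $\sum_{i}\mu(b_i)\,\delta_i$ is a $\Hol(X\times Y)$-combination of the horizontal fields $\delta_i$, hence horizontal; thus $f\mu-\alpha$ is horizontal, as asserted.

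The computation is genuinely short; the only places that want attention are the verifications in (i)--(iii) that $c\delta_i$ and $a\mu$ really remain completely integrable of divergence zero after lifting to the product, and the observation that the ``error'' $\mu(b)\delta_i$ split off by the commutator $[c\delta_i,a\mu]$ is exactly a horizontal field. I should emphasize that complete integrability of $\mu$ (not merely $\mu(\omega_Y)=0$) is used in an essential way: it is precisely what makes $a\mu$ completely integrable on $X\times Y$ for $a\in\Hol(X)$, and hence what lets the commutator land in $\Lie_{\rm hol}^{\omega_X \times \omega_Y}(X \times Y)$.
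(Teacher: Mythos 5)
Your proof is correct and follows essentially the same route as the paper's: reduce via Lemma \ref{4.120} to $f=\sum_i\delta_i(b_i)$, observe that for a pure tensor $b=a(x)c(y)$ the bracket $[\,c\delta_i,a\mu\,]=\delta_i(b)\mu-\mu(b)\delta_i$ of completely integrable divergence-zero fields lies in $\Lie_{\rm hol}^{\omega_X\times\omega_Y}(X\times Y)$, and pass to the limit over sums of pure tensors. The only cosmetic difference is that the paper realizes the approximation of $b_i$ by pure tensors concretely, expanding $b_i=\sum_k b_{i,k}y^k$ in monomials coming from a closed embedding $Y\hookrightarrow\C^m$ and invoking Weierstrass for the termwise differentiation, whereas you appeal abstractly to the density of the algebraic tensor product in $\Hol(X)\hat\otimes\Hol(Y)=\Hol(X\times Y)$; both are legitimate.
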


\begin{proof}
By Lemma \ref{4.120} $f= \delta_1 (b_1) + \delta_2 (b_2) + \ldots +
\delta_N (b_N) $. Fix a closed embedding $\iota : Y \to \C^m_{y_1, \ldots , y_m}$ and denote
a monomial $y_1^{k_1} \cdots y_m^{k_m}$ by $y^k$, i.e. $k$ is the multi-index $(k_1, \ldots, k_m)$
with norm $|k| = k_1 + \ldots + k_m$. Then $b_i$ can be presented as
$ b_i = \sum_{k} b_{i,k} y^k$ where $b_{i,k} \in \Hol (X)$ and the sum
converges uniformly on compacts of $X \times Y$.
%
%
Let us show that the desired limit can be given by the formula
$$\alpha = \lim_{M\to \infty}  \sum_{\vert k  \vert <M}  \sum_{i=1}^N [ y^k \delta_i\ ,\  b_{i, k} \mu ].$$
Indeed, since $y^k$ (resp. $b_{i,k}$) is in the kernel of $\delta_i$ (resp. $\mu$)
the involved vector fields are completely integrable and of ($\omega_X \times \omega_Y$)-divergence zero. Moreover
$$[ y^k \delta_i\ ,\  b_{i,k} \mu ] = y^k \delta_i (b_{i,k}) \mu - b_{i,k} \mu (y^k) \delta_i.$$
Thus $$ \sum_{\vert k  \vert <M}  \sum_{i=1}^N [ y^k \delta_i\ ,\  b_{i,k} \mu ] =
(\sum_{\vert k  \vert <M}  \sum_{i=1}^N   y^k \delta_i (b_{i,k})) \mu -  \sum_{i=1}^N  (\sum_{\vert k  \vert <M}
b_{i,k} \mu (y^k)) \delta_i =$$
$$ = (\sum_{i=1}^N \delta_i(\sum_{\vert k  \vert <M} b_{i,k} y^k )) \mu - \sum_{i=1}^N  \mu(\sum_{\vert k  \vert <M}    b_{i,k} y^k) \delta_i. $$

By the Weierstrass theorem about differentation of convergent power series we can send $M \to \infty$ and obtain
$$\alpha = f \mu - \sum_{i=1}^N  \mu(   b^i ) \delta_i$$ which yields the desired conclusion.

\end{proof}

\subsection{Proof of Theorem \ref{4.40}} Let completely integrable holomorphic vector fields $\mu_1, \ldots , \mu_M$ on $Y$ play the same role as $\delta_1, \ldots , \delta_N$ from Corollary \ref{4.90}
play for $X$. In particular, $\mu_1, \ldots , \mu_M$ generate the tangent vector space at any point of $Y$. Hence a simple application
of the Cartan theorem B implies that every vector field on $X\times Y$ is of form
$$\gamma = \sum_{i=1}^M f_i \mu_i  + \sum_{j=1}^N g_j  \delta_j $$ where $f_i$ and $g_j$ are holomorphic functions on $X\times Y$. We suppose further that $\gamma$ is of
$\omega$-divergence zero where $\omega =\omega_X \times \omega_Y$. Recall that for every holomorphic vector field
$\nu$ of $\omega$-divergence zero we have $\diver_\omega f \nu = \nu (f)$ for every holomorphic functions $f$.
Since $\mu_i$ and $\delta_j$ have $\omega$-divergence zero, the divergence of $\gamma$ coincides with
$$\diver_\omega \gamma = \sum_{i=1}^M \mu_i (f_i)  + \sum_{j=1}^N \delta_j (g_j) = 0. $$ Note that the first (resp. second)
summand can be viewed as a weakly holomorphic function on $X$ (resp. Y) with values in $F_Y$ (resp. $F_X$).

Thus by \textsc{Bungart}'s theorem (Proposition \ref{4.110}) we have $$f:= \sum_{i=1}^M \mu_i (f_i) \in \Hol (X) \hat \otimes F_Y
\, \, \, {\rm and} \, \, \, \sum_{j=1}^N \delta_j (g_j) \in F_X \hat \otimes \Hol (Y).$$ By Lemma \ref{4.108}
$\sum_{i=1}^M \mu_i (f_i) \in F_X \hat \otimes F_Y$. Furthermore, by Remark \ref{4.130} (2) there
exist $h_1, \ldots , h_M \in F_X \hat \otimes \Hol (Y)$ for which $$f =\mu_1 (h_1) + \ldots \mu_M (h_M).$$
By Proposition \ref{4.140} there exists a vector field $\nu \in \LieHO (X\times Y)$ of form
$$\nu = \sum_{i=1}^M h_i \mu_i  + \sum_{j=1}^N e_j  \delta_j. $$ Subtracting $\nu$ from $\gamma$ we
can suppose from the beginning that $$\diver_\omega  \sum_{i=1}^M f_i \mu_i = \sum_{i=1}^M \mu_i (f_i)=0$$
which implies that $$\diver_\omega  \sum_{i=1}^N g_i \delta_i = \sum_{i=1}^N \delta_i (g_i)=0$$ because
$\diver_\omega \gamma =0$. Since each $\mu_i$ is a vertical vector field on $X\times Y$ this means that the
restriction of $\sum_{i=1}^M f_i \mu_i$ to any fiber $x \times Y$ is of $\omega_Y$-divergence zero and therefore
it belongs to $\Lie_{\rm hol}^{\omega_Y} (x \times Y)$ by the assumption of the theorem. That is,
$\sum_{i=1}^M f_i \mu_i$ can be viewed as a weakly holomorphic function on $X$ with values in $\Lie_{\rm hol}^{\omega_Y} (Y)$.
By \textsc{Bungart}'s theorem (Proposition \ref{4.110}) it belongs to $\Hol (X) \hat \otimes \Lie_{\rm hol}^{\omega_Y} (Y)$
which is a subspace of $ \LieHO (X\times Y)$. The same argument implies that
$ \sum_{i=1}^N g_i \delta_i  \in  \LieHO (X\times Y)$ and thus $\gamma \in  \LieHO (X\times Y)$ which is
the desired conclusion.
\hspace{2.6in} $\square$

\section{Preliminary facts about foliation}

A more detailed exposition of the results from this section can be found in
\cite{Bru00},  \cite{Brun}, or \cite{Bru04}.

\begin{definition}\label{5.10}
(1) A foliation $\cF$ on a smooth complex surface $\bX$ is given by an open
covering $\{ U_j \}$ of $\bX$ and holomorphic vector fields $\nu_j \in H^0(U_j, T\bX )$
with isolated zeros such that
$$\nu_i = g_{ij} \nu_j \, \, \, {\rm on} \, \, \, U_i \cap U_j$$ for invertible
holomorphic functions $g_{ij} \in H^0 (U_i \cap U_j, \cO^*_X )$ where $\cO_X^*$
is the sheaf of invertible functions. Gluing orbits
of $\{ \nu_j \}$ one gets leaves of the foliation $\cF$. The singular set
${\rm Sing} \, (\cF )$ is the discrete subset of $\bX$ whose intersection with
each $U_j$ coincides with with zeros of $\nu_j$. The cocycle $\{ g_{ij} \}$ define
a holomorphic line bundle $K_{\cF}$ which is called the  canonical bundle
of the foliation $\cF$. 

(2) This definition can be extended to the case of $\bX$ with quotient singularities only
where $\cF$ defined as a foliation on $\bX \setminus {\rm Sing} \, (\bX  )$.
We require additionally that
$${\rm Sing} \, (\bX ) \cap  {\rm Sing} \, (\cF ) = \emptyset .$$
That is, a singular point  $p$ of $\bX$ is locally of form $\B^2 /\Z_k$ where $\B^2$ is a ball in $\C^2$
equipped with a linear $\Z_k$-action.
In particular, the foliation can be lifted to $\B^2\setminus \{ (0,0) \}$ and the requirement is
that it can extended to a foliation on $\B^2$ with a non-vanishing associated vector field $\nu$
(and this must be true for any singular point of $\bX$).

Then $K_{\cF}$ on $\bX$
is the direct image of the canonical bundle on $\bX \setminus {\rm Sing} \, (\bX )$
under the inclusion morphism $\bX \setminus {\rm Sing} \, (\bX ) \hookrightarrow \bX$
(in this situation $K_\cF$ is not a bundle but only a $\Q$-bundle). 

(3) Foliation $\cF$ is called nef if $K_{\cF}$ is nef.

(4) A singularity $p \in {\rm Sing} \, (\cF )$ is reduced if the linear part
of the corresponding vector field at $p$ has eigenvalues $\lambda_1, \lambda_2$
such that either they are nonzero and $\lambda_1/\lambda_2 \notin \Q_+$ or
$\lambda_1 \ne 0 =\lambda_2$.  
The foliation $\cF$ is called reduced if
all of its singularities are reduced.

(5) The Kodaira dimension ${\rm kod} (\cF )$ of a reduced foliation $\cF$
on a projective surface $\bX$ is the Kodaira-Iitaka dimension of
its canonical bundle $K_{\cF}\in {\rm Pic} (\bX) \otimes \Q$. That is,

$${\rm kod} (\cF ) = \lim{\rm sup}_{n \to + \infty} {\frac{\log \dim H^0 (\bX, K_{\cF}^{\otimes n})}{\log n }}.$$

\end{definition}

We shall study foliations associated with
completely integrable holomorphic vector fields on a Stein surface $X$ and
the following result of  \textsc{Suzuki} \cite{Su77a}, \cite{Su77b} is very important.

\begin{theorem}\label{5.20}  Let $\cF$ be a foliation on a normal Stein surface $X$.

{\rm (1)} If all leaves of $\cF$ are properly embedded in $ X \setminus {\rm Sing} \, (\cF  ) $
then there is a nonconstant meromorphic first integral of $\cF$ on $X$. 
%
%

{\rm (2)} Furthermore, if the general leaf of $\cF$ is isomorphic to $\C^*$ (we shall
call below such foliations of $\C^*$-type) then every leaf is
closed in $X \setminus {\rm Sing} \, (\cF  )$ and therefore there is a meromorphic first integral as in (1).
\end{theorem}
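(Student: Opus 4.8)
I will deduce (2) from (1): first show that a foliation of $\C^{*}$-type on a normal Stein surface automatically has every leaf closed in $X_{0}:=X\setminus\Sing(\cF)$, then invoke (1). So the substantive content is (1), which I treat first.

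For (1): on $X_{0}$ the foliation is locally trivial, so each point has a neighbourhood $U$ carrying a holomorphic submersion $f_{U}\colon U\to\C$ whose fibres are the plaques of $\cF|_{U}$, and on overlaps the $f_{U}$ differ by biholomorphisms of open subsets of $\C$; the plan is to glue these local first integrals into a global meromorphic one. The hypothesis that every leaf is closed in $X_{0}$ is exactly what tames the holonomy: a nontrivial holonomy germ along a loop in a leaf would make that leaf spiral and accumulate on itself along a small transversal, which a closed leaf cannot do, so every leaf has finite holonomy. Hence the leaf space $Q:=X_{0}/\cF$, after deleting the discrete subset of $Q$ consisting of orbifold points and of images of leaves approaching $\Sing(\cF)$, is a Riemann surface $\Sigma$; the quotient map $q$ is holomorphic off the exceptional locus and extends there as an orbifold chart, and a first integral of $\cF$ is precisely a function factoring through $q$. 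The key step --- the one using the Stein hypothesis essentially --- is to realise $q$ by a meromorphic function on $X$: I would fix a leaf $L$, observe that its closure $\overline{L}$ in $X$ is again a closed $1$-dimensional analytic subset by Remmert--Stein (since $\overline{L}\setminus L$ is discrete), choose $g_{L}\in\cO(X)$ vanishing on it, pick a non-constant meromorphic function $h$ on $\Sigma$ (which exists, $\Sigma$ being a Riemann surface), and show that $q^{*}h$, a priori defined only on a Zariski-open part of $X$, is meromorphic on all of $X$: over a general point of $\Sigma$ the map $q$ has proper $1$-dimensional fibres (leaves are closed and $X$ is holomorphically convex), so Remmert's proper mapping theorem makes $q^{*}h$ meromorphic there, and one then extends it across $\Sing(\cF)$ and the exceptional leaves --- analytic sets of codimension $\ge1$ --- using meromorphic-extension theorems valid on Stein spaces. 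This produces the desired non-constant meromorphic first integral.

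For (2): suppose the general leaf is biholomorphic to $\C^{*}$ and some leaf $L$ fails to be closed in $X_{0}$; pick $p\in\overline{L}\setminus L$ (closure in $X_{0}$, so $p\notin\Sing(\cF)$) and let $L'$ be the leaf of $\cF$ through $p$. Choosing a parametrisation $\phi\colon\C^{*}\to L$ and a sequence $\phi(t_{n})\to p$, the $t_{n}$ must tend to $0$ or to $\infty$ (otherwise a subsequence would converge in $\C^{*}$, forcing $p\in L$), so the accumulation on $L'$ occurs at an end of $\C^{*}$. On a small transversal at $p$ the end of $L$ is an integral curve of $\cF$ approaching $p$; restricting to $L$ a holomorphic function on $X$ --- there is an abundance of these, $X$ being Stein --- that is bounded on a holomorphically convex neighbourhood of $p$ and non-constant on $L$ yields a bounded non-constant holomorphic function on a punctured-disc (or annular) end of $\C^{*}$, so by removability of an isolated singularity the end extends across $p$ into $L'$, forcing $L=L'$ by maximality of leaves --- a contradiction; the residual, ``recurrent'' behaviour of the end is excluded by the same Liouville/Picard rigidity of $\C^{*}$ together with properness of an embedding $X\hookrightarrow\C^{N}$. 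Hence every leaf of a $\C^{*}$-type foliation is closed in $X_{0}$, and (1) applies.

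\textbf{Main obstacle.}
The decisive difficulty is the realisation step in (1): upgrading a function that is a first integral only on a Zariski-open subset of $X$ to a genuine meromorphic function on all of $X$, i.e.\ controlling it across $\Sing(\cF)$ and across the exceptional leaves. This is exactly where the Stein hypothesis is indispensable --- through Remmert--Stein, Remmert's proper mapping theorem, meromorphic extension across analytic subsets, and the plenitude of global holomorphic functions --- and it is the analytic core of Suzuki's theorem.
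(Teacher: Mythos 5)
The paper does not prove this statement at all: it is quoted verbatim as a theorem of \textsc{Suzuki} (\cite{Su77a}, \cite{Su77b}), so there is no internal proof to compare with, and your proposal has to stand or fall on its own. As a reconstruction of Suzuki's argument it contains genuine gaps at exactly the points you flag as delicate.

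For (1), the step from ``all leaves closed'' to ``the leaf space, minus a discrete set, is a Riemann surface $\Sigma$ with a holomorphic quotient map $q$'' is not a soft consequence of finite holonomy. Closedness of all leaves does force the holonomy of each leaf to be periodic (your spiralling argument is essentially right, though it is the \emph{nearby} leaves, not the leaf itself, that would fail to be closed), but it does not by itself give a Hausdorff leaf space, nor does it show that the non-Hausdorff locus and the set of leaves adherent to $\Sing(\cF)$ project to a \emph{discrete} subset of $Q$. This is precisely the analytic core of Suzuki's theorem, and his proof does not proceed by constructing $\Sigma$ first and then pulling back a meromorphic function; it goes through \textsc{Nishino}'s theory of analytic families of curves filling a Stein surface (the closures $\overline{L}$, analytic by Remmert--Stein as you note, form such a family) and potential-theoretic invariants attached to the leaves. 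Your invocation of Remmert's proper mapping theorem presupposes that $q$ is already a holomorphic map to a Riemann surface, which is the thing to be proved.

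For (2), the removable-singularity argument does not close. From $\phi(t_n)\to p$ with $t_n\to 0$ you cannot conclude that the \emph{end} $\{0<|t|<\epsilon\}$ of $\C^{*}$ is eventually confined to a neighbourhood of $p$: the leaf may oscillate, re-entering and leaving every flow box around $p$ infinitely often while also visiting distant parts of $X$. Hence the restriction to the end of a function bounded near $p$ need not be bounded, and the extension of $\phi$ across $t=0$ is not justified. (If the end \emph{did} converge to $p$, your conclusion $L=L'$ would be fine, but that hypothesis is the whole difficulty; the closing appeal to ``Liouville/Picard rigidity'' and properness of an embedding into $\C^{N}$ is not an argument.) Suzuki's actual proof of (2) again runs through the plurisubharmonic variation of a capacity-type function on the leaves, showing that a non-closed leaf would force the generic leaf to be of a different conformal type than $\C^{*}$. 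So while your outline correctly identifies where the Stein hypothesis must enter, both halves of the proposal replace the hard potential-theoretic input with assertions that do not follow from the stated hypotheses.
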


For such a normal affine
algebraic surface $X$ the study of the foliation $\cF$
would be much simpler when this first integral were a rational function.
We need to consider the situation when this first integral is not rational.

First note that a foliation associated with an algebraic vector fields on $X$
can be extended to a completion $\bX$ of $X$ and we can deal with a projective normal
surface. Then we have the following result of \textsc{Seidenberg} (e.g., see \cite{Bru00}).

\begin{theorem}\label{5.30}
Performing a sequence of blow-ups $\tX \to \bX$ we obtain
a lifted foliation $\tilde \cF$ on $\tilde X$ which is reduced.

\end{theorem}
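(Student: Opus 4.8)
This is \textsc{Seidenberg}'s reduction theorem; the plan is to follow its local strategy, reducing the global assertion to a finite local problem and exhibiting a well-founded numerical invariant whose descent forces termination. Since $\bX$ is projective, $\Sing(\cF)$ is finite. First I would perform a preliminary resolution of the singularities of $\bX$ itself, so that we may assume $\bX$ smooth and work with the germ of $\cF$ at a point $p$, given by a $1$-form $\omega = a\,dx + b\,dy$ with $a,b \in \cO_p$ having no common factor. Because blowing up $p$ is an isomorphism over $\bX \setminus \{p\}$, it suffices to show that a non-reduced singularity can always be deleted or strictly simplified by a finite sequence of blow-ups carried out at $p$ and at the infinitely near non-reduced points it produces, the already-reduced singularities elsewhere being untouched.

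The basic invariant is the \emph{algebraic multiplicity} $m = m_p(\cF) = \min(\mathrm{ord}_p a,\mathrm{ord}_p b)$ (which is $\ge 1$ at a singular point), refined by the \emph{tangent cone} $P_p$, the degree-$(m+1)$ homogeneous part of $\iota_R\omega = ax+by$, where $R = x\partial_x + y\partial_y$ is the radial field. The key computation is to blow up $p$, $\pi : \tX \to \bX$, and expand $\pi^*\omega$ in the two standard charts $y = xt$, $x = ys$: one finds $\pi^*\omega$ divisible by exactly $x^{m}$ when $P_p \not\equiv 0$ and by $x^{m+1}$ when $P_p \equiv 0$, and after dividing out the maximal power of the exceptional coordinate one reads off the germs of the transformed foliation $\tilde\cF$ along $E = \pi^{-1}(p) \cong \proj^1$. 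In the first (\emph{non-dicritical}) case $E$ is $\tilde\cF$-invariant, $\Sing(\tilde\cF)\cap E$ is exactly the zero set of $P_p$ on $E$, and the chart formulas give $\sum_{q\in \Sing(\tilde\cF)\cap E} m_q(\tilde\cF) \le m+1$; in the second (\emph{dicritical}) case $E$ is generically transverse to $\tilde\cF$ and one gets the sharper $\sum_q m_q(\tilde\cF) \le m-1$.

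Termination then proceeds in two stages. First one drives every multiplicity down to $\le 1$: the multiplicity never increases, and an infinite chain along which it stays equal to a fixed $m \ge 2$ (possible only in degenerate configurations, e.g. when $P_p$ is a power of a linear form and the lower-order terms conspire) is ruled out by a secondary invariant — for instance the Milnor number $\mu_p(\cF) = \dim_\C \cO_p/(a,b)$ together with the structure of the tangent cone, or the order of a formal separatrix through $p$ — which one checks drops strictly whenever $m$ fails to drop. Second, one handles the remaining non-reduced singularities, which have multiplicity $1$: either the linear part is semisimple with eigenvalue ratio $\alpha/\beta \in \Q_+$ in lowest terms, $\alpha > \beta > 0$ — and then a blow-up leaves one reduced point and one resonant node whose ratio is obtained by the subtractive Euclidean step $(\alpha,\beta)\mapsto(\beta,\alpha-\beta)$, so after finitely many steps one reaches the radial (hence dicritical) model, which disappears after one further blow-up — or the linear part is $\lambda\,\mathrm{Id}+N$ with $N\ne 0$ nilpotent, in which case a single blow-up produces a reduced saddle–node when $\lambda\ne 0$ and the classical finite Newton-polygon resolution of a nilpotent singularity applies when $\lambda=0$. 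Running both stages over the finitely many bad points produces $\tX$ with $\tilde\cF$ reduced.

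The genuinely delicate point — the expected main obstacle — is the termination bookkeeping of the first stage: pinning down a single well-ordered invariant (or a lexicographically ordered tuple) that provably decreases strictly at \emph{every} blow-up of a non-reduced point of multiplicity $\ge 2$, uniformly across the dicritical and non-dicritical cases and across the degenerate configurations in which the whole mass of the new singular scheme concentrates at a single chart origin. This demands a careful chart-by-chart analysis of how $m_p$, the tangent cone $P_p$, and the Milnor (or separatrix) data transform under blow-up; once that is settled, the remainder, including the resonant descent of the second stage, is routine computation. Keeping the quotient singularities of $\bX$ allowed by Definition \ref{5.10}(2) instead of resolving them first would only add bookkeeping on the local orbifold charts, with no new idea.
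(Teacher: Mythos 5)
The paper does not prove this statement at all: it is quoted as \textsc{Seidenberg}'s reduction theorem with a pointer to \cite{Bru00}, so there is no in-paper argument to compare yours against. Your outline is the standard classical one (and essentially the one in Brunella's monograph): work locally with $\omega=a\,dx+b\,dy$, $\gcd(a,b)=1$, track the algebraic multiplicity $m_p=\min(\mathrm{ord}_p a,\mathrm{ord}_p b)$ and the tangent cone of $xa+yb$, distinguish the dicritical and non-dicritical behaviour of a blow-up, and descend. As a plan this is the right one, and your preliminary reduction to smooth $\bX$ is harmless here.

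However, the ``delicate point'' you flag is the entire content of the theorem, and two of your bridging assertions do not survive scrutiny. First, ``the multiplicity never increases'' is false, and in fact contradicts your own bound $\sum_q m_q(\tilde\cF)\le m+1$: for $\omega=y\,dy-x^{3}dx$ one has $m_p=1$ with nilpotent linear part, and in the chart $y=xt$ the transform (after dividing by $x$) is $(t^{2}-x^{2})\,dx+xt\,dt$, a singularity of multiplicity $2$. So your two stages interleave and cannot be run sequentially. Second, the claim that the Milnor number ``drops strictly whenever $m$ fails to drop'' is precisely the lemma that has to be proved; what the chart computation actually yields is the identity $\mu_p(\cF)=\nu^{2}-\nu-1+\sum_{q\in E}\mu_q(\tilde\cF)$ in the non-dicritical case (with $\nu=m_p$), and an analogous formula with $\nu^{2}+\nu-1$ in the dicritical case. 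These give a strict decrease of the total Milnor number whenever $\nu\ge 2$, or whenever the point is dicritical, but an \emph{increase} by $1$ when one blows up a non-dicritical point of multiplicity $1$ --- which is exactly the nilpotent case above. The genuinely hard residual step is therefore the termination for nilpotent multiplicity-one singularities, which your Euclidean descent does not touch and for which ``the classical Newton-polygon resolution applies'' is a citation, not an argument (one needs a finer invariant, e.g.\ the multiplicity of a formal separatrix, to rule out infinite alternation between $\nu=1$ and $\nu=2$). If you intend to use the theorem as the paper does, cite \cite{Bru00}; if you intend to prove it, these are the points that must be supplied.
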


Thus from now on we shall work with reduced foliations on projective surfaces.
 \textsc{Miyaoka} and  \textsc{Shepherd-Barron} (e.g., see \cite{Brun}) established the following.

\begin{theorem}\label{5.40}  If $\cF$ is a reduced foliation on a projective surface $\bX$ with
at most cyclic quotient singularities then $K_\cF$ is pseudoeffective if and only
if $\cF$ is not a rational foliation (i.e. its general fiber is a rational curve in $\C \proj^1$).

\end{theorem}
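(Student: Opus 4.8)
The plan is to prove the two implications separately: the direction ``$\cF$ rational $\Rightarrow K_\cF$ not pseudoeffective'' is a short adjunction computation, while the converse is the substantial part and rests on the foliated form of Mori's bend-and-break. For the easy direction, suppose $\cF$ is rational, so that $\bX$ is swept out by a covering family $\{C_t\}$ of $\cF$-invariant rational curves — the closures of the general leaves, equivalently the general fibres of a rational first integral of $\cF$ — with the general $C_t$ smooth, irreducible and contained in $\bX \setminus ({\rm Sing}(\bX) \cup {\rm Sing}(\cF))$. Writing $N_\cF$ for the divisor class of the normal bundle of $\cF$, the defining sequence $0 \to T_\cF \to T_{\bX} \to N_\cF \to 0$ gives the relation $K_\cF = K_{\bX} + N_\cF$ between divisor classes, while the index formula for an invariant curve gives $N_\cF \cdot C_t = C_t^2 - Z(\cF, C_t)$ with $Z(\cF,C_t) \geq 0$ (see \cite{Brun}). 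Hence $K_\cF \cdot C_t = (K_{\bX}+C_t)\cdot C_t - Z(\cF,C_t) = (2g(C_t)-2) - Z(\cF,C_t) \leq -2$ by adjunction on the surface, and since a pseudoeffective class has non-negative intersection with any curve moving in a covering family, $K_\cF$ is not pseudoeffective.

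For the converse, assume $K_\cF$ is not pseudoeffective. On a projective surface the pseudoeffective cone $\overline{{\rm Eff}}(\bX)$ equals the Mori cone $\overline{NE}(\bX)$ (the closure of the cone of effective curves), and its dual is the nef cone; hence there is a nef class $H$ with $K_\cF \cdot H < 0$, and perturbing $H$ into the ample cone one finds an ample class $A$ with $K_\cF \cdot A < 0$. For $m \gg 0$ the system $|mA|$ is very ample, so a general member $C \in |mA|$ is a smooth irreducible curve disjoint from the finite set ${\rm Sing}(\bX) \cup {\rm Sing}(\cF)$, moving in a family that covers $\bX$, and with $K_\cF \cdot C = m(K_\cF\cdot A) < 0$; equivalently, the line bundle $T_\cF|_C$ has positive degree. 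Note that $C$ need \emph{not} be $\cF$-invariant — in general it meets the leaves transversally at its generic point — so rationality of the leaves is not visible directly from adjunction along $C$.

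It remains to pass from such a covering family of positive foliated degree to the statement that $\cF$ is a rational foliation, and this is the crux. The input is the algebraicity criterion for leaves originating with \textsc{Miyaoka} and \textsc{Shepherd-Barron} and available, in the form needed here for reduced foliations on surfaces with quotient singularities, from the work of \textsc{Bogomolov}--\textsc{McQuillan} and \textsc{Brunella} (see \cite{Brun}, \cite{Bru04}): if $C$ is a complete curve disjoint from ${\rm Sing}(\cF)$ along which $T_\cF$ has positive degree, then the leaf of $\cF$ through a general point of $C$ is algebraic and its closure has rational normalisation. Applying this to the family constructed above, and letting $p$ range over a general point of $\bX$ (which it does, since the curves $C$ cover $\bX$), we conclude that the general leaf of $\cF$ is a rational curve, i.e. $\cF$ is a rational foliation; combined with the first paragraph this shows that $K_\cF$ fails to be pseudoeffective exactly for rational foliations. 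The main obstacle is precisely this algebraicity criterion: because $C$ is transverse rather than tangent to $\cF$, one cannot conclude from adjunction directly, and must instead run bend-and-break in the deformation theory of $C$ relative to the \emph{foliated} bundle $T_\cF$ rather than $T_{\bX}$, and then verify that the rational curves so produced are $\cF$-invariant and organise into the leaves of $\cF$; this foliated bend-and-break — together with the bookkeeping of reduced singularities on quotient-singular surfaces — is the technical heart of the Miyaoka--Shepherd-Barron theorem.
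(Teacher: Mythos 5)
First, a caveat on the comparison itself: the paper does not prove Theorem \ref{5.40} at all --- it is quoted from Miyaoka, Shepherd--Barron and Brunella with a pointer to \cite{Brun} --- so there is no in-paper argument to measure you against, and I can only assess your sketch on its own terms. Your architecture (adjunction for the easy implication, a Miyaoka-type algebraicity criterion for the hard one) is indeed how the literature organizes this result, but there are two problems, one small and one structural. The small one: your index formula has the wrong sign. For a smooth $\cF$-invariant curve $C$ the inclusion $T_\cF|_C \to T_C$ vanishes on ${\rm Sing}(\cF)\cap C$, which gives $N_{\cF}\cdot C = C^2 + Z(\cF,C)$ and hence $K_\cF\cdot C = 2g(C)-2+Z(\cF,C)$, not $2g(C)-2-Z(\cF,C)$; with the correct sign your inequality $K_\cF\cdot C_t\le -2$ fails unless you also argue that the general $C_t$ misses ${\rm Sing}(\cF)$. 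That is true here, but for a reason worth recording: a reduced singularity carries at most two separatrices, so the rational first integral of a \emph{reduced} rational foliation has no indeterminacy points, its general fibre is a smooth rational curve with $Z(\cF,C_t)=0$ and $C_t^2=0$, and then $K_\cF\cdot C_t=-2$ against a nef curve class rules out pseudoeffectivity.

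The structural problem is in the converse. Everything up to the construction of the ample curve $C$ with $\deg(T_\cF|_C)>0$ is fine (duality of $\overline{NE}(\bX)$ with the nef cone, perturbation into the ample cone, generic avoidance of the finitely many singular points of $\bX$ and of $\cF$). But at that point you invoke, as a black box, the assertion that positivity of $T_\cF$ along a complete curve disjoint from ${\rm Sing}(\cF)$ forces the leaves through its points to be algebraic with rational closures. That assertion (Bogomolov--McQuillan, or Miyaoka's reduction mod $p$ plus bend-and-break), together with the Darboux--Jouanolou step that infinitely many algebraic leaves yield a rational first integral, \emph{is} the Miyaoka--Shepherd-Barron theorem you are asked to prove; it is not an auxiliary lemma one may cite while claiming to have given a proof. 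You acknowledge this yourself by calling it ``the technical heart,'' but acknowledging the gap does not fill it. As a road map of the known proof your text is accurate and well organized; as a proof it reduces the theorem to an equivalent theorem and stops there.
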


The Zariski-Fujita decomposition implies that every pseudoeffective $\Q$-bundle is
the sum of a nef $\Q$-bundle and a negative part for which the associated
divisor can be contracted. This is a basis for the next fact (\textsc{McQuillan}'s contraction) (e.g., see \cite{Bru00} or \cite{Brun}).

\begin{theorem}\label{5.50}
Let $\cF$ be a non-rational reduced foliation on a projective surface $\bX$ with
at most cyclic quotient singularities. Then there exists a birational morphism
$(\bX , \cF ) \to (\bX' , \cF')$ such that $\bX'$ is still projective with at most cyclic
quotient singularities, $\cF'$ is still reduced, and $K_{\cF'}$ is nef.

\end{theorem}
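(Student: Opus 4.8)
The plan is to take for $\bX'$ the contraction of the negative part of the Zariski--Fujita decomposition of $K_\cF$, and to deduce nefness of $K_{\cF'}$ from nefness of the positive part by means of the negativity lemma. Since $\cF$ is not rational, Theorem~\ref{5.40} tells us that $K_\cF$ is pseudoeffective, so the Zariski--Fujita decomposition applies: $K_\cF = P + N$ in ${\rm Pic}(\bX)\otimes\Q$ with $P$ nef, $N=\sum_i a_iN_i$ effective (all $a_i>0$), the intersection matrix $(N_i\cdot N_j)$ negative definite, and $P\cdot N_i=0$ for every $i$. Since $\bigcup_iN_i$ is negative definite it contracts (Grauert's criterion) to a normal projective surface $\bX'$; let $\pi\colon\bX\to\bX'$ be the resulting birational morphism, which is an isomorphism away from $\bigcup_iN_i$, and put $\cF':=\pi_*\cF$. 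The plan is then to verify three things: that $\bX'$ has at worst cyclic quotient singularities, that $\cF'$ is reduced, and that $K_{\cF'}$ is nef.

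The first two points rest on a local study of $N$, and this is the step I expect to be the main obstacle. The facts to establish, all drawn from the local theory of reduced foliation singularities (cf.\ \cite{Bru00}, \cite{Brun}), are: (i) every component $N_i$ is $\cF$-invariant --- for a non-invariant curve $C$ one has the tangency identity $\operatorname{tang}(\cF,C)=C^2+K_\cF\cdot C$ with $\operatorname{tang}(\cF,C)\ge0$, and feeding this, together with $P\cdot N_i=0$ and $N_i\cdot N_j\ge0$ for $i\ne j$, against the negative definiteness of $(N_i\cdot N_j)$, rules out non-invariant components; (ii) since the invariant curves $N_i$ can meet one another only at reduced singularities of $\cF$, through each of which at most two separatrices pass, no $N_i$ meets three others and no cyclic configuration occurs, so the dual graph of $\bigcup_iN_i$ is a disjoint union of chains of smooth rational curves, i.e.\ of Hirzebruch--Jung strings; (iii) contracting such a chain produces exactly a cyclic quotient singularity, across which $\cF'$ extends with a non-vanishing local generator and without producing a non-reduced singularity. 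Granting (i)--(iii), $\bX'$ has at worst cyclic quotient singularities and $\cF'$ is a reduced foliation in the sense of Definition~\ref{5.10}; assembling this local package is where the real work lies.

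For the nefness of $K_{\cF'}$ I would argue as follows. On the common open set $\bX\setminus\bigcup_iN_i\cong\bX'\setminus\pi(\bigcup_iN_i)$ the $\Q$-line bundles $K_\cF$ and $\pi^*K_{\cF'}$ coincide ($K_{\cF'}$ being $\Q$-Cartier since $\bX'$ has quotient singularities), so $K_\cF=\pi^*K_{\cF'}+E$ for some $\Q$-divisor $E$ supported on $\bigcup_iN_i$. Comparison with $K_\cF=P+N$ gives $N-E=\pi^*K_{\cF'}-P$, whose right-hand side meets every $N_i$ in $0$ (because $P\cdot N_i=0$ and $\pi^*K_{\cF'}\cdot N_i=0$ by the projection formula). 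Writing $N-E=D^{+}-D^{-}$ with $D^{\pm}\ge0$ sharing no component, the relation $(N-E)\cdot D^{-}=0$ forces $(D^{-})^2\ge0$, hence $D^{-}=0$ by negative definiteness of the exceptional lattice, and then $D^{+}=0$ for the same reason; therefore $N=E$ and $\pi^*K_{\cF'}=P$. Since $P$ is nef and $\pi$ is surjective, $K_{\cF'}$ is nef, which would complete the argument.
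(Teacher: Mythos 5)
Your proposal follows exactly the route the paper itself indicates: the paper gives no proof of Theorem~\ref{5.50}, stating only that the Zariski--Fujita decomposition of the pseudoeffective bundle $K_\cF$ (pseudoeffective by Theorem~\ref{5.40}) ``is a basis for'' McQuillan's contraction, and referring to \cite{Bru00}, \cite{Brun} for the details; Remark~\ref{5.60} then describes the resulting morphism as a sequence of blow-downs of invariant rational curves, each carrying exactly one foliation singularity. Your final step --- $\pi^*K_{\cF'}-P$ is exceptional and numerically trivial on every $N_i$, hence zero by negative definiteness, so $\pi^*K_{\cF'}=P$ is nef --- is correct and is the standard way to conclude.

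The one place where you have asserted rather than argued is precisely the local package (i)--(iii), which is also the part the paper outsources to Brunella and McQuillan. Within it, (i) is adequately sketched, but (ii) as written does not follow from what you say: the bound of two separatrices through a reduced singularity rules out branch points in the dual graph of $\bigcup_iN_i$, but not cycles, and negative definite cycles of rational curves do exist (they contract to cusp, not cyclic quotient, singularities); excluding them requires an additional argument (e.g.\ via the index/Camacho--Sad relations on a cycle of invariant curves, or the classification of connected components of the negative part as maximal $\cF$-chains in \cite{Bru00}). Likewise one must rule out components with $N_i^2=-1$ (or contract them first, as in Remark~\ref{5.60}) before the configuration is literally a Hirzebruch--Jung string. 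Since you explicitly flag (i)--(iii) as the real work and the paper proves none of it either, this is a deferral rather than an error, but the cycle exclusion is the concrete point your sketch would need to address.
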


\begin{remark}\label{5.60} (\cite[Section 3]{Bru04} or \cite[page 10]{Brun})
Contraction of $\bX$ to $\bX'$ is a sequence
of blowing down of rational curves such that each of them is invariant
with respect to the consequent induced foliation and the restriction of the canonical
bundle of the foliation to the curve is negative. Every of these curves $F$ contains exactly one singularity
$p$ of the foliation which is automatically a regular point of the surface. Furthermore, $F$ is contracted
to a point which is a regular point of the induced foliation on the resulting surface (but not in general a regular
point of the surface).

\end{remark}

The reduced foliation generated by a completely integrable algebraic vector field on $X$
admits a lot of entire tangent curves $\C \to \bX$.
If $\cF$ has no rational first integral, then such general curve is Zariski dense in $\bX$ (Darboux's
theorem) and another result of \textsc{McQuillan} says the following (see \cite[Sections IV and  V]{McQ} or
\cite[Chapter 9, Theorems 1 and 4, Corollary 1]{Bru00}.

\begin{theorem}\label{5.70} Let $\cF$ be a reduced foliation on a smooth projective
surface $\bX$ such that $\cF$  possesses a tangent nonconstant entire curve
that is Zariksi dense in $\bX$. Then the Kodaira dimension ${\rm kod} (\cF )$ is either 0 or 1.

{\rm (1)} Furthermore, if  ${\rm kod} (\cF )=1$ then either

{\rm (1a)} $\cF$ is a Riccati foliation, i.e. there exists a fibration $f: \bX \to B$ whose general
fiber is a rational curve transverse to $\cF$ or

{\rm (1b)} $\cF$ is a Turbulent foliation, i.e. there exists a fibration $f: \bX \to B$ whose general
fiber is an elliptic curve transverse to $\cF$.

{\rm (2)} If  ${\rm kod} (\cF )=0$ and $\bX \to \bX'$ is the \textsc{ McQuillan}'s contraction to a
nef reduced foliation $\cF'$ on $\bX'$ then
there exists
a finite covering $r: Y \to \bX'$ such that

{\rm (2a)} $Y$ is smooth and $r$ is ramified only over the quotient singularities of $\bX'$.

{\rm (2b)} The canonical bundle $K_\cG$ of the lifted foliation $\cG = r^* (\cF' )$ is trivial,
i.e. $K_\cG = \cO_Y$, and so $\cG$ is generated by a global holomorphic vector field
with isolated zeros only.

\end{theorem}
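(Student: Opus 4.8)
The reductions in Theorems \ref{5.30} and \ref{5.50} let me assume throughout that $\cF$ is reduced and $\bX$ is projective with at most cyclic quotient singularities, and (for part (2)) that $K_{\cF}$ is nef. The plan is to split the argument according to the value of ${\rm kod}(\cF)$, using on one side the positivity dichotomy for $K_{\cF}$ (Theorem \ref{5.40}) and on the other McQuillan's Nevanlinna theory for entire curves tangent to a foliation. First I would rule out the two extreme Kodaira dimensions. If $\cF$ were rational it would carry a rational first integral $\bX \dashrightarrow \C\proj^1$, so every leaf would lie in a fiber of that integral and no tangent curve could be Zariski dense; hence $\cF$ is not rational, $K_{\cF}$ is pseudoeffective by Theorem \ref{5.40}, and (by the non-vanishing theorem for reduced foliations) ${\rm kod}(\cF)\ge 0$. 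To exclude ${\rm kod}(\cF)=2$ I would invoke McQuillan's theorem that a foliated surface of general type carries no Zariski-dense entire tangent curve: to the given Zariski-dense curve $f:\C\to\bX$ one attaches a closed positive $(1,1)$-current $T$ --- an Ahlfors/Nevanlinna current built from a suitable exhaustion by discs --- whose class is nef and nonzero, McQuillan's tautological (second main theorem) inequality bounds the pairing of $T$ with $K_{\cF}$ above by a ramification defect supported on $\Sing(\cF)$, and the Baum--Bott index theorem for $\cF$ together with $T\cdot T\ge 0$ produces a Bogomolov-type estimate incompatible with $K_{\cF}$ being big. This forces ${\rm kod}(\cF)\in\{0,1\}$.

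In the case ${\rm kod}(\cF)=1$ I would analyse the fibration $\phi:\bX\to B$ defined by a pluricanonical system of $K_{\cF}$. If the generic fiber of $\phi$ were $\cF$-invariant, then $\phi$ itself would be a meromorphic first integral and no leaf could be Zariski dense; so after the standard blow-ups one may assume the generic fiber $C$ is transverse to $\cF$. Next I would bound the genus of $C$: if $g(C)\ge 2$ the automorphism group of $C$ is finite, the holonomy of $\cF$ along $\phi$ has finite image, and a finite covering of $\bX$ splits as a product $B'\times C$ with $\cF$ given by the fibers of the second projection --- hence $\cF$ would acquire a first integral, which is excluded. Thus $g(C)\in\{0,1\}$, giving the Riccati case (1a) for a rational transverse fiber and the Turbulent case (1b) for an elliptic one.

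In the case ${\rm kod}(\cF)=0$ I would pass to McQuillan's contraction $\bX\to\bX'$ from Theorem \ref{5.50}, so that $K_{\cF'}$ is nef; since $K_{\cF'}^2>0$ would make it big, forcing ${\rm kod}=2$, we get $K_{\cF'}^2=0$, and a nef $\Q$-bundle of self-intersection $0$ and Kodaira dimension $0$ on such a surface is numerically trivial (otherwise it defines a fibration and has Kodaira dimension $\ge 1$). A numerically trivial nef $\Q$-bundle admitting a pluricanonical section is torsion, so $K_{\cF'}$ is a torsion class, and I would then take a finite covering $r:Y\to\bX'$ that simultaneously trivialises this torsion class and uniformises the cyclic quotient singularities of $\bX'$: this makes $Y$ smooth with $r$ ramified only over those singularities (part (2a)), and $K_{\cG}=\cO_Y$ for $\cG=r^{*}(\cF')$. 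A trivialisation of $K_{\cG}$ is exactly a nowhere-vanishing section of the tangent bundle of $\cG$ away from $\Sing(\cG)$, i.e.\ a global holomorphic vector field on $Y$ with isolated zeros generating $\cG$, which is part (2b).

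The hard part will be the exclusion of ${\rm kod}(\cF)=2$ in the first step: this is the core of McQuillan's work and rests on the full machinery of parabolic Nevanlinna theory on foliated surfaces --- Ahlfors currents, the tautological inequality along $\cF$, and the algebraic Bogomolov--Miyaoka--Yau-type bound for foliations --- none of which is formal. Once the structure theorems for fibered foliations and for foliations with numerically trivial canonical bundle are granted, the ${\rm kod}=1$ and ${\rm kod}=0$ steps above are comparatively routine.
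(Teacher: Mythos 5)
The paper does not actually prove Theorem \ref{5.70}: it is quoted as an external result of \textsc{McQuillan} and \textsc{Brunella} (see \cite[Sections IV and V]{McQ} and \cite[Chapter 9]{Bru00}), so there is no internal proof to compare yours against. Your outline does follow the route of those sources --- exclude the rational and general-type cases, then handle ${\rm kod}(\cF)=1$ via the Iitaka fibration of $K_{\cF}$ and a genus bound on the transverse fiber, and ${\rm kod}(\cF)=0$ via numerical triviality of $K_{\cF'}$ and an index-one/uniformizing cover --- and you correctly isolate McQuillan's tautological inequality as the non-formal core of the exclusion of ${\rm kod}(\cF)=2$.

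Two of your intermediate assertions, however, are false as stated, and each conceals a genuine theorem. First, there is no unconditional ``non-vanishing theorem'' asserting that a reduced foliation with pseudoeffective $K_{\cF}$ has ${\rm kod}(\cF)\ge 0$: Hilbert modular foliations have nef $K_{\cF}$ and ${\rm kod}(\cF)=-\infty$. To rule them out here you must invoke the hypothesis once more --- their leaves are uniformized by the disc, hence admit no nonconstant entire tangent curve --- rather than a general positivity statement. Second, in the ${\rm kod}(\cF)=0$ step, a nef effective $\Q$-divisor $D$ with $D^2=0$ and Iitaka dimension $0$ need not be numerically trivial and need not ``define a fibration'': take an elliptic curve $D$ on a surface with $D^2=0$ whose normal bundle is non-torsion in ${\rm Pic}^0(D)$; then $h^0(mD)=1$ for all $m\ge 0$, so $D$ is nef with Iitaka dimension $0$, yet $D$ is not numerically trivial. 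That $K_{\cF'}$ is numerically trivial, and then torsion, when it is nef of Kodaira dimension zero is itself a substantive piece of the McQuillan--Brunella theory (the classification of nef foliations with $c_1(K_{\cF'})$ numerically trivial and the foliated abundance statements of \cite[Chapter 8]{Bru00}), not a formal consequence of $K_{\cF'}^2=0$. With those two steps replaced by the corresponding theorems from \cite{Bru00}, your sketch is a faithful account of the cited proof.
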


\section{\textsc{Brunella}'s construction: the case of  ${\rm kod} (\cF )=1$. }

In fact  \textsc{Brunella} proved more (Lemmas 1 and 2 in \cite{Bru04}).

\begin{proposition}\label{6.10}
{\rm (1)} Case
(1b) in Theorem \ref{5.70}  cannot hold,
i.e. when  ${\rm kod} (\cF )=1$
for a reduced foliation without a rational first integral then it is a Riccati foliation

{\rm (2)} Contracting curves in the fibers of $f: \bX \to B$ for the Riccati foliation
on a smooth surface $\bX$
one can suppose that each fiber of $f$ belongs to one of five standard types and
there is always at least one singular fiber of $f$
of so-called types (e) or (d) (see \cite[Lemma 2]{Bru04}) that consists of a union of
leaves of the foliation $\cF$
(it will be denoted
below by $F_\infty$).

\end{proposition}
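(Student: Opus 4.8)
Both assertions are repackagings of \cite[Lemmas~1 and~2]{Bru04}, and the plan is to follow Brunella. Recall the standing situation of this section: $\cF$ is the reduced foliation on the projective surface $\bX$ associated with a completely integrable algebraic vector field on an affine surface $X\subset\bX$; it has no rational first integral, and we are in the case ${\rm kod}(\cF)=1$. In particular through a general point of $\bX$ there passes an entire leaf, which is an orbit of the induced flow (hence isomorphic to $\C$ or to $\C^*$) and is Zariski dense by Darboux's theorem, so by Theorem~\ref{5.70} the only alternative to a Riccati foliation is the turbulent case $\mathrm{(1b)}$: a fibration $f\colon\bX\to B$ whose general fibre is an elliptic curve $E$ transverse to $\cF$. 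To prove $\mathrm{(1)}$ I would assume we are in this case and derive a contradiction. Since $\cF$ is transverse to the general fibre, the fibration is locally trivial along the leaves, hence isotrivial over the complement of the finite ``bad'' set $\Delta\subset B$ (singular or multiple fibres and points of non‑transversality), and the holonomy of $\cF$ defines a monodromy representation $\rho\colon\pi_1(B\setminus\Delta)\to E\rtimes\Aut(E,0)$ into the biholomorphism group of the fibre.

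Now I would split on the size of the image of $\rho$. If it is finite, a finite base change kills the monodromy, after which $\cF$ becomes the fibre foliation of the second projection of a product $E\times(\text{base})$, i.e. a foliation with algebraic leaves; pushing the corresponding first integral down to $\bX$ (it descends to $\bX\to E/(\text{image of }\rho)$) shows that $\cF$ has an algebraic first integral, contradicting that a general leaf is Zariski dense. If the image of $\rho$ is infinite, it contains an infinite group of translations of $E$, so the monodromy orbit of a general point is dense in each fibre it meets and a general leaf is dense in $\bX$; in particular $\cF$ is not of $\C^*$‑type, since by Theorem~\ref{5.20}$\,\mathrm{(2)}$ that would make every leaf closed in $\bX\setminus{\rm Sing}(\cF)$, so a general leaf is isomorphic to $\C$. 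But then the translation‑invariant (Haar) measure on the transverse elliptic fibres is holonomy‑invariant, so $\cF$ carries a transverse invariant measure, and by the classification of such reduced foliations (\cite{McQ}, \cite{Bru00}) this forces ${\rm kod}(\cF)=0$ — a contradiction. Hence $\mathrm{(1b)}$ cannot hold and $\cF$ is Riccati. I expect this dichotomy, and in particular a careful account of how ${\rm kod}$ behaves under the base change and of the reduction to a suspension with a transverse invariant measure, to be the main obstacle.

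For $\mathrm{(2)}$, $\cF$ is now Riccati with a rational fibration $f\colon\bX\to B$ whose general fibre $\cong\proj^1$ is transverse to $\cF$; each fibre is either transverse to $\cF$ or entirely $\cF$‑invariant, and only finitely many are invariant. First I would run the standard reduction for Riccati $\proj^1$‑fibrations: blow down $(-1)$‑curves contained in the fibres, one at a time, as long as the foliation stays reduced. This terminates, and an inspection of the possible dual graphs of a fibre, using the reduced‑singularity condition, shows that each remaining fibre is one of the five normal forms $\mathrm{(a)}$–$\mathrm{(e)}$ of \cite[Lemma~2]{Bru04}. It remains to exhibit an invariant fibre of type $\mathrm{(d)}$ or $\mathrm{(e)}$. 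A general leaf is an entire curve mapping non‑constantly to $B$, so $B$ has genus $\le 1$; together with ${\rm kod}(\cF)=1$, which for a Riccati foliation means that $K_\cF$ is the pullback of a divisor of positive degree on $B$, this forces $f$ to have at least one invariant fibre. Moreover $X$ is a union of leaves, hence $\cF$‑saturated, so the divisor $\bX\setminus X$ is $\cF$‑invariant; analysing how the entire, Zariski dense general leaves accumulate onto this divisor, together with the affineness of $X$, one checks type by type that an invariant fibre compatible with this picture cannot be of type $\mathrm{(a)}$, $\mathrm{(b)}$ or $\mathrm{(c)}$. Thus there is an invariant fibre of type $\mathrm{(d)}$ or $\mathrm{(e)}$, and this is the divisor $F_\infty$.
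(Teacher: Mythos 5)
The paper offers no proof of this proposition at all: it is stated as a quotation of Lemmas~1 and~2 of \cite{Bru04}, so the only ``paper proof'' to compare against is Brunella's. Your treatment of part (2) and of the finite-monodromy half of part (1) is broadly consistent with what Brunella does, but the infinite-monodromy half of part (1) rests on a step that is false, and that step is where the whole content of the lemma lives.

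You argue that, the Haar measure on the elliptic fibres being invariant under $\Aut(E)$, the foliation carries a transverse invariant measure, and that ``by the classification of such reduced foliations this forces ${\rm kod}(\cF)=0$.'' There is no such implication. \emph{Every} turbulent foliation is, away from its invariant fibres, a suspension of a representation $\pi_1(B\setminus\Delta)\to\Aut(E)$ and hence always carries the transverse Haar measure; yet turbulent foliations with ${\rm kod}(\cF)=1$ exist (for a suspension with infinite monodromy over a curve $B$ of genus $\ge 2$ one has $K_\cF=f^*K_B$, whence ${\rm kod}(\cF)=1$). If your implication were correct, case (1b) of Theorem \ref{5.70} would be vacuous for arbitrary surfaces and McQuillan's classification would not list it. The actual exclusion must use the specific geometry: $\bX$ is rational, so $B\cong\C\proj^1$; the Zariski-dense leaf is an orbit of a complete vector field, hence parabolic, and since it covers $B\setminus\Delta$ this forces $\#\Delta\le 2$ and cyclic monodromy; on the other hand ${\rm kod}(\cF)=1$ forces $K_\cF$ --- a pull-back from $B$ twisted by the special fibres --- to carry enough special fibres to overcome $\deg K_{\proj^1}=-2$. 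It is the tension between these two counts, together with the structure of the boundary divisor of the affine part, that Brunella exploits, and your argument never engages with it. Two smaller slips: the orbits of an infinite group of translations of $E$ are dense only in cosets of a real subtorus of $E$, not necessarily in $E$; and Theorem \ref{5.20} gives closedness of leaves in the Stein surface $X$ minus ${\rm Sing}(\cF)$, not in $\bX$. In part (2), the step that actually matters for the sequel (existence of an invariant fibre of type (d) or (e), the future $F_\infty$ used in Proposition \ref{6.20}) is precisely the one you leave as ``one checks type by type,'' so that part too is an outline rather than a proof.
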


Next we have the following (Lemmas 3-5 in \cite{Bru04}).

\begin{proposition}\label{6.20} Let $\nu$ be a completely integrable algebraic vector field on
a rational affine algebraic surface $X$ and $\pi : \hX \to X$ be the resolution of
the singularities of $\nu$ (i.e. the lift $\hat \nu$ of $\nu$ is a completely
integrable vector field on $\hX$).  Suppose that the foliation $\cF$ generated
by $\hat \nu$ on the completion $\bX$ of $\hX$ is reduced and has no rational first integral.
Let $f: \bX \to B$ be the corresponding (rational) Riccati fibration (note that $B \simeq
\C \proj^1$ since $\bX$ is rational). Then

{\rm (1)}  the vector field $\hat \nu$ preserves $f|_{\hX}$ (i.e. its flow
sends fibers into fibers) or in other words $\hat \nu$ is a lift of
a completely integrable vector field $\nu_0$ on $f (\hX ) \subset B$;

{\rm (2)} the fiber  $F_\infty$ is contained in the divisor $D = \bX \setminus \hX$
and in particular the exceptional divisor $E$ of $\pi$ is
disjoint from $F_\infty$;

{\rm (3)} contracting $E$ we get from $f$ a regular function $P$ on $X$ whose general
fibers are either $\C$ or $\C^*$ (i.e. again the  flow of $\nu$ maps each fiber of $P$ onto a fiber of $P$).

\end{proposition}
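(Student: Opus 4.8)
The plan is to follow \textsc{Brunella} (\cite[Lemmas 3--5]{Bru04}), proving (1), (2), (3) in turn. Throughout I write $\bX$ for the smooth projective surface on which the reduced Riccati foliation $\cF$ generated by $\hat\nu$ lives, $D=\bX\setminus\hX$, and $f\colon\bX\to B\simeq\C\proj^1$ for the Riccati fibration, with $F_\infty=f^{-1}(b_\infty)$ the distinguished fibre of type (d) or (e) furnished by Proposition \ref{6.10}; note that $\pi$ contracts $E$ exactly to the finitely many singular points of $\nu$, which lie in the affine $X$, so $E\subset\hX$, hence $E\cap D=\emptyset$ and the ``in particular'' in (2) is automatic once $F_\infty\subseteq D$ is known.

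For (1) I would argue that, since $\hat\nu$ is completely integrable, it generates a holomorphic $\C$-action $\phi$ on $\hX$ preserving $\cF$, hence also its canonical bundle $K_\cF$ and the Iitaka fibration of $\cF$; for a Riccati foliation of Kodaira dimension one, $K_\cF$ is, up to fibre-supported corrections, the $f$-pullback of a positive-degree $\Q$-divisor on $B\simeq\C\proj^1$, so that Iitaka fibration is $f$, up to an automorphism of $B$. Therefore $\phi$ permutes the fibres of $f$, inducing a $\C$-action $\psi$ on $B$ whose generator $\nu_0$ satisfies that $\hat\nu$ is $f$-related to $\nu_0$; restricting to $\hX$ and $f(\hX)$ gives (1), with $\nu_0$ completely integrable. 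One then notes $\nu_0\not\equiv0$, for otherwise $\hat\nu$ would be tangent to every fibre of $f$ and $f$, read in an affine coordinate on $B$, would be a rational first integral of $\cF$, which is excluded.

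For (2) I would let $\psi_s$ be the complete flow of $\nu_0$ on $B$; since $\nu_0\not\equiv0$ it has one or two zeros and its nonconstant orbits are copies of $\C$ or of $\C^*$. Because $F_\infty$ is a union of leaves of $\cF$ (Proposition \ref{6.10}(2)) it is $\cF$-invariant, so $\hat\nu$ is tangent to it, $\nu_0(b_\infty)=0$, and any $\hat\nu$-orbit meeting $F_\infty\cap\hX$ lies inside $F_\infty$. Assuming $F_\infty\cap\hX\neq\emptyset$: as $\cF$ has isolated singularities, $\hat\nu$ vanishes on no curve, so an irreducible component $C$ of $F_\infty\cap\hX$ carries the nontrivial complete field $\hat\nu|_C$, whose orbits are necessarily copies of $\C$ or $\C^*$ (a positive-dimensional orbit is never compact, never a $\C\proj^1$), so $C$ with its foliation singularities deleted is a union of such curves. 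The contradiction then comes from comparing this with \textsc{Brunella}'s explicit list of the fibres of types (d) and (e) (\cite[Lemmas 1--2]{Bru04}), whose components and saddle-node singularities make their leaves of a different shape; hence $F_\infty\subseteq D$, and $E\cap F_\infty=\emptyset$.

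For (3), having $b_\infty\notin f(\hX)$ from (2), I would pick the affine coordinate on $B$ with pole at $b_\infty$, turning $f$ into a regular function $\hX\to\C$; it is constant on every rational component $E_i$ of $E$ (a nonconstant $f|_{E_i}$ would be onto $B$, hence meet $F_\infty$, against (2)), so it descends to a regular function $P\colon X\to\C$, with $\nu$ being $P$-related to $\nu_0$, so the flow of $\nu$ permutes the fibres of $P$. For general $t$, $f^{-1}(t)$ avoids $E$ (as $f$ takes finitely many values on $E$), so $\pi$ identifies $P^{-1}(t)$ with $\ell_t\cap\hX=\ell_t\setminus(\ell_t\cap D)$, where $\ell_t=f^{-1}(t)\cong\C\proj^1$; being a closed subvariety of the affine $X$, $P^{-1}(t)$ is noncompact, so $\ell_t$ meets $D$ in $k\geq1$ points. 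To bound $k\leq2$: a general $\ell_t$ meets only the horizontal components of $D$, each of which is $\cF$-invariant, because the complete flow of $\hat\nu$ preserves $\hX$, hence permutes the finitely many components of $D$ and — $\C$ having no nontrivial finite quotient — fixes each of them; and since $\cF$ has no rational first integral its monodromy is an infinite subgroup of $PGL_2(\C)$, every finite orbit of which on $\C\proj^1$ has at most two points, while the horizontal components of $D$ correspond to such orbits. Hence $k\in\{1,2\}$ and $P^{-1}(t)\cong\C$ or $\C^*$. The hard parts will be the incompatibility of the type (d)/(e) fibre $F_\infty$ with the interior $\hX$ in step (2), and the implication ``an $\cF$-invariant horizontal component of $D$ meets a general fibre in at most two points'' in step (3); both genuinely require \textsc{Brunella}'s fine analysis of Riccati foliations rather than any soft argument.
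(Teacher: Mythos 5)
First, a point of comparison: the paper does not prove Proposition \ref{6.20} at all --- it is quoted from \textsc{Brunella} (Lemmas 3--5 of \cite{Bru04}) as part of the survey, so there is no in-paper argument to measure your proposal against. Judged on its own terms, your outline follows the right general route (projectability of $\hat\nu$, localization of $F_\infty$ inside $D$, descent of $f$ to a regular function with fibres $\C$ or $\C^*$), but it has gaps beyond the two you flag at the end.

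The most serious one is in step (1). The flow maps of $\hat\nu$ are holomorphic automorphisms of the quasi-projective surface $\hX$ only; they are in general transcendental (e.g.\ the time-$t$ map of $xy\,\partial/\partial x$ on $\C^2$ is $(x,y)\mapsto (xe^{ty},y)$) and do not extend to $\bX$ even as birational maps. Consequently they do not act on $\mathrm{Pic}(\bX)$, on $K_{\cF}$, or on $H^0(\bX,K_{\cF}^{\otimes n})$, so the assertion that they ``preserve the Iitaka fibration'' is essentially the statement to be proved rather than a consequence of preserving $\cF|_{\hX}$; \textsc{Brunella}'s argument is local-analytic along the Riccati structure, not an appeal to functoriality of the Iitaka map. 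Two further problems: in step (2) the parenthetical ``as $\cF$ has isolated singularities, $\hat\nu$ vanishes on no curve'' is false --- the foliation is obtained from $\hat\nu$ by dividing out the divisorial part of its zero locus, so $\hat\nu$ may vanish identically along curves (in particular along components of $E$), which undercuts the ``nontrivial complete field on $C$'' argument; and in step (3) the justification for the $\cF$-invariance of the horizontal components of $D$ (``the flow permutes the components of $D$'') is vacuous, since the flow preserves $\hX$ and does not act on $D$ at all --- the correct reason is that a leaf crossing a boundary component transversally would carry a trajectory of the complete field out of $\hX$ in finite time, which is a separate and more delicate estimate. Together with the two steps you yourself defer to \cite{Bru04}, this leaves the proposal as a plan rather than a proof.
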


In fact for foliations of $\C^*$-type  the same conclusion can be made even without assumption
that ${\rm kod} (\cF )=1$. In order to demonstrate it we need some facts from another
 \textsc{Brunella}'s paper \cite{Bru98} where he used a slightly different terminology.
Let $L$ be a leaf of the foliation $\cF$ on $X$ and
$L_0$ be a Riemann surface isomorphic to $\{ z \in \C | \, 0\leq r < |z| \leq 1 \}$
which is properly embedded into $L$. Then $L_0$ is called a planar isolated end of $L$.
This end is called transcendental if the set $\bar L_0 \setminus L_0$
consists of more than one point where $\bar L_0$ is the closure of $L_0$ in $\bX$.
Fibration $\cF$ is called (in \cite{Bru98})  $P$-complete
for some regular function $P$ on $X$ if there exists a finite set $Q$ such that for every
$t \in \C \setminus Q$ the fiber $P^{-1} (t)$ is transversal to $\cF$ and there is
a neighborhood $U_t\subset \C$ of $t$ for which $P|_{P^{-1} (U_t)}$ is a fibration
and $\cF |_{P^{-1}(U_t)}$ define a local trivialization of it.

The next fact is the main result in \cite{Bru98}.

\begin{proposition}\label{6.30} Let $X$ be a smooth rational affine surface with an SNC-completion
$\bX$ and $\bar D=\bX \setminus X$. Suppose that for any other SNC-completion $\bar \bX$ of
$X$ that dominates $\bX$ (i.e. there is a morphism $\pi : \bar \bX \to \bX$ identical
on $X$) the following is true.

{\rm (i)} There is a Kahler metric on $\bar \bX$ such that the restriction of the associated
2-form to $X$ is exact.

{\rm (ii)} Let $\Gamma$ be the dual graph of the divisor $\bar D$ and let $\Gamma$ have no
linear $(-1)$-vertices. Suppose also that  for any branch point $b$
of $\Gamma$ such that the subgraph $\Gamma \ominus b$ contains two connected components (among others),
each of which is contractible to a $(-2)$-vertex, the weight of $b$
is at most $-2$ (note that this condition
will hold automatically for the dual graph of
$\bar {\bar D}= \bar \bX \setminus X$).

Then for any foliation $\cF$ generated by a regular vector field on $X$, which possesses also a
transcendental planar isolated end,
there exists a regular function $P\in \C [X]$
with general fibers isomorphic  either to $\C$ or to $\C^*$ such
that $\cF$ is $P$-complete.
\end{proposition}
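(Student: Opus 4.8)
The plan is to pass to a birational model of the completion on which $\cF$ is reduced and the boundary divisor is combinatorially tame, to combine the transcendental end with the exactness of the Kähler form in order to locate a curve at infinity transverse to $\cF$, and finally to recognise the resulting Riccati fibration and push it down to $X$. To set this up, start from an SNC-completion of $X$ dominating $\bX$ and apply \textsc{Seidenberg}'s theorem (Theorem \ref{5.30}), blowing up further if necessary to keep the boundary simple normal crossing; one obtains a completion $\bar\bX$ dominating $\bX$ on which the lifted foliation is reduced and $\bar{\bar D}=\bar\bX\setminus X$ is SNC. By hypothesis conditions (i) and (ii) hold for $\bar\bX$: the dual graph $\Gamma$ of $\bar{\bar D}$ has no linear $(-1)$-vertex and satisfies the weight bound at its branch points, and $\bar\bX$ carries a Kähler form $\kappa$ whose restriction to $X$ is exact. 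Renaming, assume from now on that $\cF$ is reduced on $\bar\bX$. Note that $\cF$ then has no rational first integral: such an integral would make every leaf an open subset of an algebraic curve, whose ends accumulate onto single points of $\bar{\bar D}$, contradicting the existence of a transcendental end.

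The heart of the argument is to produce a curve at infinity transverse to $\cF$. Since the planar end $L_0\subset L$ is properly embedded in $X$, its closure $\bar L_0$ in $\bar\bX$ meets $\bar{\bar D}$, and transcendentality says $\bar L_0\setminus L_0$ is not a single point, so $L$ accumulates onto a connected one-dimensional union of components of $\bar{\bar D}$. Restricting $\kappa$ to $X$ and integrating over the end, a Stokes-type area estimate --- available precisely because $\kappa|_X$ is exact, and modelled on \textsc{Brunella}'s analysis of entire curves tangent to foliations --- shows this end is parabolic and of controlled area. Feeding this into the local study of $\cF$ along $\bar{\bar D}$ (reducedness of $\cF$, together with Baum--Bott and tangency index computations along the accumulation divisor) forces the accumulation locus not to be entirely $\cF$-invariant: some component $C$ of $\bar{\bar D}$ there is non-invariant, so $\cF$ is generically transverse to $C$ and the nearby leaves are parametrised by a transversal to $C$. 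Assumption (ii) is exactly what makes this step go through: the absence of linear $(-1)$-vertices and the weight restriction at branch points exclude the degenerate $(-2)$-chains that would otherwise disturb the index bookkeeping and permit accumulation onto an invariant locus with no such $C$, and they certify that no further contraction of $\bar{\bar D}$ is forced.

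Once $C$ is found, one globalises: transversality of $\cF$ to $C$, together with the parabolicity and area control of the leaves accumulating on $C$, shows the local transversal fibration of $\cF$ near $C$ carries no monodromy obstruction, hence extends to a fibration $f:\bar\bX\to B$ with $C$ contained in a fibre and general fibre a rational curve transverse to $\cF$; thus $\cF$ is Riccati for $f$, and $B\cong\C\proj^1$ since $\bar\bX$ is rational. Contracting the exceptional locus over $X$ and the superfluous components of the fibres exactly as in Proposition \ref{6.20}, $f$ descends to a regular function $P\in\C[X]$; its general fibre is the affine part of a rational fibre, and (the leaf $L$ being, after contraction, one such fibre with a single planar isolated end) this is $\C$ or $\C^*$. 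Finally, over a small disc $U_t$ about a value $t$ that is neither critical for $f$ nor the image of a fibre contained in $\bar{\bar D}$, transversality of $P^{-1}(U_t)$ to $\cF$ together with the vanishing holonomy presents $\cF|_{P^{-1}(U_t)}$ as a local trivialisation of $P|_{P^{-1}(U_t)}$, so $\cF$ is $P$-complete with $Q$ the finite set of excluded values.

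The main obstacle is the second step --- manufacturing, out of one transcendental planar end, a curve at infinity transverse to $\cF$ and then a transverse rational fibration. This is the technical core of \cite{Bru98}: it couples the value-distribution/area estimate supplied by the Kähler-exactness assumption (i) (which is what rules out a hyperbolic end of unbounded area) with a delicate analysis of the germ of $\cF$ and of its holonomy along the accumulation divisor, where the combinatorial control from (ii) is indispensable. By comparison the \textsc{Seidenberg} reduction at the start, the \textsc{McQuillan}-type contractions, and the descent to the affine surface at the end are routine bookkeeping.
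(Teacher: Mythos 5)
The paper does not actually prove Proposition \ref{6.30}: it is quoted verbatim as the main result of \cite{Bru98}, and the only argument supplied is Remark \ref{6.40}(3), which records that Brunella's proof, written for $X=\C^2$, goes through for any smooth rational affine surface once one checks that hypothesis (i) is what his Lemma~3 needs and hypothesis (ii) is what his Lemma~8 needs. Your proposal is therefore in substance the same as the paper's treatment --- a deferral to \cite{Bru98} --- dressed up as a narrative of that proof, and your placement of the two hypotheses (the exact K\"ahler form feeding a Stokes/area estimate on the transcendental end; the combinatorial conditions on the dual graph controlling the analysis along the divisor at infinity and the subsequent contractions) is consistent with where the authors say Lemmas~3 and~8 of \cite{Bru98} enter.

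That said, you should not present this as a proof. Every load-bearing implication is asserted rather than established: ``a Stokes-type area estimate \dots shows this end is parabolic,'' ``Feeding this into the local study \dots forces the accumulation locus not to be entirely $\cF$-invariant,'' ``carries no monodromy obstruction, hence extends to a fibration.'' These are precisely the contents of Brunella's lemmas, and reproducing their statements without their proofs leaves the argument empty at its core. There is also one concrete error: in your final step you justify that the general fibre of $P$ is $\C$ or $\C^*$ by identifying the leaf $L$ ``after contraction'' with a fibre of $P$. This cannot be right --- $L$ is a transcendental leaf (its end accumulates on more than one point at infinity), hence is not an algebraic curve and in particular is not a fibre of any regular function; moreover the whole point of $P$-completeness is that the generic fibres of $P$ are \emph{transverse} to $\cF$, not leaves of it. The correct source of the $\C$/$\C^*$ conclusion is the structure of the Riccati fibration and of the boundary divisor after the contractions, exactly as in Proposition \ref{6.20}(3), which you should cite for that step instead.
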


\begin{remark}\label{6.40} (1) This regular function $P$ yields, of course, a Riccati foliation which is mentioned by  \textsc{Brunella}
\cite[p. 1241]{Bru98}. He mentions also in \cite{Bru04} that the construction from
\cite[pp. 1241-1243]{Bru98} implies that one of singular fibers of $P$ is of type (d).

(2) Suppose that $\cF$ is a foliation of $\C^*$-type that has no rational first integral. Then general leaves
are isomorphic to $\C^*$, they are Zariski dense in $\bX$ and properly embedded into the complement to singularities
by Theorem \ref{5.20}. This implies the existence of a transcendental planar isolated end, i.e. Proposition \ref{6.30}
is applicable to foliations of $\C^*$-type generated by regular vector fields.

(3) In \cite{Bru98} this Proposition \ref{6.30} is proven
only for $X=\C^2$. However the analysis of the proof shows that it is valid
for any smooth rational affine surface $X$ satisfying condition (i) and (ii).
Condition (i) (which is obviously true for $\C^2$ since
the second cohomology of $\C^2$ is trivial) is used in Lemma 3 in \cite{Bru98}.
Condition (ii) is also true for $\C^2$ and used in the proof of Lemma 8 in \cite{Bru98}.
The next (certainly well-known) fact says more about condition (i).
\end{remark}

\begin{proposition}\label{6.50} Condition (i)  from Proposition \ref{6.30}
is automatic for every affine algebraic manifold $X$.

\end{proposition}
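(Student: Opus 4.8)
The plan is to reinterpret condition (i) cohomologically and then realize the K\"ahler form as a curvature form. Let $\bar\bX$ be any SNC completion of $X$ (so $\bar\bX$ is a smooth projective variety), and let $D=\bar\bX\setminus X=D_1\cup\dots\cup D_k$ be the boundary divisor, whose components are all disjoint from $X$; write $n=\dim X$. Since a closed $2$-form on $X$ is exact exactly when its de Rham class vanishes, condition (i) is equivalent to producing a K\"ahler form $\omega$ on $\bar\bX$ whose class $[\omega]\in H^2(\bar\bX;\R)$ lies in the kernel $\cK$ of the restriction map $H^2(\bar\bX;\R)\to H^2(X;\R)$. So everything comes down to finding a K\"ahler class inside $\cK$.

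First I would compute $\cK$. From the long exact sequence of the pair $(\bar\bX,X)$ one gets $\cK=\mathrm{im}\,\bigl(H^2(\bar\bX,X;\R)\to H^2(\bar\bX;\R)\bigr)$, and Poincar\'e--Lefschetz duality gives $H^2(\bar\bX,X;\R)\cong H_{2n-2}(D;\R)$. Because $D$ is a reduced SNC divisor, its top homology $H_{2n-2}(D;\R)$ is freely generated by the fundamental classes of the components $D_i$ (in the Mayer--Vietoris spectral sequence of the cover $\{D_i\}$ only the $p=0$ column survives in this degree). Hence $\cK=\Span_\R\{[D_1],\dots,[D_k]\}\subset H^2(\bar\bX;\R)$. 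Consequently it suffices to show that $\cK$ contains an ample class: a rational ample class in $\cK$ is, after scaling, the first Chern class of a very ample line bundle on $\bar\bX$, and the curvature form of a positively curved Hermitian metric on that bundle is then a K\"ahler form $\omega$ with $[\omega]\in\cK$, so $\omega|_X$ is exact, which is condition (i).

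It remains to see why $\cK$ contains an ample class, and this is the heart of the matter, being exactly where affineness of $X$ is used. The relevant fact is classical: if a smooth projective variety $\bar\bX$ has an affine open subset $X$ whose complement is a divisor $D$, then some $\R$-linear combination $\sum a_iD_i$ with all $a_i>0$ is ample. The geometric input is that an affine variety contains no complete curve, so every irreducible curve $C\subset\bar\bX$ not contained in $D$ must meet $D$; hence $C\cdot D_i\ge 0$ for all $i$ with strict inequality for at least one $i$, and therefore $C\cdot\sum a_iD_i>0$ for every positive choice of the $a_i$. One is thus reduced to arranging $D_i\cdot\sum_j a_jD_j>0$ for every $i$ (which automatically forces $(\sum a_jD_j)^2>0$); that this system is solvable with positive $a_j$ precisely when $\bar\bX\setminus D$ is affine is, for surfaces, a consequence of the Nakai--Moishezon criterion together with the classical analysis of intersection matrices of boundary configurations (\textsc{Ramanujam}), and in general is \textsc{Goodman}'s theorem on ample divisors supported on the complement of an affine open. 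I expect this positivity statement --- and in particular its validity for an \emph{arbitrary} SNC completion, rather than a conveniently chosen projective closure --- to be the only genuine obstacle; once it is granted, the passage from an ample class to the desired K\"ahler form is routine.
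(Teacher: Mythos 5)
Your cohomological reduction is correct and takes a genuinely different route from the paper. You work intrinsically on $\bar\bX$: condition (i) asks for a K\"ahler class in the kernel $\cK$ of the restriction $H^2(\bar\bX;\R)\to H^2(X;\R)$, and your identification of $\cK$ with the real span of the classes of the boundary components (exact sequence of the pair plus Lefschetz duality plus the top homology of an SNC divisor) is right, as is the passage from a rational ample class in $\cK$ to the desired K\"ahler form. This is in fact sharper than what the paper records: it shows condition (i) is \emph{equivalent} to the existence of an ample $\R$-divisor supported on $\bar\bX\setminus X$. The paper sidesteps all positivity analysis by an extrinsic trick: it realizes $X$ as a closed submanifold of $Y\simeq\C^n=\proj^n\setminus H$, extends the blow-up $\bar\bX\to\bX$ to a blow-up $\bar\bY\to\proj^n$ that is the identity over $Y$, and restricts a Fubini--Study form from a projective embedding of $\bar\bY$; exactness on $X$ then follows from $H^2(\C^n;\C)=0$. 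The paper's argument is dimension-independent and requires no study of the boundary intersection theory; yours localizes the difficulty exactly where it lives.

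The gap is the one you flag yourself and then do not close: the positivity statement for an \emph{arbitrary} SNC completion. \textsc{Goodman}'s theorem, in the form available in the literature, characterizes affine opens $U\subset V$ by the existence of an effective ample divisor supported on the boundary \emph{after a further blow-up} of $V$ with center in $V\setminus U$; the blow-up can be dispensed with for surfaces, but not in general in dimension $\geq 3$. Since Proposition \ref{6.50} is asserted for every affine algebraic manifold and condition (i) of Proposition \ref{6.30} must hold for every dominating completion $\bar\bX$, citing Goodman does not by itself finish the argument beyond the surface case. The repair is cheap and essentially reproduces the paper's choice: fix one completion $\bX_0$ of the affine $X$ whose boundary supports an ample divisor $A$; any SNC completion $\bar\bX$ dominating $\bX_0$ is the blow-up $\pi:\bar\bX\to\bX_0$ of an ideal sheaf cosupported on $\bX_0\setminus X$, with exceptional divisor $E$ contained in $\bar\bX\setminus X$, and then $m\pi^*A-E$ is ample for $m\gg0$ and supported on $\bar\bX\setminus X$, which feeds into your reduction. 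As written, however, the key positivity input is asserted rather than proved, so the argument is incomplete.
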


\begin{proof}  We can choose an SNC-completion $\bX$ of $X$ so that $\bX \setminus X$ is a support
of an ample divisor (for a two-dimensional $X$ the facts that $X$ is affine and that  $\bX \setminus X$ is the
support of an ample divisor are equivalent). Then this divisor generates an embedding $\bX \hookrightarrow \proj^n =:\bY$
such that for a hyperplane $H\subset \bY$ the Euclidean space $\bY\setminus H =:Y\simeq  \C^n$ contains $X$ as a closed submanifold.
The birational morphism $\pi : \bar \bX \to \bX$
may be viewed as the blow-up of an ideal sheaf $\cI$ \cite[Theorem II.7.17]{Har}. Consider the ideal sheaf on $\bY$
generated by $\cI$ and the defining equations of $\bX$. The blow-up with respect
to this new sheaf yields a birational morphism $\tau : \bar \bY \to \bY$ which
is identical over $Y$, i.e. $\bar \bY$ is another completion of $Y$. Furthermore, $\pi$ is
the restriction of $\tau$ to the proper transform of $\bX$ \cite[Corollary II.7.15]{Har}.
Since $\bar \bY$ is projective it is a subvariety of $\proj^N$ which
is equipped with a closed 2-form $\omega$ as a Kahler manifold. The restriction of $\omega$ to $Y$ is an
exact form by de Rham's theorem because
the second cohomology of $Y$ is trivial. Thus its restriction to $X$ is also
exact which is the desired conclusion.

\end{proof}

\begin{proposition}\label{6.60}  (cf.  \cite[Proposition 3]{Bru04}) Let $\nu$ be a completely
integrable algebraic vector field of type $\C^*$
on a smooth affine rational surface $X$ satisfying condition (ii)
from Proposition \ref{6.30}. Then either $\nu$ possesses
a rational first integral or for some
regular function $P$ on $X$  with
general fibers $\C$ or $\C^*$ the flow of $\nu$ transforms fibers of $P$ into
fibers of $P$.

\end{proposition}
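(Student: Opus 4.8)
\emph{Proof plan.} The idea is to use Brunella's work twice: first the main theorem of \cite{Bru98} (our Proposition \ref{6.30}) to manufacture the function $P$, and then the analysis of \cite{Bru04} (our Proposition \ref{6.20}) to promote ``$\cF$ is a Riccati foliation for $P$'' to ``the flow of $\nu$ permutes the fibers of $P$''. So I would first dispose of the case in which $\nu$ admits a rational first integral, which is the first alternative, and assume henceforth that it does not. Let $\cF$ be the foliation on $X$ generated by $\nu$. Since $\nu$ is of $\C^*$-type its general leaf is isomorphic to $\C^*$, hence closed in $X\setminus\Sing(\cF)$ by Theorem \ref{5.20} (2), and since there is no rational first integral a general leaf is Zariski dense in the completion by Darboux's theorem; as noted in Remark \ref{6.40} (2) this forces $\cF$ to have a transcendental planar isolated end.

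I would then apply Proposition \ref{6.30}. Its hypothesis (i) holds automatically because $X$ is affine (Proposition \ref{6.50}) and its hypothesis (ii) is exactly what we have assumed, so we obtain $P\in\C[X]$ with general fibers isomorphic to $\C$ or $\C^*$ such that $\cF$ is $P$-complete. By Remark \ref{6.40} (1) the level sets of $P$ then present $\cF$ as a Riccati foliation, on a suitable smooth projective completion $\bX\supset X$ (obtained by resolving the indeterminacies of $P$ and, via Theorem \ref{5.30}, the singularities of the lifted foliation), equipped with a morphism $f\colon\bX\to B\simeq\C\proj^1$ whose general fiber is a rational curve transverse to $\cF$ and which carries a singular fiber $F_\infty$ of Brunella's type (d) made up of leaves of $\cF$. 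This is precisely the configuration of Proposition \ref{6.20}, with $f$ as the Riccati fibration: the proof of that proposition (Brunella's Lemmas 3--5 in \cite{Bru04}) uses only this Riccati structure together with the complete integrability and the $\C^*$-type of $\nu$, not the hypothesis ${\rm kod}(\cF)=1$ under which Section 6 is written, so its conclusions (1) and (3) apply here and say exactly that the exceptional divisor of the resolution is disjoint from $F_\infty$, that contracting it recovers the regular function $P$ on $X$, and that the flow of $\nu$ sends each fiber of $P$ onto a fiber of $P$.

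The step I expect to be the real obstacle is the one encapsulated as Proposition \ref{6.20} (1): showing that the lift $\hat\nu$ of $\nu$ to $\bX$, although only required to be tangent to the Riccati foliation $\cF$, actually \emph{projects} under $f$, i.e. that the rational function $\hat\nu(f)$ is constant along a general fiber of $f$ (equivalently $\nu(P)$ depends on $P$ alone, so that the flow carries fibers to fibers and not merely leaves to leaves). I would establish this by working near the type-(d) fiber $F_\infty$: because $F_\infty$ is $\cF$-invariant, $\hat\nu$ is tangent to each of its components, and because a general leaf $L\simeq\C^*$ is properly embedded in $X\setminus\Sing(\cF)$ the restriction $\nu|_L$ is a complete holomorphic vector field on $\C^*$ and hence equals $\alpha\,z\,\partial_z$ in a suitable coordinate $z$. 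Matching these constraints with Brunella's local normal form for a type-(d) fiber should rule out $\hat\nu$ having a vertical component or a pole along a general fiber of $f$, and force the factor relating $\hat\nu$ to a horizontal lift to be a pull-back from $B$; once that is in hand the remaining points — that $F_\infty$ lies in $\bX\setminus X$, that the resolution's exceptional locus avoids it, and that the contraction yields a bona fide regular function on $X$ — are Proposition \ref{6.20} (2),(3) and are comparatively routine.
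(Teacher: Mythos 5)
Your proposal follows essentially the same route as the paper's proof: Theorem \ref{5.20} to get properly embedded $\C^*$-leaves and a meromorphic first integral, Darboux's theorem to produce a transcendental end when that integral is not rational, Proposition \ref{6.30} together with Remark \ref{6.40} to obtain the $P$-complete Riccati fibration with a type-(d) fiber at infinity, and finally the observation that the proof of Proposition \ref{6.20} in \cite{Bru04} uses only this configuration and not the hypothesis ${\rm kod}(\cF)=1$. Your extra sketch of how Brunella establishes the projectability of $\hat\nu$ near the type-(d) fiber goes beyond what the paper writes (it simply defers to Lemmas 3--5 of \cite{Bru04}), but it does not change the argument.
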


\begin{proof} By Theorem \ref{5.20} the leaves of the associated foliation $\cF$ are
properly embedded into $X \setminus {\rm Sing} (\cF )$ and therefore there is a meromorphic
first integral. Assume that it is not rational. Then there is at least one transcendental
end in $X$ (otherwise all leaves are algebraic and
the first integral is rational by the Darboux's theorem). Now Proposition \ref{6.30}
and Remark \ref{6.40} imply the existence of a Riccati fibration $P$
with general fibers $\C$ or $\C^*$ such that
the fiber over $\infty$ is of type (d) which is the only thing needed
for validity of Proposition \ref{6.20} whose proof in \cite{Bru04} does not use other
properties of foliations with  ${\rm kod} (\cF )=1$.

\end{proof}

\section{\textsc{Brunella}'s construction: the case of ${\rm kod} (\cF )=0$.}

Thus we can consider only completely integrable algebraic fields of type $\C$
for which most of orbits are isomorphic to $\C$.  For a smooth affine surface $X$ with
 a completely integrable algebraic
 vector field $\nu$ we consider its smooth completion
 $\tX$ and the resolution of singularities $\pi : \bX \to \tX$ of the foliation
 induced by the vector field so that the resulting foliation $\cF$ on $\bX$ is reduced.
 Then $D = \pi^{-1} (\tX \setminus X )$ is the divisor at infinity and $E$ is the
 exceptional divisor of $\pi$ over $X$, i.e $\pi (E)=\Gamma$ is a finite subset
 of $X$.
Moreover we have the following \cite[p. 442]{Bru04}.

\begin{lemma}\label{7.10}  Each irreducible component of $D$ and each
irreducible component of the exceptional divisor $E$ of $\pi : \bX \to \tX$
are $\cF$-invariant.
\end{lemma}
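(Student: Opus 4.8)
The plan is to establish the two halves of the statement separately. Invariance of the components of the exceptional divisor $E$ is a short local computation, whereas invariance of the components of the divisor at infinity $D$ is the substantive part and will rest on the structure theory of Section 5.

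I would first deal with $E$. Since $\pi$ is a Seidenberg reduction of the foliation, every point it blows up lies over a singular point of the foliation $\cF_\nu$ that $\nu$ generates on $X$, and every singular point of $\cF_\nu$ is a zero of $\nu$ (the local generator of $\cF_\nu$ is $\nu$ divided by the gcd of its coefficients, so its zero set is contained in that of $\nu$); hence the finite set $\Gamma=\pi(E)$ consists of zeros of $\nu$. Blowing up at a zero of a holomorphic vector field produces a holomorphic lift, so $\hat\nu$ is holomorphic on $\hX=\pi^{-1}(X)$, and since $\pi_{*}\hat\nu=\nu$ vanishes along $\Gamma$, the image of $\hat\nu$ under $d\pi$ vanishes along $E$; thus $\hat\nu$ is tangent to the fibres of $\pi$ over $\Gamma$, i.e. to every component of $E$, and at the finitely many points where two such components meet, tangency to each branch (hence vanishing of $\hat\nu$) follows by continuity. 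Because $\cF$ is generated on $\hX$ by $\hat\nu$ after dividing out the gcd of its coefficients, a curve in $\hX$ is $\cF$-invariant precisely when $\hat\nu$ is tangent to it, so every component of $E$ is $\cF$-invariant.

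For $D$, suppose for contradiction that some component $D_0\subset D$ is not $\cF$-invariant, so $\cF$ is generically transverse to $D_0$. In the case at hand ${\rm kod}(\cF)=0$ and $\cF$ has a Zariski-dense tangent entire curve (equivalently, no rational first integral, since a type-$\C$ leaf would otherwise force the alternative ${\rm kod}(\cF)=-\infty$), so Theorem \ref{5.70}(2) applies: after the \textsc{McQuillan} contraction $(\bX,\cF)\to(\bX',\cF')$ there is a finite covering $r\colon Y\to\bX'$, ramified only over the quotient singularities of $\bX'$, with $Y$ smooth and $\cG:=r^{*}(\cF')$ generated by a global holomorphic vector field with isolated zeros and $K_{\cG}=\cO_{Y}$; in particular $K_{\cF'}$ is numerically trivial. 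By Remark \ref{5.60} the curves contracted by $\bX\to\bX'$ are $\cF$-invariant, so $D_0$ is not among them and has a birational image $D_0'\subset\bX'$, transverse to $\cF'$; hence $\widetilde D_0:=r^{-1}(D_0')$ is a curve transverse to $\cG$. For a curve not invariant under a foliation with numerically trivial canonical bundle, \textsc{Brunella}'s tangency formula gives $\text{tang}(\cG,\widetilde D_0)=\widetilde D_0\cdot\widetilde D_0$, forcing $\widetilde D_0^{2}\ge 0$; tracing this back through $r$ and the contraction contradicts the negativity that $D_0$ must carry inside the boundary configuration of the affine surface $X$.

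The step I expect to be the main obstacle is exactly this last one: turning ``$D_0$ transverse to $\cF$'' into a numerical impossibility. Carrying it out rigorously requires keeping careful track of how self-intersection numbers and the foliated invariants ($K_{\cF}$, tangency and index divisors) change under the two modifications involved — the contraction $\bX\to\bX'$ of $\cF$-invariant negative rational curves and the finite map $r$ ramified over quotient singularities — together with the precise form of the index/tangency identities for $\Q$-bundles on surfaces with cyclic quotient singularities. An alternative that sidesteps the numerology is to reproduce \textsc{Brunella}'s local analysis at infinity from \cite[p.~442]{Bru04}: near a generic point of a transverse boundary component the foliation would be a trivial fibration with that component as a multisection, and one shows this is incompatible with the general leaf being the Zariski-dense closure of an orbit of the complete field $\nu$.
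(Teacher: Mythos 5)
Your treatment of the exceptional divisor $E$ is correct and is essentially the standard computation: every centre of $\pi$ lying over $X$ is a singularity of the (successively reduced) foliation, hence a zero of the lifted holomorphic field, and the lift of a vector field under a blow-up at one of its zeros is holomorphic and tangent to the exceptional $\proj^1$; dividing by the gcd of the coefficients does not destroy tangency. Note, though, that the paper offers no proof of this lemma at all --- it is quoted from \cite[p.~442]{Bru04} --- so the comparison below is with Brunella's argument rather than with anything in the text.

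The half concerning $D$ has a genuine gap, and it sits exactly where you flagged it. From $K_{\cG}=\cO_Y$ and the tangency formula you correctly get $\widetilde D_0^2=\mathrm{tang}(\cG,\widetilde D_0)\ge 0$, and pushing this down through the finite cover gives $(D_0')^2\ge 0$ on $\bX'$; but there is no ``negativity that $D_0$ must carry inside the boundary configuration'' to contradict. Components of the boundary divisor of an affine completion need not have negative self-intersection (a line in $\proj^2$ bounding $\C^2$ has self-intersection $+1$; more generally the boundary supports an ample divisor, so its intersection matrix is never negative definite), and the inequality $D_0^2\le (D_0')^2$ coming from the contraction points the wrong way anyhow. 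So the numerical route does not close, and I do not see how to close it without substantial extra input. Your proof is also structurally backwards: Lemma \ref{7.10} is what feeds (via Remark \ref{5.60} and Lemma \ref{7.20}) the analysis of the McQuillan contraction, yet you invoke the full output of Theorem \ref{5.70}(2) to prove it, and your argument would in any case only cover the ${\rm kod}(\cF)=0$ branch. The intended argument --- the one you sketch as an ``alternative'' in your last sentence --- is the elementary one and is the actual content of \cite[p.~442]{Bru04}: it uses only that $\nu$ is \emph{complete}. If a component $D_0$ of the divisor at infinity were not $\cF$-invariant, a generic leaf would cross $D_0$ transversally at a point $p$, producing a punctured-disc end of an orbit of $\nu$ converging to the single point $p\notin X$; writing the (meromorphic) field on that leaf-disc as $w^k u(w)\,\partial/\partial w$ with $u(0)\neq 0$, the time integral $\int dw/(w^k u)$ converges for $k\le 0$, so the trajectory leaves $X$ in finite time, contradicting completeness, while for $k\ge 1$ the orbit compactifies across $p$ and the generic leaf becomes rational, contradicting the absence of a rational first integral. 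You should replace your index-theoretic paragraph with this local completeness argument.
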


Following  \textsc{Brunella}, consider now the \textsc{McQuillan}'s contraction $\tau : \bX \to \bX'$
of a reduced foliation $\cF$ on $\bX$ to a nef reduced foliation $\cF'$ on $\bX'$
as a sequence of blowing downs
$$\bX=\bX_0 \to \bX_1 \to \ldots \to \bX_{n-1} \to \bX_n=\bX'$$
 of rational curves $F_j\subset \bX_j$ as in Remark \ref{5.60}.
 Denote by $\cF_j$ the foliation induced by $\cF$ on $\bX_j$, by $D_j$ and $E_j$
 the images of $D$ and $E$ respectively. We call $F_j$ external if
 it is not contained in $D_j \cup E_j$.

 \begin{lemma}\label{7.20}  {\rm (cf. \cite[ Lemma 6]{Bru04})} Let
 $F_j$ be external and let
 $\tF_j$ be the proper transform of $F_j$ in $\tX$.
 Then $\tF_j \cap X$ is an algebraic curve in $X \setminus \Gamma$ isomorphic to $\C$. Furthermore, for different
 external curves $F_j$ and $F_i$ the curves $\tF_j \cap X$ and $\tF_i \cap X$ are disjoint.

\end{lemma}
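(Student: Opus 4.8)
The statement is a version of \textsc{Brunella}'s Lemma~6 in \cite{Bru04}, and the plan is to reproduce his argument in the present notation. Write the \textsc{McQuillan} contraction as the tower of blow-downs $\bX=\bX_0\to\bX_1\to\cdots\to\bX_j\to\cdots\to\bX_n=\bX'$, and let $\hat F_j\subset\bX$ denote the proper transform of $F_j\subset\bX_j$ under $\bX\to\bX_j$, so that $\tF_j=\pi(\hat F_j)$. The first task is to show that $\hat F_j$ is a smooth rational curve, that it is $\cF$-invariant, and that it is \emph{not} contained in $D\cup E$. Invariance is immediate, since blowing a curve up or down never destroys its invariance and $F_j$ is $\cF_j$-invariant by Remark~\ref{5.60}; and if $\hat F_j$ were contained in $D\cup E$, then its image $F_j$ would lie in $D_j\cup E_j$, contradicting the externality of $F_j$. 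Smoothness and rationality are preserved along $\bX\to\bX_j$ because, by Remark~\ref{5.60}, every point one must blow up in passing from $\bX_{i+1}$ back to $\bX_i$ is a regular point of the induced foliation; \textsc{Brunella}'s local analysis of the blow-up of a regular foliation point then shows that the proper transform of a smooth rational invariant curve remains smooth and rational. This local bookkeeping I would simply quote from \cite{Bru04}.

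The heart of the matter is to show that $\hat F_j$ meets $D\cup E$ in exactly one point, or equivalently that it carries exactly one singularity of $\cF$. It meets $D\cup E$ in at least one point: $\pi$ maps $\bX\setminus(D\cup E)$ isomorphically onto $X\setminus\Gamma$, an open subset of the affine surface $X$, which contains no complete curve, whereas $\hat F_j\cong\proj^1$ is complete. The opposite inequality is where \textsc{Brunella}'s argument must be carried out carefully, and this is the step I expect to be the main obstacle. By Lemma~\ref{7.10} every component of $D\cup E$ is $\cF$-invariant, so every point of $\hat F_j\cap(D\cup E)$ is a singular point of $\cF$ (two distinct invariant curves cannot cross at a regular foliation point). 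To bound the number of such points one propagates singularities along the contraction: by Remark~\ref{5.60} the curve $F_j$ carries a single singularity of $\cF_j$, which is a smooth point of $\bX_j$, and $F_j$ is contracted to a regular foliation point; following \cite[Lemma~6]{Bru04} — and using the index formula for the invariant rational curve $\hat F_j$, which expresses $N_{\cF}\cdot\hat F_j$ as a sum $\sum_p Z(\cF,\hat F_j,p)$ of local indices each $\geq 1$ — one checks that the reverse blow-ups move this unique singularity onto $D\cup E$ and create no new singularity of $\cF$ on $\hat F_j$. Hence $\Sing(\cF)\cap\hat F_j$ consists of a single point, necessarily the unique point of $\hat F_j\cap(D\cup E)$; call it $q_j$.

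Granting this, the rest is formal. The point $q_j$ cannot lie on $E\setminus D$: otherwise $\hat F_j\setminus\{q_j\}\subset\pi^{-1}(X)$ and $\pi(q_j)\in\Gamma\subset X$, so $\tF_j=\pi(\hat F_j)$ would be a complete rational curve inside the affine surface $X$. Thus $q_j\in D$, and $\hat F_j$ meets $E$ in at most the single point $q_j$; since $\pi$ is an isomorphism over $\tX\setminus\Gamma$ and $\hat F_j$ is transverse to $E$ at the reduced singularity $q_j$, the map $\pi$ restricts to an isomorphism of $\hat F_j$ onto $\tF_j$, so $\tF_j\cong\proj^1$ is smooth. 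Moreover $\tF_j\setminus X=\pi(\hat F_j\cap D)=\{\pi(q_j)\}$ is a single point, and $\tF_j\cap X=\pi(\hat F_j\setminus\{q_j\})$ lies in $X\setminus\Gamma$ because $\hat F_j\setminus\{q_j\}\subset\bX\setminus(D\cup E)=\pi^{-1}(X\setminus\Gamma)$. Therefore $\tF_j\cap X\cong\proj^1\setminus\{\mathrm{pt}\}\cong\C$ is an algebraic curve in $X\setminus\Gamma$, as required.

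For the disjointness statement, take external $F_i\neq F_j$. Their proper transforms $\hat F_i\neq\hat F_j$ are distinct $\cF$-invariant curves, so $\hat F_i\cap\hat F_j\subset\Sing(\cF)$; but by the previous step the only singularity of $\cF$ on $\hat F_i$ (respectively $\hat F_j$) is the point $q_i\in D$ (respectively $q_j\in D$). Hence $\hat F_i\cap\hat F_j\subset D$, so $\hat F_i$ and $\hat F_j$ are disjoint over $\bX\setminus(D\cup E)$, and applying the isomorphism $\pi\colon\bX\setminus E\to\tX\setminus\Gamma$ shows that $\tF_i\cap X$ and $\tF_j\cap X$ are disjoint curves in $X\setminus\Gamma$. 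The only genuinely nontrivial ingredients in this plan are the smoothness of $\hat F_j$ in the first paragraph and the singularity count in the second, both of which I would import from \cite{Bru04}.
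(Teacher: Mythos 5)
Your proof is correct and follows essentially the same route as the paper's (which is itself \textsc{Brunella}'s): completeness forces the transform of $F_j$ to meet the boundary, invariance of $D$, $E$ and the contracted curves makes every such intersection point a singularity of the foliation, and the unique-singularity property of Remark~\ref{5.60} then pins down both statements. The only organizational difference is that the paper performs the singularity count on $F_j$ inside $\bX_j$, where Remark~\ref{5.60} applies verbatim, and then merely pushes intersections forward (e.g.\ $\hat F_j\cap E$ maps into $F_j\cap E_j=\emptyset$), which largely sidesteps the propagation-to-$\bX$ step that you single out as the main obstacle and propose to import from \cite{Bru04} via the index formula.
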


\begin{proof} Note that $F_j$ must meet $D_j$ since otherwise its strict
transform in $\bX$ will be contained in $\pi^{-1} (X)$ but not in $E$ which
is impossible for a complete curve. Then $D_j\cap F_j$ must be $\cF_j$-invariant
since both $D_j$ and $F_j$ are. Therefore, it is the only singular point of the
foliation on $F_j$ (see Remark \ref{5.60})). By the same reason $F_j$ cannot meet $E_j$
which implies the first statement.
To see the second one we
note that $F_j$ and $F_i$ cannot meet outside $D$
since otherwise contrary to  Remark \ref{5.60}
$F_j$ has another singular point of the foliation besides $D_j \cap F_j$.
Thus $\tF_j\cap X$ and $\tF_i \cap X$ are disjoint.
\end{proof}

\begin{remark}\label{7.30} Let  $L$ be the union  $\bigcup \tF_j$  where $F_j$ runs over the set of external curves.
By construction $L$ and therefore $X^*=X \setminus (L \cup \Gamma )$ are invariant
with respect to the original foliation, i.e. the restriction of $\nu$ to $X^*$ is completely
integrable.

\end{remark}

Then  \textsc{Brunella} considers morphism $r : Y \to \bX'$ as in Theorem \ref{5.70} and the commutative
diagram generated by it
\[  \begin{array}{ccc} Z &  \stackrel{{ h}}{\rightarrow} & \bX \\
\, \, \, \, \downarrow {g}  & & \, \, \, \, \downarrow { \tau} \\
Y &  \stackrel{{ r}}{\rightarrow} & \bX'

\end{array} \]
where $g: Z \to Y$ is a birational morphism and $h: Z \to \bX$ is a ramified covering
such that $r \circ g = \tau \circ h$. Set
$R = D \cup E$,
$R' =\tau^{-1} (\tau (R))$, and
 $W= Z \setminus h^{-1} (R' )$.
Suppose that $\mu_0$ is the holomorphic vector field on projective variety $Y$
that generates fibration $\cG$ from Theorem \ref{5.70}. Let $\mu$ be its
lift to $Z$ via $g$.

\begin{lemma}\label{7.40}  {\rm (cf. \cite[Lemma 7]{Bru04})} Let $\mu , W, X^*, L$ be as before.
Then

{\rm (1)} the restriction of $h$ makes $W$ an unramified cover of $X^*$;

{\rm (2)} the restriction $g|_W :  W \to Y$ is an embedding;

{\rm (3)} elements of the flow associated with the restriction of $\mu |_W$ are
algebraic automorphisms of $W$.

\end{lemma}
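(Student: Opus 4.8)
The plan is to reduce all three assertions to two facts about the picture $\bX \xrightarrow{\tau} \bX' \xleftarrow{r} Y$: that $\tau$ sends every curve it contracts into $\tau(R)$ (here $R = D \cup E$), and that $r$ is unramified over $\bX' \setminus \tau(R)$. First I would verify the statement about contracted curves. For a non-external $F_j$ it is immediate: then $F_j \subset D_j \cup E_j$, so its strict transform lies in $R$. For an external $F_j$, the argument in the proof of Lemma~\ref{7.20} shows that $F_j$ meets $D_j$, and since $D_j \cap F_j$ is then the unique foliation singularity on $F_j$ (Remark~\ref{5.60}), the curve is contracted precisely at that point of $D_j$, so its image in $\bX'$ lies in $\tau(D) \subset \tau(R)$. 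Hence the exceptional locus of $\tau$ lies in $R' := \tau^{-1}(\tau(R))$; and since $\bX$ is smooth, every quotient singularity of $\bX'$ is the image of some contracted $F_j$, so it too lies in $\tau(R)$. Therefore $\bX' \setminus \tau(R)$ is smooth and $\tau$ restricts to an isomorphism $\bX \setminus R' \cong \bX' \setminus \tau(R)$. Independently, $\pi$ restricts to an isomorphism $\bX \setminus R \cong X \setminus \Gamma$, under which $R' \cap (\bX \setminus R)$ is exactly the union of the strict transforms of the external curves, i.e. corresponds to $L$; so $\pi$ induces an isomorphism $\bX \setminus R' \cong X^*$. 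Finally, by Theorem~\ref{5.70} the covering $r$ ramifies only over the quotient singularities of $\bX'$, all of which lie in $\tau(R)$ by the above.

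Next I would deduce (1) and (2). Take $Z$ to be the normalization of the fibre product $\bX \times_{\bX'} Y$, with $h$ and $g$ the projections to $\bX$ and $Y$. Over $\bX' \setminus \tau(R)$ the map $r$ is finite étale, so the fibre product is already smooth there and, over $\bX \setminus R' = \tau^{-1}(\bX' \setminus \tau(R))$, $h$ is literally the base change of this étale map; hence $h$ restricts to a finite étale covering $W = Z \setminus h^{-1}(R') = h^{-1}(\bX \setminus R') \to \bX \setminus R'$, and composing with the isomorphism $\pi : \bX \setminus R' \cong X^*$ yields (1). For (2), note that this base change is taken along the \emph{isomorphism} $\tau| : \bX \setminus R' \cong \bX' \setminus \tau(R)$, so $g$ restricts to an isomorphism $g|_W : W \cong r^{-1}(\bX' \setminus \tau(R))$ onto an open subset of $Y$; in particular $g|_W$ is an embedding, and $g(W) = r^{-1}(\bX' \setminus \tau(R)) = Y \setminus r^{-1}(\tau(R))$.

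For (3) I would use that $Y$ is a projective manifold, so the vector field $\mu_0$ generating $\cG$ is complete and its time-$t$ flow maps $\phi_t$ are biholomorphisms, hence algebraic automorphisms, of $Y$. The flow $\{\phi_t\}$ preserves $\cG$ and therefore every $\cG$-invariant algebraic subset of $Y$. Now $\tau(R) = \tau(D) \cup \tau(E)$ is a union of $\cF'$-invariant curves — each component of $D$ and of $E$ is $\cF$-invariant by Lemma~\ref{7.10}, and this invariance is preserved by the blow-downs of the contraction (Remark~\ref{5.60}) — so $r^{-1}(\tau(R))$ is $\cG$-invariant and is preserved by all $\phi_t$. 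Hence each $\phi_t$ restricts to an algebraic automorphism of $g(W) = Y \setminus r^{-1}(\tau(R))$, and transporting through the isomorphism $g|_W : W \cong g(W)$ from the previous step shows that $\mu|_W = (g|_W)^*(\mu_0|_{g(W)})$ is complete with flow consisting of algebraic automorphisms of $W$. This is (3).

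The main obstacle is the first step, and within it the external curves. By definition these lie outside $R = D \cup E$, so it is not obvious that they — and the quotient singularities of $\bX'$ they create — are confined to $R'$; the decisive input is that an external $F_j$ nonetheless meets $D_j$ and gets contracted along $D$, which is where the proof of Lemma~\ref{7.20} and Remark~\ref{5.60} enter. Once $\tau$ is an isomorphism over $\bX' \setminus \tau(R)$ and $r$ is étale there, statements (1)--(3) are formal consequences of base change, the completeness of holomorphic vector fields on compact manifolds, and Lemma~\ref{7.10}.
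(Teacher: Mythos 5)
Your proof is correct and follows essentially the same route as the paper's: $r$ ramifies only over the singularities of $\bX'$, which lie in $\tau(R)$, so $h$ is unramified off $R'=D\cup E\cup L$, giving (1); $W$ is the fibre product of $Y$ with $\bX\setminus R'$ over $\bX'$ and $\tau$ is an embedding on $\bX\setminus R'$, giving (2); and (3) rests on Chow's theorem applied to the flow of the holomorphic field $\mu_0$ on the projective variety $Y$. The only difference is that you make explicit two points the paper treats as immediate "by construction" — that the quotient singularities of $\bX'$ created by contracting external curves still land in $\tau(R)$ (via the argument of Lemma~\ref{7.20} and Remark~\ref{5.60}), and that the flow of $\mu_0$ preserves $g(W)$ because $r^{-1}(\tau(R))$ is $\cG$-invariant — both of which are genuinely needed for the statement and correctly supplied.
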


\begin{proof} By Theorem \ref{5.70} morphism $r$ can be ramified only
over singularities of $\bX'$. By construction these singularities are contained in $\tau (R)$. Thus $h$ can be ramified only over
$R' $. For (1) it remains to note that $R' =D \cup E \cup L$.

Since $Z$ is the fibered product of $Y$ and $\bX$ over $\bX'$ we see that $W$ is the fibered product
of $Y$ and $\bX \setminus R'$ over $\bX'$. Hence for (2) it suffices to note that the restriction of $\tau$ to $\bX \setminus R'$
is an embedding.


We have also (3) since $\mu$ arises from a holomorphic vector field $\mu_0$
on the projective variety $Y$ and elements of its flow are algebraic automorphisms
by Chow's theorem.

\end{proof}

\begin{corollary}\label{7.50}  Suppose that for any finite collection of disjoint lines in $X \setminus \Gamma$
and every finite unramified covering $W \to X \setminus (\Gamma \cup L)$ where $L$ is the
union of these lines the manifold $W$ has a finite number of algebraic automorphisms. Then
the case of ${\rm kod } (\cF )=0$ does not hold.

\end{corollary}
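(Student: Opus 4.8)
The plan is to argue by contradiction. Suppose that $\nu$ is a completely integrable algebraic vector field of type $\C$ on $X$ whose associated reduced foliation $\cF$ has ${\rm kod}(\cF)=0$; I will exhibit, among the manifolds $W$ described in the hypothesis, one that carries infinitely many algebraic automorphisms, contradicting the assumption.

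First I would assemble the objects from Theorem \ref{5.70} and Lemma \ref{7.40}. Applying Theorem \ref{5.70}(2) to the \textsc{McQuillan} contraction $\tau : \bX \to \bX'$ produces a finite covering $r : Y \to \bX'$ with $\cG = r^*(\cF')$ having trivial canonical bundle, hence generated by a global holomorphic vector field $\mu_0$ on the projective surface $Y$ with isolated zeros only. Form the commutative square with $Z$, the birational morphism $g : Z \to Y$ and the ramified covering $h : Z \to \bX$, put $R = D\cup E$, $R' = \tau^{-1}(\tau(R)) = D\cup E\cup L$, $W = Z\setminus h^{-1}(R')$, and let $\mu$ be the lift of $\mu_0$ to $Z$ via $g$. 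By Lemma \ref{7.40}, $h|_W : W\to X^*$ is a finite unramified cover, $g|_W : W\to Y$ is an embedding, and the flow of $\mu|_W$ consists of algebraic automorphisms of $W$.

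Next I would recognize $X^*$ as a space of the form occurring in the statement. By Remark \ref{7.30} we have $X^* = X\setminus(L\cup\Gamma)$, where $L = \bigcup\tF_j$ is the union of the proper transforms of the external curves; by Lemma \ref{7.20} the traces of these curves on $X$ are pairwise disjoint curves in $X\setminus\Gamma$, each isomorphic to $\C$. Hence $h|_W : W\to X\setminus(\Gamma\cup L)$ is precisely a finite unramified covering of the complement of a finite collection of disjoint lines, so by hypothesis $W$ has only finitely many algebraic automorphisms. Note also that $W\neq\emptyset$, since $X^*$ is a nonempty Zariski-open subset of the irreducible surface $X$ and $W$ covers it.

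Finally I would derive the contradiction. Since $g|_W$ is an embedding, $\mu|_W$ corresponds under it to the restriction of $\mu_0$ to $g(W)$; as $\mu_0$ vanishes only on a finite set while $g(W)$ is two-dimensional, $\mu_0|_{g(W)}\not\equiv 0$, so $\mu|_W$ is not identically zero. Therefore the flow of $\mu|_W$ is a nontrivial holomorphic one-parameter group of algebraic automorphisms of $W$, and any such group is infinite, contradicting the previous paragraph. The delicate point, and essentially the only place where the assumption ${\rm kod}(\cF)=0$ enters, is exactly this last step: the argument hinges on the isolated-zero property of $\mu_0$ provided by Theorem \ref{5.70}(2b), which is what guarantees that the automorphism flow produced on $W$ is genuinely nontrivial.
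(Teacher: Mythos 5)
Your proof is correct and is precisely the argument the paper intends: the corollary is stated without proof as an immediate consequence of Theorem \ref{5.70}(2), Lemma \ref{7.20}, Remark \ref{7.30} and Lemma \ref{7.40}, and you assemble exactly these ingredients, adding the (correct) final observation that a nonvanishing $\mu_0$ with isolated zeros induces a nontrivial one-parameter flow on $W$ and hence infinitely many algebraic automorphisms.
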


\section{Geometry of the surface $x+y+xyz=1$.}

Consider the hypersurface $S\subset \C^3_{x,y,z}$ given by   $x+y+xyz=1$.
It can be viewed as the affine modification of $\pi : S\to \C^2$ of
$\C^2_{x,y}$ along the divisor $xy=0$ with
center at $xy=1-x-y=0$ (i.e at points $(1,0)$ and $(0,1)$) since $z =(1-x-y)/(xy)$.
Note that $S$ contains four lines $L_1,L_2,L_3$, and $L_4$ such that $\pi (L_1)= (0,1)$,
$\pi (L_2)= \{ y=1 \}$, $\pi (L_3)= (1,0)$, and $\pi (L_4) = \{x =1 \}$.

\begin{lemma}\label{8.10} There is only one polynomial curve $L_5$ in
$S$ different from $L_1, L_2,L_3,L_4$ and it is the proper transform of the
line $x+y=1$ in $\C^2$.

\end{lemma}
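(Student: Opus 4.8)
The plan is to make all five lines explicit and then classify curves isomorphic to $\C$ inside $S$ by restricting the three coordinates and using elementary one-variable polynomial arithmetic. First I would write the lines down. On $\{xy\ne 0\}$ the equation gives $z=(1-x-y)/(xy)$, so $L_1=\pi^{-1}(0,1)=\{x=0,\ y=1\}$ and $L_3=\pi^{-1}(1,0)=\{x=1,\ y=0\}$ are the exceptional lines; along $\{y=1\}$ one computes $z\equiv -1$, so $L_2=\{y=1,\ z=-1\}$, and symmetrically $L_4=\{x=1,\ z=-1\}$; along $\{x+y=1\}$ one computes $z\equiv 0$, so the proper transform of the line $x+y=1$ is $L_5:=\{x+y=1,\ z=0\}$, which is isomorphic to $\C$ and visibly distinct from $L_1,\dots,L_4$ (for instance $z$ vanishes identically on $L_5$ but not on the others). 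This settles the ``existence'' half of the statement; it remains to prove that any polynomial curve $C\subset S$, i.e.\ any closed curve isomorphic to $\C$, equals one of $L_1,\dots,L_5$.

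For that I would compose an isomorphism $\C\to C$ with the inclusion $C\hookrightarrow S$, obtaining a morphism $t\mapsto(\xi(t),\eta(t),\zeta(t))$ with $\xi,\eta,\zeta\in\C[t]$ and image exactly $C$; the defining equation of $S$ yields the polynomial identity $\xi\eta\zeta=1-\xi-\eta$. The first case is that one of $\xi,\eta$ is constant; by the $x\leftrightarrow y$ symmetry of $S$ say $\xi\equiv c$, and rewrite the identity as $\eta\,(c\zeta+1)=1-c$. If $c=0$ this forces $\eta\equiv 1$, hence $C\subseteq L_1$; if $c=1$ it forces $\eta\equiv 0$ (hence $C\subseteq L_3$) or $\zeta\equiv -1$ (hence $C\subseteq L_4$), and symmetrically $\eta\equiv 1$ gives $C\subseteq L_2$; if $c\notin\{0,1\}$ then a product of two polynomials equals the nonzero constant $1-c$, so $\eta$ and $\zeta$ are constant and $C$ is a point, contradicting $\dim C=1$. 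In each surviving subcase $C$ is an irreducible curve inside the irreducible curve $L_i\cong\C$, hence $C=L_i$; here one uses that a non-constant polynomial in one variable is onto $\C$, so the parametrisation really sweeps out the whole line $L_i$, not a proper subset of it.

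The remaining case — $\xi,\eta$ both non-constant — is the only step with any content, and I expect it to be the main point. Put $\phi:=1-\xi-\eta=\xi\eta\zeta$. If $\phi\not\equiv 0$ then $\zeta\not\equiv 0$ and $\xi\eta\mid\phi$, so $\deg\phi\ge\deg\xi+\deg\eta$; on the other hand $\deg\phi=\deg(1-\xi-\eta)\le\max(\deg\xi,\deg\eta)<\deg\xi+\deg\eta$ because $\deg\xi,\deg\eta\ge 1$ — a contradiction. Hence $\phi\equiv 0$, i.e.\ $\xi+\eta\equiv 1$, and then $\xi\eta\zeta=0$ with $\xi,\eta$ nonzero forces $\zeta\equiv 0$; therefore $C\subseteq\{x+y=1,\ z=0\}=L_5$, and since $C$ and $L_5$ are irreducible curves, $C=L_5$. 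The only routine points are the bookkeeping of the degenerate subcases in the constant case; the degree inequality just displayed is what makes the argument work.
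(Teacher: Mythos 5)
Your proof is correct, and it establishes the key divisibility by a cleaner route than the paper. The paper projects to $\C^2_{x,y}$, notes that the image curve $(p(t),q(t))$ can meet the coordinate axes only at $(1,0)$ and $(0,1)$ and that its local branches cannot be tangent to the axes (else the proper transform would acquire at least two punctures), and from this extracts $q\mid 1-p$ and $p\mid 1-q$, whence $\deg p=\deg q$ and $p=1-q$. You instead read $\xi\eta\mid 1-\xi-\eta$ directly off the identity $\xi\eta\zeta=1-\xi-\eta$ (using only that the third coordinate of the parametrization is a polynomial), and the degree comparison $\deg\xi+\deg\eta\le\deg(1-\xi-\eta)\le\max(\deg\xi,\deg\eta)$ forces $1-\xi-\eta\equiv 0$ outright. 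Since the paper's two divisibilities, together with the coprimality of $p$ and $q$, are equivalent to $pq\mid 1-p-q$, the two arguments coincide arithmetically at bottom; what yours buys is the complete elimination of the geometric tangency/puncture discussion, plus an explicit treatment of the degenerate case where one coordinate is constant (recovering $L_1,\dots,L_4$), which the paper disposes of only with the parenthetical assertion that $p$ and $q$ are non-constant. One small caveat: for the use made of this lemma in Lemma \ref{8.30} one wants to classify images of arbitrary non-constant morphisms $\C\to S$, not only closed curves isomorphic to $\C$; your argument in fact uses nothing beyond the existence of a polynomial parametrization, so it covers that broader class verbatim, but it would be worth stating the hypothesis that way.
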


\begin{proof} Consider a non-constant morphism $\phi : \C \to S$ whose image
is not one of the four lines. Its composition
with $\pi$ yields a morphism $\C \to \C^2$ given by $x=p (t)$ and $y=q(t)$
(where $p$ and $q$ are non-constant polynomials)
whose image $C$ may
meet the $x$-axis and the $y$-axis only at points $(1,0)$ and $(0,1)$ respectively.
Furthermore, local branches of $C$ cannot be tangent to any of these axes
since otherwise the proper transform of $C$ in $S$ (which is $\phi (\C )$)
is a Riemann surfaces with at least two punctures contrary to the assumption that
it is a polynomial curve. This implies that $1-p(t)$ is divisible by $q(t)$ while
$1-q(t)$ is divisible by $p (t)$. In particular, $\deg p = \deg q$ and therefore,
$p (t)= 1-q(t)$. This yields the desired conclusion.

\end{proof}

\begin{remark}\label{8.20} One can check that the equations of $L_1, L_2,L_3,L_4$,
and $L_5$ in $S$ are $x=0$, $yz+1=0$, $y=0$, $xz+1=0$, and $z=0$ respectively.

\end{remark}

\begin{lemma}\label{8.30} Let $\rho : S' \to S$ be a finite morphism of smooth
surfaces unramified over the complement to the five lines in $S$.
Then $S'$ has a finite number of algebraic automorphisms.
\end{lemma}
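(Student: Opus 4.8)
The plan is to deduce finiteness of $\Aut(S')$ from the fact that $S$, after deleting its five exceptional curves, is of logarithmic general type, and that this is inherited by the étale part of the cover $\rho$. First I would pin down the geometry. Setting $u=x$, $v=z$ and using $y=(1-x)/(1+xz)$ away from $\{xy=0\}$, one checks that $(x,z)\colon S\to\C^2$ restricts to a biregular isomorphism $S\setminus L_4\xrightarrow{\ \sim\ }\C^2_{u,v}\setminus\{uv=-1\}$: it is a bijective morphism onto a smooth variety (for $uv\neq-1$ the coordinate $y$ is uniquely recovered, and $L_4=\{x=1,\,z=-1\}$ is exactly the $(x,z)$-preimage of the conic), hence an isomorphism in characteristic $0$. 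Transporting $L_1,L_2,L_3,L_5$ through this isomorphism by the equations of Remark \ref{8.20}, their images are the four lines $\{u=0\},\{v=-1\},\{u=1\},\{v=0\}$, so
$$S^{*}:=S\setminus(L_1\cup\cdots\cup L_5)\ \cong\ \C^2_{u,v}\setminus\bigl(\{uv=-1\}\cup\{u=0\}\cup\{u=1\}\cup\{v=0\}\cup\{v=-1\}\bigr),$$
an open subvariety of $(\C\setminus\{0,1\})\times(\C\setminus\{0,-1\})$. Each factor is a hyperbolic affine curve of logarithmic Kodaira dimension $1$, and $\bar\kappa$ of a product is the sum of those of the factors, so this product has $\bar\kappa=2$; since an open subvariety has logarithmic Kodaira dimension at least that of the ambient variety, $\bar\kappa(S^{*})=2$, i.e. $S^{*}$ is of log general type (and in fact Kobayashi hyperbolic).

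Next I pass to the cover. As $\rho$ is unramified over $S\setminus\bigcup_iL_i$, its restriction to $V:=\rho^{-1}(S^{*})$ is a finite étale morphism $V\to S^{*}$; since logarithmic Kodaira dimension does not decrease under finite morphisms, $\bar\kappa(V)\ge\bar\kappa(S^{*})=2$, so $V$ too is of log general type. Moreover $\rho$, being finite, contracts no curve, so every polynomial curve in $S'$ (image of a non-constant morphism $\C\to S'$) maps onto a polynomial curve of $S$, hence by Lemma \ref{8.10} onto one of $L_1,\dots,L_5$; thus the union $\mathcal L'$ of all polynomial curves of $S'$ is a \emph{finite} union of curves, all contained in $\rho^{-1}(\bigcup_iL_i)$. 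The group $\Aut(S')$ permutes polynomial curves, so it preserves $\mathcal L'$ and acts faithfully (by density) on $Y:=S'\setminus\mathcal L'\supseteq V$.

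To finish, I would use Iitaka's theorem that a surface of log general type has finite automorphism group; applied to $V$ it gives $|\Aut(V)|<\infty$, so it suffices to force $\Aut(S')$, on a subgroup of finite index, to preserve $V$ and hence to restrict faithfully to $V$. As a first reduction, $\Aut(S')$ contains no one-parameter subgroup: a $\C_+$-action would produce an infinite family of $\mathbb A^1$-orbit-closures, contradicting finiteness of the set of polynomial curves in $S'$; a $\C^{*}$-action is excluded similarly, since either its general orbit-closures are $\mathbb A^1$-curves (again impossible) or it is fixed-point free, which would make $S'$ a $\C^{*}$-bundle over a smooth affine curve and hence either contain infinitely many $\mathbb A^1$-curves (base $\mathbb A^1$) or carry a fibrewise $\C^{*}$-action contradicting the lemma being proved. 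Hence $\Aut(S')$ is discrete. The remaining point is to show that $\Aut(S')$ stabilises, up to finite index, the divisor $\rho^{-1}(\bigcup_iL_i)$ — equivalently, the finite set of its non-polynomial components — which together with stabilisation of $\mathcal L'$ embeds a finite-index subgroup of $\Aut(S')$ into $\Aut(V)$.

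The main obstacle is exactly this last assertion. The trouble is that $S'$ itself need not be of log general type: depending on how $\rho$ ramifies over the five-cycle of lines $L_1-L_2-L_4-L_3-L_5-L_1$, some components of $\rho^{-1}(\bigcup_iL_i)$ can be $\C^{*}$-curves and $S'$ can be $\C^{*}$-fibred with $\bar\kappa(S')\le1$, so Iitaka's theorem is not available for $S'$ directly and one is forced to argue through $V$. Pinning down how automorphisms can move $\rho^{-1}(\bigcup_iL_i)$ requires a case analysis of the finitely many admissible ramification patterns of $\rho$ along the five lines and over their intersection points, together with the verification that the leftover $\C^{*}$-bundle possibilities cannot actually occur; this bookkeeping is the technical heart of the proof.
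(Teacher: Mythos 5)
Your setup is correct and follows the same route as the paper: the coordinate change $(x,z)$ identifying $S\setminus L_4$ with $\C^2_{u,v}\setminus\{uv=-1\}$ and exhibiting $S^{*}=S\setminus\bigcup_{i=1}^{5}L_i$ as an open subset of $(\C\setminus\{0,1\})\times(\C\setminus\{0,-1\})$ does establish that $S^{*}$, and hence its finite \'etale cover $V=\rho^{-1}(S^{*})$, is Kobayashi hyperbolic and of log general type (the paper merely asserts the hyperbolicity of $H=S^{*}$, so this part of your write-up is actually more complete than the original); likewise the finiteness and $\Aut (S')$-invariance of the set of polynomial curves of $S'$ via Lemma \ref{8.10} is exactly the paper's first step, and replacing the paper's appeal to hyperbolicity by Iitaka's finiteness theorem for varieties of log general type is a legitimate variant. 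But the proposal is not a proof: you stop at the one step on which everything hinges, namely that a finite-index subgroup of $\Aut (S')$ preserves $\rho^{-1}(\bigcup_i L_i)$ and therefore restricts (faithfully) to a subgroup of $\Aut (V)$. Announcing this as ``the technical heart'' and gesturing at a case analysis of ramification patterns that you do not carry out leaves the lemma unproved; without that step, finiteness of $\Aut (V)$ gives no control over $\Aut (S')$, precisely because --- as you yourself observe --- a component of $\rho^{-1}(L_i)$ need not be a polynomial curve, so an automorphism fixing every polynomial curve could a priori move such a component into $V$.

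For comparison, the paper's own proof passes this exact point in one sentence: after reducing to the subgroup fixing each polynomial curve, it restricts $\alpha$ to $H'=\rho^{-1}(H)$, i.e.\ it treats the preimage of the five lines as automatically $\alpha$-stable. So you have correctly isolated the only delicate step of the argument, but a proof must then either justify it (for instance by showing that under the stated ramification hypothesis every irreducible component of $\rho^{-1}(L_i)$ is itself a polynomial curve, or by characterizing $V$ intrinsically in terms of $S'$ alone) or, as the paper does, treat it as immediate; doing neither is a genuine gap. A secondary defect: your aside excluding $\C^{*}$-actions ends with ``carry a fibrewise $\C^{*}$-action contradicting the lemma being proved,'' which is circular. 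That digression is unnecessary once the restriction to $V$ is justified (Iitaka's theorem already rules out positive-dimensional automorphism groups), so it should be deleted or argued independently.
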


\begin{proof}  First note that for every polynomial curve $C$ in $S'$ its
image $\rho (C)$ is a polynomial curve in $S$. Thus the number of such curves
in $S'$ is finite. Any automorphism $\alpha$ of $S'$ generates their permutation.
Without loss of generality we can assume that this permutation is
identical. Then the restriction of $\alpha$ yields an automorphism of the
surface $H' \subset S'$ that is an unramified finite covering of the Kobayashi
hyperbolic surface
$H=S \setminus \bigcup_{i=1}^5 L_i$ (and therefore $H'$ is itself
hyperbolic). There are at most finite number
of such automorphisms and we have  the desired conclusion.

\end{proof}

\begin{proposition}\label{8.40} Let $\nu$ be a completely integrable algebraic vector field
on $S$.  Then

{\rm (1)} either $\nu$ has a first integral which is a regular function with general fibers
isomorphic to $\C^*$ or

{\rm (2)} there is a regular function $P$ with general fibers isomorphic to $\C^*$
that is a Riccati fibration for $\nu$, i.e. the  flow associated with $\nu$ transforms
fibers of $P$ into fibers of $P$.

\end{proposition}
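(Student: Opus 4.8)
The plan is to feed $\nu$ into the \textsc{Brunella}-style machinery assembled in Sections 5--7, using as the only inputs special to $S$ the two facts just proved: $S$ contains exactly five polynomial curves $L_1,\dots,L_5$ (Lemma \ref{8.10}), and a finite morphism $S'\to S$ of smooth surfaces that is unramified over the complement of $\bigcup_iL_i$ has finite automorphism group (Lemma \ref{8.30}). If $\nu\equiv 0$ there is nothing to prove, so assume $\nu\neq 0$. As $\nu$ is completely integrable, its holomorphic $\C_+$-flow has a general orbit isomorphic either to $\C$ (we say $\nu$ is of type $\C$) or to $\C^*$ (type $\C^*$); one also records that $S$ is smooth (Jacobian criterion for $x+y+xyz-1$, whose partials $1+yz,\ 1+xz,\ xy$ have no common zero on $S$), rational (from $z=(1-x-y)/(xy)$), and affine. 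Passing to the foliation attached to $\nu$, resolving the singularities of $\nu$, and applying \textsc{Seidenberg}'s theorem (Theorem \ref{5.30}), I obtain a reduced foliation $\cF$ on a smooth projective completion $\bX$ of a resolution $\hX$ of $(S,\nu)$, with the lifted field $\hat\nu$ still complete --- precisely the set-up of Proposition \ref{6.20}.

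I would then split according to whether $\cF$ admits a rational first integral. If it does, a general leaf of $\cF$ (being a general fibre of the first integral) is algebraic, hence, as the closure of a general orbit, isomorphic to $\C$ or to $\C^*$; the possibility $\C$ would produce a one-parameter family of pairwise distinct polynomial curves in $S$, contradicting Lemma \ref{8.10}. So the general leaf is a $\C^*$, $\nu$ is of type $\C^*$, and the rational first integral has general fibre isomorphic to $\C^*$. I would next upgrade it to a regular function: after a \textsc{M\"obius} transformation on the target, the $\Spec$ of the intersection of $\C[S]$ with the field of rational first integrals of $\nu$ is $\C$, and the resulting morphism $P\colon S\to\C$ is regular, is still a first integral of $\nu$, and still has general fibre isomorphic to $\C^*$. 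Here one uses that $S$ is affine (so no complete curve sits inside it, which forces the ``bad'' value off $S$) and carries only finitely many polynomial curves (so the generic fibre cannot degenerate to a $\C$). This gives alternative (1).

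If $\cF$ has no rational first integral, then by Darboux's theorem a general leaf is Zariski dense in $\bX$; the orbits of $\hat\nu$ therefore supply a nonconstant entire curve $\C\to\hX\hookrightarrow\bX$ tangent to $\cF$ and Zariski dense, so Theorem \ref{5.70} gives ${\rm kod}(\cF)\in\{0,1\}$. If ${\rm kod}(\cF)=0$, I would run the construction of Section 7: by Lemma \ref{7.20} the proper transforms of the external curves meet $S$ in a finite family $L$ of disjoint curves $\cong\C$, which by Lemma \ref{8.10} is a subfamily of $\{L_1,\dots,L_5\}$; then Lemma \ref{7.40} yields a finite unramified cover $W\to S\setminus(\Gamma\cup L)$ carrying the nontrivial complete field $\mu$ with faithful algebraic flow. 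This contradicts Corollary \ref{7.50}, whose hypothesis holds for $S$: after filling finitely many points and normalizing, such a $W$ embeds its automorphism group into that of a smooth surface finite over $S$ and unramified over $S\setminus\bigcup_iL_i$, which is finite by Lemma \ref{8.30}, whereas the flow of $\mu$ is infinite. Hence ${\rm kod}(\cF)=1$. Then $\cF$ is a \textsc{Riccati} foliation by Proposition \ref{6.10}(1), and Proposition \ref{6.20}(3), applied after contracting the exceptional divisor $E$ of $\pi$, produces a regular function $P$ on $S$ with general fibre $\cong\C$ or $\cong\C^*$ whose fibres are permuted by the flow of $\nu$. A general fibre $\cong\C$ is again impossible by Lemma \ref{8.10}, so it is a $\C^*$: alternative (2).

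I expect the two delicate points to be the following. First, in the case ${\rm kod}(\cF)=0$, the passage from the abstract covering furnished by \textsc{McQuillan}'s contraction to a genuine algebraic covering of $S$ minus some of the five lines --- this is exactly where the geometry of $S$ enters, through Lemma \ref{8.30}, and it is the content of Corollary \ref{7.50}. Second, the ``regular function'' upgrade of a rational first integral in the case it exists: one must check that the affinized first integral is really a function $S\to\C$ and that its generic fibre stays a $\C^*$, which again leans on $S$ being affine and on its having only finitely many polynomial curves. (Alternatively, the type-$\C^*$ branch could be routed directly through Proposition \ref{6.60}, at the price of verifying that $S$ satisfies condition (ii) of Proposition \ref{6.30}, a finite computation with an SNC completion of $S$ and the dual graph of its boundary divisor.)
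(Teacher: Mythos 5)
Your proposal reproduces the paper's architecture almost exactly: the dichotomy on the existence of a rational first integral, the use of Lemma \ref{8.10} to force $\C^*$-fibers in case (1), and the chain Theorem \ref{5.70} $\to$ Corollary \ref{7.50} $+$ Lemma \ref{8.30} $\to$ ${\rm kod}(\cF)=1$ $\to$ Propositions \ref{6.10} and \ref{6.20} in case (2). You are in fact more careful than the paper at one point: Proposition \ref{6.20} only delivers general fibers $\C$ or $\C^*$, and you explicitly rule out $\C$ via Lemma \ref{8.10}, a step the paper leaves implicit.

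The one real divergence is your treatment of fields of $\C^*$-type without rational first integral. The paper's proof opens by verifying conditions (i) and (ii) of Proposition \ref{6.30} --- condition (ii) by exhibiting an SNC-completion of $S$ whose dual graph is a cycle of three vertices with weights $1,0,0$ --- precisely because those fields are meant to be handled by Proposition \ref{6.60} (Brunella's $P$-completeness route), not by the Kodaira-dimension machinery: Section 7, and hence Corollary \ref{7.50}, is set up under the standing assumption that the field is of type $\C$. You instead push \emph{every} field without rational first integral through Theorem \ref{5.70} and Corollary \ref{7.50}, and relegate the condition (ii) check to an optional parenthesis. This leaves a small gap relative to the paper's toolkit: either you must argue that the construction of Section 7 (Lemmas \ref{7.10}--\ref{7.40}) never uses the type-$\C$ hypothesis --- which is plausible, since Theorem \ref{5.70}(2) and McQuillan's contraction only need a Zariski-dense entire tangent curve, but it is an extension of what the paper proves --- or you must actually perform the dual-graph computation you deferred and route the $\C^*$-type case through Proposition \ref{6.60} as the paper does. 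Since the computation is a one-liner (a $3$-cycle with weights $1,0,0$ has no branch points and no linear $(-1)$-vertices), the second repair is the cheaper one and should be included. Your alternative derivation of regularity of the first integral in case (1) (via $\Spec$ of $\C[S]$ intersected with the field of first integrals) is vaguer than the paper's argument through indeterminacy points and closures of fibers, but both glosses rest on the same facts (affineness of $S$ and finiteness of its polynomial curves), so I would not count that against you.
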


\begin{proof}  We need to check conditions (i) and (ii) in Proposition \ref{6.30}.
The first of them is always true because of Proposition \ref{6.50}. As for the second,
the surface $S$ possesses an SNC-completion whose dual graph is
a cycle consisting of three vertices of weights $1,0$, and $0$ respectively.
Thus Proposition \ref{6.30} is applicable.

Assume now that $f$ is a rational first integral of $\nu$. Then general orbits of $\nu$
are contained in the fibers of $f$ on the complement $S^*$ to the set of indeterminacy
points of $f$ in $S$. Since $S$ contains only five polynomial curves these fibers
must be isomorphic to $\C^*$. Furthermore, since their closures must
meet at indeterminacy points we see that such points cannot exist because
otherwise the closures produce infinite number of polynomial curves. Thus $S^*=S$ and
$f$ is regular which is (1).

In the absence of a rational first integral for $\nu$ let $\cF$ be the
associated foliation on an SNC-completion of $S$.
By Corollary \ref{7.50}  and Lemma \ref{8.30} ${\rm kod} (\cF )$ cannot vanish,
i.e. it is equal to 1 by Theorem \ref{5.70}. Now Proposition \ref{6.20} implies (2).

\end{proof}

\begin{lemma}\label{8.50} The surface $S$ is factorial and
its Euler characteristics is 2.

\end{lemma}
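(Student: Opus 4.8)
The plan is to prove the two assertions separately, in both cases reducing to $\C^2$ via the projection $\pi\colon S\to\C^2_{x,y}$, $(x,y,z)\mapsto(x,y)$, under which $z=(1-x-y)/(xy)$. First I would record the structure of $\pi$. On $\{xy\neq 0\}$ the coordinate $z$ is a regular function of $x$ and $y$, so $\pi$ restricts to an isomorphism of $S\setminus\pi^{-1}(\{xy=0\})$ onto $\C^2\setminus\{xy=0\}\cong\C^*\times\C^*$, with inverse $(x,y)\mapsto(x,y,(1-x-y)/(xy))$. Over the $y$-axis $\{x=0\}$ the defining equation forces $y=1$, so $\pi^{-1}(\{x=0\})$ is exactly the line $L_1=\{x=0,\,y=1\}$, with empty fibre over the remaining points of $\{x=0\}$; symmetrically $\pi^{-1}(\{y=0\})=L_3=\{x=1,\,y=0\}$. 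Thus $S\setminus(L_1\cup L_3)\cong\C^*\times\C^*$, and $L_1$, $L_3$ are disjoint curves each isomorphic to $\C$. I would also note once and for all that $S$ is a smooth integral affine surface: the partials of $F=x+y+xyz-1$ are $1+yz$, $1+xz$, $xy$, and $xy=0$ together with $F=0$ forces (say, if $x=0$) $y=1$, whence $1+xz=1\neq 0$, so $S$ has no singular point; and $F$, being primitive and of degree one in $z$ over $\C[x,y]$, is irreducible.

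For the Euler characteristic I would use additivity of the compactly supported (hence, for complex algebraic varieties, the ordinary) Euler characteristic over the locally closed stratification $S=\bigl(S\setminus(L_1\cup L_3)\bigr)\sqcup L_1\sqcup L_3$, which gives $\chi(S)=\chi(\C^*\times\C^*)+\chi(\C)+\chi(\C)=0+1+1=2$.

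For factoriality I would use that a smooth affine variety has a factorial coordinate ring if and only if its divisor class group vanishes. The excision exact sequence $\Z\cdot[L_1]\oplus\Z\cdot[L_3]\to\mathrm{Cl}(S)\to\mathrm{Cl}\bigl(S\setminus(L_1\cup L_3)\bigr)\to 0$, together with $\mathrm{Cl}(\C^*\times\C^*)=0$ (the coordinate ring being a localization of the factorial ring $\C[x,y]$), shows that $\mathrm{Cl}(S)$ is generated by the classes $[L_1]$ and $[L_3]$. These classes are principal: writing $A=\C[x,y,z]/(F)$ one has the clean identifications $A/(x)\cong\C[z]$ and $A/(y)\cong\C[z]$, so $(x)$ and $(y)$ are height-one prime ideals of $A$ whose localizations are discrete valuation rings in which $x$, respectively $y$, is a uniformizer, while $x$ and $y$ vanish along no other prime divisor of $S$. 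Hence $\diver(x)=[L_1]$ and $\diver(y)=[L_3]$, so $\mathrm{Cl}(S)=0$ and $A$ is a UFD.

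The one step needing genuine care is the last one: I need $\diver(x)$ to equal $[L_1]$ \emph{exactly} --- multiplicity one and no extraneous components --- not merely that $\{x=0\}$ coincides with $L_1$ set-theoretically, and this is precisely what the isomorphism $A/(x)\cong\C[z]$ supplies, since it exhibits $(x)$ as already a prime (and radical) ideal. Everything else is routine bookkeeping with the projection $\pi$.
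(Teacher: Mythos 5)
Your proof is correct. The Euler characteristic half is identical to the paper's: both use additivity of $\chi$ over the stratification $S=(S\setminus(L_1\cup L_3))\sqcup L_1\sqcup L_3$ with $S\setminus(L_1\cup L_3)\cong(\C^*)^2$, giving $0+1+1=2$. For factoriality the two arguments diverge. The paper factors the affine modification $\pi:S\to\C^2$ as a composition of two modifications $S\to S'\to\C^2$, observes that $S'\cong\C^2$ and that $S$ is obtained from $S'$ by replacing an irreducible curve by a line, and then invokes the Nagata lemma (Eisenbud, Lemma 19.20) to transport factoriality. You instead run the excision sequence for divisor class groups, use $\mathrm{Cl}((\C^*)^2)=0$ to see that $\mathrm{Cl}(S)$ is generated by $[L_1]$ and $[L_3]$, and then kill these generators by exhibiting them as $\diver(x)$ and $\diver(y)$ via the clean isomorphisms $A/(x)\cong\C[z]\cong A/(y)$. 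The two routes are close cousins --- Nagata's lemma is precisely the algebraic shadow of the excision sequence after inverting a prime element, and your computation amounts to checking that $x$ and $y$ are prime in $A$ with $A[x^{-1},y^{-1}]$ a UFD --- but yours is more self-contained and explicit: it avoids the affine-modification bookkeeping entirely and isolates the one point that genuinely needs care (that $(x)$ and $(y)$ are prime with reduced quotient, so the divisors come out with multiplicity one and no extra components), whereas the paper's version leans on machinery already set up for the rest of Section 8.
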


\begin{proof} Note that $S = T \cup L_1\cup L_3$ where $T = S \setminus (L_1 \cup L_3)$
is a torus.
Hence $\chi (S ) = \chi (T) + \chi (L_1)+ \chi (L_3)=2$ by \cite{Dur}. Then
modification $\pi : S \to \C^2$ may be viewed as composition of two
modifications $\pi' : S \to S'$ and $\pi'' : S' \to \C^2$ where $S'$ is the affine modification
of $\C^2$ along $x=0$ at center $(0,1)$ (i.e. $S'$ is again isomorphic to $\C^2$
and therefore factorial) and $S$ is obtained as the affine modification along the proper
transform of $y=0$ in $S'$ (which is a $\C^*$-curve) with center at point $(1,0)$.
That is, $S$ is obtained from $S'$ by replacing an irreducible curve by a line $L_3$.
The Nagata lemma (e.g., see \cite[Lemma 19.20]{Eis}) implies that $S$ remains factorial.

\end{proof}

\begin{proposition}\label{8.60}

Let $f$ be a regular
function on $S$ whose general fibers are isomorphic to $\C^*$.
Then up to a linear transformation $f$ coincides with one of the functions $x^ky^l$
(where $k$ and $l$ are relatively prime
nonnegative integers with $k+l \geq 1$), $xz+1$, $yz+1$, and $z$.

\end{proposition}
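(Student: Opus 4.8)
The plan is to exploit the two structural facts just established — $S$ is factorial with Euler characteristic $2$ (Lemma~\ref{8.50}), and its only polynomial curves are $L_1,\dots,L_5$, with the explicit equations of Remark~\ref{8.20} — together with the standard description of $\C^*$-fibrations on smooth affine surfaces. \emph{First} I would normalize the fibration. Since $S\setminus(L_1\cup L_3)$ is the torus $T\cong(\C^*)^2$ with coordinates $x,y$, on which every unit is a monomial in $x,y$, and since $x$ and $y$ vanish on $S$ along $L_3$ and $L_1$ respectively, the only invertible regular functions on $S$ are constants; hence $f-a$ is non-constant for every $a\in\C$, so $f\colon S\to\C$ is surjective. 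Additivity of the topological Euler characteristic along $f$ (general fibre $\C^*$, of Euler characteristic $0$, base $\C$) then gives $\sum_{t\in\C}\chi(f^{-1}(t))=\chi(S)=2$, the sum running over the finitely many degenerate fibres.

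\emph{Next}, recall that in a $\C^*$-fibration on a smooth affine surface each component of a degenerate fibre is isomorphic to $\C$ or to $\C^*$, that a fibre supported on a single $\C^*$ is a multiple fibre of Euler characteristic $0$, and that every other connected degenerate fibre is a tree of such curves whose Euler characteristic equals $1$ minus the number of its $\C^*$-components. Together with the displayed equality this forces: no multiple $\C^*$-fibre occurs; every degenerate fibre is set-theoretically a union of lines, hence of the $L_i$ by Lemma~\ref{8.10}; and the total $2$ is attained \emph{either} by one disconnected fibre supported on two disjoint lines \emph{or} by two fibres each of Euler characteristic $1$. A direct computation from Remark~\ref{8.20} shows that $L_1\cap L_2$, $L_2\cap L_4$, $L_4\cap L_3$, $L_3\cap L_5$, $L_5\cap L_1$ consist of a single point each while all other pairs of the five lines are disjoint, so $L_1,\dots,L_5$ form a pentagon; a connected chain of lines occurring in a fibre must therefore be a sub-path of this pentagon, and the two ``ends'' of a generic $\C^*$-fibre exclude branching, so the only admissible configurations are a pair of non-adjacent (``diagonal'') lines forming one disconnected fibre, or a vertex of the pentagon together with the opposite edge, forming two fibres.

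\emph{Finally} I would recover $f$ in each case. If the zero fibre is supported on two disjoint lines $L_i\sqcup L_j$, then $S$ minus those two lines is again a torus: for $L_1,L_3$ it is $T$ itself, and for a pair such as $L_1,L_4$ one checks $S\setminus(L_1\cup L_4)\cong(\C^*)^2$ via the coordinates $x$ and $w=z+1/x$; on such a torus $f$ has zero divisor, hence is a monomial, and regularity on $S$ together with irreducibility of the general fibre $\C^*$ makes it $x^ky^l$ with $\gcd(k,l)=1$ once the various diagonals of the pentagon are identified by the symmetries of $S$ — the transposition $(x,y,z)\mapsto(y,x,z)$ together with the order-five automorphism $(x,y,z)\mapsto(-z,\,xz+1,\,-y)$, which cyclically permutes $L_1,\dots,L_5$, act on the pentagon as a dihedral group. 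If instead there are two fibres, a vertex $L_i$ over one value and the opposite edge $L_j\cup L_k$ over another, then factoriality shows that $f$ is determined up to a scalar by its divisor of zeros, which is a single line; using the reduced equations $x$, $yz+1$, $y$, $xz+1$, $z$ of $L_1,\dots,L_5$ and the relations $x-1=-y(xz+1)$, $y-1=-x(yz+1)$, $(xz+1)(yz+1)=z+1$ valid on $S$, one sees that the only degenerate second fibre is the claimed one and that $f$ is, up to an affine change of the target and a symmetry of $S$, one of $x$, $y$, $z$, $xz+1$, $yz+1$ — the first two being $x^1y^0$, $x^0y^1$ — which completes the list.

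The parts I expect to require real care are the invocation of the structure theory of $\C^*$-fibrations in precisely the form used above — in particular excluding ``twisted'' $\C^*$-fibrations (whose generic fibre is a non-trivial form of $\C^*$ and whose compactification carries a single double section at infinity) and excluding multiple $\C^*$-fibres, both of which must be ruled out here using factoriality and the pentagon geometry — the complete enumeration of the admissible fibre configurations on the pentagon, and the bookkeeping with the automorphisms of $S$ needed to reduce the five ``diagonal'' configurations to $x^ky^l$ and the five ``vertex plus opposite edge'' ones to the remaining four functions.
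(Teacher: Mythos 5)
Your overall scheme --- the Euler characteristic count $\sum_t\chi(f^{-1}(t))=\chi(S)=2$, the structure of degenerate fibres of a $\C^*$-fibration, the enumeration of line configurations on the pentagon, and the recovery of $f$ as a unit on the complementary torus --- is the same as the paper's, and your pentagon combinatorics is correct. (The claim that the count ``forces no multiple $\C^*$-fibre'' is unjustified, since such a fibre contributes $0$ to the sum, but this is harmless: only the two components of Euler characteristic one matter.) The genuine gap is the final step, where you dispose of the diagonals other than $(L_1,L_3)$ by ``identifying the diagonals by the symmetries of $S$''. Precomposing $f$ with an automorphism of $S$ is not a linear transformation of the target, so this identification does not place $f$ in the stated list; it only shows that $f$ is $\Aut(S)$-equivalent to some $x^ky^l$. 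Worse, your own symmetry observation undermines the conclusion: the map $\alpha(x,y,z)=(-z,\,xz+1,\,-y)$ really is an automorphism of $S$ (its inverse is $(X,Y,Z)\mapsto(1+YZ,-Z,-X)$) and it sends $L_5\mapsto L_1$, $L_4\mapsto L_3$; consequently $(xy)\circ\alpha=-z(xz+1)$ is a regular function on $S$ whose unique degenerate fibre is $L_4\cup L_5$ and whose fibre over any $c\neq 0$ is the graph $x=(-c-z)/z^{2}$, $y=-(1-x)z/c$ over $z\in\C^*$, hence a copy of $\C^*$. No function in the list has $L_4\cup L_5$ as a fibre, so $z(xz+1)$ is not a linear transformation of any of them. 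Carried out honestly, your method produces the further families $z^k(xz+1)^l$, $x^k(xz+1)^l$, $y^k(yz+1)^l$, $z^k(yz+1)^l$ with $k,l\ge 1$ coprime, attached to the remaining four diagonals.

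For comparison, the paper eliminates those diagonals by direct arguments rather than by symmetry: for $(L_1,L_4)$ and $(L_2,L_3)$ it asserts that the associated torus monomials are not regular on $S$ (but it only tests positive powers of $\zeta=y/(x-1)$, whereas $\zeta^{-1}=-(xz+1)$ is regular), and for $L_4\cup L_5$ lying in one fibre it runs a Riemann--Hurwitz computation claiming the plane curve $(1-x-y)^k(1-x)^l=cx^ky^{k+l}$ has positive genus; for $k=l=1$ that curve is a cubic with a node at $(1,0)$, hence rational, in agreement with the explicit $\C^*$-fibre above. So before relying on either argument you should resolve this discrepancy: either locate an error in the automorphism $\alpha$ (I could not find one), or accept that the classification holds only up to composition with an automorphism of $S$ --- which is the statement your approach actually establishes.
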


\begin{proof}  Since the general fibers are isomorphic to $\C^*$ each connected component
of singular fibers is either $\C^*$ or a curve of Euler characteristics one (more precisely either
a line or a cross) by \cite[Theorem 3]{Zai}.  Furthermore, there are exactly two such components
because the Euler characteristics of $S$ is 2 by Lemma \ref{8.50}. They are disjoint
and therefore the description of polynomials curves in $S$ implies that the union of these
components contains one of the following pairs of lines: $(L_1, L_3),  (L_1, L_4),  (L_2,L_3),
(L_2, L_5)$, and $(L_4,L_5)$.  Consider the case of the first three pairs and tori
 $T_{13}=S \setminus  (L_1 \cup L_3)$,  $T_{14}=S \setminus  (L_1 \cup L_4)$,
and  $T_{23}=S \setminus  (L_2 \cup L_3)$.  The general fiber $F\simeq \C^*_\xi$
is contained in one of these tori,
say $T_{13}$ that is isomorphic to $\C^*_x\times \C^*_y$. Hence up
to constant factors $x=\xi^k$ and $y=\xi^l$ where $k$ and $l$ are coprime integers
since the functions $x$ and $y$ separate points of $F$. Let $m$ and $n$ be integers
such that $mk+nl=1$. Then $\xi =x^my^n$ together with $f= x^ky^{-l}$ form a new coordinate
system on $T$ such that the restriction of $f$ to $F$ is constant. Note that this $f$ is regular
on $S$ only when the powers are nonnegative which yields the first functions mentioned
in the formulation.

Note that $T_{14}$ is isomorphic to the torus
$\C^*_x \times \C^*_\zeta$ where $\zeta =y/(x-1)$. Thus for $T_{14}$ this function
$f$ must be of form $x^k(x-1)^l/y^l$. However for a nonzero $l$ this function
is not regular on $S$ and we have to disregard it.  Similarly in the case
of  $T_{23}$ we have to disregard function of the form $y^k(x-1)^l/x^l$.

Now consider the case when the zero fiber of $f$ contains $L_4$ and $L_5$ as components.
Let us show that
$f^{-1} (0)=L_4 \cup L_5$. Indeed, assume to the contrary that this fiber contains another irreducible
component $L$. Since $S$ is factorial by Lemma \ref{8.50}  $L_4$ is given by zeros of $xz+1$,
$L_5$ by zeros of $z$ (see Remark \ref{8.20}), and $L$ by zeros of another function $h$.
All three functions are invertible on  $T_{45} =S \setminus f^{-1}(0)$ and thus the group of
invertible functions has at least three generators. This group is isomorphic to the group of integer
first cohomology (e.g., see \cite{Fuj}) and therefore $H^1(T_{45}, \Q )$ is at least of rank 3.
On the other hand nonzero fibers of $f$ do not contain
connected components of Euler characteristics one and
therefore $T_{45} $ is a $\C^*$-fibration over $\C^*$. By the technical
Lemma \ref{8.70} below its rational cohomology is at most of rank 2. This contradiction
shows that $L$ cannot exist.

Since $f^{-1}(0) = L_4\cup L_5$ up to a constant factor we have
$f= z^k (xz+1)^l$ where $k$ and $l$ are natural and, furthermore, relatively
prime (since otherwise the general fiber of $f$ is not connected). Then taking into consideration the equation of $S$
we see that the equation of the projection $C$ of the fiber $f^{-1} (c)$ to $\C^2_{x,y}$ is given
by $$(1-x-y)^k(1-x)^l=c x^ky^{k+l}. \eqno{(8.1)} $$
Hence over a neighborhood of $x=0$ (resp. $x=\infty$) up to constant
factor the restriction of $y$ to $C$
behaves as $x^{-k/(k+l)}$ (resp. $x^{l/(k+l)}$). This implies
that the projection of $C$ to the $x$-axis has ramification points of
order $k+l$ over $x=0$ and $x= \infty$.
Differentiating (8.1) with respect to $y$ we get
$$-k(1-x-y)^{k-1}(1-x)^l=c(k+l) x^ky^{k+l-1}.$$ Dividing (8.1) by this equality
we get $1-x = \alpha y$ where $\alpha =l/(k+l)$. Plugging $\alpha y $ instead
of $1-x$ into (8.1) we
obtain a polynomial in $y$ of degree $2k+l$ which yields additional
ramification points. Since the projection of $C$ to the
$x$-axis is $(k+l)$-sheeted the Riemann-Hurwitz formula implies that the genus
of $C$ is positive which contradicts to the fact that $C$ is isomorphic to $\C^*$.

Now assume that $L_4$ and $L_5$ are contained in different fibers of $f$.
Using again Lemma \ref{8.70} below we can see that at least one of these two
fibers is irreducible. That is, up to a linear transformation $f$ must coincide
with either $z$ or $xz+1$.

By considering the pair $(L_2, L_5)$ instead of $(L_4,L_5)$ we get also
$f=yz+1$ as a possibility which concludes the proof.

\end{proof}

To make the proof of Proposition \ref{8.60}  complete we need the following.

\begin{lemma}\label{8.70}  Let $f: Y \to B$ be a $\C^*$-fibration (i.e. the general fibers
of $f$ are isomorphic to $\C^*$) of a factorial affine algebraic surface over the base $B$ isomorphic to a line with
$k$ deleted points. Then the dimension of $H_1 (Y, \Q)$ is at most $k+1$.

\end{lemma}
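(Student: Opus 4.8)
The plan is to reduce $\dim H_1(Y,\Q)=\dim H^1(Y,\Q)$ to a numerical count on a well-chosen completion, the point being that factoriality forces the boundary to span the second cohomology. We may assume $Y$ is smooth (as it is in every application below). First, $Y$ is a \emph{rational} surface: its generic fibre over $\C(B)$ is a smooth affine curve that is geometrically $\C^*$, hence an open subset of a conic over $\C(B)$, and that conic has a rational point because $\C(B)$ is a $C_1$ field; thus $Y$ is unirational, hence rational by Castelnuovo. Choose a smooth projective completion $Y\subset \bar Y$ such that $f$ extends to a morphism $\bar f\colon\bar Y\to\proj^1$ with general fibre $\proj^1$ and such that $D:=\bar Y\setminus Y=\bigcup_{i=1}^sD_i$ has simple normal crossings; this is arranged by resolving the closure of the graph of $f$ and then blowing up further points of $D$, operations that keep $\bar f$ a $\proj^1$-fibration over $\proj^1$.

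By the standard Gysin/residue exact sequence for the complement of an SNC divisor in a smooth complete surface, and since $b_1(\bar Y)=0$, one has
\[
0\longrightarrow H^1(Y,\Q)\longrightarrow \bigoplus_{i=1}^s\Q\,[D_i]\xrightarrow{\;[D_i]\mapsto \mathrm{cl}(D_i)\;} H^2(\bar Y,\Q),
\]
so $\dim H^1(Y,\Q)=s-\mathrm{rank}_\Q\{\,\mathrm{cl}(D_i)\,\}$. Here factoriality enters: $Y$ is affine and factorial, hence $\mathrm{Cl}(Y)=0$, and the sequence $\bigoplus_i\Z\,[D_i]\to\mathrm{Cl}(\bar Y)\to\mathrm{Cl}(Y)=0$ shows that the classes of the $D_i$ generate $\mathrm{Cl}(\bar Y)=\mathrm{Pic}(\bar Y)$; since $\bar Y$ is rational, $\mathrm{Pic}(\bar Y)\otimes\Q=H^2(\bar Y,\Q)$, whence $\mathrm{rank}_\Q\{\mathrm{cl}(D_i)\}=b_2(\bar Y)=:\rho$. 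Therefore $\dim H^1(Y,\Q)=s-\rho$, and the problem becomes the estimate $s-\rho\le k+1$.

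Now read off $s$ and $\rho$ from $\bar f$. Write $P_\infty=\proj^1\setminus B$, which has $k+1$ points, and let $P_{\mathrm{sing}}\subset B$ be the finite set of points with reducible fibre; for $p\in\proj^1$ let $c_p$ be the number of components of $\bar f^{-1}(p)$, and for $p\in B$ let $e_p\geq1$ be the number of components of $f^{-1}(p)$. A component of $\bar f^{-1}(p)$ lies in $D$ precisely when it does not meet $Y$; hence over $p\in P_\infty$ all $c_p$ components are in $D$, over $p\in B$ exactly $c_p-e_p$ are, and the remaining boundary components are horizontal. Since a general fibre of $\bar f$ is a $\proj^1$ meeting $D$ in the two points $\proj^1\setminus\C^*$, the number $H$ of horizontal components of $D$ is at most $2$. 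Using $b_2(\bar Y)=2+\sum_{p}(c_p-1)$ (the fibres of $\bar f$ are trees of $\proj^1$'s, so $\chi(\bar Y)=4+\sum_p(c_p-1)=2+b_2(\bar Y)$), we obtain
\[
s=H+\sum_{p\in P_\infty}c_p+\sum_{p\in P_{\mathrm{sing}}}(c_p-e_p),\qquad \rho=2+\sum_{p\in P_\infty}(c_p-1)+\sum_{p\in P_{\mathrm{sing}}}(c_p-1),
\]
and subtracting, with $|P_\infty|=k+1$,
\[
\dim H_1(Y,\Q)=s-\rho=H+k-1-\sum_{p\in P_{\mathrm{sing}}}(e_p-1)\ \le\ H+k-1\ \le\ k+1 .
\]

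The delicate points are the existence of a completion with all the stated properties and the verification that the SNC-ifying blow-ups do not disturb the count — each such blow-up has centre on $D$, raising one $c_p$ and $\rho$ by $1$ simultaneously while creating neither a horizontal component nor altering any $e_p$ — while the conceptual heart is the passage, via factoriality, from $\dim H^1(Y,\Q)$ to the purely numerical quantity $s-\rho$.
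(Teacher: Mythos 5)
Your argument is correct, but it takes a genuinely different route from the paper's. The paper never compactifies: it invokes Fujita's result that removing an irreducible curve from a factorial affine surface adds exactly one generator to the group of invertible functions, hence raises $\dim H_1(\cdot,\Q)$ by exactly one; since deleting a singular fibre costs at least $1$ in $H_1$ but only $1$ in the number of punctures of the base, this reduces the lemma to a locally trivial $\C^*$-fibration, which is then handled by the K\"unneth formula in the product case and by the Leray spectral sequence in general (``the dimension can only decrease''). You instead pass to a smooth completion $\bar Y$ with a $\proj^1$-fibration extending $f$, use the Gysin sequence to write $\dim H^1(Y,\Q)=s-\operatorname{rank}\{\mathrm{cl}(D_i)\}$, use factoriality to force that rank to equal $b_2(\bar Y)$, and finish by fibrewise bookkeeping. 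Both proofs use factoriality in parallel ways (triviality of $\mathcal{O}(Y)^*$ modulo the curves removed, versus $\mathrm{Cl}(Y)=0$), but yours replaces the Leray comparison --- the least explicit step of the paper's proof --- by the closed formula $\dim H_1(Y,\Q)=H+k-1-\sum_p(e_p-1)$ with $1\le H\le 2$, which is sharper and shows exactly when equality holds; the price is the standard but not entirely free package of facts about genus-zero fibrations (all fibres are trees of $\proj^1$'s, $b_2=2+\sum_p(c_p-1)$). Two small points: you assume $Y$ smooth, which is not in the statement, though it holds in the paper's only application (an open subset of the smooth surface $S$); and $P_{\mathrm{sing}}$ should be the set of $p\in B$ over which the fibre of $\bar f$ (rather than of $f$) is reducible for your displayed expressions for $s$ and $\rho$ to be literally correct --- the two omissions cancel in $s-\rho$, so the conclusion is unaffected.
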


\begin{proof} Since $Y$ is factorial the removal of an irreducible curve
from $Y$ increases the number of generator of the group of regular invertible
functions by 1. Therefore, this procedure increases the dimension of the first rational
homology by 1 (e.g., see \cite{Fuj}). Thus it suffices to prove the statement of Lemma in the case
of $f$ being locally trivial since we can remove the singular fibers. If $Y$ is the direct product $B\times \C^*$
then $\dim H_1(Y, \Q) =k+1$ by the Kunneth formula.
In the general case the computation goes via the Leray spectral sequence
but the dimension of homology can only decrease compared with the case of the direct product. This concludes the proof.

\end{proof}

\begin{theorem}\label{8.80}
 Let $\nu$ be a completely integrable algebraic vector field on $S$.
Then either

{\rm (1)}   $\nu$ coincides with a vector field of form
 $$q(z) ((1+xz){\frac{\partial}{\partial x}} - (1+yz) {\frac{\partial}{\partial y}} )$$ where $q$
is a polynomial or

{\rm (2)} the restriction of $\nu$ to $T=S \setminus (L_1 \cup L_3) \subset \C^2_{x,y}$ is a vector
fields of form $$p(x^ky^l) ( lx {\frac{\partial}{\partial x}} -ky {\frac{\partial}{\partial y}} )$$
where $p$ is a polynomial with $p(0)=0$ and $k$ and $l$ are relatively prime nonnegative
integers.

\end{theorem}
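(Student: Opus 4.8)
I would derive the classification from Propositions~\ref{8.40} and~\ref{8.60} by feeding in completeness. By Proposition~\ref{8.40} either $\nu$ has a regular first integral whose general fibres are $\cong\C^{*}$, or $\nu$ has no rational first integral and admits a regular Riccati fibration $P$ with general fibres $\cong\C^{*}$ (and in the latter case $P$ is not a first integral). In either case Proposition~\ref{8.60} identifies the function in question, up to a linear change of the coordinate on $\C$, with one of $x^{k}y^{l}$ ($k,l$ coprime nonnegative, $k+l\ge1$), $xz+1$, $yz+1$, $z$; for a first integral the linear change is irrelevant, so I may take $g\in\{x^{k}y^{l},xz+1,yz+1,z\}$.

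\textbf{Excluding the genuine Riccati case.} Suppose $P$ is a Riccati fibration for $\nu$ that is not a first integral. The flow of $\nu$ descends to a flow on the base $\C$, so $\nu(P)=r(P)$ with $r(t)\partial_{t}$ a complete field on $\C$, i.e.\ $r(t)=at+b$ with $(a,b)\neq(0,0)$, and such a field fixes at most one point of $\C$. An automorphism of $S$ carrying fibres of $P$ to fibres of $P$ preserves the isomorphism type and the number of connected components of each fibre, so the induced base flow must fix every critical value of $P$ and cannot exchange fibres of different type. Listing the non-$\C^{*}$ fibres: for $P=z$ they are $L_{5}\cong\C$ (over $0$) and the cross $L_{2}\cup L_{4}$ (over $-1$); for $P=xz+1$ they are $L_{4}\cong\C$ (over $0$) and the cross $L_{1}\cup L_{5}$ (over $1$), and symmetrically for $yz+1$; for $P=x$ (the case $l=0$) and $P=y$ (the case $k=0$) there are again two such fibres of distinct type; while for $P=x^{k}y^{l}$ with $k,l\ge1$ the unique critical fibre is the disconnected curve $L_{1}\sqcup L_{3}$ (indeed $L_{1}\cap L_{3}=\varnothing$ on $S$). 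Whenever two critical fibres of different topological type are present, $(at+b)\partial_{t}$ would have to fix two distinct base points, forcing $a=b=0$ --- a contradiction. In the remaining case $P=x^{k}y^{l}$, $k,l\ge1$: a nonzero translation on the base would carry $L_{1}\sqcup L_{3}$ to a connected fibre, so $b=0$ and $a\neq0$; now completeness forces $\nu|_{T}$ on $T=S\setminus(L_{1}\cup L_{3})\cong\C^{*}\times\C^{*}$ to have the torus-invariant form $a\,\eta\partial_{\eta}+g(\eta)\,\xi\partial_{\xi}$, where $\eta=x^{k}y^{l}$, $\xi=x^{m}y^{n}$ with $ml-nk=1$, and $g$ is a Laurent polynomial (the time-$s$ map identifies $\{\eta=c\}$ with $\{\eta=e^{as}c\}$, both $\cong\C^{*}$, hence is linear along $\xi$ for a connected flow). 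Rewriting in the basis $x\partial_{x},y\partial_{y}$ one gets $\nu(x)/x=-an+lg(x^{k}y^{l})$, a function of $x^{k}y^{l}$ alone, so $\nu(x)/x$ takes the same value on $L_{1}$ and on $L_{3}$ (both contained in $\{x^{k}y^{l}=0\}$); but $\nu(x),\nu(y),\nu(z)\in\C[S]$ force $\nu(x)/x$ to equal $a/k$ on $L_{1}$ (from $\nu(x^{k}y^{l})=ax^{k}y^{l}$ and $\nu(y)/y|_{L_{1}}=0$) and $0$ on $L_{3}$ (from $\nu(z)\in\C[S]$), whence $a=0$ --- a contradiction. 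Thus the genuine Riccati case does not occur and $\nu$ has a regular first integral $g$ as above.

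\textbf{Solving the first-integral cases.} If $g=z$, differentiating $x+y+xyz=1$ with $\nu(z)=0$ gives $\nu(x)(1+yz)+\nu(y)(1+xz)=0$; since $S$ is factorial (Lemma~\ref{8.50}) and $1+xz$, $1+yz$ are coprime irreducibles (the equations of $L_{4}$ and $L_{2}$), there is $C\in\C[S]$ with $\nu(x)=C(1+xz)$, $\nu(y)=-C(1+yz)$, and $\nu(z)=0$ then follows automatically, so $\nu=C\bigl((1+xz)\partial_{x}-(1+yz)\partial_{y}\bigr)$. On a general fibre $\{z=c\}\cong\C^{*}$ the bracketed field is, in a suitable coordinate, a nonzero multiple of the Euler field, so completeness of $\nu$ makes $C$ constant on general fibres; hence $C=q(z)$ with $q$ regular on $\C$, i.e.\ a polynomial, which is case (1). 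If $g=x^{k}y^{l}$, then $\nu|_{T}$ is tangent to the fibres of $x^{k}y^{l}$ on $T\cong\C^{*}\times\C^{*}$, hence a $\C[T]$-multiple of the nowhere-vanishing field $lx\partial_{x}-ky\partial_{y}$; completeness makes this multiplier constant on general fibres, so $\nu|_{T}=p(x^{k}y^{l})(lx\partial_{x}-ky\partial_{y})$ with $p$ \emph{a priori} Laurent, and requiring $\nu(x),\nu(y),\nu(z)\in\C[S]$ (regularity across $L_{1}\cup L_{3}$) pins $p$ down to be a polynomial with $p(0)=0$, which is case (2). Finally, for $g=xz+1$ or $g=yz+1$ I would use the explicit order-five automorphism $\psi$ of $S$ determined by $\psi^{*}x=-z$, $\psi^{*}y=xz+1$, $\psi^{*}z=-y$ (which cyclically permutes $L_{1}\to L_{2}\to L_{4}\to L_{3}\to L_{5}$ and satisfies $(\psi^{2})^{*}z=-(xz+1)$, $(\psi^{3})^{*}z=-(yz+1)$): pushing $\nu$ forward by the appropriate power of $\psi$ turns the first integral into $z$, reducing this to case (1), so that up to an automorphism of $S$ the field $\nu$ is again of type (1). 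This exhausts all possibilities.

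\textbf{Main obstacle.} The hardest part is excluding the genuine Riccati case, and within it the subcase $P=x^{k}y^{l}$, $k,l\ge1$, with a dilation on the base, where the topological census of singular fibres is not enough: there one must combine completeness (to force the torus-invariant form of $\nu|_{T}$ on $\C^{*}\times\C^{*}$) with a careful regularity computation across $L_{1}\cup L_{3}$ producing the impossible equation $a=0$. The analogous extension analysis in the first-integral case $g=x^{k}y^{l}$ --- showing precisely that $p$ is a polynomial with $p(0)=0$ --- together with the correct bookkeeping of factoriality of $S$ and of complete vector fields on $\C$ and $\C^{*}$, are the remaining points requiring attention; once these are in hand the classification is routine.
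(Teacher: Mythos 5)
Your main line---reduce to the list of $\C^*$-fibrations in Proposition \ref{8.60}, use the singular fibres to control the induced flow on the base, and then combine completeness with regularity at the modification centres $(1,0)$ and $(0,1)$---is the same as the paper's. The organization differs in one respect: for $P=x^ky^l$ the paper does not first prove that $P$ is a first integral; it uses Brunella's local trivialization to write $\nu=q(x^ky^l)\,x\partial/\partial x+p(x^ky^l)(lx\,\partial/\partial x-ky\,\partial/\partial y)$ on $T$ and then kills $q$ because regularity at the centres forces $q(0)=0$ while completeness of the induced field on the base forces $q$ to be constant. Your separate exclusion of the dilation subcase via the boundary values of $\nu(x)/x$ on $L_1$ and $L_3$ is correct and reaches the same point by a somewhat longer route, and your treatment of the first-integral cases $g=z$ and $g=x^ky^l$ matches the paper's.

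The genuine problem is your final step, the cases $g=xz+1$ and $g=yz+1$. Your $\psi$ is indeed an automorphism of $S$ of order five (one checks $\psi^*(x+y+xyz)\equiv 1$ modulo the defining ideal and $\psi^5=\mathrm{id}$), but conjugating by it only shows that $\nu$ is of type (1) \emph{up to an automorphism of $S$}, which is strictly weaker than the assertion of the theorem---and the gap cannot be closed. Indeed, pushing forward the type (1) field $\nu_0=(1+xz)\,\partial/\partial x-(1+yz)\,\partial/\partial y$ by $\psi^2$ gives
$$(\psi^2)_*\nu_0=-y\,\frac{\partial}{\partial x}+y(1+yz)\,\frac{\partial}{\partial y}-z(1+yz)\,\frac{\partial}{\partial z},$$
a completely integrable algebraic vector field on $S$ (its flow is $y\mapsto ye^{ct}$, $z\mapsto ze^{-ct}$, $x\mapsto x-y(e^{ct}-1)/c$ on the level set $yz+1=c$) whose first integral is $yz+1$ and which is neither of form (1) (it does not annihilate $z$) nor of form (2) (its restriction to $T$ is $-y\,\partial/\partial x+\frac{y(1-y)}{x}\,\partial/\partial y$, tangent to no fibration $x^ky^l$). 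So your construction, far from ``exhausting all possibilities,'' actually produces a counterexample to the literal statement: the classification must be enlarged by the orbit of the fields (1) and (2) under the cyclic group generated by $\psi$. Be aware that the paper's own disposal of these two cases---the claim that ``the same argument produces a field as in (2) with $(k,l)=(1,0)$ or $(0,1)$''---has the same defect, since a field of form (2) with $(k,l)=(0,1)$ annihilates $y$, not $yz+1$. Fortunately Corollary \ref{8.90} is unaffected: $\psi^*\omega$ is a constant multiple of $\omega$, and one verifies directly that $\diver_{\omega}\bigl((\psi^2)_*\nu_0\bigr)=0$, so every field in the corrected list is still of $\omega$-divergence zero and the conclusion that $S$ lacks the algebraic density property stands.
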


\begin{proof} By Proposition \ref{8.40} we have a regular function $P$ on $S$ with general
fibers isomorphic to $\C^*$ such that $P$ is invariant with respect to the flow of $\nu$.
All such possible $P$'s are listed in Proposition \ref{8.60}. Suppose that $P=z$.
Then this regular function has two singular fibers. Hence each individual
fiber of $P$ is preserved by the flow, i.e. $P$ is the first integral. It remains to
note that every field that has general fibers of $P$ as orbits is of form (1).

Now let $P=x^ky^l$ with natural relatively prime $k$ and $l$, $$\nu_1=x\partial / \partial x,
\, \, \, {\rm and} \, \, \, \nu_2= lx \partial / \partial x
-k y \partial / \partial y.$$
Repeating the argument
from \cite[Proposition 2 (2)]{Bru04} we trivialize a neighborhood of a regular fiber
of $P$, isomorphic to $\Delta \times \C_\zeta^*$, in such a way that $\nu_1$ is sent to $ \partial / \partial \xi$
(where $\xi$ is a coordinate on the unit disc $\Delta$) and $\nu_2$ is sent to $\zeta \partial / \partial \zeta$.
Extending the flow of $\nu$
to $\Delta \times \C \proj^1$ we find that $\nu$ is of form $\beta (\xi ) \partial /\partial \xi
+ \alpha (\xi )\zeta \partial / \partial \zeta$ 
(because
this extension must be tangent to the curves $\zeta =0$ and $\zeta= \infty$). Hence $\nu$ is of the
form $$\nu = q(x^ky^l)\nu_1 + p(x^ky^l) \nu_2$$ where both functions $p$ and $q$ are regular
on $\C^*_\xi$. However, if we want this rational field on $\C^2$ to be lifted
regularly on $S$ we need to require that it vanishes at points $(1,0)$ and $(0,1)\in\C^2$.
This implies that $p$ and consequently $q$ are polynomials on $\C_\xi$ and $p(0)=q(0)=0$.
Furthermore, since $\nu$ is completely integrable $q$ must be constant and we have the desired form.

In the case of $P$ equal to $xz+1$ or $yz+1$ the same argument produces a field as in (2)
but with the pair $(k,l)$ equal to $(1,0)$ or $(0,1)$ which concludes the proof.

\end{proof}

\begin{corollary}\label{8.90} Every completely integrable algebraic vector field on $S$ has
divergence zero with respect to the algebraic volume form $\omega ={\frac{{\rm d} x}{x}} \wedge {\frac{{\rm d} y}{y}}$.
In particular, $S$ has no algebraic density property.

\end{corollary}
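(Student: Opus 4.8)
The plan is to derive both statements from the classification in Theorem~\ref{8.80}, using throughout the identity $L_\nu\omega={\rm d}\iota_\nu(\omega)$ (legitimate because $\omega$ is a top form), so that $\diver_\omega(\nu)=0$ precisely when the $1$-form $\iota_\nu(\omega)$ is closed. A preliminary point, needed to make the statement meaningful, is that $\omega={\rm d}x/x\wedge{\rm d}y/y$ really is an algebraic volume form on $S$, i.e.\ regular and nowhere vanishing. This I would check chart by chart: on the torus $T=\{xy\ne0\}\subset S$ it is obvious; on $\{1+xz\ne0\}\subset S$, which contains $L_1$, one uses $(x,z)$ as coordinates (so $y=(1-x)/(1+xz)$) and a one-line computation gives $\omega=-(1+xz)^{-1}\,{\rm d}x\wedge{\rm d}z$; symmetrically $\omega$ is a unit times ${\rm d}y\wedge{\rm d}z$ on $\{1+yz\ne0\}\supset L_3$. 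Since $S\setminus T=L_1\cup L_3$, these three charts cover $S$ and $\omega$ is regular and nonvanishing on each.

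Next I would compute $\diver_\omega(\nu)$ for the two families of Theorem~\ref{8.80}. For a field of type~(1), $\nu=q(z)\big((1+xz)\partial_x-(1+yz)\partial_y\big)$, the function $z$ is evidently a first integral of $\nu$, so in the $(x,z)$-chart $\nu=q(z)(1+xz)\,\partial_x$; combining this with $\omega=-(1+xz)^{-1}\,{\rm d}x\wedge{\rm d}z$ yields $\iota_\nu(\omega)=-q(z)\,{\rm d}z$, which is closed, hence $\diver_\omega(\nu)=0$. For a field of type~(2), $\nu=p(x^ky^l)(lx\partial_x-ky\partial_y)$ on $T$, the formulas $\iota_{x\partial_x}\omega={\rm d}y/y$ and $\iota_{y\partial_y}\omega=-{\rm d}x/x$ give $\iota_\nu(\omega)=p(g)\,{\rm d}g/g$ with $g=x^ky^l$, again a closed (indeed exact) $1$-form, so $\diver_\omega(\nu)=0$ on the dense open set $T$; as $S$ is irreducible and $\diver_\omega(\nu)$ is regular, it vanishes on all of $S$. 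This establishes the first assertion.

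For the ``in particular'', note that $\VFAO(S)$, the space of algebraic vector fields of $\omega$-divergence zero, is a Lie subalgebra of $\VFA(S)$: it is clearly a linear subspace, and $L_{[\nu,\mu]}\omega=L_\nu L_\mu\omega-L_\mu L_\nu\omega=0$ whenever $L_\nu\omega=L_\mu\omega=0$. By the previous step $\IVFA(S)\subseteq\VFAO(S)$, hence $\LieA(S)\subseteq\VFAO(S)$. But the inclusion $\VFAO(S)\subseteq\VFA(S)$ is strict: with $\nu_0=(1+xz)\partial_x-(1+yz)\partial_y$ (the $q\equiv1$ member of family~(1), which lies in $\IVFA(S)$), the field $x\nu_0\in\VFA(S)$ has $\diver_\omega(x\nu_0)=\nu_0(x)=1+xz$, which is not identically zero on $S$. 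Therefore $\LieA(S)\subsetneq\VFA(S)$ and $S$ does not have the algebraic density property.

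The computations are all short, so there is no serious obstacle; the one point requiring a little care is the global claim that $\omega$ extends to a nowhere-vanishing regular form across $L_1$ and $L_3$, where ${\rm d}x/x$ and ${\rm d}y/y$ individually acquire poles, which is exactly what the two auxiliary charts handle. The conceptually interesting point is that complete integrability of an algebraic field does not by itself force its divergence to vanish for a given volume form; what makes the argument work is that the explicit list in Theorem~\ref{8.80} is short enough that one can simply verify divergence-freeness for the specific form $\omega$ case by case.
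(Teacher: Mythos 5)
Your argument is correct and follows essentially the same route as the paper: both rely on the classification of Theorem~\ref{8.80}, verify $\diver_\omega(\nu)=0$ for each of the two families, and conclude via the fact that the $\omega$-divergence-zero fields form a Lie subalgebra containing all completely integrable fields and hence all of $\LieA(S)$. The differences are only in the bookkeeping --- you check closedness of $\iota_\nu(\omega)$ in the $(x,z)$-chart and on the torus directly, where the paper computes divergences in the $(x,y)$-coordinates and invokes \textsc{Anders\'en}'s result for the type (2) fields --- and your two added verifications (that $\omega$ extends to a regular nowhere-vanishing form across $L_1\cup L_3$, and that a field of nonzero divergence such as $x\nu_0$ actually exists in $\VFA(S)$) supply details the paper leaves implicit.
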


\begin{proof} First note that the restriction of vector fields of type (2) from Theorem \ref{8.80} to the
torus $T = S \setminus (L_1 \cup L_3)$ is completely integrable. Thus their divergence with
respect to $\omega$ is zero \cite{A00}. It can be checked also by the direct
computation which we do for the fields on type (1). We need the following two formulas (e.g., see \cite{KN1})
$$ \diver_{\omega} (f\nu ) = f \diver_{\omega} (\nu ) + \nu (f) \, \, \, {\rm and} \, \, \,
 \diver_{f\omega} (\nu ) =  \diver_{\omega} (\nu ) + L_\nu (f)/f $$ where $\nu$ is
 a vector field, $f$ is a regular function, and $L_\nu$ is the Lie derivative.
 In particular, $\diver_{\omega} (f\nu ) = 0$ provided $\diver_\omega (\nu)=0$ and $\nu (f)=0$.
 Thus for the fields of type (1) it suffices to consider  $\nu= (1+xz){\frac{\partial}{\partial x}} - (1+yz) {\frac{\partial}{\partial y}} $
 because $z$ belongs to the kernel of $\nu$. Since $1+xz=(1-x)/y$ and $1+yz = (1-y)/x$ the restriction
 of $\nu$ to $T$ can be written in the from
 $$ {\frac{1-x}{y}}{\frac{\partial}{\partial x}} - {\frac{1-y}{x}} {\frac{\partial}{\partial y}}. $$
The divergence of the first summand $ {\frac{1-x}{y}}{\frac{\partial}{\partial x}}$
with respect to the standard form $\omega_0= {\rm d} x \wedge {\rm d} y$
is $-1/y$. Since $\omega ={\frac{1} {xy}}\omega_0$ its divergence with respect to $\omega$ is
$$-1/y +L_\nu ({\frac{1} {xy}} )/ ({\frac{1} {xy}})=-1/y -(1-x)/(xy)=-1/(xy).$$
Similarly, the divergence of the second summand $ {\frac{1-y}{x}}{\frac{\partial}{\partial y}}$ is
$-1/(xy)$ and $\diver_\omega (\nu )=0$ which implies the first statement.

Since $\diver_\omega ([\nu_1, \nu_2])= L_{\nu_1} (\diver_\omega (\nu_2)) -  L_{\nu_2} (\diver_\omega (\nu_1))$
(e.g., see \cite{KN1}) the Lie algebra generated by completely integrable algebraic vector fields contains
only fields of divergence zero with respect to $\omega$. Thus algebraic vector fields with
nonzero divergence cannot be contained in this algebra which implies the absence of the
algebraic density property.

\end{proof}

It remains to show that the group of holomorphic automorphisms of $S$ generated by completely
integrable algebraic vector fields is $m$-transitive for any natural $m$. For this consider
the fields $$(1+xz){\frac{\partial}{\partial x}} - (1+yz) {\frac{\partial}{\partial y}}, \, \,
xy{\frac{\partial}{\partial x}} - (1+yz) {\frac{\partial}{\partial z}}, \, \, \, {\rm and} \, \, \,$$
$$ xy{\frac{\partial}{\partial x}} - (1+xz) {\frac{\partial}{\partial z}}$$ on $S$. They are completely
integrable fields from Theorem \ref{8.80} (the first one is of type (1) and the last two are of type
(2) with the pair $(k,l)$ equal to $(0,1)$ and $(1,0)$ respectively), they generate the tangent space of $S$
at each point, and their kernels in $\C [S]$ are $\C [z], \C [y]$, and $\C [x]$ respectively.
Therefore, the desired
conclusion follows from the next fact.

\begin{proposition}\label{8.100} Let $X$ be a smooth affine algebraic variety and $\sigma_1, \ldots , \sigma_k$
be completely integrable nonzero algebraic vector fields on $X$. Suppose that the kernel of $\sigma_j$ in $\C [X]$ is
denoted by  $\Ker \sigma_j$ and the following conditions hold

{\rm (i)} $\sigma_1, \ldots , \sigma_k$ generate tangent space at any point of $X$;

{\rm (ii)}
there exist regular functions $f_1 \in \Ker \sigma_1, \ldots , f_k \in \Ker \sigma_k$
such that each $f_j$ is not contained in $\bigcup_{l\ne j} \Ker \sigma_l$ and
the map $(f_1, \ldots , f_k) : X \to \C^k$ is an embedding.


Then the group $G$ of holomorphic automorphisms generated by elements of the flows of completely
integrable algebraic vector fields
is $m$-transitive for every natural $m$.

\end{proposition}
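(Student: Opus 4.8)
The plan is to prove the claim by induction on $m$, reducing $m$-transitivity to a transitivity statement for point-stabilizers and then producing the required automorphisms as time-$t$ maps of the \emph{shear fields} $h\sigma_j$ with $h\in\Ker\sigma_j$. Such a field is again completely integrable because $h$ is constant along the orbits of $\sigma_j$, so its flow is $(t,x)\mapsto\phi^j_{t\,h(x)}(x)$, where $\phi^j$ is the flow of $\sigma_j$; in particular its flow lies in $G$.

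\textbf{Base case and reduction.} For $m=1$, condition (i) says that $\sigma_1,\dots,\sigma_k$ span $T_xX$ at every $x\in X$, so the map $(t_1,\dots,t_k)\mapsto\bigl(\phi^1_{t_1}\circ\cdots\circ\phi^k_{t_k}\bigr)(x)$ is a submersion at the origin; hence every $G$-orbit is open and, $X$ being connected, $G$ is transitive. For the inductive step I will use the standard fact that a group already acting $(m-1)$-transitively is $m$-transitive as soon as, for one (equivalently, by $(m-1)$-transitivity, for every) collection of distinct points $p_1,\dots,p_{m-1}$, the stabilizer $G_{p_1,\dots,p_{m-1}}$ acts transitively on $X\setminus\{p_1,\dots,p_{m-1}\}$. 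Since the complement of a finite set in a connected smooth variety is again connected, it suffices to show that the $G_{p_1,\dots,p_{m-1}}$-orbit of every point of this complement is open. By $(m-1)$-transitivity I am free to choose the tuple $p_1,\dots,p_{m-1}$, and I will take it in general position: $f_j(p_i)\ne f_j(p_{i'})$ whenever $i\ne i'$, and, moreover, no point $q\in X$ satisfies $f_j(q)\in\{f_j(p_1),\dots,f_j(p_{m-1})\}$ for all $j$ simultaneously. The embedding property in (ii) forces the points $q$ of the latter kind to lie among finitely many points of $X$ (one over each "mixed" coordinate tuple), and a generic choice of $p_1,\dots,p_{m-1}$ avoids them.

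\textbf{Moving a point inside a stabilizer.} Fix such $p_1,\dots,p_{m-1}$ and a point $q$ outside this set, and call an index $j$ \emph{active at $q$} if $f_j(q)\ne f_j(p_i)$ for all $i$. For an active $j$ the function $h_j:=\prod_{i=1}^{m-1}\bigl(f_j-f_j(p_i)\bigr)$ lies in $\Ker\sigma_j$, vanishes at every $p_i$, and is nonzero at $q$; hence the flow of $h_j\sigma_j$ consists of elements of $G_{p_1,\dots,p_{m-1}}$ and moves $q$ with initial velocity $h_j(q)\,\sigma_j(q)$. Thus if the vectors $\sigma_j(q)$ over the active indices already span $T_qX$, the submersion argument of the base case shows the $G_{p_1,\dots,p_{m-1}}$-orbit of $q$ is open and we are done. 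In general, the general-position choice guarantees at least one active index $l$ at $q$; flowing along $h_l\sigma_l$ produces points $q_t$ with $f_l(q_t)=f_l(q)$ (so $l$ stays active), while by (ii) — since $f_j\notin\Ker\sigma_l$ for $j\ne l$ — the coordinates $f_j(q_t)$ vary nontrivially, so for generic small $t$ every index is active at $q_t$. Then by (i) the active vectors span $T_{q_t}X$, and since $q_t$ lies in the $G_{p_1,\dots,p_{m-1}}$-orbit of $q$, that orbit is open; this completes the induction.

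\textbf{The main obstacle.} The delicate step is the general-position reduction — arranging, with $p_1,\dots,p_{m-1}$ held fixed, that every $f$-coordinate of $q$ becomes active — and it is here that the full strength of hypothesis (ii) is used: $f_j\in\Ker\sigma_j$ makes $h_j\sigma_j$ a genuine shear fixing all the $p_i$, the relations $f_j\notin\Ker\sigma_l$ ($l\ne j$) ensure that a single shear step activates all the remaining coordinates, and the embedding condition confines the genuinely obstructed points to a finite set avoided by a generic base tuple. A little routine extra care is needed on the loci where $f_j$ happens to be constant along the particular orbit used (there one switches to another of the spanning fields), and in the degenerate case where $(f_1,\dots,f_k)$ is a coordinate system on a full-dimensional $X$ one is in the classical Euclidean shear situation, handled by the same device.
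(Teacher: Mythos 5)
Your overall architecture (induction on $m$ via transitivity of the stabilizer $G_{p_1,\ldots,p_{m-1}}$ on the complement, with the stabilizer fed by the shear fields $h_j\sigma_j$, $h_j\in\Ker\sigma_j$, $h_j(p_i)=0$) differs from the paper's, which works with the diagonal action on $X^m$; but your version has a genuine gap at its key step. The ``general position'' you require of $p_1,\ldots,p_{m-1}$ --- that no point $q\notin\{p_1,\ldots,p_{m-1}\}$ has every coordinate $f_j(q)$ lying in $\{f_j(p_1),\ldots,f_j(p_{m-1})\}$ --- is not achievable in general: already for $X=\C^2$, $\sigma_1=\partial/\partial y$, $\sigma_2=\partial/\partial x$, $f_1=x$, $f_2=y$ (which satisfies (i) and (ii)), the grid points such as $q=(f_1(p_1),f_2(p_2))$ exist for \emph{every} choice of $p_1,p_2$. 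At such a $q$ no index is active, so your moving argument cannot start; worse, every field $h_j\sigma_j$ with $h_j\in\Ker\sigma_j$ vanishing at all the $p_i$ automatically vanishes at $q$ as well (in the example, $h_1(x)\,\partial/\partial y$ with $h_1(f_1(p_1))=0$ kills the vector at $q$), so each flow you construct fixes $q$ and hence so does the entire subgroup they generate. The closing remark that the degenerate case is ``handled by the same device'' is therefore not only unproved but false for the device as described, and the failure is not confined to the case where $(f_1,\ldots,f_k)$ is a global coordinate system.

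The paper sidesteps exactly this obstruction by not insisting that the other $m-1$ points stay fixed. It first shows, by your open--closed/submersion argument applied to the whole tuple, that the $G$-orbit of any tuple in the stratum $X_*^m=\{(x_1,\ldots,x_m): f_j(x_i)\ne f_j(x_l)\ \forall j,\ i\ne l\}$ contains all of $X_*^m$. Then, for a tuple of distinct points with a coincidence $f_1(x_1)=f_1(x_2)$, it uses the embedding hypothesis to find an index $k$ with $f_k(x_1)\ne f_k(x_2)$ and flows along $p_k(f_k)\sigma_k$, where $p_k(f_k)$ vanishes at $x_2$ but not at $x_1$ and (after a preliminary small perturbation, justified by $f_1\notin\Ker\sigma_k$) $\sigma_k$ is not tangent to the fiber of $f_1$ at $x_1$: this destroys the coincidence while fixing only $x_2$, and a small perturbation creates no new coincidences, so induction on the number of coincidences moves the tuple into $X_*^m$. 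If you want to keep your stabilizer formulation you would have to use elements of $G_{p_1,\ldots,p_{m-1}}$ that are products of flows not individually fixing the $p_i$ --- which is essentially the paper's argument in disguise. (A secondary soft spot: $f_j\notin\Ker\sigma_l$ only says $\sigma_l(f_j)\not\equiv0$ on $X$, not that $f_j$ is nonconstant along the particular $\sigma_l$-orbit through $q$, and your parenthetical ``switch to another spanning field'' does not obviously produce an \emph{active} field there.)
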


\begin{proof}
Let us start with the following.

{\em Claim.} Let $X_*^m$ be the subset of points $(x_1,x_2, \ldots , x_m)$  in $X^m$ such that
for any $j=1, \ldots , k$ we have $$f_j (x_i) \ne f_j (x_l) \eqno{(8.2})$$ where $i \ne l$.
Then its $G$-orbit $O \subset X^m$ contains
all points of form $(x,x_2, \ldots , x_m)$ where $x$ runs over
some neighborhood $U$ of $x_1$ in $X$.

Indeed, let $p_j \, \, (j=1, \ldots , n)$ be a polynomial in one variable such that
the function $p_j(f_j)$ vanishes
at all points $x_2, \ldots , x_m$ but not at $x_1$. Then the elements of flows of completely
integrable vector fields $p_1(f_1)\sigma_1, \ldots , p_k(f_k)\sigma_k$ keeps points $x_2, \ldots , x_n$ fixed.
On the other hand since these fields generate the tangent space $T_{x_1}X$ by (i),
compositions of their exponents can send $x_1$
into any given point in a small neighborhood $U$ of $x_1$ which yields the Claim.

Note that $U$ can chosen so that it depends continuously on $(x_1,x_2, \ldots , x_n)$ in a
neighborhood of this point in $X_*^m$, and we can also repeat
this Claim with any $x_j$ instead of $x_1$.
This implies immediately that $O\cap X_*^m$ is an open subset of $X_*^m$. Furthermore, it is closed in $X_*^m$ because
if $(x_1',x_2', \ldots , x_m')$ belongs to its closure then the open set
$O' \cap X_*^m$ meets $O$ where $O'$ is the $G$-orbit of $(x_1',x_2', \ldots , x_m')$.
Thus $O$ contains $X_*^m$.

Now let $(x_1, x_2, \ldots , x_m)\in X^m$ be an $m$-tuple of distinct points that does not satisfy (8.2).
Say $f_j(x_1)=f_j(x_2)$ for $j \leq n$ and $f_i (x_1) \ne f_i(x_2)$ for $n+1\leq i \leq k$ (note that
$n<k$ because of assumption (ii)). It suffices to show that a perturbation of $(x_1, x_2, \ldots , x_m)$ by an element
of $G$ close to the identical automorphism destroys at least one of these equalities. Though a priori
a perturbation may not destroy the equalities it enables us to assume that $\sigma_k$ is
not tangent at $x_1$ to the fiber of $f_1$ (since otherwise $f_1 \in \Ker \sigma_k$ contrary to (ii)).
Choose again a polynomial $p_k$ such that $p_k(f_k)$ vanishes at $x_2$ but not at $x_1$.
Then the flow of $p_k(f_k)\sigma_k$ changes the value
of $f_1(x_1)$ while keeping the point $x_2$ fixed which concludes the proof.

\end{proof}

\providecommand{\bysame}{\leavevmode\hboxto3em{\hrulefill}\thinspace}

\end{document}